\newtheorem{theorem}{Theorem}[section]
\newtheorem{lemma}{Lemma}[section]
\newtheorem{corollary}[theorem]{Corollary}
\newtheorem{proposition}[theorem]{Proposition}
\newtheorem{definition}{Definition}[section]
\newtheorem{conj}[theorem]{Conjecture}
\begin{document}
	\title{\uppercase{Uniform vector bundles over $\mathbb{P}^4$}}
\author{Rong Du \thanks{School of Mathematical Sciences, Key Laboratory of MEA (Ministry of Education) and
Shanghai Key Laboratory of PMMP,
East China Normal University,
Rm. 312, Math. Bldg, No. 500, Dongchuan Road,
Shanghai, 200241, P. R. China,
rdu@math.ecnu.edu.cn.
}
and Yuhang Zhou
\thanks{School of Mathematical Sciences,
University of Science and Technology of China,
No.96, JinZhai Road Baohe District, 
Hefei, Anhui, 230026, P.R.China.
zyh@ustc.edu.cn
Both authors are sponsored by National Natural Science Foundation of China (Grant No. 12471040), Innovation Action Plan (Basic research projects) of Science and Technology Commission of Shanghai Municipality (Grant No. 21JC1401900), Natural Science Foundation of Chongqing Municipality, China (general program, Grant No. CSTB2023NSCQ-MSX0334) and Science and Technology Commission of Shanghai Municipality (Grant No. 22DZ2229014).
}
}
	
	\date{}
	
	\maketitle

	\begin{abstract}
		There is a long-standing conjecture which states that every uniform algebraic vector bundle of rank $r<2n$ on the $n$-dimensional projective space $\mathbb{P}^n$ over an algebraically closed field of characteristic $0$ is homogeneous. This conjecture is valid for $n\leq3$. In this paper, we classify all uniform vector bundles of rank $r<8$ over  $\mathbb{P}^4$ and show that the conjecture holds for $n=4$.
	\end{abstract}

	\section{Introduction}
Algebraic vector bundles on a projective variety $X$ are fundamental research objects in algebraic geometry. By Grothendieck's well-known result, if $X=\mathbb{P}^1$, then any vector bundle over $X$ splits as a direct sum of line bundles. However, when $X$ is a projective space of dimension greater than one, determining the structures of vector bundles on $X$ is not as straightforward. Since projective spaces are covered by lines, it is a natural approach to consider the restriction of vector bundles to the lines within them. If the splitting type of a vector bundle $E$ remains the same when it is restricted to any line in the projective space $\mathbb{P}^n$, then $E$ is called a uniform vector bundle on $\mathbb{P}^n$.

Let $k$ be an algebraically closed field of characteristic $0$. All objects are considered over the field $k$, and in the following context, we simply use ``vector bundles'' instead of ``algebraic vector bundles'' and use $r$-bundle instead of ``vector bundle of rank $r$''.

In 1972, Van de Ven (\cite{Ven}) proved that for $n > 2$, uniform $2$-bundles over $\mathbb{P}^n$ split, and uniform $2$-bundles over $\mathbb{P}^2$ are precisely the bundles $\mathcal{O}_{\mathbb{P}^2}(a)\oplus\mathcal{O}_{\mathbb{P}^2}(b)$, $T_{\mathbb{P}^2}(a)$, and $\Omega^1_{\mathbb{P}^2}(b)$, where $a, b\in\mathbb{Z}$.
In 1976, Sato \cite{Sat} proved that for $2 < r < n$, uniform $r$-bundles over $\mathbb{P}^n$ split. In 1978, Elencwajg \cite{Ele} extended Van de Ven's investigations and showed that, up to dual, uniform vector bundles of rank 3 over $\mathbb{P}^2$ are of the forms
$$\mathcal{O}_{\mathbb{P}^2}(a)\oplus\mathcal{O}_{\mathbb{P}^2}(b)\oplus\mathcal{O}_{\mathbb{P}^2}(c), \ T_{\mathbb{P}^2}(a)\oplus\mathcal{O}_{\mathbb{P}^2}(b)\quad\text{and}\quad S^2T_{\mathbb{P}^2}(a),$$
where $a$, $b$, $c\in \mathbb{Z}$. Previously, Sato \cite{Sat} had shown that for odd $n$, uniform $n$-bundles over $\mathbb{P}^n$ are of the forms
$$\bigoplus_{i = 1}^{n}\mathcal{O}_{\mathbb{P}^n}(a_i), \ T_{\mathbb{P}^n}(a)\quad\text{and}\quad \Omega^1_{\mathbb{P}^n}(b),$$
where $a_i,a, b\in \mathbb{Z}$.
The results of Elencwajg and Sato thus provide a complete classification of uniform $3$-bundles over $\mathbb{P}_k^n$. In particular, all uniform $3$-bundles over $\mathbb{P}_k^n$ are homogeneous, meaning they retain their structure under the automorphism pullback of the projective spaces.
Later, Elencwajg, Hirschowitz, and Schneider \cite{E-H-S} showed that Sato's result also holds for even $n$. Around 1982, Ellia \cite{Ell} proved that for $n + 1=4,5,6$, uniform $(n + 1)$-bundles over $\mathbb{P}^{n}$ are of the form
$$\bigoplus_{i = 1}^{n + 1}\mathcal{O}_{\mathbb{P}^n}(a_i), \ T_{\mathbb{P}^n}(a)\oplus\mathcal{O}_{\mathbb{P}^n}(b)\quad\text{and}\quad\Omega^1_{\mathbb{P}^n}(c)\oplus\mathcal{O}_{\mathbb{P}^n}(d), $$
where $a_i, a,b,c,d\in\mathbb{Z}$. Later, Ballico \cite{Bal} showed that Ellia's result remains valid for any $n$. Therefore, uniform $(n + 1)$-bundles over $\mathbb{P}^{n}$ are homogeneous.

One can observe from Ellia and Ballico's papers that the classification problem becomes more difficult when the rank of a vector bundle is greater than $n$.

It is obvious that homogeneous vector bundles are also uniform. Initially, based on the classification results of uniform $2$-bundles and $3$-bundles over projective spaces (\cite{Ven}, \cite{Sat}, and \cite{Ele}), there was a widespread belief that all uniform bundles are homogeneous. However, in 1979, Elencwajg disproved this belief by constructing a non-homogeneous uniform $4$-bundle over $\mathbb{P}^2$ (\cite{Ele2}). Subsequently, Hirschowitz provided examples of uniform non-homogeneous bundles of rank $3n- 1$ over $\mathbb{P}^n$ for $n\geq 3$ (\cite{O-S-S}). In 1980, Dr{\'e}zet proved the existence of uniform non-homogeneous bundles of rank $2n$ over $\mathbb{P}^n$ for $n\geq 3$ (\cite{Dre}).

Then, according to the classification results of uniform vector bundles of rank less than or equal to $n+1$ on $\mathbb{P}^n$, Dr{\'e}zet conjectured in his paper (\cite{Dre}) that uniform vector bundles of rank greater than $n+1$ and less than $2n$ on $\mathbb{P}^n$ are homogeneous (or see \cite{EP-MP}).

\begin{conj}
    Uniform vector bundles of rank $r$ such that $n + 1<r<2n$ over $\mathbb{P}^n$ are homogeneous.
\end{conj}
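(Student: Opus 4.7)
My plan is to establish the $n=4$ case of the conjecture, which together with the Ellia-Ballico classification for rank $r = n+1 = 5$ and Sato's theorem for $r < n$ reduces to showing that every uniform $r$-bundle $E$ on $\mathbb{P}^4$ with $r \in \{6,7\}$ is homogeneous.

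First, I would pass to the standard point-line incidence diagram
$$\mathbb{P}^4 \xleftarrow{p} I \xrightarrow{q} G,$$
where $G = G(1,4)$ is the Grassmannian of lines and $I = F(0,1,4)$ is the flag manifold. If the splitting type of $E$ is $(a_1,\ldots,a_r)$ with distinct values $a_{(1)} > \cdots > a_{(k)}$ and multiplicities $m_1,\ldots,m_k$, uniformity produces a canonical filtration
$$0 = F_0 \subset F_1 \subset \cdots \subset F_k = p^*E$$
on $I$ whose $i$-th graded piece restricts on each $q$-fiber $\ell$ to $\mathcal{O}_\ell(a_{(i)})^{\oplus m_i}$. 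The pushforwards $q_*(F_i/F_{i-1})$, suitably twisted, are expected to descend to $G$ as $SL_5$-equivariant, hence homogeneous, bundles on $G$.

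Second, I would constrain the admissible splitting types. If $E$ is not semistable, its maximal destabilizing subsheaf is a subbundle that is itself uniform of rank at most $5$, hence homogeneous by the existing classifications; an $\mathrm{Ext}^1$-computation between homogeneous bundles on $\mathbb{P}^4$ then shows $E$ is also homogeneous. This reduces matters to semistable $E$, for which the Grauert-M\"ulich theorem forces $a_{(i)} - a_{(i+1)} \leq 1$, leaving only a finite list of splitting types (up to twist) in ranks $6$ and $7$. For each admissible type, matching Chern classes and restricting $E$ to a hyperplane $\mathbb{P}^3 \subset \mathbb{P}^4$, where the classifications of Ellia and Ballico for rank $\leq 7$ apply, would narrow $E$ to a single homogeneous candidate. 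The candidates in rank $6$ comprise direct sums of line bundles, twists of $T_{\mathbb{P}^4} \oplus \mathcal{O}_{\mathbb{P}^4}^{\oplus 2}$ and $\Omega^1_{\mathbb{P}^4} \oplus \mathcal{O}_{\mathbb{P}^4}^{\oplus 2}$, and the irreducible bundles $\wedge^2 T_{\mathbb{P}^4}(a)$ and $\wedge^2 \Omega^1_{\mathbb{P}^4}(a)$; in rank $7$ one adjoins an extra line-bundle summand. The explicit morphism between $E$ and the candidate would be produced from a nonzero element of $H^0(\mathbb{P}^4, E^\vee \otimes (\text{candidate}))$, computed by pulling back to $I$ and invoking the Borel-Bott-Weil theorem on that homogeneous variety.

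The main obstacle I foresee is the irreducible rank-$6$ case in which $E$ has balanced splitting type and must be identified with $\wedge^2 T_{\mathbb{P}^4}(a)$ or $\wedge^2 \Omega^1_{\mathbb{P}^4}(a)$. Here no line-bundle sub- or quotient is available, so the filtration on $I$ is the only handle; producing a global comparison morphism reduces to showing that a certain extension class on $I$ is nonzero and that the induced map is fibrewise an isomorphism. Controlling the higher cohomology of $p^*E$ twisted by the relative differentials along $q$, so as to force both the existence and the bijectivity of that map, will be the critical technical point and the place where the $n=4$ situation differs most from the smaller-rank cases already in the literature.
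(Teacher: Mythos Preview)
Your proposal targets only the $n=4$ case, which is also all the paper proves (the full conjecture remains open). The outline has two genuine gaps. First, the hyperplane-restriction step fails: a uniform rank-$6$ or rank-$7$ bundle on $\mathbb{P}^4$ restricts to a uniform bundle of the same rank on $\mathbb{P}^3$, but uniform $6$-bundles on $\mathbb{P}^3$ are \emph{not} classified --- by Dr\'ezet's construction they need not be homogeneous, since $6=2n$ there --- so no comparison list on $\mathbb{P}^3$ is available in the ranks you need. Second, the semistability reduction is incomplete: even if the maximal destabilizing subsheaf and the quotient were uniform (which is not automatic) and hence homogeneous by induction, an extension of homogeneous bundles need not be homogeneous, and the relevant $\mathrm{Ext}^1$ groups do not vanish in general, so the asserted $\mathrm{Ext}^1$-computation cannot by itself conclude.

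The paper's route is more computational and avoids hyperplane restriction entirely. For each admissible splitting type it writes the Chern-polynomial identity $c_{p^*E}(T)=\prod_i S_i(T+u_iU,U,V)$ in the Chow ring $A(F(1,2,5))=\mathbb{Z}[U,V]/\langle R^4(U,V),\,U^5\rangle$, solves for the coefficients of the symmetric polynomials $S_i$ by computer algebra, and then for each of the finitely many solutions analyzes $HN^i|_{p^{-1}(x)}$ on the $p$-fibers $p^{-1}(x)\cong\mathbb{P}^3$ (the $\mathbb{P}^3$ of lines through $x$, not a hyperplane). What is invoked on these $\mathbb{P}^3$'s is the classification of \emph{globally generated} bundles with small $c_1$, not the uniform-bundle classification; combined with push--pull along $p$ and Descente-type arguments, this identifies $E$ case by case. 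You are right that the $\bigwedge^2 T_{\mathbb{P}^4}(-1)$ case is the crux: the paper handles it by showing the fibrewise type of $HN^1$ must be constant (via an extension argument on $F(1,2,5)$) and then appealing to a result of Ballico--Ellia.
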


In 1983, Ballico and Ellia (\cite{B-E-P3}) demonstrated that the conjecture holds true for $n=3$. However, to the best of our knowledge, over the past four decades, despite the concerted efforts of the research community, there has been a dearth of significant progress in higher-dimensional cases. As the value of $n$ increases, the complexity of the problem appears to grow exponentially, presenting formidable challenges that have hitherto proven difficult to surmount.

In this paper, we humbly attempt to contribute to this long-standing problem. We aim to provide a positive solution for the case of $n = 4$, fully aware that this is but a small step in the face of the overwhelming difficulties that come with higher-dimensional scenarios. We hope that our work might inspire further research and help pave the way for more comprehensive solutions as we continue to grapple with the increasing complexity associated with larger values of $n$.

\begin{theorem}
    Uniform $r$-bundles with $r < 8$ on $\mathbb{P}^4$ are homogeneous.
\end{theorem}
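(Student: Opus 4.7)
For $r \leq 5$ the theorem already follows from the existing classifications quoted in the introduction: uniform $2$- and $3$-bundles on $\mathbb{P}^4$ split by Van de Ven and Sato, while uniform $4$- and $5$-bundles take one of the homogeneous forms of Sato--Elencwajg--Hirschowitz--Schneider and Ellia--Ballico respectively. I therefore restrict attention to $r = 6$ and $r = 7$, i.e.\ precisely the Dr\'ezet range $n+1 < r < 2n$ for $n = 4$.

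The plan is to exploit the incidence correspondence. Let $G = G(1,4)$ be the Grassmannian of lines in $\mathbb{P}^4$ and let $F = \{(x,\ell) : x \in \ell\} \subset \mathbb{P}^4 \times G$ with projections $p : F \to \mathbb{P}^4$ (a $\mathbb{P}^3$-bundle) and $q : F \to G$ (a $\mathbb{P}^1$-bundle); write $\mathcal{O}_q(1)$ for the relative hyperplane bundle along $q$. For a uniform $r$-bundle $E$ on $\mathbb{P}^4$ with splitting type $b_1 > \cdots > b_s$ of multiplicities $m_1,\ldots,m_s$, the relative Harder--Narasimhan filtration of $p^*E$ along fibers of $q$ produces a chain
$$0 = \mathcal{F}_0 \subset \mathcal{F}_1 \subset \cdots \subset \mathcal{F}_s = p^*E$$
with $\mathcal{F}_i/\mathcal{F}_{i-1} \cong q^*N_i \otimes \mathcal{O}_q(b_i)$ for some rank-$m_i$ vector bundle $N_i$ on $G$. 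I aim to identify each $N_i$ by restricting $\mathcal{F}_\bullet$ to the fibers of $p$ (the $\mathbb{P}^3$ of lines through a point), to descend the filtration to $\mathbb{P}^4$ via $p_*$ after appropriate twists, and to reassemble $E$ as an iterated extension of twists of the four irreducible homogeneous bundles $\mathcal{O}$, $T_{\mathbb{P}^4}$, $\Omega^1_{\mathbb{P}^4}$, $\Omega^2_{\mathbb{P}^4}$.

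Concretely I would first peel off the extremal pieces. The sub-bundle $\mathcal{F}_1$ pulls back from $G$ after a twist by $\mathcal{O}_q(-b_1)$; restricting to a fiber of $p$ and combining base change with an $\mathrm{Ext}^1$-vanishing argument forces $N_1$ to be either trivial (contributing a summand $\mathcal{O}(b_1)^{m_1}$ to $E$) or a twist of one of the tautological bundles on $G$ whose push-forward along $p$ is a twist of $T_{\mathbb{P}^4}$. A dual argument at the bottom of the filtration handles $p^*E/\mathcal{F}_{s-1}$, producing either a trivial summand or $\Omega^1_{\mathbb{P}^4}(c)$. The genuinely new phenomenon in rank $6$ is $\Omega^2_{\mathbb{P}^4}(a)$, which is uniform of splitting type $(a-2,a-2,a-2,a-3,a-3,a-3)$ and must be detected inside a middle graded piece by recognising a twist of $\bigwedge^2$ of a tautological bundle on $G$. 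Once all graded pieces are identified as homogeneous, the final step is to compute the relevant $\mathrm{Ext}^1$-groups on $\mathbb{P}^4$ and show that the resulting extensions are either trivial or coincide with the known homogeneous extensions, so that $E$ itself is homogeneous.

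The main obstacle is the combinatorial blow-up of splitting-type configurations for $r = 6, 7$, made sharper by the fact that the $N_i$ can now have rank $\geq 2$ and live on the six-dimensional Grassmannian $G(1,4)$, where the classification of low-rank bundles is far less explicit than on $\mathbb{P}^2$ or $G(1,3)$. When two consecutive splitting numbers differ by $\geq 2$, Grauert--M\"ulich-type restrictions no longer suffice to control the intermediate pieces, and one must combine restriction to general $\mathbb{P}^3$'s and $\mathbb{P}^2$'s (where the theorem is already known in rank $\leq 7$) with careful Chern-class bookkeeping on $F$ and $G$ in order to rule out non-homogeneous candidates.
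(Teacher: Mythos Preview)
Your proposal correctly identifies the central framework --- the relative Harder--Narasimhan filtration of $p^*E$ on the flag variety $F(1,2,5)$, with graded pieces pulled back from $G(1,4)$ --- and this is exactly the setup the paper uses. But what you have written is a plan, not a proof, and it is missing the engine that actually drives the paper's argument.

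The paper's key step, which you do not mention, is a Chern-polynomial computation in the Chow ring $A(F(1,2,5)) \cong \mathbb{Z}[U,V]/\langle R^4(U,V),\,U^5\rangle$. Writing $c_{p^*E}(T) = \prod_i S_i(T+u_iU,U,V)$ with each $S_i$ symmetric in $U,V$, and imposing that $c_{p^*E}$ be pulled back from $\mathbb{P}^4$ (hence a polynomial in $T,U$ modulo the relations), yields a system of polynomial equations in the unknown coefficients of the $S_i$. For each splitting type these are solved by computer and turn out to have only a handful of solutions; only \emph{then} does one analyse $HN^i|_{p^{-1}(x)}$ fiberwise, using classifications of globally generated bundles with small $c_1$ on $\mathbb{P}^3$, to identify $E$ or rule the case out. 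Your proposal skips straight to ``an $\mathrm{Ext}^1$-vanishing argument forces $N_1$ to be either trivial or a twist of a tautological bundle,'' but without the Chern-class constraints there is no reason the $N_i$ --- which are rank $\leq 5$ bundles on a six-dimensional Grassmannian --- should be so restricted, and you give no substitute mechanism for pinning them down.

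There is also a specific misconception. You write that when two consecutive splitting numbers differ by $\geq 2$, Grauert--M\"ulich-type restrictions ``no longer suffice to control the intermediate pieces.'' In fact this is the \emph{easy} case: by Ellia's result (Theorem~2.3 in the paper), such a gap forces $E$ to be an extension of two uniform bundles of strictly smaller rank, and one finishes by induction. The entire difficulty --- and essentially the entire paper --- is concentrated on splitting types with all consecutive gaps equal to $1$, which is precisely where your proposal is vaguest.
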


\section{Preliminaries}
	
In this section, we present some basic concepts and theorems that will be used. General background information can be found in \cite{E-H-S}.
	
\subsection{Chern polynomial}
		
	\begin{definition}
		The Chern polynomial of a vector bundle $E$ of rank r over variety X is
		\begin{center}
			$c_{E}(T)=T^r-c_1(E)T^{r-1}+c_2(E)T^{r-2}+\cdots+(-1)^{r}c_{r}(E)$.
		\end{center}
	\end{definition}
	\begin{theorem}
		For short exact sequence of vector bundles over $X$
		\begin{center}
			$0\longrightarrow F\longrightarrow G\longrightarrow E\longrightarrow 0$,
		\end{center}
we have	$c_{G}(T)\equiv c_{E}(T) c_{F}(T) $ mod $A(X)$, where $A(X)$ is the Chow ring of $X$.
	\end{theorem}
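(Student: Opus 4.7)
The plan is to reduce to the split case via the splitting principle. I would construct a proper morphism $\pi\colon Y\to X$, namely an iterated flag bundle, such that $\pi^{*}\colon A(X)\to A(Y)$ is injective and both $\pi^{*}F$ and $\pi^{*}E$ admit complete filtrations whose successive quotients are line bundles, say $L_{1},\ldots,L_{s}$ for $\pi^{*}F$ and $M_{1},\ldots,M_{t}$ for $\pi^{*}E$. The construction proceeds by repeatedly passing to the projectivization of the bundle one wants to split further and peeling off the tautological quotient line bundle; injectivity of pullback on Chow rings at each stage is furnished by the projective bundle formula. Pulling back the given short exact sequence, $\pi^{*}G$ is an extension of $\pi^{*}E$ by $\pi^{*}F$, and therefore inherits a filtration whose successive quotients are precisely $L_{1},\ldots,L_{s},M_{1},\ldots,M_{t}$.

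The central computational step is to show that for a bundle $V$ equipped with a filtration by subbundles with line bundle quotients $N_{1},\ldots,N_{r}$, one has
$$c(V)=\prod_{i=1}^{r}(1+c_{1}(N_{i})),$$
where $c(V)=1+c_{1}(V)+\cdots+c_{r}(V)$ denotes the total Chern class. This is either taken as a defining property or derived from the projective bundle relation $\sum_{i=0}^{r}(-1)^{i}p^{*}c_{i}(V)\,\xi^{r-i}=0$ on $\mathbb{P}(V)$ combined with functoriality of Chern classes. Applying this lemma to $\pi^{*}F$, $\pi^{*}E$, and $\pi^{*}G$ yields
$$c(\pi^{*}G)=\prod_{i=1}^{s}(1+c_{1}(L_{i}))\prod_{j=1}^{t}(1+c_{1}(M_{j}))=c(\pi^{*}F)\,c(\pi^{*}E)$$
in $A(Y)$.

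Finally I would translate back to Chern polynomials via the identity $c_{V}(T)=T^{\mathrm{rk}(V)}c(V)(-T^{-1})=\prod_{i}(T-\lambda_{i})$, where the $\lambda_{i}$ are the Chern roots of $V$. Since ranks add in a short exact sequence, the multiplicativity of total Chern classes transfers coefficientwise to the Chern polynomial identity $c_{\pi^{*}G}(T)=c_{\pi^{*}F}(T)\,c_{\pi^{*}E}(T)$ in $A(Y)[T]$. Injectivity of $\pi^{*}\otimes\mathrm{id}\colon A(X)[T]\hookrightarrow A(Y)[T]$ then forces this identity to hold already in $A(X)[T]$, which is exactly the asserted congruence. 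The only non-formal ingredient is the multiplicativity lemma for filtrations by line bundles; I do not anticipate any genuine obstacle, since it follows directly from the axiomatic characterisation of Chern classes (functoriality, normalisation on $\mathcal{O}_{\mathbb{P}(V)}(1)$, and the projective bundle formula) once the splitting principle is in hand.
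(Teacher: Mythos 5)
The paper does not prove this statement at all: it is Theorem 2.2 in the Preliminaries, quoted as standard background (it is the Whitney sum formula rewritten in terms of the Chern polynomial $c_E(T)=\prod_i(T-\lambda_i)$ with $\lambda_i$ the Chern roots), so there is no in-paper argument to compare against. Your proposal is the standard and correct splitting-principle proof: pull back along an iterated flag bundle $\pi\colon Y\to X$ with $\pi^*$ injective on Chow rings, filter $\pi^*G$ by concatenating the line-bundle filtrations of $\pi^*F$ and $\pi^*E$, apply multiplicativity of the total Chern class for such filtrations, and descend by injectivity. The translation from $c(V)$ to $c_V(T)$ and the additivity of ranks are handled correctly, and the ``mod $A(X)$'' in the paper's statement is just the assertion that the polynomial identity holds with coefficients in $A(X)$, which your descent step delivers.

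The only place where you are leaning on something nontrivial is the ``multiplicativity lemma'' itself: the identity $c(V)=\prod_i(1+c_1(N_i))$ for a bundle filtered with line-bundle quotients is essentially the Whitney formula in the case of an extension by a line bundle, so calling it ``a defining property'' risks circularity if one's axioms only posit Whitney for direct sums. The honest base case is the Grothendieck-style computation that $\prod_i\bigl(\xi-c_1(\pi^*N_i)\bigr)=0$ in $A(\mathbb{P}(V))$, where $\xi=c_1(\mathcal{O}_{\mathbb{P}(V)}(1))$, proved by induction on the filtration using the vanishing of a top Chern class of a bundle admitting a nowhere-vanishing section; comparing with the defining relation of the projective bundle formula then identifies the elementary symmetric functions of the $c_1(N_i)$ with the $c_i(V)$. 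You gesture at exactly this, so the gap is one of emphasis rather than substance; for a classical result stated without proof in the paper, your outline is adequate and sound.
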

	
	\subsection{Uniform vector bundles}
	\begin{theorem}
	(Grothendieck 1956) For every vector bundle	$E$ over $\mathbb{P}^1$, $E\cong \underset{i=1}{\overset{k}{\bigoplus}}\mathcal{O}_{\mathbb{P}^1}^{\oplus r_i}(u_i)$.
	\end{theorem}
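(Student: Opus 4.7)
The plan is to prove the statement by induction on the rank $r$ of $E$. For $r=1$, a line bundle on $\mathbb{P}^1$ is determined up to isomorphism by its degree, giving $E\cong \mathcal{O}_{\mathbb{P}^1}(u)$ as the base case. So I would assume the splitting is known for all vector bundles of rank less than $r$, and let $E$ be a vector bundle of rank $r\geq 2$.

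First I would produce a sub-line-bundle of maximal degree. Since $E$ is coherent, Serre vanishing gives $H^0(E(m))\neq 0$ for $m\gg 0$, while a Riemann--Roch / degree argument shows $H^0(E(m))=0$ for $m\ll 0$; hence there is a largest integer $u$ such that $H^0(E(-u))\neq 0$. A nonzero section yields a map $\varphi\colon \mathcal{O}_{\mathbb{P}^1}(u)\to E$. The crucial observation is that $\varphi$ vanishes nowhere: if it vanished at some point $p$, then $\varphi$ would factor as $\mathcal{O}_{\mathbb{P}^1}(u)\hookrightarrow \mathcal{O}_{\mathbb{P}^1}(u+1)\to E$, producing a nonzero section of $E(-u-1)$ and contradicting the maximality of $u$. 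Hence $\varphi$ realises $\mathcal{O}_{\mathbb{P}^1}(u)$ as a sub-bundle with locally free quotient $E'$ of rank $r-1$, fitting into
$$0\longrightarrow \mathcal{O}_{\mathbb{P}^1}(u)\longrightarrow E\longrightarrow E'\longrightarrow 0.$$

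By the inductive hypothesis, $E'\cong \bigoplus_j \mathcal{O}_{\mathbb{P}^1}(b_j)$. The next step is to use the maximality of $u$ again to bound the $b_j$: twisting the sequence by $\mathcal{O}_{\mathbb{P}^1}(-u-1)$ and taking cohomology gives
$$0\to H^0(\mathcal{O}_{\mathbb{P}^1}(-1))\to H^0(E(-u-1))\to H^0(E'(-u-1))\to H^1(\mathcal{O}_{\mathbb{P}^1}(-1)),$$
whose outer terms vanish and whose middle term vanishes by the choice of $u$, so $H^0(E'(-u-1))=0$ and thus $b_j\leq u$ for every $j$.

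Finally, to split the extension, I would compute
$$\mathrm{Ext}^{1}\bigl(E',\mathcal{O}_{\mathbb{P}^1}(u)\bigr)=H^{1}\bigl(\mathbb{P}^1,(E')^{\vee}(u)\bigr)=\bigoplus_j H^{1}\bigl(\mathbb{P}^1,\mathcal{O}_{\mathbb{P}^1}(u-b_j)\bigr),$$
which vanishes because each exponent satisfies $u-b_j\geq 0\geq -1$ and $H^1(\mathbb{P}^1,\mathcal{O}(m))=0$ for $m\geq -1$. Hence the sequence splits, and $E\cong \mathcal{O}_{\mathbb{P}^1}(u)\oplus E'\cong \bigoplus_i \mathcal{O}_{\mathbb{P}^1}^{\oplus r_i}(u_i)$ after collecting summands of equal degree. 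The main obstacle in this plan is the non-vanishing of the maximal section, i.e.\ showing that $\varphi$ is an inclusion of bundles rather than merely of sheaves; once $u$ is chosen so as to make that step work, the bound on the $b_j$ and the $\mathrm{Ext}^{1}$-vanishing that produces the splitting both follow mechanically.
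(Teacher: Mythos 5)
Your argument is correct and is the classical inductive proof of Grothendieck's theorem; the paper itself states this result without proof (it is quoted as a 1956 result and used as a black box), so there is nothing in the source to compare against. The one step worth tightening is the existence of the maximal twist $u$: Riemann--Roch by itself only controls $h^0-h^1$, so the vanishing $H^0(E(m))=0$ for $m\ll 0$ is better justified by embedding $E$ into a direct sum of line bundles $\mathcal{O}_{\mathbb{P}^1}(N)^{\oplus k}$ (dualize a surjection onto $E^{\vee}(N)$ for $N\gg 0$) and bounding the degree of any sub-line-bundle. Everything else --- the nowhere-vanishing of the maximal section, the bound $b_j\leq u$, and the $\mathrm{Ext}^1$ vanishing --- is exactly as in the standard treatment and is carried out correctly.
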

\begin{definition}
	A vector bundle $E$ over $\mathbb{P}^n$ is uniform if for every line $L\subset \mathbb{P}^n$,  $$E|_{L}\cong \underset{i=1}{\overset{k}{\bigoplus}}\mathcal{O}_{\mathbb{L}}^{\oplus r_i}(u_i),$$ where $u_1>\cdots u_k$.
	
	The splitting type of the uniform vector bundle $E$ is $(k;r_1,\cdots,r_k;u_1,\cdots, u_k)$.
\end{definition}

	 \begin{theorem}
	 (\cite{Ell},Proposition 1-4.1)	The uniform vector bundle $E$ is an extension of two uniform vector bundles when $u_i-u_{i+1}\geq 2$ .
	 \end{theorem}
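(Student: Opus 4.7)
The plan is to produce a subbundle $F\subset E$ on $\mathbb{P}^n$ whose restriction to every line $L$ is the ``top part'' $\bigoplus_{j\leq i}\mathcal{O}_L(u_j)^{r_j}$ of the fixed splitting of $E|_L$. Granting such an $F$, both $F$ and $E/F$ are tautologically uniform---with splitting types $(i;r_1,\dots,r_i;u_1,\dots,u_i)$ and $(k-i;r_{i+1},\dots,r_k;u_{i+1},\dots,u_k)$---and the short exact sequence $0\to F\to E\to E/F\to 0$ realizes $E$ as the desired extension. The gap hypothesis $u_i-u_{i+1}\geq 2$ is precisely what will allow us to isolate this subbundle canonically via cohomology on the variety of pointed lines.

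I would work on the incidence variety $I=\{(x,L)\in\mathbb{P}^n\times G(1,n):x\in L\}$ with projections $p:I\to\mathbb{P}^n$ and $q:I\to G(1,n)$, the latter being a $\mathbb{P}^1$-bundle with relative tautological line bundle $\xi$ restricting to $\mathcal{O}_L(1)$ on each fiber $L=q^{-1}([L])$. The gap condition gives $u_j-u_{i+1}-1\geq 1$ for $j\leq i$ and $u_j-u_{i+1}-1\leq -1$ for $j>i$, so on each $q$-fiber the twisted bundle $E|_L(-u_{i+1}-1)$ satisfies $h^1=0$ and $h^0=\sum_{j\leq i}r_j(u_j-u_{i+1})$ independently of $L$. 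By Grauert's theorem, $\mathcal{A}:=q_\ast(p^\ast E\otimes\xi^{-u_{i+1}-1})$ is locally free on $G(1,n)$, and the adjoint evaluation furnishes a morphism $\varphi:q^\ast\mathcal{A}\otimes\xi^{u_{i+1}+1}\to p^\ast E$ whose image $\widetilde F$ restricts on each $q$-fiber to exactly $\bigoplus_{j\leq i}\mathcal{O}_L(u_j)^{r_j}\subset E|_L$. Because the fiberwise cokernel is locally free of constant rank, $\widetilde F$ is a subbundle of $p^\ast E$ on all of $I$.

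The remaining step is to descend $\widetilde F$ along $p$. The fibers of $p$ are projective spaces $\Pi_x\cong\mathbb{P}^{n-1}$ parameterizing the lines through $x\in\mathbb{P}^n$, and $p^\ast E|_{\Pi_x}\cong E_x\otimes\mathcal{O}_{\Pi_x}$ is trivial, so $\widetilde F|_{\Pi_x}$ is a rank-$s$ subbundle of a trivial rank-$r$ bundle on $\Pi_x$ (with $s=\sum_{j\leq i}r_j$ and $r=\operatorname{rank}(E)$), classified by a morphism $\Pi_x\to G(s,r)$. Such a subbundle has nonpositive degree on every line of $\Pi_x$, and a direct Chern-class computation using the defining evaluation morphism together with the relative Euler sequence for $q$ forces this degree to be zero, so $\widetilde F|_{\ell}$ is a trivial subbundle for every line $\ell\subset\Pi_x$; the classifying map to $G(s,r)$ is then constant, yielding the descended subbundle $F\subset E$ with the required behavior on every line. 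The main obstacle is exactly this descent step: the construction of $\widetilde F$ on $I$ is formal, but proving that the distinguished subspace of $E_x$ does not depend on the line through $x$ is the substantive geometric content, and it is precisely where the gap condition is indispensable---without it, $\widetilde F$ would not even be well-defined as a subbundle on all of $I$.
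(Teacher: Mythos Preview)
The paper does not prove this statement---it is quoted from Ellia---so there is no in-paper argument to compare against. Your construction of $\widetilde F\subset p^*E$ on the incidence variety is correct and coincides with the Harder--Narasimhan step $HN^i$ described in the paper's \S2.3. However, you mislocate where the hypothesis $u_i-u_{i+1}\ge 2$ is needed: your pushforward-and-evaluate construction works verbatim for gap $\ge 1$ (the conditions $u_j-u_{i+1}-1\ge 0$ for $j\le i$ and $\le -1$ for $j>i$ already make $\mathcal A$ locally free and the fiberwise image equal to $\bigoplus_{j\le i}\mathcal O_L(u_j)^{r_j}$), and indeed $HN^i$ is always a subbundle of $p^*E$, with no gap condition required. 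So the sentence ``without it, $\widetilde F$ would not even be well-defined as a subbundle'' is false.

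The gap $\ge 2$ must therefore enter in the descent, and that is where your argument is only a placeholder. You assert that ``a direct Chern-class computation \dots\ forces this degree to be zero'' but do not perform it, and the tools you name do not obviously deliver it: since the tautological $\xi$ for $q$ equals $p^*\mathcal O_{\mathbb P^n}(1)$, it is trivial on every $\Pi_x$, so your evaluation surjection restricted to $\Pi_x$ exhibits $\widetilde F|_{\Pi_x}$ only as a quotient of $(q|_{\Pi_x})^*\mathcal A$, which has no reason to be trivial and yields no degree bound from below. That descent genuinely fails for gap $1$ is witnessed by $T_{\mathbb P^n}$, which has splitting type $(2;1,n-1;2,1)$ but admits no line subbundle restricting to $\mathcal O_L(2)$ on every line (since $H^0(T_{\mathbb P^n}(-2))=0$). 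A correct proof has to consume the hypothesis $u_i-u_{i+1}\ge 2$ at the descent step, and your outline does not indicate how.
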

	
	 \subsection{HN-filtration}
	
There is a standard commutative diagram :
	\begin{align}
		\xymatrix
		{
			F(1,2,n+1)\ar[r]^{q} \ar[d]^{p}& F(2,n+1)\\
			\mathbb{P}^n,  & \\
		}
	\end{align}
	where $F(1,2,n+1)$ and $F(2,n+1)=\mathbb{G}(1,n)$ are flag variety. The morphisms in this diagram are all projective. As a result,  $F(1,2,n+1)=\mathbb{P}(T_{\mathbb{P}^n}(-1))$. For the sake of brevity in the context, we further define  $F_n:=F(1,2,n+1)$ in the context for shorten the notation. Denote the Hopf relative bundle by $\mathcal{H}_{T_{\mathbb{P}^n}(-1)}:=\mathcal{O}_{\mathbb{P}(T_{\mathbb{P}^n}(-1))}(-1)$.

	For any uniform vector bundle $E$ over $\mathbb{P}^n$ with splitting type $(k;u_1,\cdots,u_k;r_1,\cdots,r_k)$, there is a HN-filtration of $p^{*}E$:
	\begin{center}
		$p^{*}E=HN^k\supset HN^{k-1}\supset\cdots \supset HN^{1}\supset HN^{0}=0$.
	\end{center}
Moreover, there is a short exact sequence $$0\longrightarrow HN^{i-1}\longrightarrow HN^{i}\longrightarrow q^{*}E_{i}\otimes p^*\mathcal{O}_{\mathbb{P}^n}(u_i)\longrightarrow 0,$$
where $E_i$ is a vector bundle over $F(2,n+1)$ for $0<i\leq k$.

Thus,
	\begin{center}
		$c_{p^{*}E}(T,U)=\underset{i=1}{\overset{k}{\prod}}S_{i}(T+u_iU,U,V)$ in  $A(F(1,2,n+1)),$
	\end{center}

where $U=c_1(p^{*}\mathcal{O}_{\mathbb{P}^n}(-1))$, $V=c_{1}(\mathcal{H}_{T_{\mathbb{P}^n}(-1)})$,
$c_{p^{*}E}(T,U)$ is Chern polynomial of $p^{*}E$,  $S_{i}(T,U,V)$ is the Chern polynomial of $q^*E_{i}$, which is symmetric about $U$ and $V$, and
 $A(F(1,2,n+1))$ is the Chow ring of $F(1,2,n+1)$. See \cite{E-H-S} for more details.

The following theorem is about the Chow group of $F(1,2,n+1)$.
	\begin{theorem} (\cite{Guyot}Theorem 3.1.)
		$A(F(1,2,n+1))\cong \mathbb{Z}[U,V]/<R^{n}(U,V),U^{n+1}>$, where $R^{n}(U,V)=
		\underset{k=0}{\overset{n}{\sum}}U^kV^{n-k}$.
	\end{theorem}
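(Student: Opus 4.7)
The plan is to recognize $F_n = F(1,2,n+1) = \mathbb{P}(T_{\mathbb{P}^n}(-1))$ as a projective bundle of relative dimension $n-1$ over $\mathbb{P}^n$ via $p$, and then compute its Chow ring via the projective bundle (Grothendieck) formula applied to the rank-$n$ bundle $T_{\mathbb{P}^n}(-1)$. Since $A(\mathbb{P}^n)=\mathbb{Z}[U]/(U^{n+1})$ (the $(n{+}1)$-st power of any degree-one generator vanishes), and the projective bundle formula asserts
\[
A(F_n) \;\cong\; A(\mathbb{P}^n)[V]\big/ P(V),
\]
with $V=c_1(\mathcal{H}_{T_{\mathbb{P}^n}(-1)})$ as an $A(\mathbb{P}^n)$-algebra generator and $P(V)$ a single monic relation of degree $n$ in $V$, the entire task reduces to identifying $P(V)$ with $R^n(U,V)$.

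The first step is to compute the total Chern class of $T_{\mathbb{P}^n}(-1)$ via the twisted Euler sequence
\[
0 \longrightarrow \mathcal{O}_{\mathbb{P}^n}(-1) \longrightarrow \mathcal{O}_{\mathbb{P}^n}^{\oplus(n+1)} \longrightarrow T_{\mathbb{P}^n}(-1) \longrightarrow 0.
\]
Multiplicativity of Chern classes and the relation $c(\mathcal{O}_{\mathbb{P}^n}(-1))=1+U$ give $c(T_{\mathbb{P}^n}(-1))=(1+U)^{-1}=\sum_{i\ge 0}(-1)^iU^i$, and hence $c_i(T_{\mathbb{P}^n}(-1))=(-1)^iU^i$ for $0\le i\le n$ (after pullback to $F_n$).

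The second step is to write down the Grothendieck relation. Because $\mathcal{H}_{T_{\mathbb{P}^n}(-1)}=\mathcal{O}(-1)$ is a tautological \emph{subbundle} of $p^*T_{\mathbb{P}^n}(-1)$, the defining relation of the projective bundle is $\sum_{i=0}^n(-1)^ic_i(T_{\mathbb{P}^n}(-1))\,V^{n-i}=0$ in $A(F_n)$. Plugging in $c_i=(-1)^iU^i$ from the previous step, the alternating signs cancel and one obtains
\[
\sum_{i=0}^{n}(-1)^i(-1)^iU^iV^{n-i} \;=\; \sum_{i=0}^{n}U^iV^{n-i} \;=\; R^n(U,V) \;=\; 0,
\]
which is the first relation in the claimed presentation. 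Together with $U^{n+1}=0$ inherited from $A(\mathbb{P}^n)$, these exhaust the relations, because the projective bundle theorem guarantees that $\{U^aV^b: 0\le a\le n,\; 0\le b\le n-1\}$ is a $\mathbb{Z}$-basis of $A(F_n)$, and the two relations $R^n=0$ and $U^{n+1}=0$ are precisely what is needed to reduce an arbitrary monomial $U^aV^b$ to this range.

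The main subtlety, and the only place the argument could slip, is the sign bookkeeping in Step 2: one must be careful that the Grothendieck relation is applied with the convention appropriate to the \emph{tautological subbundle} $\mathcal{O}(-1)$ (as opposed to the dual $\mathcal{O}(1)$), so that the two sign twists—one from the tautological subbundle convention, one from the twisted Euler sequence making $c_i(T_{\mathbb{P}^n}(-1))=(-1)^iU^i$—exactly cancel and yield the unsigned sum $R^n(U,V)$ rather than an alternating sum. This can be double-checked against small cases (e.g.\ $n=1$, where $F_1=\mathbb{P}^1$ and the relation $V+U=0$ with $U^2=0$ must recover $A(\mathbb{P}^1)=\mathbb{Z}[U]/(U^2)$) before writing down the general case.
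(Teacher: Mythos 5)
Your argument is correct, and it is the standard derivation: the paper itself gives no proof of this statement (it is quoted from Guyot), so there is nothing internal to compare against. Identifying $F(1,2,n+1)=\mathbb{P}(T_{\mathbb{P}^n}(-1))$, extracting $c_i(T_{\mathbb{P}^n}(-1))=(-1)^iU^i$ from the twisted Euler sequence with the paper's convention $U=c_1(\mathcal{O}_{\mathbb{P}^n}(-1))$, and imposing the Grothendieck relation for the tautological subbundle $V=c_1(\mathcal{O}(-1))$ does yield $R^n(U,V)=0$; the only quibble is that the relation in that convention is $\sum_{i=0}^n(-1)^{n-i}c_i\,V^{n-i}=0$ rather than $\sum_{i=0}^n(-1)^{i}c_i\,V^{n-i}=0$, but these differ by the unit $(-1)^n$ and generate the same ideal. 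Your closing observation that $R^n$ is monic of degree $n$ in $V$, so that $\mathbb{Z}[U,V]/\langle R^n,U^{n+1}\rangle$ is free over $\mathbb{Z}[U]/\langle U^{n+1}\rangle$ on $1,V,\dots,V^{n-1}$, is exactly what is needed to match the projective bundle theorem and close the proof.
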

	
	\begin{lemma}\label{deco-lem}
		For a point $x\in \mathbb{P}^n$, if $HN^{i}|_{p^{-1}(x)}\cong \mathcal{O}_{p^{-1}(x)}\oplus G$, where $G$ is a vector bundle over $\mathbb{P}^n$ with $H^{0}(p^{-1}(x), G)=H^{1}(p^{-1}(x), G)=0$, then $E$ has $\mathcal{O}_{\mathbb{P}^n}(l)$ as its subbundle, where $l$ is determined by the Chern polynomial of $HN^i$.
	\end{lemma}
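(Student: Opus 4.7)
The plan is to construct a line subbundle $L=p_{*}HN^{i}\hookrightarrow E$ on $\mathbb{P}^n$ by a cohomology-and-base-change argument, and then descend through the $\mathbb{P}^{n-1}$-bundle $p$.

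First, I would propagate the fiberwise hypothesis from the single point $x$ to every point of $\mathbb{P}^n$. Because $E$ is uniform and the HN-filtration on $p^{*}E$ is built canonically from the splitting type, the isomorphism class of $HN^{i}|_{p^{-1}(y)}$ is independent of $y\in\mathbb{P}^n$. In particular, $h^{0}(p^{-1}(y),HN^{i}|_{p^{-1}(y)})=1$ and $h^{1}(p^{-1}(y),HN^{i}|_{p^{-1}(y)})=0$ for every $y\in\mathbb{P}^n$.

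Second, I would apply Grauert's theorem to the smooth proper morphism $p:F_n\to\mathbb{P}^n$. Since $HN^{i}$ is locally free and the two cohomologies above are locally constant in $y$, the sheaf $L:=p_{*}HN^{i}$ is a line bundle on $\mathbb{P}^n$, one has $R^{1}p_{*}HN^{i}=0$, and the formation of $L$ commutes with arbitrary base change, so $L\otimes k(y)\cong H^{0}(p^{-1}(y),HN^{i}|_{p^{-1}(y)})$ for all $y$.

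Third, I would examine the adjunction map $p^{*}L=p^{*}p_{*}HN^{i}\to HN^{i}\hookrightarrow p^{*}E$. On each fiber $p^{-1}(y)$ this is the canonical inclusion of the one-dimensional global section space into $HN^{i}|_{p^{-1}(y)}\cong\mathcal{O}_{p^{-1}(y)}\oplus G$, which is precisely the inclusion of the $\mathcal{O}$-summand (using $H^{0}(G)=0$). Hence $p^{*}L\hookrightarrow p^{*}E$ is a rank-one subbundle. Finally, applying $p_{*}$ and using $p_{*}\mathcal{O}_{F_n}=\mathcal{O}_{\mathbb{P}^n}$ together with the projection formula (so that $p_{*}p^{*}(-)=(-)$), one obtains a map $L\to E$; fiber-by-fiber it sends the generator of $L_y$ to the nonzero vector of $E_y$ coming from the $\mathcal{O}$-summand, so it is a line subbundle. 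Since $\mathrm{Pic}(\mathbb{P}^n)=\mathbb{Z}$, $L\cong\mathcal{O}_{\mathbb{P}^n}(l)$, and $l$ is read off from $c_{1}(p_{*}HN^{i})$ via the Grothendieck pushforward applied to the Chern polynomial of $HN^{i}$ recorded in the previous subsection.

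The main obstacle is the first step: justifying that the pointwise hypothesis propagates to every fiber of $p$. Once this constancy of $h^{0}$ and $h^{1}$ is in place, Grauert's theorem and the projection formula make the remaining construction automatic, and extracting the integer $l$ is a routine Chern-class bookkeeping using the explicit product expression for $c_{p^{*}E}(T,U)$.
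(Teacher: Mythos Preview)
Your argument is essentially the paper's, but your first step contains a genuine error. You claim that uniformity of $E$ forces the isomorphism class of $HN^{i}|_{p^{-1}(y)}$ to be independent of $y$. This is false: uniformity fixes only the splitting type of $E$ on lines, hence only the Chern classes of $HN^{i}|_{p^{-1}(y)}$, not its isomorphism type. Indeed, later in the very same paper (Proposition~\ref{exterior T}) the authors must work hard to exclude the possibility that $HN^{1}|_{p^{-1}(x)}\cong T_{\mathbb{P}^3}(-2)$ for some $x$ while $HN^{1}|_{p^{-1}(y)}\cong N(-1)\oplus\mathcal{O}_{\mathbb{P}^3}$ for other $y$; if your propagation principle were valid that whole argument would be unnecessary.

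Fortunately this does not sink the lemma, because the paper's statement is simply loosely phrased: in every application the hypothesis is verified for \emph{every} fiber before the lemma is invoked, and the paper's own proof silently reads the hypothesis that way (it immediately asserts $p_{*}HN^{i}\cong\mathcal{O}_{\mathbb{P}^n}(l)$ and $R^{1}p_{*}HN^{i}=0$, which is exactly Grauert applied over all of $\mathbb{P}^n$). So drop your propagation step and take the fiberwise decomposition as given for all $x$. After that, your route and the paper's coincide: the paper pushes forward the sequence $0\to HN^{i}\to p^{*}E\to F^{i}\to 0$, pulls the result back, and uses the snake lemma to see that $p^{*}\mathcal{O}_{\mathbb{P}^n}(l)\hookrightarrow HN^{i}$ is a subbundle; your use of the adjunction map $p^{*}p_{*}HN^{i}\to HN^{i}$ together with a fiberwise check is the same map viewed more directly, and the conclusion that $\mathcal{O}_{\mathbb{P}^n}(l)\hookrightarrow E$ is a subbundle follows either way.
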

\begin{proof}
	
Given that $H^{0}(p^{-1}(x), HN^{1}|_{p^{-1}(x)})=1$ and $H^{1}(p^{-1}(x), HN^{1}|_{p^{-1}(x)})=0$, we can conclude that there exists an integer $l\in \mathbb{Z}$ such that  $p_{*}HN^i\cong \mathcal{O}_{\mathbb{P}^n}(l)$ and $R^{1}p_{*}HN^1=0$.
	
Now, consider the following exact sequence:
	\[0\longrightarrow HN^i\longrightarrow p^{*}E\longrightarrow F^i\longrightarrow 0,\] where $F^i$ is a vector bundle. By applying the push forward functor $p_*$ to this sequence, we obtain the following exact sequence:
\[0\longrightarrow  \mathcal{O}_{\mathbb{P}^n}(l)\longrightarrow E\longrightarrow p_{*}F^{i}\longrightarrow0. \]
Futhermore, when we apply the pull-back functor  $p^{*}$ to the above exact sequence, we get the following commutative diagram:
	\begin{center}
		$\xymatrix{
	0\ar[r]& p^{*}	\mathcal{O}_{\mathbb{P}^n}(l)\ar[r]\ar[d]&p^{*}E\ar[r] \ar[d]^{id}& p^{*}p_{*}F^i\ar[r]\ar[d]&0\\
	0\ar[r]& HN^i \ar[r] & p^{*}E \ar[r]& F^i \ar[r]&0.\\
	}$
	\end{center}
By the snake lemma, we can infer that $HN^i$ has $p^{*}\mathcal{O}_{\mathbb{P}^n}(l)$ as a subbundle.  So $E$ has $\mathcal{O}_{\mathbb{P}^n}(l)$ as a subbundle.
	
\end{proof}

\begin{lemma}\label{trivial-lemma}
	If $HN^{i}/HN^{i-1}$ is trivial on all the $p$-fibers, then $p_{*}(HN^{i}/HN^{i-1})\otimes\mathcal{O}_{\mathbb{P}^n}(-u_i)$ is trivial bundle of rank $r_i$ over $\mathbb{P}^n$ and $HN^{i}/HN^{i-1}\cong p^*\mathcal{O}^{\oplus r_i}_{\mathbb{P}^n}(u_i)$.
\end{lemma}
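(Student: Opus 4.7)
The plan is to apply cohomology and base change in the style of Grauert, then identify the pushforward via restriction to $q$-fibers, and finally promote a uniform-on-lines condition to a global trivialization.

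First I would apply cohomology and base change. By hypothesis $(HN^{i}/HN^{i-1})|_{p^{-1}(x)}$ is the trivial bundle of rank $r_i$ on $p^{-1}(x)\cong\mathbb{P}^{n-1}$, so $h^{0}=r_i$ and $h^{j}=0$ for $j\geq 1$, constantly in $x$. Grauert's theorem then gives that $F:=p_{*}(HN^{i}/HN^{i-1})$ is locally free of rank $r_i$, the higher direct images vanish, and the adjunction $p^{*}F\to HN^{i}/HN^{i-1}$ is an isomorphism on $F(1,2,n+1)$. To pin down the isomorphism class of $F$, I restrict this isomorphism to a $q$-fiber $q^{-1}([L])\cong L$. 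Via the identification $p|_{q^{-1}([L])}\colon q^{-1}([L])\xrightarrow{\sim}L\hookrightarrow\mathbb{P}^{n}$, the left-hand side becomes $F|_{L}$; on the other side, the HN filtration of $p^{*}E$ restricts on $q^{-1}([L])$ to the HN filtration of $E|_{L}=\bigoplus_{j}\mathcal{O}_{L}(u_j)^{\oplus r_j}$, whose $i$-th graded piece is $\mathcal{O}_{L}(u_i)^{\oplus r_i}$. Consequently $F|_{L}\cong\mathcal{O}_{L}(u_i)^{\oplus r_i}$ for every line $L\subset\mathbb{P}^{n}$, so $G:=F\otimes\mathcal{O}_{\mathbb{P}^n}(-u_i)$ is a uniform bundle with trivial splitting type on every line.

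It remains to prove $G\cong\mathcal{O}_{\mathbb{P}^n}^{\oplus r_i}$. From $p^{*}G\cong q^{*}E_{i}$, every restriction $G|_{L}$ carries the canonical trivialization $G|_L\cong E_i|_{[L]}\otimes\mathcal{O}_L$; in particular, for any two points $x,y\in\mathbb{P}^n$ the unique line $L(x,y)$ through them provides a canonical isomorphism $G_x\cong E_i|_{[L(x,y)]}\cong G_y$. Fixing a base point $x_0$, this ``parallel transport'' assembles into a map $G_{x_0}\otimes\mathcal{O}_{\mathbb{P}^n}\to G$, $v\mapsto(y\mapsto\text{transport of }v\text{ from }x_0\text{ to }y)$, which is an isomorphism on each fiber and hence globally. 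Therefore $G$ is trivial, $F\cong\mathcal{O}_{\mathbb{P}^n}(u_i)^{\oplus r_i}$, and applying $p^{*}$ gives $HN^{i}/HN^{i-1}\cong p^{*}\mathcal{O}^{\oplus r_i}_{\mathbb{P}^n}(u_i)$.

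The first two steps are essentially formal manipulations with base change and the explicit form of the HN graded pieces. The real content, and hence the main obstacle, lies in the last step: one must verify rigorously that the fiberwise parallel transports determined by different lines fit together coherently (i.e.\ are cocycle-free over triangles $x,y,z$ of non-collinear points) so that they integrate into bona fide algebraic global sections. Equivalently, one needs the classical fact that a uniform vector bundle on $\mathbb{P}^{n}$ with constant splitting type $(u,\ldots,u)$ is isomorphic to $\mathcal{O}_{\mathbb{P}^n}(u)^{\oplus r}$; this can be established by induction on $n$ via hyperplane restriction, with the base case $n=1$ supplied by Grothendieck's splitting theorem and the inductive step handled through the flag-variety diagram.
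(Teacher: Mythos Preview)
Your proposal is correct and follows essentially the same route as the paper. Both arguments first use Grauert/base change to show $p^{*}P\cong HN^{i}/HN^{i-1}$ for $P:=p_{*}(HN^{i}/HN^{i-1})$, and both then invoke the E--H--S structure $HN^{i}/HN^{i-1}\cong q^{*}E_i\otimes p^{*}\mathcal{O}_{\mathbb{P}^n}(u_i)$ to finish. The only difference is in how the final triviality step is phrased: the paper simply observes that $p^{*}\bigl(P(-u_i)\bigr)\cong q^{*}E_i$ forces both $P(-u_i)$ and $E_i$ to be trivial (a bundle on the flag variety that is simultaneously a $p$-pullback and a $q$-pullback must be trivial), whereas you unpack this same fact as ``uniform with trivial splitting type $\Rightarrow$ trivial'' and sketch the parallel-transport justification. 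Your concern about coherence of transports is exactly the content of that standard lemma, so there is no gap---just a more explicit presentation of a step the paper treats as known.
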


\begin{proof}
    Since $HN^{i}/HN^{i - 1}$ is trivial on the $p$-fiber, we know that $p_{*}(HN^{i}/HN^{i - 1})=P$ is a vector bundle over $\mathbb{P}^n$. It is evident that the canonical morphism from $p^{*}P$ to $HN^{i}/HN^{i - 1}$ is an isomorphism.

    Meanwhile, according to Proposition 3.3 in \cite{E-H-S}, there exists a vector bundle $E_{i}$ over $\mathbb{G}(1, n)$ such that $HN^{i}/HN^{i - 1}\cong q^{*}E_{i}\otimes p^*\mathcal{O}_{\mathbb{P}^n}(u_i)$. From this, we can conclude that both $P\otimes \mathcal{O}_{\mathbb{P}^n}(-u_i)$ and $E_{i}$ are trivial.
\end{proof}


\begin{corollary}\label{all-trivial-lem}
	Let $E$ be a uniform vector bundle of splitting type $(k;u_1,\cdots,u_k;r_1,\cdots,r_k)$. If $HN^i/HN^{i-1}$ are trivial over the $p$-fibers for $1\leq i \leq k$, then $E$ is direct sum of line bunldes.
\end{corollary}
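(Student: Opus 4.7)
The plan is to induct on the length $k$ of the HN-filtration. For the base case $k=1$ we have $p^{*}E = HN^{1}$, which by hypothesis is trivial on $p$-fibers (up to the twist by $u_{1}$), so Lemma \ref{trivial-lemma} gives $p^{*}E \cong p^{*}\mathcal{O}_{\mathbb{P}^{n}}^{\oplus r_{1}}(u_{1})$, and applying $p_{*}$ yields $E \cong \mathcal{O}_{\mathbb{P}^{n}}^{\oplus r_{1}}(u_{1})$.

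For the inductive step, Lemma \ref{trivial-lemma} identifies $HN^{1} \cong p^{*}\mathcal{O}_{\mathbb{P}^{n}}^{\oplus r_{1}}(u_{1})$. The first key observation is that $p^{*}E/HN^{1}$ is trivial on each $p$-fiber $F \cong \mathbb{P}^{n-1}$: the induced filtration on $(p^{*}E/HN^{1})|_{F}$ has graded pieces $(HN^{i}/HN^{i-1})|_{F} \cong \mathcal{O}_{F}^{\oplus r_{i}}$ for $i \geq 2$ (again by Lemma \ref{trivial-lemma}), and since $H^{1}(\mathbb{P}^{n-1}, \mathcal{O}) = 0$ every successive extension splits fiberwise. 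Cohomology and base change then produce a locally free sheaf $Q := p_{*}(p^{*}E/HN^{1})$ on $\mathbb{P}^{n}$ with $p^{*}Q \cong p^{*}E/HN^{1}$.

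Pushing the exact sequence $0 \to HN^{1} \to p^{*}E \to p^{*}E/HN^{1} \to 0$ down to $\mathbb{P}^{n}$, and using $R^{1}p_{*}HN^{1} = 0$ (projection formula together with $R^{1}p_{*}\mathcal{O}_{F_{n}} = 0$), we obtain
\[
0 \longrightarrow \mathcal{O}_{\mathbb{P}^{n}}^{\oplus r_{1}}(u_{1}) \longrightarrow E \longrightarrow Q \longrightarrow 0.
\]
Restricting to any line $L \subset \mathbb{P}^{n}$ and using that the HN-filtration of $p^{*}E$ restricts on $q^{-1}(L)$ to the HN-filtration of $E|_{L}$, one checks that $Q$ is uniform of splitting type $(k-1; u_{2},\ldots,u_{k}; r_{2},\ldots,r_{k})$, and that its HN-filtration $HN^{i}/HN^{1}$ on $p^{*}Q$ inherits the hypothesis of the corollary. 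By the inductive hypothesis, $Q \cong \bigoplus_{i=2}^{k}\mathcal{O}_{\mathbb{P}^{n}}^{\oplus r_{i}}(u_{i})$.

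Finally, the class of the displayed extension lies in
\[
\mathrm{Ext}^{1}\bigl(Q,\mathcal{O}_{\mathbb{P}^{n}}^{\oplus r_{1}}(u_{1})\bigr) \;=\; \bigoplus_{i=2}^{k} H^{1}\bigl(\mathbb{P}^{n}, \mathcal{O}(u_{1}-u_{i})\bigr)^{\oplus r_{1}r_{i}},
\]
and for $n \geq 2$ every such $H^{1}$ vanishes, so the sequence splits and $E$ is a direct sum of line bundles. The step I expect to require the most care is the relative triviality of $p^{*}E/HN^{1}$ on $p$-fibers, because it is what allows the inductive descent; everything else then reduces to the projection formula, Bott vanishing, and Lemma \ref{trivial-lemma}.
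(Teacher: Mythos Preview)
Your argument is correct, but it takes a more roundabout route than the paper. The paper stays entirely on $F_{n}$ and inducts on the index $i$ of the filtration: Lemma~\ref{trivial-lemma} gives $HN^{i}/HN^{i-1}\cong p^{*}\mathcal{O}_{\mathbb{P}^{n}}^{\oplus r_{i}}(u_{i})$ for every $i$, and one then checks directly that each extension
\[
0\longrightarrow HN^{i}\longrightarrow HN^{i+1}\longrightarrow p^{*}\mathcal{O}_{\mathbb{P}^{n}}^{\oplus r_{i+1}}(u_{i+1})\longrightarrow 0
\]
splits on $F_{n}$, since $\mathrm{Ext}^{1}(p^{*}\mathcal{O}(u_{i+1}),p^{*}\mathcal{O}(u_{j}))\cong H^{1}(\mathbb{P}^{n},\mathcal{O}(u_{j}-u_{i+1}))=0$ by the projection formula and Bott vanishing. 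This yields $p^{*}E\cong\bigoplus_{j}p^{*}\mathcal{O}^{\oplus r_{j}}(u_{j})$ in one stroke, and pushing down finishes. Your version instead descends $p^{*}E/HN^{1}$ to a bundle $Q$ on $\mathbb{P}^{n}$, verifies that $Q$ is again uniform with the correct HN-filtration, applies the induction hypothesis to $Q$, and finally splits the resulting extension on $\mathbb{P}^{n}$. The extra verifications (fiberwise triviality of the quotient, uniformity of $Q$, identification of its HN-filtration) are all correct as you wrote them, but they are not needed: the paper's approach avoids them by never leaving $F_{n}$, and the Ext vanishing you invoke at the end is the same one that makes the upstairs argument work.
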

\begin{proof}
	By above lemma and induction, the $HN$-filtration is given by
	\[HN^1\cong p^{*}\mathcal{O}^{\oplus r_{1}}_{\mathbb{P}^n}(u_{1})\]
		
and \[0\longrightarrow HN^i\longrightarrow HN^{i+1}\longrightarrow p^{*}\mathcal{O}^{\oplus r_{i+1}}_{\mathbb{P}^n}(u_{i+1})\longrightarrow 0\] for $i=1,\dots, k-1$.

So $HN^i\cong \underset{j=1}{\overset{i}{\bigoplus}} p^{*}\mathcal{O}^{\oplus r_{j}}_{\mathbb{P}^n}(u_{j})$,
and $E=p_{*}p^{*}E=p_{*}(\underset{j=1}{\overset{k}{\bigoplus}} p^{*}\mathcal{O}^{\oplus r_{j}}_{\mathbb{P}^n}(u_{j}))=\underset{j=1}{\overset{k}{\bigoplus}} \mathcal{O}^{\oplus r_{j}}_{\mathbb{P}^n}(u_{j})$.
\end{proof}

If $k = 1$, it is a well-established fact that the uniform bundle $E$ can be expressed as the direct sum of identical line bundles.
When $k = 2$, Ellia managed to prove the following result.

%
%
\begin{lemma}\label{k=2r1=1}
	(\cite{Ell} Proposition  \uppercase\expandafter{\romannumeral4}-2.2) Any uniform $r$-bundle $E$ with splitting form $(2;1,r-1,u_1,u_1-1)$ over $\mathbb{P}^n$ is homogeneous.
\end{lemma}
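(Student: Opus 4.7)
The plan is to analyze the Harder--Narasimhan filtration of $p^*E$ on $F_n$ and identify $E$ explicitly. After twisting we may assume $u_1 = 1$ and $u_2 = 0$, so $E|_L \cong \mathcal{O}_L(1) \oplus \mathcal{O}_L^{r-1}$ on every line $L$. The target is to show that $E$ is isomorphic to either $\mathcal{O}_{\mathbb{P}^n}(1) \oplus \mathcal{O}_{\mathbb{P}^n}^{r-1}$ or $T_{\mathbb{P}^n}(-1) \oplus \mathcal{O}_{\mathbb{P}^n}^{r-n}$, both of which are homogeneous. When $r < n$, Sato's splitting theorem forces $E$ to split and the splitting type leaves only the first option, so assume $r \geq n$.

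Since $\mathrm{Pic}(\mathbb{G}(1,n)) = \mathbb{Z}$, the first HN piece has the form $HN^1 \cong q^*\mathcal{O}_{\mathbb{G}}(a) \otimes p^*\mathcal{O}(1)$ for some integer $a$. Restricting to a $p$-fiber gives $\mathcal{O}_{\mathbb{P}^{n-1}}(a)$ inside a trivial rank-$r$ bundle, so $a \leq 0$. The Chern polynomial identity of Theorem 2.5 in $A(F_n)$, combined with the constraint that $q^*E_2 = p^*E/HN^1$ must be locally free of rank $r-1$ globally, would further restrict to $a \in \{0, -1\}$. If $a = 0$, then $HN^1 \cong p^*\mathcal{O}(1)$ is trivial on each $p$-fiber; Lemma 2.7 gives $\mathcal{O}(1) \hookrightarrow E$, Corollary 2.9 identifies $E/\mathcal{O}(1) \cong \mathcal{O}^{r-1}$, and the extension splits since $H^1(\mathcal{O}(1)) = 0$, so $E \cong \mathcal{O}(1) \oplus \mathcal{O}^{r-1}$. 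If $a = -1$, then the identification $F_n = \mathbb{P}(T_{\mathbb{P}^n}(-1))$ gives $HN^1 \cong \mathcal{H}_{T_{\mathbb{P}^n}(-1)}$, which is the tautological sub-bundle in
$$0 \to \mathcal{H}_{T_{\mathbb{P}^n}(-1)} \to p^*T_{\mathbb{P}^n}(-1) \to q^*Q \to 0,$$
where $Q$ is the universal rank-$(n-1)$ quotient on $\mathbb{G}(1,n)$; since $\mathcal{O}_{\mathbb{P}^{n-1}}(-1)$ is acyclic, $Rp_*HN^1 = 0$ and hence $E \cong p_*q^*E_2$.

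To close the proof I would show $E_2 \cong Q \oplus \mathcal{O}_{\mathbb{G}}^{r-n}$. Pushing the HN sequence forward along $q$ (using $q_*\mathcal{H}_{T_{\mathbb{P}^n}(-1)} \cong S$ and $R^1q_*\mathcal{H}_{T_{\mathbb{P}^n}(-1)} = 0$) yields $0 \to S \to q_*p^*E \to E_2 \to 0$, to be compared with the analogous sequence for the candidate $T_{\mathbb{P}^n}(-1) \oplus \mathcal{O}^{r-n}$; Chern class data together with the induced splitting $E_2|_{\sigma_x} \cong T_{\mathbb{P}^{n-1}}(-1) \oplus \mathcal{O}^{r-n}$ on Schubert $\sigma$-planes would pin $E_2$ down uniquely. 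Then $E \cong p_*q^*(Q \oplus \mathcal{O}^{r-n}) \cong T_{\mathbb{P}^n}(-1) \oplus \mathcal{O}^{r-n}$. The main obstacle is precisely this last identification of $E_2$: uniform-type bundles on Grassmannians with prescribed splittings are much less classified than on projective spaces, so uniqueness from Chern data alone is delicate. A cleaner alternative would be to show directly that $h^0(\mathbb{P}^n, E) = r+1$, that $E$ is globally generated, and that the evaluation map has kernel $\mathcal{O}(-1)$, which by the twisted Euler sequence identifies $E$ immediately as $T_{\mathbb{P}^n}(-1) \oplus \mathcal{O}^{r-n}$, reducing the whole problem to cohomology estimates on $\mathbb{P}^n$.
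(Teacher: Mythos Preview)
The paper does not prove this lemma; it simply cites Ellia. So there is no ``paper's own proof'' to compare against here, only Ellia's original argument. Your outline follows the natural strategy (and essentially Ellia's): analyse the first HN piece $HN^1$, which is a line bundle, and split into cases according to its twist on $p$-fibres. That framework is correct, but two points are not settled.

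First, your justification of $a\in\{0,-1\}$ is too loose. Local freeness of the quotient $p^{*}E/HN^{1}$ on a $p$-fibre does \emph{not} exclude $a\le -2$: an inclusion $\mathcal{O}_{\mathbb{P}^{n-1}}(a)\hookrightarrow\mathcal{O}^{\oplus r}$ with locally free cokernel exists for any $a\le 0$ once $r$ is large enough (take $r$ forms of degree $-a$ without common zeros). The genuine constraint comes from the Chern polynomial identity in $A(F_n)$; that computation, not a soft local-freeness remark, is what forces $a_{1}\in\{0,-1\}$. You should carry it out or cite Ellia for it.

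Second, in the case $a=-1$ you rightly identify $HN^{1}\cong\mathcal{H}_{T_{\mathbb{P}^n}(-1)}$, but then try to finish by classifying $E_{2}$ on $\mathbb{G}(1,n)$, which you yourself flag as the obstacle. This detour is unnecessary. Since $HN^{1}=\mathcal{H}_{T_{\mathbb{P}^n}(-1)}\hookrightarrow p^{*}E$ is already a sub\-bundle, Proposition~3.5 of \cite{E-H-S} (invoked in exactly this way in the paper's Proposition~\ref{(-1,0,0)case}) gives a sub\-bundle $T_{\mathbb{P}^n}(-1)\hookrightarrow E$ directly. The quotient $E/T_{\mathbb{P}^n}(-1)$ is then uniform of trivial splitting type, hence $\cong\mathcal{O}_{\mathbb{P}^n}^{\,r-n}$, and the extension splits because $\mathrm{Ext}^{1}(\mathcal{O}^{\,r-n},T_{\mathbb{P}^n}(-1))=H^{1}(T_{\mathbb{P}^n}(-1))^{\oplus(r-n)}=0$. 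This closes the gap without any Grassmannian classification and without the $h^{0}(E)=r+1$ alternative you sketch at the end.
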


So by Ellia's result, we only need to consider the cases of $k>2$ and $k=2$ with $r_1>1$.

\section{Uniform $6$-bundles over $\mathbb{P}^4$}

	\subsection{$k=2,r_1=r_2=3$}
	
Now
	 \begin{equation}\label{r1r2=3}
c_{p^{*}E}(T)=\overset{2}{\underset{i=1}{\prod}}S_i(T+u_{i}U,U,V)
\end{equation}
in $A(F(1,2,5))$, so we have
\[T^{2}P(T,U)+A(T,U,V)R^{4}(U,V)+B(T,U,V)R^{5}(U,V)=\overset{2}{\underset{i=1}{\prod}}S_i(T+u_{i}U,U,V),\]	
where $P(T,U)$, $A(T,U,V)$ and $B(T,U,V)$ are homogeneous polynomials of degrees $4$, $2$ and $1$ respectively.
Assume that
\[P(T,U)=T^4+c_{1}UT^3+c_{2}U^2T^2+c_{3}U^3T+c_{4}U^4,\]
		
		\[A(T,U,V)=a_{1}T^2+(a_{2}U+a_{3}V)T+A_{2}(U,V),\]
		
	and \[B(T,U,V)=b_{1}T+b_{2}U+b_{3}V,\] where $A_{2}(U,V)$ is of degree $2$, $a_{1}, a_{2}, a_{3}, b_{1}, b_{2}, b_{3}$ and $c_{1}\dots c_{4} $ are all constants.

Suppose that 	
		$$S_{1}(T,U,V)=T^3+t_{1}(U+V)T^2+[t_{2}^{1}(U^2+V^2)+t_{2}^{2}UV]T+[t_{3}^{1}(U^3+V^3)+t_{3}^{2}(U^{2}V+UV^{2})]$$
and	
	$$S_{2}(T,U,V)=T^3+n_{1}(U+V)T^2+[n_{2}^{1}(U^2+V^2)+n_{2}^{2}UV]T+[n_{3}^{1}(U^3+V^3)+n_{3}^{2}(U^{2}V+UV^{2})],$$
where $t_{1}, t_{2}^{1}, t_{2}^{2}, t_{3}^{1}, t_{3}^{2}, n_{1}, n_{2}^{1}, n_{2}^{2}, n_{3}^{1}, n_{3}^{2}$ are all constants.
	
	Without loss of generity, we may assume $u_1=1$ and $u_2=0$ .
	
	
Using computer calculation, we get only four solutions of the equation (\ref{r1r2=3}).
\begin{eqnarray}
(t_1,t_2^1,t_2^2,t_3^1,t_3^2,n_1,n_2^1,n_2^2,n_3^1,n_3^2)&=&(0,0,0,0,0,0,0,0,0,0),\nonumber\\
&&(-1,1,1,-1,-1,1,0,0,0,0),\nonumber\\
&&(-1,0,0,0,0,1,1,1,1,1),\nonumber\\
&&(-2,2,3,0,-1,2,2,3,0,1).
\end{eqnarray}


\begin{proposition}\label{(0,0,0)case}
If $(t_1,t_2^1,t_2^2,t_3^1,t_3^2,n_1,n_2^1,n_2^2,n_3^1,n_3^2)=(0,0,0,0,0,0,0,0,0,0),$ then $$E\cong  \mathcal{O}_{\mathbb{P}^{4}}^{\oplus 3}(1)\oplus \mathcal{O}_{\mathbb{P}^{4}}^{\oplus 3}.$$
\end{proposition}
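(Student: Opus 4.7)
The plan is to confirm the Chern polynomial matches that of $\mathcal{O}_{\mathbb{P}^4}(1)^{\oplus 3}\oplus\mathcal{O}_{\mathbb{P}^4}^{\oplus 3}$ and then show both HN subquotients are trivial on every $p$-fiber, so that Corollary \ref{all-trivial-lem} applies. Substituting $S_1=S_2=T^3$ with $u_1=1$, $u_2=0$ into the product formula gives $\prod_i S_i(T+u_iU,U,V)=(T+U)^3T^3$, hence $P(T,U)=T(T+U)^3$, and the Chern polynomial of $p^*E$ agrees with that of the target bundle. This is only a numerical check; the content is producing the correct bundle structure.

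The core step is fiberwise triviality of the HN subquotients. Fix $x\in\mathbb{P}^4$ and restrict the short exact sequence $0\to HN^1\to p^*E\to q^*E_2\to 0$ to $p^{-1}(x)\cong\mathbb{P}^3$. Since $p^*E$ is trivial on the $p$-fiber and $HN^1=q^*E_1\otimes p^*\mathcal{O}_{\mathbb{P}^4}(1)$ with $p^*\mathcal{O}_{\mathbb{P}^4}(1)|_{p^{-1}(x)}$ trivial, the restricted sequence reads
\[0\longrightarrow q^*E_1|_{p^{-1}(x)}\longrightarrow \mathcal{O}_{p^{-1}(x)}^{\oplus 6}\longrightarrow q^*E_2|_{p^{-1}(x)}\longrightarrow 0.\]
For any line $M\subset p^{-1}(x)$, writing $q^*E_1|_M=\bigoplus_{i=1}^{3}\mathcal{O}_M(a_i)$, the subbundle condition inside the trivial bundle forces $a_i\le 0$ (no nonzero morphism $\mathcal{O}_M(a)\to\mathcal{O}_M$ exists for $a>0$), while $S_1=T^3$ yields $\sum a_i=0$, so all $a_i=0$ and $q^*E_1|_M$ is trivial. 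A dual argument on the quotient side, combined with $S_2=T^3$, gives $q^*E_2|_M\cong\mathcal{O}_M^{\oplus 3}$.

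Consequently $q^*E_i|_{p^{-1}(x)}$ is a uniform rank-$3$ bundle on $\mathbb{P}^3$ with splitting type $(1;0;3)$, hence isomorphic to $\mathcal{O}_{\mathbb{P}^3}^{\oplus 3}$ by the $k=1$ case already recorded in the paper (equivalently, by Sato's classification \cite{Sat} of uniform $n$-bundles on $\mathbb{P}^n$ together with $c_1=0$). Lemma \ref{trivial-lemma} then promotes this to $HN^1\cong p^*\mathcal{O}_{\mathbb{P}^4}(1)^{\oplus 3}$ and $HN^2/HN^1\cong p^*\mathcal{O}_{\mathbb{P}^4}^{\oplus 3}$, and Corollary \ref{all-trivial-lem} concludes $E\cong\mathcal{O}_{\mathbb{P}^4}(1)^{\oplus 3}\oplus\mathcal{O}_{\mathbb{P}^4}^{\oplus 3}$.

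The main obstacle is the subbundle/quotient argument on each line $M$: vanishing of Chern classes alone does not force triviality of a rank-$3$ bundle on $\mathbb{P}^3$. The leverage here is that the HN sequence embeds $q^*E_1$ (and displays $q^*E_2$ as a quotient) inside a trivial rank-$6$ bundle on every $p$-fiber, which converts the vanishing of $c_1$ on lines into genuine triviality one line at a time and then, via the uniform-bundle machinery, into triviality on the full fiber.
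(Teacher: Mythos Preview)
Your proof is correct and follows essentially the same strategy as the paper: show that $HN^1|_{p^{-1}(x)}$ is trivial on every $p$-fiber and then apply Corollary~\ref{all-trivial-lem}. The only difference is in how you justify triviality on the fiber: you restrict further to lines $M\subset p^{-1}(x)$, use the subbundle condition plus $c_1=0$ to force the splitting type to be $(0,0,0)$, and then invoke the $k=1$ uniform classification on $\mathbb{P}^3$. The paper instead observes directly that $HN^1|_{p^{-1}(x)}$ is a subbundle of a trivial bundle with $c_1=0$ and cites the standard fact (Corollary after Theorem~3.2.1 in \cite{O-S-S}) that such a bundle is trivial, which is quicker but logically equivalent to what you do.
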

\begin{proof}
The restriction of $HN^1$ to a $p$-fiber at some point $x\in \mathbb{P}^4$, $HN^1|_{p^{-1}(x)}$, is subbundle of the trivial bundle with $c_1=0$. Thus $HN^1|_{p^{-1}(x)}$ is trivial bundle (cf. \cite{O-S-S} Corollary after Theorem 3.2.1).  By Corollary \ref{all-trivial-lem}, $E\cong  \mathcal{O}_{\mathbb{P}^{4}}^{\oplus 3}(1)\oplus \mathcal{O}_{\mathbb{P}^{4}}^{\oplus 3} $.
\end{proof}

\begin{proposition}\label{(-1,0,0)case}
If $(t_1,t_2^1,t_2^2,t_3^1,t_3^2,n_1,n_2^1,n_2^2,n_3^1,n_3^2)=(-1,0,0,0,0,1,1,1,1,1)$, then $$E\cong T_{\mathbb{P}^4}(-1)\oplus \mathcal{O}_{\mathbb{P}^4}^{\oplus 2}(1).$$
\end{proposition}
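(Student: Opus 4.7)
The plan is to mimic the strategy of Proposition 3.1.1 in spirit but adapt it to the non-trivial fiber situation: first produce a subbundle $\mathcal{O}_{\mathbb{P}^4}^{\oplus 2}(1)\hookrightarrow E$ via $p_{*}HN^{1}$, then identify the rank-$4$ quotient as $T_{\mathbb{P}^4}(-1)$ by combining Ellia's Lemma 2.4.1 with a Chern class computation, and finally split the resulting extension by a cohomology vanishing.

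The first step is to pin down $HN^{1}|_{p^{-1}(x)}$ on each $p$-fiber $\mathbb{P}^{3}$. Substituting $U=0$ and $V|_{p^{-1}(x)}=-h$ (where $h$ is the hyperplane class of the fiber) into $S_{1}(T+U,U,V)=T^{2}(T-V)$ gives the restricted Chern polynomial $T^{2}(T+h)$, so $HN^{1}|_{p^{-1}(x)}$ is a rank-$3$ subbundle of $p^{*}E|_{p^{-1}(x)}\cong\mathcal{O}_{\mathbb{P}^{3}}^{\oplus 6}$ with $c_{1}=-h$, $c_{2}=c_{3}=0$. Its dual $F$ is then a globally generated rank-$3$ quotient of $\mathcal{O}_{\mathbb{P}^{3}}^{\oplus 6}$ with $c_{3}=0$; since $c_{3}$ represents the zero locus of a generic section, a generic section of $F$ is nowhere vanishing and splits off an $\mathcal{O}_{\mathbb{P}^{3}}$-summand. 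The rank-$2$ quotient is still globally generated with $c_{2}=0$, and the same argument yields a second trivial summand; the remaining line bundle is $\mathcal{O}_{\mathbb{P}^{3}}(1)$. The vanishings $H^{1}(\mathcal{O}_{\mathbb{P}^{3}}(-1))=H^{1}(\mathcal{O}_{\mathbb{P}^{3}})=0$ force both extensions to split, giving $HN^{1}|_{p^{-1}(x)}\cong\mathcal{O}_{\mathbb{P}^{3}}^{\oplus 2}\oplus\mathcal{O}_{\mathbb{P}^{3}}(-1)$.

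Consequently $h^{0}(HN^{1}|_{p^{-1}(x)})=2$ and all higher fiber cohomologies vanish, so by cohomology and base change $p_{*}HN^{1}$ is locally free of rank $2$ with $R^{i}p_{*}HN^{1}=0$ for $i\geq 1$. The evaluation map $p^{*}p_{*}HN^{1}\to HN^{1}$ is fiberwise the inclusion of the $\mathcal{O}^{\oplus 2}$-summand, hence a subbundle inclusion with a line-bundle quotient $M$; comparing $c(HN^{1})=c(p^{*}p_{*}HN^{1})\cdot c(M)$ degree by degree in $A(F_{4})$, using the Chern classes of $HN^{1}=q^{*}E_{1}\otimes p^{*}\mathcal{O}_{\mathbb{P}^{4}}(1)$ obtained from $S_{1}$, I extract $c_{1}(p_{*}HN^{1})=2h$, $c_{2}(p_{*}HN^{1})=h^{2}$ and $M\cong\mathcal{H}$. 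Pushing the composite $p^{*}p_{*}HN^{1}\subset HN^{1}\subset p^{*}E$ down to $\mathbb{P}^{4}$, the vanishing $R^{1}p_{*}(p^{*}p_{*}HN^{1})=0$ shows $E/p_{*}HN^{1}$ is locally free, so $p_{*}HN^{1}\hookrightarrow E$ is a subbundle. Restricted to any line $L\subset\mathbb{P}^{4}$, the image is a rank-$2$ subbundle of $HN^{1}|_{L}\cong\mathcal{O}_{L}(1)^{\oplus 3}$ of total degree $2$, which forces it to be $\mathcal{O}_{L}(1)^{\oplus 2}$; the $k=1$ case noted after Corollary 2.3.3 then gives $p_{*}HN^{1}\cong\mathcal{O}_{\mathbb{P}^{4}}^{\oplus 2}(1)$, and hence the desired embedding $\mathcal{O}_{\mathbb{P}^{4}}^{\oplus 2}(1)\hookrightarrow E$.

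Set $E':=E/\mathcal{O}_{\mathbb{P}^{4}}^{\oplus 2}(1)$: this is a rank-$4$ uniform bundle with splitting type $(2;1,3;1,0)$ on every line, so Lemma 2.4.1 makes it homogeneous; Sato's classification of rank-$n$ uniform bundles over $\mathbb{P}^{n}$ then leaves only two candidates, $\mathcal{O}_{\mathbb{P}^{4}}(1)\oplus\mathcal{O}_{\mathbb{P}^{4}}^{\oplus 3}$ and $T_{\mathbb{P}^{4}}(-1)$, which are distinguished by $c_{2}$. A direct expansion of $c_{p^{*}E}(T,U)=S_{1}(T+U,U,V)\,S_{2}(T,U,V)$ in $A(F_{4})$ gives $c_{2}(E)=4h^{2}$, and the relation $c(E)=(1+h)^{2}c(E')$ forces $c_{2}(E')=h^{2}$, singling out $E'\cong T_{\mathbb{P}^{4}}(-1)$. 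Finally, $\mathrm{Ext}^{1}(T_{\mathbb{P}^{4}}(-1),\mathcal{O}_{\mathbb{P}^{4}}(1))=H^{1}(\Omega^{1}_{\mathbb{P}^{4}}(2))=0$ (immediate from the twisted Euler sequence $0\to\Omega^{1}_{\mathbb{P}^{4}}(2)\to\mathcal{O}_{\mathbb{P}^{4}}(1)^{\oplus 5}\to\mathcal{O}_{\mathbb{P}^{4}}(2)\to 0$, whose right map is surjective on sections), so the extension splits and $E\cong T_{\mathbb{P}^{4}}(-1)\oplus\mathcal{O}_{\mathbb{P}^{4}}^{\oplus 2}(1)$. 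The main obstacle is the rigid fiberwise identification in paragraph two: the Chern data $(c_{1},c_{2},c_{3})=(-h,0,0)$ admits several abstract rank-$3$ bundles on $\mathbb{P}^{3}$, and it is the combination of being a subbundle of a trivial bundle with $c_{3}=0$ that uniquely forces the splitting.
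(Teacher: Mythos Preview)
Your proof is correct and reaches the same conclusion, but the route diverges from the paper's at the key identification step. Both arguments start by recognising $HN^{1}|_{p^{-1}(x)}\cong\mathcal{O}_{\mathbb{P}^{3}}^{\oplus 2}\oplus\mathcal{O}_{\mathbb{P}^{3}}(-1)$ and form the rank-$2$ bundle $M:=p_{*}HN^{1}$, and both finish with the same $\mathrm{Ext}^{1}$ vanishing. The difference is how the two pieces are named: the paper applies the snake lemma to the comparison diagram between $p^{*}M\hookrightarrow p^{*}E$ and $HN^{1}\hookrightarrow p^{*}E$, obtains that $G:=HN^{1}/p^{*}M$ has Chern polynomial $T-V$, and then invokes Proposition~3.5 of \cite{E-H-S} (an inclusion of the Hopf line bundle $\mathcal{H}_{T_{\mathbb{P}^{n}}(-1)}$ into $p^{*}(p_{*}F)$ forces $T_{\mathbb{P}^{4}}(-1)\subset p_{*}F$) to identify the quotient directly as $T_{\mathbb{P}^{4}}(-1)$; the uniformity and splitting of $M$ are then read off afterwards. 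You reverse the order: you first pin down $M\cong\mathcal{O}_{\mathbb{P}^{4}}(1)^{\oplus 2}$ by restricting the subbundle inclusion $p^{*}M\hookrightarrow HN^{1}$ to $q$-fibres (so that $M|_{L}$ sits inside $\mathcal{O}_{L}(1)^{\oplus 3}$ with degree~$2$), and only then identify $E'$ by appealing to the rank-$n$ uniform classification plus a $c_{2}$ computation. Your approach trades the Hopf-bundle criterion of \cite{E-H-S} for the classification of rank-$4$ uniform bundles on $\mathbb{P}^{4}$; the paper's approach is slightly more self-contained in that it recognises $T_{\mathbb{P}^{4}}(-1)$ intrinsically rather than via a Chern-class lookup. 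Two minor remarks: the citation of Ellia's Lemma (your ``Lemma~2.4.1'') is superfluous, since the Sato/Elencwajg--Hirschowitz--Schneider classification of uniform $n$-bundles already covers $E'$ without first establishing homogeneity; and your justification that $p_{*}HN^{1}\hookrightarrow E$ is a subbundle is cleanest by noting that its $p^{*}$-pullback has constant rank, rather than via $R^{1}p_{*}$.
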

\begin{proof}
Now $S_1(T,U,V )=T^2(T-(U+V))$, so the Chern polynomial of $HN^1$ is $$c_{HN^1}(T,U,V)=(T+U)^2(T-V)=T^3+(2U-V)T^2+(U^2-2UV)T-U^2V.$$
	By the definition of $HN$-filtration, for a point $x\in \mathbb{P}^4$, $HN^1|_{p^{-1}(x)}$ is subbundle of trivial bundle with $c_1(HN^1|_{p^{-1}(x)})=\mathcal{O}_{\mathbb{P}^3}(-1)$. So $HN^1|_{p^{-1}(x)}\cong \mathcal{O}_{\mathbb{P}^3}(-1)\oplus \mathcal{O}_{\mathbb{P}^3}^{\oplus 2}$.
	Denoting $M=p_{*}HN^1$ which is a rank $2$ vector bundle and applying $p_{*}$ to the exact sequence
	\begin{center}
		$0\longrightarrow HN^1\longrightarrow p^{*}E \longrightarrow F\longrightarrow 0,$
	\end{center}
	we have the exact sequence
	$$0\longrightarrow M \longrightarrow E \longrightarrow p_{*}F\longrightarrow 0,$$ where $F$ is some vector bundle over $F(1,2,5)$.
	Then we have the following commutative diagram
	\begin{center}
		$\xymatrix{
			0\ar[r] &p^{*}M \ar[r] \ar[d] & p^{*}E \ar[d]^{id} \ar[r] & p^{*}p_{*}F\ar[r]\ar[d]&0\\
			0\ar[r] & HN^{1} \ar[r]&p^{*}E \ar[r]&F\ar[r]&0.\\
		}$
	\end{center}
	By the snake lemma, the following exact sequences hold:
\[0\longrightarrow p^{*}M\longrightarrow HN^1\longrightarrow G\longrightarrow 0\]
and 		
		\[0\longrightarrow G\longrightarrow p^{*}p_{*}F\longrightarrow F\longrightarrow 0,\]
	where $G=HN^1/p^{*}M$ is the quotient bundle. Comparing the Chern polynomials of $G, p^*M$ and $HN^1$, we can get $c_{G}(T,U,V)=T-V$ and $c_{p^*M}(T, U)=(T+U)^2$.
Thus $p^{*}p_{*}F$ has $\mathcal{H}_{T_{\mathbb{P}^n}(-1)}$ as its subbundle. By Proposition 3.5 in \cite{E-H-S}, $p_{*}F$ has $T_{\mathbb{P}^4}(-1)$ as its subbundle. So $p_{*}F\cong T_{\mathbb{P}^4}(-1)$ whose splitting type is $(2;1,0;1,3)$. Then $M$ is a uniform vector bundle over $\mathbb{P}^4$.
	Thus, we have the following exact sequence
	\[0\longrightarrow \mathcal{O}_{\mathbb{P}^4}^{\oplus 2}(1)\longrightarrow E\longrightarrow T_{\mathbb{P}^4}(-1)\longrightarrow 0.\]
	Since $Ext^1(T_{\mathbb{P}^4}(-1), \mathcal{O}_{\mathbb{P}^4}^{\oplus 2}(1))=0$, we get that $E\cong T_{\mathbb{P}^4}(-1)\oplus \mathcal{O}_{\mathbb{P}^4}^{\oplus 2}(1) $ .
\end{proof}

 \begin{proposition}
 	If $(t_1,t_2^1,t_2^2,t_3^1,t_3^2,n_1,n_2^1,n_2^2,n_3^1,n_3^2)=(-1,1,1,-1,-1,1,0,0,0,0)$, then $$E\cong \Omega^1_{\mathbb{P}^4}(2)\oplus \mathcal{O}_{\mathbb{P}^4}^{\oplus 2}.$$
 \end{proposition}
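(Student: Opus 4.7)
The plan is to reduce this case to Proposition~\ref{(-1,0,0)case} by analyzing the twisted dual $F := E^{\vee}\otimes\mathcal{O}_{\mathbb{P}^4}(1)$. On every line $L\subset\mathbb{P}^4$, $E|_L\cong\mathcal{O}_L(1)^{\oplus 3}\oplus\mathcal{O}_L^{\oplus 3}$ implies $F|_L\cong\mathcal{O}_L(1)^{\oplus 3}\oplus\mathcal{O}_L^{\oplus 3}$, so $F$ is itself a uniform $6$-bundle of splitting type $(2;3,3;1,0)$ falling under the classification of this subsection.

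First I would identify the HN-filtration of $p^*F$ from that of $p^*E$. Dualizing the short exact sequence $0\to HN^1\to p^*E\to q^*E_2\to 0$ and twisting by $p^*\mathcal{O}(1)$ gives
\[
0\to q^*E_2^{\vee}\otimes p^*\mathcal{O}(1)\to p^*F\to (HN^1)^{\vee}\otimes p^*\mathcal{O}(1)\to 0,
\]
and restricting each term to a line $L$ shows that the left piece restricts to $\mathcal{O}_L(1)^{\oplus 3}$ while the right piece restricts to $\mathcal{O}_L^{\oplus 3}$. By uniqueness of the HN-filtration, this is the HN-filtration of $p^*F$; in particular $HN^1_F\cong q^*E_2^{\vee}\otimes p^*\mathcal{O}(1)$.

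Next, using $c_{G^{\vee}}(T)=(-1)^{r}c_G(-T)$ and $c_{G\otimes L}(T)=c_G(T-c_1(L))$ together with the current case's $S_1$ and $S_2$, I would compute
\[
c_{HN^1_F}(T,U,V)=(T+U)^2(T-V),
\]
\[
c_{p^*F/HN^1_F}(T,U,V)=T^3+(U+V)T^2+(U^2+UV+V^2)T+(U+V)(U^2+V^2),
\]
which correspond exactly to the tuple $(-1,0,0,0,0,1,1,1,1,1)$ treated in Proposition~\ref{(-1,0,0)case}. Applying that proposition to $F$ yields $F\cong T_{\mathbb{P}^4}(-1)\oplus\mathcal{O}_{\mathbb{P}^4}(1)^{\oplus 2}$, and since $E\cong F^{\vee}\otimes\mathcal{O}_{\mathbb{P}^4}(1)$, this gives $E\cong\Omega^1_{\mathbb{P}^4}(2)\oplus\mathcal{O}_{\mathbb{P}^4}^{\oplus 2}$.

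The main non-routine point will be the rigorous identification of $HN^1_F$ with $q^*E_2^{\vee}\otimes p^*\mathcal{O}(1)$, which rests on the uniqueness of the HN-filtration and a line-by-line slope comparison; once that identification is in place, the remainder is mechanical Chern-polynomial algebra in $A(F(1,2,5))$ followed by a direct appeal to the previously proved case.
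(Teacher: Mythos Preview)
Your approach is correct and is exactly the paper's: pass to $E^{\vee}(1)$, show it falls under the tuple $(-1,0,0,0,0,1,1,1,1,1)$ of Proposition~\ref{(-1,0,0)case}, and dualize back. The paper's proof is a one-liner that simply asserts $E^{*}(1)\cong T_{\mathbb{P}^4}(-1)\oplus\mathcal{O}_{\mathbb{P}^4}(1)^{\oplus 2}$ ``by the same argument above'', while you spell out the identification of the HN-filtration of $p^{*}F$ and the Chern polynomial computation, but the strategy is identical.
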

\begin{proof}
	By the same argument above, $E^{*}(1)\cong T_{\mathbb{P}^4}(-1)\oplus \mathcal{O}_{\mathbb{P}^4}^{\oplus 2}(1)$. So $E\cong \Omega^1_{\mathbb{P}^4}(2)\oplus \mathcal{O}_{\mathbb{P}^4}^{\oplus 2}$.
\end{proof}

\begin{proposition}\label{exterior T}
	If $(t_1,t_2^1,t_2^2,t_3^1,t_3^2,n_1,n_2^1,n_2^2,n_3^1,n_3^2)=(-2,2,3,0,-1,2,2,3,0,1)$, then $$E\cong \bigwedge ^{2}T_{\mathbb{P}^4}(-1).$$
\end{proposition}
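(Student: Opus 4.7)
The plan is to realize $E$ by identifying $p^*E$ on the flag variety $F := F(1,2,5)$ with $\bigwedge^2 p^*T_{\mathbb{P}^4}(-1)$, piece by piece via its HN filtration. The guiding geometric input is the tautological sequence $0 \to \mathcal{H} \to p^*T_{\mathbb{P}^4}(-1) \to q^*Q \to 0$ on $F$, where $Q$ is the tautological rank $3$ quotient on $\mathbb{G}(1,4)$; taking $\bigwedge^2$ yields $0 \to \mathcal{H}\otimes q^*Q \to \bigwedge^2 p^*T_{\mathbb{P}^4}(-1) \to q^*\bigwedge^2 Q \to 0$, whose two factors are the candidate HN-pieces.

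First I would expand $c_{HN^1}(T) = S_1(T+U,U,V) = T^3 + (U-2V)T^2 + (U^2-UV+2V^2)T + U(U^2+V^2)$ and verify that this agrees with $c(\mathcal{H}\otimes q^*Q)$. Restricting $HN^1$ to a $p$-fiber $\mathbb{P}^3$ (setting $U=0$, $V=-h$) gives Chern classes $(-2h,2h^2,0)$, matching $T_{\mathbb{P}^3}(-2)$; in particular $H^\bullet(\mathbb{P}^3,T_{\mathbb{P}^3}(-2))=0$, which will be used later. To promote Chern equality to an actual isomorphism $HN^1\cong \mathcal{H}\otimes q^*Q$, I would twist by $\mathcal{H}^{-1}$: on each $p$-fiber the resulting bundle has the Chern classes of $T_{\mathbb{P}^3}(-1)$, which is globally generated with $h^0=4$ and no higher cohomology. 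Hence $M := p_*(HN^1\otimes \mathcal{H}^{-1})$ is a rank $4$ bundle on $\mathbb{P}^4$ with vanishing higher $R^ip_*$, and the counit $p^*M \twoheadrightarrow HN^1\otimes \mathcal{H}^{-1}$ is surjective with kernel a line bundle $K$ restricting to $\mathcal{O}_{\mathbb{P}^3}(-1)$ on each $p$-fiber. By the $p$-fiber triviality argument of Lemma \ref{trivial-lemma}, $K\cong \mathcal{H}\otimes p^*\mathcal{O}(a)$ for some $a\in\mathbb{Z}$. Expanding $c(p^*M)=c(K)\cdot c(HN^1\otimes\mathcal{H}^{-1})$ inside $A(F)=\mathbb{Z}[U,V]/(R^4(U,V),U^5)$ gives $c_2(p^*M)=(a+1)U^2+aUV$; the requirement $c_2(p^*M)=p^*c_2(M)\in\mathbb{Z}\cdot U^2$ forces $a=0$, and continuing the computation yields $c(M)=1+h+h^2+h^3+h^4=c(T_{\mathbb{P}^4}(-1))$. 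Since $M$ is uniform (inherited from $E$ via the naturality of the construction), the Sato--Elencwajg--Hirschowitz--Schneider classification of uniform $4$-bundles on $\mathbb{P}^4$ gives $M\cong T_{\mathbb{P}^4}(-1)$; comparing the resulting sequence $0\to K\to p^*M\to HN^1\otimes\mathcal{H}^{-1}\to 0$ (now with $K\cong \mathcal{H}$ and $p^*M\cong p^*T_{\mathbb{P}^4}(-1)$) with the canonical one identifies $HN^1\otimes\mathcal{H}^{-1}\cong q^*Q$, hence $HN^1\cong \mathcal{H}\otimes q^*Q$.

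A symmetric analysis (using that $p^*E/HN^1$ is trivial on $q$-fibers and therefore pulled back from $\mathbb{G}(1,4)$) identifies $p^*E/HN^1 \cong q^*\bigwedge^2 Q$. Using the vanishing $R^\bullet p_*HN^1=0$ noted above, the HN sequence collapses to give $E = p_*(p^*E) = p_*(p^*E/HN^1) = p_*(q^*\bigwedge^2 Q)$; applying $p_*$ to the canonical sequence $0\to\mathcal{H}\otimes q^*Q\to \bigwedge^2 p^*T_{\mathbb{P}^4}(-1)\to q^*\bigwedge^2 Q\to 0$ (whose sub has $R^\bullet p_*=0$ by the same fiber-cohomology calculation) yields $\bigwedge^2 T_{\mathbb{P}^4}(-1) = p_*(q^*\bigwedge^2 Q)$; therefore $E\cong \bigwedge^2 T_{\mathbb{P}^4}(-1)$. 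The main obstacle is the Chow-ring bookkeeping together with the uniformity-and-classification argument that pins down $M\cong T_{\mathbb{P}^4}(-1)$; once that identification is in hand, the rest of the argument is essentially formal.
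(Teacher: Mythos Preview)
Your argument has a genuine gap at the very first step. The Chern classes $(-2h,2h^2,0)$ of $HN^1|_{p^{-1}(x)}$ do \emph{not} pin down the bundle as $T_{\mathbb{P}^3}(-2)$: the null--correlation bundle $N$ on $\mathbb{P}^3$ has $c_1=0$, $c_2=1$, so $N(-1)\oplus\mathcal{O}_{\mathbb{P}^3}$ has exactly the same Chern classes, and its dual $N(1)\oplus\mathcal{O}$ is also globally generated. Since the only constraint you have on the fiber is that it is a rank-$3$ subbundle of a trivial bundle with those Chern classes, both possibilities are live. This is not harmless for your scheme: if the fiber is $N(-1)\oplus\mathcal{O}$, then after twisting by $\mathcal{H}^{-1}$ you get $N\oplus\mathcal{O}(1)$, and $N$ has no global sections, so the counit $p^*M\to HN^1\otimes\mathcal{H}^{-1}$ is \emph{not} surjective on that fiber and your ``kernel is a line bundle'' step collapses. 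Likewise your later use of $R^\bullet p_*HN^1=0$ fails on such fibers, since $h^0(N(-1)\oplus\mathcal{O})=1$.

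The paper's proof is mostly devoted to eliminating exactly this alternative. It first shows, by a cohomological jumping argument (comparing $H^i(F_4,\mathcal{I}_{p^{-1}(x)}\otimes HN^1)$ at two points and building a nontrivial extension of $HN^1$ by $\mathcal{O}_{F_4}$), that the fiber type cannot vary between $T_{\mathbb{P}^3}(-2)$ and $N(-1)\oplus\mathcal{O}$; then it rules out the constant $N(-1)\oplus\mathcal{O}$ case via Lemma~\ref{deco-lem} and the shape of the Chern polynomial of $HN^1$. Only after this does the paper know each fiber is $T_{\mathbb{P}^3}(-2)$ (and dually for the quotient), at which point it simply cites \cite{B-E1982} to conclude $E\cong\bigwedge^2 T_{\mathbb{P}^4}(-1)$. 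Your direct-image construction of $M$ is an interesting alternative to that citation, but it cannot start until the $N(-1)\oplus\mathcal{O}$ possibility is excluded; as a secondary point, the claim that $M$ is uniform ``by naturality'' would also need a real argument.
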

\begin{proof}
Now $$S_1(T,U,V )=T^3-2(U+V)T^2+(2(U^2+V^2)+3UV)T-(U^2V+UV^2),$$ so $c_1(HN^1)=-U+2V.$
By the definition of $HN$-filtration, we have the exact sequence
\begin{center}
		$0\longrightarrow HN^1\longrightarrow p^{*}E \longrightarrow F\longrightarrow 0,$
	\end{center}
for some quotient bundle $F$.
Then for a point $x\in \mathbb{P}^4$, $HN^1|_{p^{-1}(x)}$ is subbundle of trivial bundle with $c_1(HN^1|_{p^{-1}(x)})=\mathcal{O}_{\mathbb{P}^3}(-2)$.
	Using the classifying result of global generated bundle with $c_1=2$ in \cite{S-J-U}, we can get that $HN^1|_{p^{-1}(x)}\cong N(-1)\oplus \mathcal{O}_{\mathbb{P}^3}$ or $
	HN^1|_{p^{-1}(x)}\cong T_{\mathbb{P}^3}(-2)$, where $N$ is the null correlation bundle.
	
	Suppose that $HN^1|_{p^{-1}(x)}\cong T_{\mathbb{P}^3}(-2)$ and $HN^1|_{p^{-1}(y)}\cong N(-1)\oplus \mathcal{O}_{\mathbb{P}^3}$ and for different points $x, y\in \mathbb{P}^4$. Tensor the exact sequence
	\[0\longrightarrow \mathcal{I}_{p^{-1}(x)}\longrightarrow \mathcal{O}_{F_4}\longrightarrow \mathcal{O}_{p^{-1}(x)}\longrightarrow 0\]
	with the bundle $HN^1$, where $F_4:=F(1,2,5)$ and $\mathcal{I}_{p^{-1}(x)}$ is the ideal sheaf of $p^{-1}(x)$. Then we have the exact sequence
	\[0\longrightarrow \mathcal{I}_{p^{-1}(x)}\otimes HN^1\longrightarrow HN^1\longrightarrow HN^1|_{p^{-1}(x)}\longrightarrow 0.\]
	
	Thus, we have the long exact sequence
\begin{multline*}
0\longrightarrow H^0(F_4, \mathcal{I}_{p^{-1}(x)}\otimes HN^1)\longrightarrow H^0(F_4,HN^1)\longrightarrow H^0(p^{-1}(x), HN^1|_{p^{-1}(x)})\\ \longrightarrow H^1(F_4,\mathcal{I}_{p^{-1}(x)}\otimes HN^1)\longrightarrow H^1(F_4,HN^1)\longrightarrow H^1(p^{-1}(x), HN^1|_{p^{-1}(x)})\cdots
\end{multline*}
	
	Since $\mathcal{I}_{p^{-1}(x)}\cong \mathcal{I}_{p^{-1}(y)}$ for any two points $x,y\in P^4$, we have
	$$ H^i(F_4, \mathcal{I}_{p^{-1}(x)}\otimes HN^1)=H^i(F_4, \mathcal{I}_{p^{-1}(y)}\otimes HN^1)$$ for any $i$.
	
	Now $HN^1|_{p^{-1}(x)}\cong T_{\mathbb{P}^3}(-2)$, so $$H^0(p^{-1}(x), HN^1|_{p^{-1}(x)})=H^1(p^{-1}(x), HN^1|_{p^{-1}(x)})=0.$$ Then $$ H^0(F_4, \mathcal{I}_{p^{-1}(x)}\otimes HN^1)= H^0(F_4,HN^1)$$ and $$ H^1(F_4, \mathcal{I}_{p^{-1}(x)}\otimes HN^1)= H^1(F_4,HN^1).$$
	
	Since $$HN^1|_{p^{-1}(y)}\cong N(-1)\oplus \mathcal{O}_{p^{-1}(y)}$$
	and  $$ H^0(F_4, \mathcal{I}_{p^{-1}(y)}\otimes HN^1)= H^0(F_4,HN^1),$$ we have the following exact sequence
\begin{multline*}0\longrightarrow H^0(p^{-1}(y), HN^1|_{p^{-1}(y)})\longrightarrow  H^1(F_4, \mathcal{I}_{p^{-1}(y)}\otimes HN^1)\longrightarrow H^1(F_4,HN^1)
	\stackrel{g}{\longrightarrow} \\H^1(p^{-1}(y),HN^1|_{p^{-1}(y)}) \longrightarrow \cdots
\end{multline*}
	
Since $$ H^0(p^{-1}(y), HN^1|_{p^{-1}(y)})= H^1(p^{-1}(y), HN^1|_{p^{-1}(y)})=1$$ and $$H^1(F_4, \mathcal{I}_{p^{-1}(y)}\otimes HN^1)= H^1(F_4,HN^1),$$ there exists an element $a\in  H^1(F_4,HN^1) $ such that $g(a)=1 \in H^1(p^{-1}(y),HN^1|_{p^{-1}(y)}) $. The element $a$ induces the extension of $HN^1$ by $\mathcal{O}_{F_4}$ which fits the following communtative diagram
	
	\begin{center}
		$\xymatrix{
			0\ar[r] &HN^1 \ar[r] \ar[d] & HN^{1'} \ar[d] \ar[r] & \mathcal{O}_{F_4}\ar[r]\ar[d]&0\\
			0\ar[r] & N(-1)\oplus \mathcal{O}_{p^{-1}(y)} \ar[r]& T_{\mathbb{P}^3}(-2)\oplus \mathcal{O}_{p^{-1}(y)} \ar[r]& \mathcal{O}_{p^{-1}(y)}\ar[r]&0.\\
		}$
	\end{center}
	For any point $z\in \mathbb{P}^4$, if $HN^1|_{p^{-1}(z)}\cong T_{\mathbb{P}^3}(-2)$, then $HN^{1'}|_{p^{-1}(z)}\cong T_{\mathbb{P}^3}(-2)\oplus \mathcal{O}_{p^{-1}(z)}$ since $H^{1}(\mathbb{P}^3,T_{\mathbb{P}^3}(-2))=0$. If $HN^1|_{p^{-1}(z)}\cong N(-1)\oplus \mathcal{O}_{\mathbb{P}^3}$, then $HN^{1'}|_{p^{-1}(z)}\cong T_{\mathbb{P}^3}(-2)\oplus \mathcal{O}_{p^{-1}(z)}$ since ${HN^1}'$ is the nontrivial extension of $HN^1$ by $\mathcal{O}_{F_4}$ which restricts to fiber is also nontrivial extension. Therefore, for any point $z\in \mathbb{P}^4$,  $HN^{1'}|_{p^{-1}(z)}\cong T_{\mathbb{P}^3}(-2)\oplus \mathcal{O}_{p^{-1}(z)}$.

The nontrivial extension ${HN^1}'$ of $HN^1$ by $\mathcal{O}_{F_4}$ induces an extension $G$ of $p^{*}E$ by $\mathcal{O}_{F_4}$. So we have the exact sequence
\begin{equation}\label{nontrivial ext for 33}
0\longrightarrow HN^{1'} \longrightarrow G\longrightarrow Q\longrightarrow 0,
\end{equation}
where $Q$ is the quotient bundle. Since $H^1(F_4,p^{*}E)=H^1(\mathbb{P}^4,E)$, we see that $G\cong p^{*}E^{'}$ for some bundle $E'$ over $\mathbb{P}^4$. Applying $p_{*}$ to (\ref{nontrivial ext for 33}), we get the exact sequence
	\[0\longrightarrow \mathcal{O}_{\mathbb{P}^4}(i)\longrightarrow E'\longrightarrow p_{*}Q\longrightarrow 0.\]
	
	Then we have the following commutative diagram
	
	\begin{center}
		$\xymatrix{
			0\ar[r] &p^{*}\mathcal{O}_{\mathbb{P}^4}(i) \ar[r] \ar[d] & p^{*}E' \ar[d]^{id} \ar[r] & p^{*}p_{*}Q\ar[r]\ar[d]&0\\
			0\ar[r] & HN^{1'} \ar[r]& p^{*}E' \ar[r]& Q\ar[r]&0.\\
		}$
	\end{center}
	By snake lemma and comparing the Chern polynomials of ${HN^1}'$ and $p^{*}\mathcal{O}_{\mathbb{P}^4}(i)$, we have $$T+iU|c_{{HN^1}'}(T)=Tc_{HN^1}(T)$$ in $A(F_4)$. Thus $i=0$ and the exact sequence \[0\longrightarrow \mathcal{O}_{F_4}\longrightarrow HN^{1'}\longrightarrow HN^{1''}\longrightarrow 0\]
	holds, where $HN^{1''}$ is the quotient bundle.
	
	Now, the following digram commutes
	\begin{center}
		$\xymatrix{
			&  & 0\ar[d] &  & \\
			&  & \mathcal{O}_{F_4}\ar[d]^{f} \ar@{-->}_{s}[ld]&  &\\
			0\ar[r]& HN^1 \ar[r]^{g} & HN^{1'} \ar[r]^{h} & \mathcal{O}_{F_4} \ar[r] & 0.\\
		}$
	\end{center}
	Since the non-zero morphism $h\circ f \in Hom(\mathcal{O}_{F_4},\mathcal{O}_{F_4})=H^{0}(F_4,\mathcal{O}_{F_4})=k,$ $HN^{1'}$ is the trivial extension of $HN^1$ by $\mathcal{O}_{F_4}$, which is a contradiction. Therefore,  for any point $x\in \mathbb{P}^4,~ HN^{1}|_{p^{-1}(x)}\cong T_{p^{-1}(x)} $ or $HN^{1}|_{p^{-1}(x)}\cong N(-1)\oplus \mathcal{O}_{p^{-1}(x)}$.

	If for any point $x\in \mathbb{P}^4, HN^{1}|_{p^{-1}(x)}\cong N(-1)\oplus \mathcal{O}_{p^{-1}(x)}$, then $p_{*}HN^{1}$ and $R^{1}p_{*}HN^1$ are line bundles. By Lemma \ref{deco-lem}, $HN^1$ has $\mathcal{O}_{\mathbb{P}^4}(i)$ as its subbundle for some $i$. However, there is no subbundle of $HN^1$ with the form of $\mathcal{O}_{\mathbb{P}^4}(i)$ by the expression of the Chern polynomial of $HN^1$, which is a contradiction.
	
	So for any point $x\in \mathbb{P}^4, HN^{1}|_{p^{-1}(x)}\cong T_{p^{-1}(x)}(-2) $. By the similar argument to $F^{*}$, $F|_{p^{-1}(x)}\cong \Omega^1_{\mathbb{P}^3}(2)$. By \cite{B-E1982}, $E\cong \bigwedge ^{2}T_{\mathbb{P}^4}(-1).$
\end{proof}

So we prove the following proposition.
\begin{proposition}
	The a uniform vector $E$ over $\mathbb{P}^4$ with the splitting form $(2;3,3;u_1,u_1-1)$ is
	\begin{center}
		$\mathcal{O}_{\mathbb{P}^{4}}^{\oplus 3}(u_1)\oplus \mathcal{O}_{\mathbb{P}^{4}}^{\oplus 3}(u_1-1)$, $ T_{\mathbb{P}^4}(u_1-2)\oplus \mathcal{O}_{\mathbb{P}^4}^{\oplus 2}(u_1)$,	$ \Omega^1_{\mathbb{P}^4}(u_1+1)\oplus \mathcal{O}_{\mathbb{P}^4}^{\oplus 2}(u_1-1)$ or $(\bigwedge ^{2}T_{\mathbb{P}^4}(-1))\otimes\mathcal{O}_{\mathbb{P}^4}(u_1-1)$.
	\end{center}
\end{proposition}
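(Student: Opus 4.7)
The plan is essentially a bookkeeping reduction: the four preceding propositions have already exhausted the normalized case $u_1 = 1$, $u_2 = 0$, so all that remains is to pass from a general shift down to this normalization and back.

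First I would twist by a line bundle, setting $E' := E \otimes \mathcal{O}_{\mathbb{P}^4}(1 - u_1)$. Tensoring with a line bundle commutes with restriction to any line $L \subset \mathbb{P}^4$, so $E'$ is again uniform, and its splitting type shifts additively to $(2; 3, 3; 1, 0)$. This is precisely the normalization under which the Chern polynomial identity (\ref{r1r2=3}) was solved, so the four tuples enumerated above are the only possibilities for $E'$.

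Next, for each of the four tuples I would invoke the corresponding proposition: the zero tuple yields $E' \cong \mathcal{O}_{\mathbb{P}^4}^{\oplus 3}(1) \oplus \mathcal{O}_{\mathbb{P}^4}^{\oplus 3}$ by Proposition \ref{(0,0,0)case}; the tuple with $t_1=-1$ and all $n$-entries equal to $1$ gives $E' \cong T_{\mathbb{P}^4}(-1) \oplus \mathcal{O}_{\mathbb{P}^4}^{\oplus 2}(1)$ by Proposition \ref{(-1,0,0)case}; its dual variant gives $E' \cong \Omega^1_{\mathbb{P}^4}(2) \oplus \mathcal{O}_{\mathbb{P}^4}^{\oplus 2}$ by the dualization argument already recorded above; and the last tuple gives $E' \cong \bigwedge^2 T_{\mathbb{P}^4}(-1)$ by Proposition \ref{exterior T}. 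Finally, untwisting by $\mathcal{O}_{\mathbb{P}^4}(u_1 - 1)$ recovers the four isomorphism classes displayed in the statement.

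Since every individual case is already settled, the only substantive point left is the exhaustiveness of the solution list, because a fifth solution of (\ref{r1r2=3}) would open a fifth case not covered by the above enumeration. The honest ``obstacle'' is therefore not a geometric one but a verification that the computer-algebra enumeration of tuples $(t_1,t_2^1,t_2^2,t_3^1,t_3^2,n_1,n_2^1,n_2^2,n_3^1,n_3^2)$ satisfying the Chern identity in $A(F(1,2,5))$ is complete; once that is granted, the present proposition is an immediate corollary of the four preceding ones.
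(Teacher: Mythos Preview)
Your proposal is correct and matches the paper's approach exactly: the paper normalizes to $u_1=1$, $u_2=0$ via the phrase ``without loss of generality'' at the start of the subsection, appeals to the computer enumeration of solutions to (\ref{r1r2=3}), and then states this proposition as the summary of Propositions \ref{(0,0,0)case}--\ref{exterior T} without further argument. You have simply made the twisting step explicit.
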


\vspace{1cm}

\subsection{$k=2,r_1=2,r_2=4$}
Similarly, we suppose that
	 \begin{equation}\label{r1=2r2=4}
c_{p^{*}E}(T)=T^{2}P(T,U)+A(T,U,V)R^{4}(U,V)+B(T,U,V)R^{5}(U,V)=\overset{2}{\underset{i=1}{\prod}}S_i(T+u_{i}U,U,V)
\end{equation}
in $A(F(1,2,5))$,
where $P(T,U)$, $A(T,U,V)$ and $B(T,U,V)$ are homogeneous polynomials of degrees $4$, $2$ and $1$ respectively.

Without loss of generality, we set $u_2=0$.
Suppose that
\[S_{1}(T,U,V)=T^2+m_{1}(U+V)T+(m_{2}^{0}(U^2+V^2)+m_{2}^{1}UV)\] and
\[S_{2}(T,U,V)=T^4+N_{1}(U,V)T^3+N_{2}(U,V)T^2+N_{3}(U,V)T+N_4(U,V),\]
where $N_{i}(U,V)$ is homogeneous polynomials of degrees $i$ for $i=1,2,3,4$.
Using computer calculation, we get only following two solutions of equation (\ref{r1=2r2=4}).

\begin{center}
	$(m_1,m_2^0,m_2^1)=(0,0,0),$
	
	$(m_1,m_2^0,m_2^1)=(-1,0,0).$
\end{center}

If $(m_1,m_2^0,m_2^1)=(0,0,0)$, by the similar arguments in the proof of Proposition \ref{(0,0,0)case}, $E\cong \mathcal{O}_{\mathbb{P}^{4}}^{\oplus 2}(1)\oplus \mathcal{O}_{\mathbb{P}^{4}}^{\oplus 4}.$

\begin{proposition}
	If $(m_1,m_2^0,m_2^1)=(-1,0,0)$, then $$E\cong \mathcal{O}_{\mathbb{P}^4}(1)\oplus\mathcal{O}_{\mathbb{P}^4}\oplus T_{\mathbb{P}^4}(-1).$$
\end{proposition}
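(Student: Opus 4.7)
The plan is to mirror the argument of Proposition \ref{(-1,0,0)case}, adapted to the rank-$(2,4)$ splitting. First I would compute $c_{HN^1}(T,U,V)=S_1(T+U,U,V)=(T+U)(T-V)$. Restricted to a $p$-fiber $p^{-1}(x)\cong\mathbb{P}^3$ (on which $U$ vanishes and $-V$ becomes the hyperplane class), this forces $c_1=-1$, $c_2=0$; since $HN^1|_{p^{-1}(x)}$ is a rank-$2$ subbundle of the trivial bundle $p^*E|_{p^{-1}(x)}$, its dual is a globally generated rank-$2$ bundle on $\mathbb{P}^3$ with $c_1=1$, $c_2=0$, which must be $\mathcal{O}\oplus\mathcal{O}(1)$. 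Hence $HN^1|_{p^{-1}(x)}\cong\mathcal{O}_{\mathbb{P}^3}(-1)\oplus\mathcal{O}_{\mathbb{P}^3}$ for every $x\in\mathbb{P}^4$, so Lemma \ref{deco-lem} applies and, matching Chern polynomials (the only divisor of $c_{HN^1}$ of the form $T+\ell U$ being $T+U$), yields $p_*HN^1\cong\mathcal{O}_{\mathbb{P}^4}(1)$ and $R^1p_*HN^1=0$.

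Pushing forward the HN-sequence $0\to HN^1\to p^*E\to F\to 0$ then gives
\[0\longrightarrow\mathcal{O}_{\mathbb{P}^4}(1)\longrightarrow E\longrightarrow p_*F\longrightarrow 0,\]
and the snake-lemma diagram comparing it with the HN-sequence (exactly as in Proposition \ref{(-1,0,0)case}) produces a rank-$1$ quotient $G=HN^1/p^*\mathcal{O}_{\mathbb{P}^4}(1)$ whose Chern polynomial $T-V$ identifies it with $\mathcal{H}_{T_{\mathbb{P}^4}(-1)}$, together with an inclusion $G\hookrightarrow p^*p_*F$. Proposition 3.5 in \cite{E-H-S} then upgrades this to a subbundle inclusion $T_{\mathbb{P}^4}(-1)\hookrightarrow p_*F$. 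To pin down the rank-$1$ cokernel $Q$, I would restrict the displayed sequence to an arbitrary line $L$ to see $p_*F|_L\cong\mathcal{O}_L(1)\oplus\mathcal{O}_L^{\oplus 4}$; combined with $T_{\mathbb{P}^4}(-1)|_L\cong\mathcal{O}_L(1)\oplus\mathcal{O}_L^{\oplus 3}$, this forces $Q|_L\cong\mathcal{O}_L$ on every line, hence $Q\cong\mathcal{O}_{\mathbb{P}^4}$.

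Both extensions then split for free: $H^1(\mathbb{P}^4,T_{\mathbb{P}^4}(-1))=0$ (Euler sequence) gives $p_*F\cong T_{\mathbb{P}^4}(-1)\oplus\mathcal{O}_{\mathbb{P}^4}$, and Bott vanishing gives
\[\mathrm{Ext}^1\bigl(T_{\mathbb{P}^4}(-1)\oplus\mathcal{O}_{\mathbb{P}^4},\mathcal{O}_{\mathbb{P}^4}(1)\bigr)=H^1(\Omega^1_{\mathbb{P}^4}(2))\oplus H^1(\mathcal{O}_{\mathbb{P}^4}(1))=0,\]
so $E\cong\mathcal{O}_{\mathbb{P}^4}(1)\oplus\mathcal{O}_{\mathbb{P}^4}\oplus T_{\mathbb{P}^4}(-1)$. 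The main (and really only nontrivial) obstacle is the fiberwise identification $HN^1|_{p^{-1}(x)}\cong\mathcal{O}(-1)\oplus\mathcal{O}$; once this is secured, the remainder is the snake-lemma/Proposition 3.5 template of Proposition \ref{(-1,0,0)case}, followed by the two standard cohomology vanishings that split the resulting short exact sequences.
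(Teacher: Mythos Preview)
Your argument is correct, and the first half---computing $c_{HN^1}=(T+U)(T-V)$, identifying $HN^1|_{p^{-1}(x)}\cong\mathcal{O}\oplus\mathcal{O}(-1)$ as a subbundle of a trivial bundle, invoking Lemma \ref{deco-lem} to produce $0\to\mathcal{O}_{\mathbb{P}^4}(1)\to E\to p_*F\to 0$---matches the paper exactly. The divergence is in how the rank-$5$ quotient $p_*F$ is pinned down. You re-run the snake-lemma/Proposition~3.5 machinery of Proposition~\ref{(-1,0,0)case}: exhibit $\mathcal{H}_{T_{\mathbb{P}^4}(-1)}\hookrightarrow p^*p_*F$, upgrade to $T_{\mathbb{P}^4}(-1)\hookrightarrow p_*F$, and then determine the line-bundle cokernel by restricting to lines. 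The paper instead takes a shortcut: it simply observes that $p_*F$ is a uniform $5$-bundle of splitting type $(1,0,0,0,0)$ on $\mathbb{P}^4$ and invokes Ellia's classification of uniform $(n{+}1)$-bundles (\cite{Ell}) together with the Chern data to conclude $p_*F\cong T_{\mathbb{P}^4}(-1)\oplus\mathcal{O}_{\mathbb{P}^4}$ directly. Your route is more self-contained (it does not appeal to the rank-$5$ classification), while the paper's is shorter; both finish the same way via the obvious Ext vanishing.
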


\begin{proof}
Clearly, $c_{HN^1}(T)=T^2+(U-V)T-UV=(T+U)(T-V)$. So $c_{1}(HN^1)=V-U$ and $c_{1}(HN^1|_{p^{-1}(x)})=\mathcal{O}_{\mathbb{P}^3}(-1)$, for each point $x \in \mathbb{P}^4$. It is easy to see that $HN^1|_{p^{-1}(x)}$ is a subbundle of the trivial bundle. Thus, $HN^1|_{p^{-1}(x)}\cong \mathcal{O}_{\mathbb{P}^3} \oplus \mathcal{O}_{\mathbb{P}^3}(-1)$, for each point  $x \in \mathbb{P}^4.$ By Lemma \ref{deco-lem}, we have the exact sequence 	
	\[0\longrightarrow \mathcal{O}_{\mathbb{P}^4}(1)\longrightarrow E\longrightarrow E^{'}\longrightarrow 0,\] where $E'$ is the quotient bundle.
	By cohomology calculation, it is not hard to get that $E'$ is a uniform bundle with splitting form $(1,0,0,0,0)$. By the classifying result in \cite{Ell} and Chern polynomial of $E'$, $E'\cong T_{\mathbb{P}^4}(-1)\oplus \mathcal{O}_{\mathbb{P}^4}$. So $E\cong \mathcal{O}_{\mathbb{P}^4}(1)\oplus\mathcal{O}_{\mathbb{P}^4}\oplus T_{\mathbb{P}^4}(-1).$
\end{proof}

 So we prove the following proposition:
   \begin{proposition}
   	The uniform vector bundle $E$ over $\mathbb{P}^4$ with splitting form of $(2;2,4;u_1,u_1-1)$ is isomorphic to either $\mathcal{O}_{\mathbb{P}^{4}}^{\oplus 2}(u_1)\oplus \mathcal{O}_{\mathbb{P}^{4}}^{\oplus 4}(u_1-1)$ or $\mathcal{O}_{\mathbb{P}^4}(u_1)\oplus\mathcal{O}_{\mathbb{P}^4}(u_1-1)\oplus T_{\mathbb{P}^4}(u_1-2)$.
   \end{proposition}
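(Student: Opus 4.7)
The plan is to assemble this classification from the two case analyses carried out just above, viewing the proposition as a summary of the $u_2=0$ normalization together with a line-bundle twist. First I would reduce to the case $u_1=1$, $u_2=0$ by tensoring: the bundle $E\otimes\mathcal{O}_{\mathbb{P}^4}(1-u_1)$ is again uniform, its splitting type is $(2;2,4;1,0)$, and tensoring by a line bundle commutes with everything involved (Chern polynomial identities, formation of $p^*E$, and the HN-filtration). So it suffices to classify uniform bundles of type $(2;2,4;1,0)$.

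Next I would invoke the computer enumeration performed in the subsection: the only two solutions of equation \eqref{r1=2r2=4} are $(m_1,m_2^0,m_2^1)=(0,0,0)$ and $(-1,0,0)$. For the trivial profile, one argues exactly as in Proposition \ref{(0,0,0)case}: $c_1(HN^1|_{p^{-1}(x)})=0$ and $HN^1|_{p^{-1}(x)}$ is a rank-$2$ subbundle of a trivial bundle on $\mathbb{P}^3$, hence trivial by the corollary after Theorem 3.2.1 of \cite{O-S-S}; Corollary \ref{all-trivial-lem} then yields $E\cong\mathcal{O}_{\mathbb{P}^4}^{\oplus 2}(1)\oplus\mathcal{O}_{\mathbb{P}^4}^{\oplus 4}$. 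The profile $(-1,0,0)$ is handled by the proposition immediately preceding, which gives $E\cong\mathcal{O}_{\mathbb{P}^4}(1)\oplus\mathcal{O}_{\mathbb{P}^4}\oplus T_{\mathbb{P}^4}(-1)$.

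Finally I would untwist: if $E\otimes\mathcal{O}_{\mathbb{P}^4}(1-u_1)\cong\mathcal{O}^{\oplus 2}(1)\oplus\mathcal{O}^{\oplus 4}$ then $E\cong\mathcal{O}^{\oplus 2}(u_1)\oplus\mathcal{O}^{\oplus 4}(u_1-1)$, and in the other case one recovers $\mathcal{O}(u_1)\oplus\mathcal{O}(u_1-1)\oplus T_{\mathbb{P}^4}(u_1-2)$. Since the enumeration of numerical solutions is exhaustive and the two classifications are each rigid (the first case forces a direct sum via Corollary \ref{all-trivial-lem}, and the second is fully pinned down in the preceding proof), no further possibilities can arise.

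Honestly there is no real obstacle here: the proposition is an assembly statement, and the only subtle check is that the line-bundle twist genuinely lines up with the enumeration of $(m_1,m_2^0,m_2^1)$. This is automatic because the $S_i$ for the twisted bundle coincide with those for $E$ after the substitution $u_i \mapsto u_i - (u_1 - 1)$, which takes $(u_1,u_1-1)$ to $(1,0)$ and leaves the numerical constants $m_j$ untouched.
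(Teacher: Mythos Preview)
Your proposal is correct and follows essentially the same approach as the paper: reduce by a line-bundle twist to the normalized splitting type $(2;2,4;1,0)$, invoke the computer enumeration of solutions to the Chern polynomial equation, handle $(m_1,m_2^0,m_2^1)=(0,0,0)$ via the argument of Proposition~\ref{(0,0,0)case} together with Corollary~\ref{all-trivial-lem}, handle $(-1,0,0)$ via the immediately preceding proposition, and then untwist. The only difference is that you make the twist-and-untwist step explicit, whereas the paper absorbs it into the phrase ``without loss of generality, we set $u_2=0$''.
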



 \subsection{$k=3,r_1=1,r_2=2,r_3=3$}
We suppose that
\begin{equation}\label{r1=1r2=2}
c_{p^{*}E}(T)=\overset{3}{\underset{i=1}{\prod}}S_i(T+u_{i}U,U,V).
\end{equation}
Without loss of generality, we may assume $u_3=0$. We also assume that
\[S_{1}(T,U,V)=T+a(U+V),\]
\[S_{2}(T,U,V)=T^2+b_1(U+V)T+b_2^0U^2+b_2^1UV+b_2^0V^2\] and
\[S_{3}(T,U,V)=T^3+n_1(U+V)T^2+(n_2^0U^2+n_2^1UV+n_2^0V^2)T+(n_3^0U^3+n_3^1U^2V+n_3^1UV^2+n_3^0V^3),\]
where $a, b_1, b_2^0, b_2^1, b_2^2, n_1, n_2^0, n_2^1, n_3^0, n_3^1$ are all constants.

Using computer calculation, we get only following three solutions of equation (\ref{r1=1r2=2}).
  \begin{center}
 	$(a,b_1,b_2^0,b_2^1,n_1,n_2^0,n_2^1,n_3^0,n_3^1)=(0,0,0,0,0,0,0,0,0)$,
 	
 	$(a,b_1,b_2^0,b_2^1,n_1,n_2^0,n_2^1,n_3^0,n_3^1)=(0,-1,0,0,1,1,1, 1, 1)$,
 	
 	$(a,b_1,b_2^0,b_2^1,n_1,n_2^0,n_2^1,n_3^0,n_3^1)=(-2, 0, 0, 0,2, 4, 4, 8, 8)$.
 \end{center}
If $(a,b_1,b_2^0,b_2^1,n_1,n_2^0,n_2^1,n_3^0,n_3^1)=(0,0,0,0,0,0,0,0,0)$, then by the similar arguments in the proof of Proposition \ref{(0,0,0)case}, $E\cong \mathcal{O}_{\mathbb{P}^{4}}(2)\oplus\mathcal{O}_{\mathbb{P}^{4}}^{\oplus 2}(1)\oplus \mathcal{O}_{\mathbb{P}^{4}}^{\oplus 3}.$

 \begin{proposition}\label{0-1}
 	If $(a,b_1,b_2^0,b_2^1,n_1,n_2^0,n_2^1,n_3^0,n_3^1)=(0,-1,0,0,1,1,1, 1, 1)$, then  $$E\cong \mathcal{O}_{\mathbb{P}^4}(2) \oplus \mathcal{O}_{\mathbb{P}^4}(1)\oplus  T_{\mathbb{P}^4}(-1).$$
 \end{proposition}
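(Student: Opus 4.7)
The plan is to imitate the reduction used in Propositions \ref{(-1,0,0)case} and \ref{exterior T}: I first identify the sub-line-bundle coming from $HN^1$, then peel it off and recognise the rank-$5$ quotient via Ballico's classification of uniform $(n+1)$-bundles on $\mathbb{P}^n$.

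With the given parameters, $S_1(T,U,V)=T$ and $u_1=2$, so $c_{HN^1}(T)=S_1(T+2U,U,V)=T+2U$. Thus $HN^1$ is a line bundle whose restriction to every $p$-fibre is trivial. Consequently $p_{*}HN^1\cong\mathcal{O}_{\mathbb{P}^4}(2)$, $R^1p_{*}HN^1=0$, and the natural map $p^{*}p_{*}HN^1\to HN^1$ is an isomorphism, so $HN^1\cong p^{*}\mathcal{O}_{\mathbb{P}^4}(2)$. Pushing the short exact sequence $0\to HN^1\to p^{*}E\to F\to 0$ forward by $p_{*}$ then yields
\[0\longrightarrow \mathcal{O}_{\mathbb{P}^4}(2)\longrightarrow E\longrightarrow E'\longrightarrow 0,\]
where $E':=p_{*}F$ is a vector bundle of rank $5$. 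On any line $L$, the vanishing $\mathrm{Hom}(\mathcal{O}_L(2),\mathcal{O}_L(a))=0$ for $a<2$ forces the inclusion $\mathcal{O}(2)\hookrightarrow E$ to land in (and split off) the highest summand of $E|_L$; hence $\mathcal{O}(2)\subset E$ is a subbundle and $E'$ is uniform of splitting type $(2;2,3;1,0)$.

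Since $E'$ is a uniform $5$-bundle on $\mathbb{P}^4$, Ballico's theorem limits the possibilities, and this splitting type rules out the $\Omega^1$-form; so $E'\cong\mathcal{O}(1)^{\oplus 2}\oplus\mathcal{O}^{\oplus 3}$ or $E'\cong T_{\mathbb{P}^4}(-1)\oplus\mathcal{O}(1)$. The first case would give $E\cong\mathcal{O}(2)\oplus\mathcal{O}(1)^{\oplus 2}\oplus\mathcal{O}^{\oplus 3}$ (by an $\mathrm{Ext}^1$-vanishing), whose HN-filtration realises the trivial solution $(0,0,\ldots,0)$ of (\ref{r1=1r2=2}) and contradicts the hypothesis $b_1=-1$. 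Therefore $E'\cong T_{\mathbb{P}^4}(-1)\oplus\mathcal{O}(1)$. Finally
\[\mathrm{Ext}^1\bigl(T_{\mathbb{P}^4}(-1)\oplus\mathcal{O}(1),\mathcal{O}_{\mathbb{P}^4}(2)\bigr)\cong H^1(\Omega^1_{\mathbb{P}^4}(3))\oplus H^1(\mathcal{O}_{\mathbb{P}^4}(1))=0\]
by Bott's formula and standard projective-space vanishing, so the extension splits and $E\cong\mathcal{O}_{\mathbb{P}^4}(2)\oplus\mathcal{O}_{\mathbb{P}^4}(1)\oplus T_{\mathbb{P}^4}(-1)$.

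The main obstacle, in my view, is the discrimination step: I must be sure that the parametric hypothesis genuinely separates the two Ballico candidates. The key observation is that a completely decomposable $E$ would force every HN-quotient to be pulled back from $\mathbb{P}^4$, giving $S_2(T,U,V)=T^2$, i.e. $b_1=b_2^0=b_2^1=0$; this immediately rules out the decomposable candidate. Everything else (the peeling off, the uniformity of $E'$, and the concluding Ext-vanishing) is parallel to arguments already used in this section.
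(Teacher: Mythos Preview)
Your proof is correct and follows essentially the same line as the paper: identify $HN^1\cong p^*\mathcal{O}_{\mathbb{P}^4}(2)$, push forward to split off $\mathcal{O}_{\mathbb{P}^4}(2)$, recognise the rank-$5$ quotient via the classification of uniform $(n+1)$-bundles, and conclude by an $\mathrm{Ext}^1$-vanishing. The paper is terser---it cites \cite{Ell} rather than Ballico and does not spell out the discrimination between the two admissible candidates (presumably relying on the Chern classes of $E'$)---so your explicit elimination of the decomposable case via the hypothesis $b_1=-1$ is a welcome clarification rather than a departure.
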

\begin{proof}
Now $S_{1}(T,U,V)=T$ and $c_{HN^{1}}=T+2U$. So for a point $x\in \mathbb{P}^4$, $HN^{1}|_{p_{-1}(x)}\cong \mathcal{O}_{\mathbb{P}^3}$. By Lemma \ref{trivial-lemma} and the Chern polynomial of $HN^1$, $HN^1\cong p^{*}\mathcal{O}_{\mathbb{P}^4}(2)$. Applying $p_{*}$ to the exact sequence
	\[0\longrightarrow HN^1\longrightarrow p^{*}E\longrightarrow F\longrightarrow 0\]
	for some quotient bundle $F$, we get the exact sequence
	\[0\longrightarrow \mathcal{O}_{\mathbb{P}^4}(2)\longrightarrow E\longrightarrow p_*F\longrightarrow0.\]
	It is not hard to see that $ p_*F$ is a uniform bundle. Thus $ p_*F\cong T_{\mathbb{P}^4}(-1)\oplus \mathcal{O}_{\mathbb{P}^4}(1)$ by the result in \cite{Ell}. Thus, $E\cong \mathcal{O}_{\mathbb{P}^4}(2) \oplus \mathcal{O}_{\mathbb{P}^4}(1)\oplus  T_{\mathbb{P}^4}(-1)$.
\end{proof}

 \begin{proposition}\label{-2 case}
 	 If $(a,b_1,b_2^0,b_2^1,n_1,n_2^0,n_2^1,n_3^0,n_3^1)=(-2, 0, 0, 0,2, 4, 4, 8, 8)$, then there does not exist a uniform vector bundle.
 \end{proposition}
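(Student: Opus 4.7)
The plan is to argue by contradiction. Suppose a uniform bundle $E$ with the stated splitting $(3;1,2,3;2,1,0)$ and Chern polynomial data exists; I will force $E$ to be $\mathcal{O}_{\mathbb{P}^4}(2)\oplus\mathcal{O}_{\mathbb{P}^4}(1)^{\oplus 2}\oplus\mathcal{O}_{\mathbb{P}^4}^{\oplus 3}$ by analysing the HN-filtration, and then observe that the resulting $c_2$ does not match the $c_2$ one reads directly from the prescribed Chern polynomial.

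First I compute $c_{HN^1}(T,U,V)=S_1(T+2U,U,V)=T-2V$, so $HN^1$ is the line bundle with $c_1=2V$; equivalently $HN^1\cong\mathcal{H}_{T_{\mathbb{P}^4}(-1)}^{\otimes 2}$. Restricting to any $p$-fibre yields $HN^1|_{p^{-1}(x)}\cong\mathcal{O}_{\mathbb{P}^3}(-2)$, whose cohomology vanishes in every degree, so $R^{i}p_{*}HN^1=0$ for all $i$. Next, $HN^2/HN^1=q^{*}E_2\otimes p^{*}\mathcal{O}(1)$ restricts on a $p$-fibre to a rank $2$ bundle on $\mathbb{P}^3$ with Chern polynomial $S_2(T,U,V)|_{U=0}=T^2$, i.e.\ with $c_1=c_2=0$. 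No stable rank $2$ bundle on $\mathbb{P}^3$ has these invariants, so the bundle is strictly semistable with $\mathcal{O}_{\mathbb{P}^3}$ as a saturated destabilising sub, and the resulting self-extension splits because $\mathrm{Ext}^1(\mathcal{O}_{\mathbb{P}^3},\mathcal{O}_{\mathbb{P}^3})=0$, forcing $q^{*}E_2|_{p^{-1}(x)}\cong\mathcal{O}_{\mathbb{P}^3}^{\oplus 2}$. Lemma \ref{trivial-lemma} then yields $HN^2/HN^1\cong p^{*}\mathcal{O}_{\mathbb{P}^4}(1)^{\oplus 2}$.

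The sequence $0\to HN^1\to HN^2\to p^{*}\mathcal{O}(1)^{\oplus 2}\to 0$ is classified by $\mathrm{Ext}^1(p^{*}\mathcal{O}(1)^{\oplus 2},HN^1)\cong H^1(F_4,p^{*}\mathcal{O}(-1)^{\oplus 2}\otimes HN^1)$, which vanishes by the Leray spectral sequence and $R^{i}p_{*}HN^1=0$, so $HN^2\cong HN^1\oplus p^{*}\mathcal{O}(1)^{\oplus 2}$. The direct summand $p^{*}\mathcal{O}(1)^{\oplus 2}\subset HN^2\subset p^{*}E$ therefore descends via $p_{*}$ to a subbundle $\mathcal{O}_{\mathbb{P}^4}(1)^{\oplus 2}\hookrightarrow E$. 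Set $E':=E/\mathcal{O}(1)^{\oplus 2}$; it is a uniform rank $4$ bundle whose splitting type on every line is $(2,0,0,0)$. Sato's classification of uniform $4$-bundles on $\mathbb{P}^4$ rules out the twists $T_{\mathbb{P}^4}(a)$ and $\Omega^{1}_{\mathbb{P}^4}(b)$ by splitting-type matching, so $E'\cong\mathcal{O}_{\mathbb{P}^4}(2)\oplus\mathcal{O}_{\mathbb{P}^4}^{\oplus 3}$. Since $\mathrm{Ext}^1(E',\mathcal{O}(1)^{\oplus 2})=H^1(\mathbb{P}^4,\mathcal{O}(-1)^{\oplus 2}\oplus\mathcal{O}(1)^{\oplus 6})=0$, the extension $0\to\mathcal{O}(1)^{\oplus 2}\to E\to E'\to 0$ splits, giving $E\cong\mathcal{O}_{\mathbb{P}^4}(2)\oplus\mathcal{O}_{\mathbb{P}^4}(1)^{\oplus 2}\oplus\mathcal{O}_{\mathbb{P}^4}^{\oplus 3}$, hence $c_2(E)=5H^2$.

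The contradiction now emerges by expanding the prescribed Chern polynomial $(T-2V)(T+U)^2 S_3(T,U,V)$ in $A(F_4)$: the coefficient of $T^4$ is $9U^2$, so $c_2(E)=9H^2\ne 5H^2$, and no such bundle exists. The main obstacles I anticipate are two: (i) establishing that $q^{*}E_2|_{p^{-1}(x)}$ is genuinely trivial, which needs the fact that rank $2$ sheaves on $\mathbb{P}^3$ with vanishing first two Chern classes are trivial; and (ii) justifying that the summand $p^{*}\mathcal{O}(1)^{\oplus 2}$ of $HN^2$ descends not merely to a subsheaf but to a subbundle of $E$, which requires a cohomology-and-base-change check that the cokernel on $F_4$ is $p$-trivial. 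Once these are secured, the remaining Ext-vanishings and the Chern class comparison are routine.
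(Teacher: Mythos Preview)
Your argument is correct, and the first half (identifying $HN^1\cong\mathcal{H}^{\otimes 2}$, showing $HN^2/HN^1\cong p^*\mathcal{O}(1)^{\oplus 2}$, and splitting $HN^2$) coincides with the paper's. The divergence is in how the contradiction is reached. The paper applies the snake lemma to the diagram comparing $p^*p_*(p^*E/HN^2)\to p^*E/HN^2$ with $HN^2\hookrightarrow p^*E$ to obtain an exact sequence
\[
0\longrightarrow \mathcal{O}_{\mathbb{P}(T_{\mathbb{P}^4}(-1))}(-2)\longrightarrow p^*W\longrightarrow p^*E/HN^2\longrightarrow 0,
\]
then observes that on $q$-fibres the left term is $\mathcal{O}(2)$ and the right term is trivial, so $\mathcal{H}om(\mathcal{O}_{\mathbb{P}(T(-1))}(-2),p^*E/HN^2)=0$; the Descente-Lemma then forces $\mathcal{O}_{\mathbb{P}(T(-1))}(-2)\cong p^*E'$ for some bundle on $\mathbb{P}^4$, an immediate contradiction. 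Your route instead descends the summand $p^*\mathcal{O}(1)^{\oplus 2}$ to a subbundle of $E$, invokes the classification of uniform $4$-bundles on $\mathbb{P}^4$ to identify the quotient, and gets a $c_2$-mismatch. Both work; the paper's approach is shorter and avoids appealing to the rank-$4$ classification, while yours is more hands-on and reuses machinery already in the paper.

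One point to tighten: your proof that $(HN^2/HN^1)|_{p^{-1}(x)}\cong\mathcal{O}_{\mathbb{P}^3}^{\oplus 2}$ asserts ``no stable rank~$2$ bundle on $\mathbb{P}^3$ has $c_1=c_2=0$'' and silently skips the unstable case. The paper handles this differently: it first classifies $HN^2|_{p^{-1}(x)}$ (which is a subbundle of a trivial bundle, so its dual is globally generated with $c_1=2$, $c_2=c_3=0$, hence $\mathcal{O}(2)\oplus\mathcal{O}^{\oplus2}$ by \cite{S-J-U}), and then shows the quotient $M$ has $H^1(M(k))=0$ for all $k$, so Horrocks' criterion gives $M\cong\mathcal{O}^{\oplus 2}$. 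You could adopt this, or simply note that any nonzero section of such an $M$ has empty zero locus (since $c_2=0$), giving $\mathcal{O}\hookrightarrow M$ as a subbundle with quotient $\mathcal{O}$; the existence of a section follows from $\chi(M)=2$ together with $h^3(M)=h^0(M(-4))=0$ and $h^2(M)=h^1(M(-4))=0$, both of which hold because $M$ is a quotient of $\mathcal{O}(-2)\oplus\mathcal{O}^{\oplus2}$.
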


 \begin{proof}
 	In this case,  $S_{1}(T,U,V)=T-2(U+V)$, $S_2(T,U,V)=T^2$, $c_{HN^{1}}=T-2V$ and $c_{HN^{2}}=(T-2V)(T+U)^2$. So $HN^{1}|_{p^{-1}(x)}\cong \mathcal{O}_{\mathbb{P}^3}(-2)$,  $c_{1}(HN^2|_{p^{-1}(x)})=\mathcal{O}_{\mathbb{P}^3}(-2)$, $c_2(HN^2|_{p^{-1}(x)})=0$ and $c_3(HN^2|_{p^{-1}(x)})=0$ for each point $x\in \mathbb{P}^4$. Since $HN^{2}|_{p^{-1}(x)}$ is a subbundle of the trivial bundle, its dual is globally generated. By the main theorem in \cite{S-J-U}, $HN^{2}|_{p^{-1}(x)}\cong \mathcal{O}_{\mathbb{P}^3}(-2)\oplus \mathcal{O}^{\oplus 2}_{\mathbb{P}^3}$.
 	
 	So restricting $HN$-sequence to $p$-fiber, we have the exact sequence
 	\[0\longrightarrow \mathcal{O}_{p^{-1}(x)}(-2)\longrightarrow \mathcal{O}_{p^{-1}(x)}(-2)\oplus \mathcal{O}^{\oplus 2}_{p^{-1}(x)}\longrightarrow HN^{2}/HN^1|_{p^{-1}(x)}\longrightarrow 0.\]
 	
 	Denote $M:=HN^{2}/HN^1|_{p^{-1}(x)}$. Then $M$ is a rank $2$ bundle with $c_1=c_2=0$. So $M\cong {M}^{\vee}$. Clearly, $H^{1}(p^{-1}(x), M(k))=0$ for all $k$. By Horrocks' theorem (cf. Theorem 2.3.1 in \cite{O-S-S} chapter 1), $M\cong \mathcal{O}^{\oplus 2}_{\mathbb{P}^3}$. Thus by Lemma \ref{trivial-lemma}, $p_*(HN^{2}/HN^1)\cong \mathcal{O}^{\oplus 2}_{\mathbb{P}^4}$ and $HN^{2}/HN^1\cong p^*\mathcal{O}^{\oplus 2}_{\mathbb{P}^4}(1)$.
Since we have the exact sequence
\[0\longrightarrow HN^1\longrightarrow HN^2\longrightarrow HN^{2}/HN^1\longrightarrow 0\] and $HN^1=\mathcal{O}_{\mathbb{P}(T_{\mathbb{P}^4}(-1))}(-2)$, so we have the exact sequence
\[0\longrightarrow \mathcal{O}_{\mathbb{P}(T_{\mathbb{P}^4}(-1))}(-2)\longrightarrow HN^2\longrightarrow p^*\mathcal{O}^{\oplus 2}_{\mathbb{P}^4}(1)\longrightarrow 0.\]
 Since $$H^1(\mathbb{P}(T_{\mathbb{P}^4}(-1)), \mathcal{O}_{\mathbb{P}(T_{\mathbb{P}^4}(-1))}(-2)\otimes p^*\mathcal{O}^{\oplus 2}_{\mathbb{P}^4}(-1))=H^1(\mathbb{P}^4, p_*\mathcal{O}_{\mathbb{P}(T_{\mathbb{P}^4}(-1))}(-2)\otimes\mathcal{O}^{\oplus 2}_{\mathbb{P}^4}(-1))=0,$$ we have \[HN^2\cong \mathcal{O}_{\mathbb{P}(T_{\mathbb{P}^4}(-1))}(-2) \oplus p^*\mathcal{O}^{\oplus 2}_{\mathbb{P}^4}(1).\]
 	
By the same method used in Lemma \ref{deco-lem}, applying $p_{*}$ to $HN$-sequence, we have the following commutative diagram.
 	\begin{center}
 		$\xymatrix{
 			0\ar[r]& 	p^{*}\mathcal{O}^{\oplus 2}_{\mathbb{P}^4}(1) \ar[r] \ar[d]& p^{*}E\ar[r]\ar[d]^{id}& p^{*}W\ar[r]\ar[d]&0\\
 			0\ar[r] & \mathcal{O}_{\mathbb{P}(T_{\mathbb{P}^4}(-1))}(-2)\oplus p^{*}\mathcal{O}^{\oplus 2}_{\mathbb{P}^4}(1) \ar[r] &p^{*}E\ar[r] & p^{*}E/HN^2\ar[r]&0,\\
 		}$
 	\end{center}
 	where $W=p_{*}(p^{*}E/HN^2)$ .
 	
 	By the snake lemma, the following sequence is exact:
 	\[0\longrightarrow \mathcal{O}_{\mathbb{P}(T_{\mathbb{P}^4}(-1))}(-2)\longrightarrow p^{*}W\longrightarrow p^{*}E/HN^2\longrightarrow 0. \]
 	
 	According to the splitting type of $E$, the restriction of $p^{*}E/HN^2$ to the $q$-fiber at some point $l\in \mathbb{G}(1,4)$, $p^{*}E/HN^2|_{q^{-1}(l)}$ is trivial. Moreover, $\mathcal{O}_{\mathbb{P}(T_{\mathbb{P}^4}(-1))}(-2)|_{q^{-1}(l)}\cong \mathcal{O}_{q^{-1}(l)}(2)$. Since $\mathcal{H}om(\mathcal{O}_{q^{-1}(l)}(2), \mathcal{O}_{q^{-1}(l)})=0$, we have $\mathcal{H}om(\mathcal{O}_{\mathbb{P}(T_{\mathbb{P}^4}(-1))}(-2), p^{*}E/HN^2)=0$. So by Descente-Lemma (cf. Lemma 2.1.2 in \cite{O-S-S} charpter 2), there is a vector bundle $E'$ over $\mathbb{P}^4$ such that $p^{*}E'\cong \mathcal{O}_{\mathbb{P}(T_{\mathbb{P}^4}(-1))}(-2)$, which is a contradiction.
 \end{proof}

  So we prove the following proposition:
  \begin{proposition}
 	The uniform vector bundle $E$ over $\mathbb{P}^4$ with splitting form of $(3;1,2,3;u_1,u_1-1,u_1-2)$ is isomorphic to either $$ \mathcal{O}_{\mathbb{P}^{4}}(u_1)\oplus\mathcal{O}_{\mathbb{P}^{4}}^{\oplus 2}(u_1-1)\oplus \mathcal{O}_{\mathbb{P}^{4}}^{\oplus 3}(u_1-2)$$ or $$ \mathcal{O}_{\mathbb{P}^4}(u_1) \oplus \mathcal{O}_{\mathbb{P}^4}(u_1-1)\oplus  T_{\mathbb{P}^4}(u_1-3).$$
  \end{proposition}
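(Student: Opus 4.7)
The plan is to assemble the proposition directly from the three preceding results, together with a twist-normalization argument to handle the parameter $u_1$. First, I would observe that if $E$ has splitting type $(3;1,2,3;u_1,u_1-1,u_1-2)$, then $E\otimes \mathcal{O}_{\mathbb{P}^4}(2-u_1)$ has splitting type $(3;1,2,3;2,1,0)$, so it is enough to prove the proposition under the normalization $u_1=2$, $u_2=1$, $u_3=0$, and then tensor the two conclusions back by $\mathcal{O}_{\mathbb{P}^4}(u_1-2)$.

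Under this normalization, the Chern-polynomial identity (\ref{r1=1r2=2}) was shown via a computer algebra system to admit exactly three solutions for the coefficients $(a,b_1,b_2^0,b_2^1,n_1,n_2^0,n_2^1,n_3^0,n_3^1)$. So the second step is simply a trichotomy: in case $(0,0,0,0,0,0,0,0,0)$, each $HN^i/HN^{i-1}$ has trivial Chern classes, so restricts to the trivial bundle on every $p$-fiber $\mathbb{P}^3$ (by the standard fact on subbundles of a trivial bundle with $c_1=0$, cf.\ \cite{O-S-S}); Corollary \ref{all-trivial-lem} then forces $E\cong \mathcal{O}_{\mathbb{P}^4}(2)\oplus \mathcal{O}_{\mathbb{P}^4}^{\oplus 2}(1)\oplus \mathcal{O}_{\mathbb{P}^4}^{\oplus 3}$. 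The case $(0,-1,0,0,1,1,1,1,1)$ is Proposition \ref{0-1}, yielding $\mathcal{O}_{\mathbb{P}^4}(2)\oplus\mathcal{O}_{\mathbb{P}^4}(1)\oplus T_{\mathbb{P}^4}(-1)$. The case $(-2,0,0,0,2,4,4,8,8)$ is ruled out by Proposition \ref{-2 case}.

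The main obstacle here is really nothing new: all the structural work (verifying the parameter list, analyzing each triple, and especially the descent/Horrocks argument that excludes the third triple) has already been carried out in the preceding propositions. The only remaining subtlety is bookkeeping the twist, namely checking that the quotient bundle $E'$ appearing in the analogue of Proposition \ref{0-1} in the twisted setting is still uniform with the expected splitting type, which is routine since tensoring by a line bundle preserves uniformity and shifts the splitting type uniformly.

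Thus the proof consists of: (i) reduce to $u_1=2$; (ii) invoke the classification of solutions from the Chern polynomial computation; (iii) cite the three propositions to identify or exclude each case; (iv) tensor the surviving two isomorphism classes by $\mathcal{O}_{\mathbb{P}^4}(u_1-2)$ to obtain the two bundles in the statement.
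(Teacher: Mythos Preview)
Your proposal is correct and follows essentially the same approach as the paper: normalize to $u_3=0$, invoke the computer-verified list of three solutions to (\ref{r1=1r2=2}), dispatch the first via Corollary~\ref{all-trivial-lem}, the second via Proposition~\ref{0-1}, rule out the third via Proposition~\ref{-2 case}, and twist back. The only quibble is that your remark about ``the analogue of Proposition~\ref{0-1} in the twisted setting'' is unnecessary, since Proposition~\ref{0-1} is already stated and proved in the normalized case $u_3=0$ directly.
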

		
\vspace{1cm}

 \subsection{$k=3,r_1=2,r_2=1,r_3=3$}
 We suppose that
\begin{equation}\label{r1=2r2=1}
c_{p^{*}E}(T)=\overset{3}{\underset{i=1}{\prod}}S_i(T+u_{i}U,U,V).
\end{equation}
Without loss of generality, we may assume $u_3=0$. We also assume that
\[S_{2}(T,U,V)=T+a(U+V),\]
\[S_{1}(T,U,V)=T^2+b_1(U+V)T+b_2^0U^2+b_2^1UV+b_2^0V^2\] and
\[S_{3}(T,U,V)=T^3+n_1(U+V)T^2+(n_2^0U^2+n_2^1UV+n_2^0V^2)T+(n_3^0U^3+n_3^1U^2V+n_3^1UV^2+n_3^0V^3),\]
where $a, b_1, b_2^0, b_2^1, b_2^2, n_1, n_2^0, n_2^1, n_3^0, n_3^1$ are all constants.

Using computer calculation, we get only following three solutions of equation (\ref{r1=2r2=1}).
  \begin{center}
 	$(a,b_1,b_2^0,b_2^1,n_1,n_2^0,n_2^1,n_3^0,n_3^1)=(0,0,0,0,0,0,0,0,0)$,
 	
 	$(a,b_1,b_2^0,b_2^1,n_1,n_2^0,n_2^1,n_3^0,n_3^1)=(-1,0,0,0,1,1,1, 1, 1)$,
 	
 	$(a,b_1,b_2^0,b_2^1,n_1,n_2^0,n_2^1,n_3^0,n_3^1)=(0, -2, 0, 0,2, 4, 4, 8, 8)$.
 \end{center}
If $(a,b_1,b_2^0,b_2^1,n_1,n_2^0,n_2^1,n_3^0,n_3^1)=(0,0,0,0,0,0,0,0,0)$, then by the similar arguments in the proof of Proposition \ref{(0,0,0)case}, Proposition \ref{0-1}, Proposition \ref{-2 case},
%
%
%
%
we have the theorem as follows.
 \begin{proposition}
 	The uniform vector bundle $E$ over $\mathbb{P}^4$ with splitting form of $(3;2,1,3;u_1,u_1-1,u_1-2)$ is isomorphic to either $$ \mathcal{O}^{\oplus 2}_{\mathbb{P}^{4}}(u_1)\oplus\mathcal{O}_{\mathbb{P}^{4}}(u_1-1)\oplus \mathcal{O}_{\mathbb{P}^{4}}^{\oplus 3}(u_1-2)$$ or $$ \mathcal{O}^{\oplus 2}_{\mathbb{P}^4}(u_1) \oplus  T_{\mathbb{P}^4}(u_1-3).$$
  \end{proposition}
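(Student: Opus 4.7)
My plan is to dispatch the three Chern-polynomial solutions of (\ref{r1=2r2=1}) in turn, mirroring closely the proof template of the preceding subsection on $(k;r_1,r_2,r_3)=(3;1,2,3)$. Throughout I use the normalisation $u_1=2$, $u_2=1$, $u_3=0$; twisting by $\mathcal{O}_{\mathbb{P}^4}(u_1-2)$ at the very end recovers the general statement.

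For the all-zero solution, I first compute $S_1(T,U,V)=T^2$, so $c_{HN^1}(T)=(T+2U)^2$ and the restriction of $HN^1$ to every $p$-fibre $\mathbb{P}^3$ is a rank-$2$ subbundle of the trivial bundle with all Chern classes vanishing. Exactly as in Proposition~\ref{(0,0,0)case}, this forces $HN^1|_{p^{-1}(x)}\cong\mathcal{O}_{\mathbb{P}^3}^{\oplus 2}$, and inducting up the filtration via Corollary~\ref{all-trivial-lem} yields $E\cong\mathcal{O}^{\oplus 2}_{\mathbb{P}^4}(u_1)\oplus\mathcal{O}_{\mathbb{P}^4}(u_1-1)\oplus\mathcal{O}^{\oplus 3}_{\mathbb{P}^4}(u_1-2)$.

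For the solution $(-1,0,0,0,1,1,1,1,1)$ we again have $S_1=T^2$, so the same fibrewise argument and Lemma~\ref{trivial-lemma} promote $HN^1$ to $p^{*}\mathcal{O}^{\oplus 2}_{\mathbb{P}^4}(2)$ globally. Pushing the $HN$-sequence down by $p_{*}$ yields the short exact sequence
\[
0\longrightarrow\mathcal{O}^{\oplus 2}_{\mathbb{P}^4}(u_1)\longrightarrow E\longrightarrow p_{*}F\longrightarrow 0,
\]
and a restriction-to-a-line check shows $p_{*}F$ is a rank-$4$ uniform bundle of splitting type $(2;1,3;1,0)$. Ellia's classification (as used in Proposition~\ref{0-1}) forces $p_{*}F\cong T_{\mathbb{P}^4}(-1)$, and the vanishing $\operatorname{Ext}^1(T_{\mathbb{P}^4}(-1),\mathcal{O}^{\oplus 2}_{\mathbb{P}^4}(2))=H^1(\mathbb{P}^4,\Omega^1_{\mathbb{P}^4}(3))^{\oplus 2}=0$ splits the extension, producing $E\cong\mathcal{O}^{\oplus 2}_{\mathbb{P}^4}(u_1)\oplus T_{\mathbb{P}^4}(u_1-3)$.

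The main obstacle is the third solution $(0,-2,0,0,2,4,4,8,8)$, which I expect to rule out by contradiction, recycling the Descente-Lemma argument of Proposition~\ref{-2 case}. Here $c_{HN^1}(T)=(T+2U)(T-2V)$, and on each $p$-fibre $HN^1$ is a rank-$2$ subbundle of the trivial bundle with $c_1=-2H$ and $c_2=0$; its dual is globally generated, so the classification in \cite{S-J-U} pins down $HN^1|_{p^{-1}(x)}\cong\mathcal{O}_{\mathbb{P}^3}(-2)\oplus\mathcal{O}_{\mathbb{P}^3}$ on every fibre. The trivial summand activates Lemma~\ref{deco-lem}, peeling off $\mathcal{O}_{\mathbb{P}^4}(2)\hookrightarrow HN^1$; the residual rank-$1$ piece has Chern polynomial $T-2V$ and, assuming the cohomology vanishing $\operatorname{Ext}^1(\mathcal{O}_{\mathbb{P}(T_{\mathbb{P}^4}(-1))}(-2),p^{*}\mathcal{O}_{\mathbb{P}^4}(2))=0$ (which reduces via the projection formula to a computation on $\mathbb{P}^4$), splits off as $\mathcal{O}_{\mathbb{P}(T_{\mathbb{P}^4}(-1))}(-2)$. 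From this point the snake-diagram and Descente-Lemma argument of Proposition~\ref{-2 case} apply verbatim, forcing $\mathcal{O}_{\mathbb{P}(T_{\mathbb{P}^4}(-1))}(-2)$ to descend to a bundle on $\mathbb{P}^4$, which is impossible and yields the desired contradiction. The delicate step, and the one most likely to require extra care, is precisely this global splitting of $HN^1$: once it is in hand, the remainder of the argument is a direct transcription of the template already developed.
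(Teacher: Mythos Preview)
Your proposal is correct and follows the same template the paper invokes (Propositions~\ref{(0,0,0)case}, \ref{0-1}, \ref{-2 case}); indeed the paper's own proof here is just ``by similar arguments'' to those three propositions. Two small remarks: in the second case your quotient $p_*F$ has rank $4=n$, so the relevant classification is the Elencwajg--Hirschowitz--Schneider/Sato result for rank-$n$ uniform bundles rather than Ellia's rank-$(n+1)$ theorem (the conclusion $p_*F\cong T_{\mathbb{P}^4}(-1)$ is unchanged); and in the third case the argument is not quite ``verbatim'' since you run it at the level of $HN^1$ (rank $2$) rather than $HN^2$ (rank $3$) as in Proposition~\ref{-2 case}, but the parallel works because $\mathrm{Hom}(\mathcal{O}_l(2),\mathcal{O}_l(1)\oplus\mathcal{O}_l^{\oplus 3})=0$ on every $q$-fibre, and your Ext vanishing $H^1(F_4,p^*\mathcal{O}(2)\otimes\mathcal{O}_{\mathbb{P}(T(-1))}(2))\cong H^1(\mathbb{P}^4,S^2T_{\mathbb{P}^4})=0$ is genuine.
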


 \vspace{1cm}

 \subsection{$k=3,r_1=1,r_2=3,r_3=2$}
 We suppose that
\begin{equation}\label{r1=1r2=3}
c_{p^{*}E}(T)=\overset{3}{\underset{i=1}{\prod}}S_i(T+u_{i}U,U,V).
\end{equation}
Without loss of generality, we may assume $u_3=0$. We also assume that
\[S_{1}(T,U,V)=T+a(U+V),\]
\[S_{3}(T,U,V)=T^2+b_1(U+V)T+b_2^0U^2+b_2^1UV+b_2^0V^2\] and
\[S_{2}(T,U,V)=T^3+n_1(U+V)T^2+(n_2^0U^2+n_2^1UV+n_2^0V^2)T+(n_3^0U^3+n_3^1U^2V+n_3^1UV^2+n_3^0V^3),\]
where $a, b_1, b_2^0, b_2^1, b_2^2, n_1, n_2^0, n_2^1, n_3^0, n_3^1$ are all constants.

Using computer calculation, we get only following three solutions of equation (\ref{r1=1r2=3}).
  \begin{center}
 	$(a,b_1,b_2^0,b_2^1,n_1,n_2^0,n_2^1,n_3^0,n_3^1)=(0,0,0,0,0,0,0,0,0)$,
 	
 	$(a,b_1,b_2^0,b_2^1,n_1,n_2^0,n_2^1,n_3^0,n_3^1)=(-1,0,0,0,1,1,1,1,1)$,
 	
 	$(a,b_1,b_2^0,b_2^1,n_1,n_2^0,n_2^1,n_3^0,n_3^1)=(0,1,0,0,-1,1,1, -1, -1)$.
 \end{center}
If $(a,b_1,b_2^0,b_2^1,n_1,n_2^0,n_2^1,n_3^0,n_3^1)=(0,0,0,0,0,0,0,0,0)$, then by the similar arguments in the proof of Proposition \ref{(0,0,0)case}, $E\cong \mathcal{O}_{\mathbb{P}^{4}}(2)\oplus\mathcal{O}_{\mathbb{P}^{4}}^{\oplus 3}(1)\oplus \mathcal{O}_{\mathbb{P}^{4}}^{\oplus 2}.$

If $(a,b_1,b_2^0,b_2^1,n_1,n_2^0,n_2^1,n_3^0,n_3^1)=(0,1,0,0,-1,1,1, -1, -1)$, then by same arguments in Proposition \ref{0-1}, $E$ has $\mathcal{O}_{\mathbb{P}^4}(2)$ as subbundle. Finally, it is easy to get that $E\cong \mathcal{O}_{\mathbb{P}^4}(2)\oplus\Omega_{\mathbb{P}^4}(2)\oplus \mathcal{O}_{\mathbb{P}^4}$.

If $(a,b_1,b_2^0,b_2^1,n_1,n_2^0,n_2^1,n_3^0,n_3^1)=(-1,0,0,0,1,1,1,1,1)$, then by the same arguments in Proposition \ref{(-1,0,0)case}, we can get that $E\cong T_{\mathbb{P}^4}\oplus \mathcal{O}^{\oplus2}_{\mathbb{P}^4}$.

So the following proposition holds.
\begin{proposition}
   	The uniform vector bundle $E$ over $\mathbb{P}^4$ with splitting form of $(3;1,3,2;u_1,u_1-1,u_1-2)$ is isomorphic to $$\mathcal{O}_{\mathbb{P}^{4}}(u_1)\oplus\mathcal{O}_{\mathbb{P}^{4}}^{\oplus 3}(u_1-1)\oplus \mathcal{O}_{\mathbb{P}^{4}}^{\oplus 2}(u_1-2),$$ $$\mathcal{O}_{\mathbb{P}^4}(u_1)\oplus\Omega_{\mathbb{P}^4}(u_1)\oplus \mathcal{O}_{\mathbb{P}^4}(u_1-2)$$ or $$T_{\mathbb{P}^4}(u_1-2)\oplus \mathcal{O}^{\oplus2}_{\mathbb{P}^4}(u_1-2).$$
\end{proposition}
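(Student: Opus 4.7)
The plan is to follow the blueprint of the earlier subsections: normalize by a twist so that $u_{3}=0$ (hence $u_{1}=2$, $u_{2}=1$), write down the Chern polynomial identity (\ref{r1=1r2=3}) in $A(F(1,2,5))=\mathbb{Z}[U,V]/\langle R^{4},U^{5}\rangle$, enumerate its solutions by computer algebra (which produces precisely the three tuples listed), and then identify $E$ in each case by analyzing $HN^{1}|_{p^{-1}(x)}$ and applying the descent / snake-lemma machinery already developed.

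For the all-zero tuple, $S_{1}=T$ and $S_{2}=T^{3}$, $S_{3}=T^{2}$ force each successive graded piece $HN^{i}/HN^{i-1}$ to have vanishing Chern classes when restricted to a $p$-fiber; each is therefore a subbundle of a trivial bundle with trivial Chern polynomial, and so trivial on $p$-fibers. Corollary \ref{all-trivial-lem} then collapses $E$ to the split sum $\mathcal{O}_{\mathbb{P}^{4}}(u_{1})\oplus\mathcal{O}_{\mathbb{P}^{4}}^{\oplus 3}(u_{1}-1)\oplus\mathcal{O}_{\mathbb{P}^{4}}^{\oplus 2}(u_{1}-2)$, exactly as in Proposition \ref{(0,0,0)case}. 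For the tuple $(0,1,0,0,-1,1,1,-1,-1)$ one still has $S_{1}=T$, so Lemma \ref{trivial-lemma} identifies $HN^{1}\cong p^{*}\mathcal{O}_{\mathbb{P}^{4}}(u_{1})$; pushing the HN-sequence down by $p_{*}$ produces $0\to\mathcal{O}_{\mathbb{P}^{4}}(u_{1})\to E\to E'\to 0$ with $E'$ a uniform rank-$5$ bundle of splitting type $(2;3,2;u_{1}-1,u_{1}-2)$. The classification of uniform bundles of rank at most $n+1$ on $\mathbb{P}^{n}$ (\cite{Ell}), together with a Chern-class match, forces $E'\cong\Omega^{1}_{\mathbb{P}^{4}}(u_{1})\oplus\mathcal{O}_{\mathbb{P}^{4}}(u_{1}-2)$; the vanishing of $\mathrm{Ext}^{1}(E',\mathcal{O}_{\mathbb{P}^{4}}(u_{1}))$ then splits the extension.

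The interesting case is the tuple $(-1,0,0,0,1,1,1,1,1)$. Here the Chern polynomial pins $HN^{1}$ down as the line bundle $\mathcal{H}_{T_{\mathbb{P}^{4}}(-1)}\otimes p^{*}\mathcal{O}_{\mathbb{P}^{4}}(u_{1}-1)$. Tensoring the inclusion $HN^{1}\hookrightarrow p^{*}E$ by $p^{*}\mathcal{O}_{\mathbb{P}^{4}}(1-u_{1})$ yields $\mathcal{H}_{T_{\mathbb{P}^{4}}(-1)}\hookrightarrow p^{*}(E\otimes\mathcal{O}_{\mathbb{P}^{4}}(1-u_{1}))$, and Proposition 3.5 of \cite{E-H-S} descends this to a subbundle inclusion $T_{\mathbb{P}^{4}}(u_{1}-2)\hookrightarrow E$. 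A fiberwise check on lines shows that the rank-$2$ quotient is a uniform bundle with only one splitting value $u_{1}-2$, hence is $\mathcal{O}_{\mathbb{P}^{4}}^{\oplus 2}(u_{1}-2)$, and the vanishing $H^{1}(\mathbb{P}^{4},T_{\mathbb{P}^{4}})=0$ makes the extension split.

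The main obstacle lies in this third case: one has to rigorously descend the line-bundle inclusion $HN^{1}\hookrightarrow p^{*}E$ to an actual saturated subbundle inclusion $T_{\mathbb{P}^{4}}(u_{1}-2)\hookrightarrow E$ via Proposition 3.5 of \cite{E-H-S}, and then unambiguously identify the rank-$2$ complement as the trivial uniform bundle using the prescribed splitting type. The first two cases, by contrast, are essentially routine adaptations of the arguments already carried out for the $(2;3,3)$ and $(3;1,2,3)$ splitting types.
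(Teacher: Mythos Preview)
Your proposal is correct and follows essentially the same approach as the paper: normalize, enumerate the three solutions of the Chern polynomial equation (\ref{r1=1r2=3}), and handle each by the same toolkit (Corollary~\ref{all-trivial-lem} for the zero tuple, Lemma~\ref{trivial-lemma} plus the rank-$5$ classification from \cite{Ell} for the $(0,1,0,0,\ldots)$ tuple, and Proposition~3.5 of \cite{E-H-S} for the $(-1,0,0,0,\ldots)$ tuple). The only cosmetic difference is that in the third case the paper points to Proposition~\ref{(-1,0,0)case}, whose snake-lemma argument would here degenerate (since $p_{*}HN^{1}=0$), whereas you apply Proposition~3.5 of \cite{E-H-S} directly after the twist $HN^{1}\otimes p^{*}\mathcal{O}(-1)\cong\mathcal{H}$; this is the natural specialization of that argument to a rank-$1$ $HN^{1}$ and not a genuinely different route.
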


\vspace{1cm}

\subsection{$k=3,r_1=r_2=1,r_3=4$}
 We suppose that
\begin{equation}\label{r1=1r2=1}
c_{p^{*}E}(T)=\overset{3}{\underset{i=1}{\prod}}S_i(T+u_{i}U,U,V).
\end{equation}
Without loss of generality, we may assume $u_3=0$. We also assume that
\[S_{1}(T,U,V)=T+a(U+V),\]
\[S_{2}(T,U,V)=T+b(U+V)\] and
\begin{multline*}
S_{3}(T,U,V)=T^4+n_1(U+V)T^3+[n_2^0(U^2+V^2)+n_2^1UV]T^2+[n_3^0(U^3+V^3)+n_3^1(U^2V+UV^2)]T\\+[n_4^0(U^4+V^4)+n_4^1(U^3V+UV^3)+n_4^2U^2V^2],
\end{multline*}
where $a, b, n_1, n_2^0, n_2^1, n_3^0, n_3^1,n_4^0, n_4^1, n_4^2$ are all constants.

Using computer calculation, we get only following three solutions of equation (\ref{r1=1r2=1}).
  \begin{center}
 	$(a,b,n_1,n_2^0,n_2^1,n_3^0,n_3^1)=(0,0,0,0,0,0,0)$,
 	
 	$(a,b,n_1,n_2^0,n_2^1,n_3^0,n_3^1)=(-2,0,2,4,4,8,8)$,
 	
 	$(a,b,n_1,n_2^0,n_2^1,n_3^0,n_3^1)=(0,-1,1,1,1,1,1)$.
 \end{center}

If $(a,b,n_1,n_2^0,n_2^1,n_3^0,n_3^1)=(0,0,0,0,0,0,0)$, then by the similar arguments in the proof of Proposition \ref{(0,0,0)case}, $E\cong \mathcal{O}_{\mathbb{P}^{4}}(2)\oplus\mathcal{O}_{\mathbb{P}^{4}}(1)\oplus \mathcal{O}^{\oplus 4}_{\mathbb{P}^{4}}.$

If $(a,b,n_1,n_2^0,n_2^1,n_3^0,n_3^1)=(0,-1,1,1,1,1,1)$, then by same arguments in
Proposition \ref{0-1}, it is easy to get that $E\cong \mathcal{O}_{\mathbb{P}^4}(2)\oplus T_{\mathbb{P}^4}(-1)\oplus \mathcal{O}_{\mathbb{P}^4}$.

If $(a,b,n_1,n_2^0,n_2^1,n_3^0,n_3^1)=(-2,0,2,4,4,8,8)$, by the same argument in Proposition \ref{-2 case}, there does not exist such a uniform bundle.

So the following proposition holds.
 \begin{proposition}
 The uniform vector bundle $E$ over $\mathbb{P}^4$ with splitting form of $(3;1,1,4;u_1,u_1-1,u_1-2)$ is isomorphic to either $$ \mathcal{O}_{\mathbb{P}^{4}}(u_1)\oplus\mathcal{O}_{\mathbb{P}^{4}}(u_1-1)\oplus \mathcal{O}^{\oplus 4}_{\mathbb{P}^{4}}(u_1-2)$$ or $$ \mathcal{O}_{\mathbb{P}^4}(u_1)\oplus T_{\mathbb{P}^4}(u_1-3)\oplus \mathcal{O}_{\mathbb{P}^4}(u_1-2).$$
 \end{proposition}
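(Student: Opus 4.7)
The plan is to follow the three-case template already set up in the preceding subsections. After twisting $E$ by $\mathcal{O}_{\mathbb{P}^4}(-u_1+2)$ I may assume $(u_1,u_2,u_3)=(2,1,0)$, and then the Chern polynomial identity
\[c_{p^*E}(T,U)\equiv \prod_{i=1}^{3} S_i(T+u_iU,U,V) \pmod{R^4(U,V),\,U^5}\]
in $A(F(1,2,5))$, together with the ranks $(r_1,r_2,r_3)=(1,1,4)$ and the displayed ansatz for $S_1,S_2,S_3$, gives a polynomial system in ten unknowns whose symbolic elimination produces exactly the three tuples listed in the excerpt. It then remains to identify $E$ from each of them.

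For the all-zero solution, each graded piece $HN^i/HN^{i-1}$ has $c_1=0$ on every $p$-fiber and is a subquotient of the trivial bundle $p^*E|_{p^{-1}(x)}$, hence is itself trivial on fibers by \cite{O-S-S} (Corollary after Theorem 3.2.1). Corollary \ref{all-trivial-lem} then delivers the first splitting. For the solution $(a,b,n_1,\ldots)=(0,-1,1,1,1,1,1)$ I would copy the proof of Proposition \ref{0-1}: $S_1=T$ and $c_{HN^1}=T+2U$ force $HN^1$ to be trivial on every $p$-fiber, so Lemma \ref{trivial-lemma} gives $HN^1\cong p^*\mathcal{O}_{\mathbb{P}^4}(2)$. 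Pushing the defining sequence down by $p_*$ yields
\[0\longrightarrow \mathcal{O}_{\mathbb{P}^4}(2)\longrightarrow E\longrightarrow p_*F\longrightarrow 0\]
with $p_*F$ a uniform $5$-bundle of splitting type $(2;1,4;1,0)$. The Ellia--Ballico classification of uniform $(n+1)$-bundles on $\mathbb{P}^n$ combined with the computed Chern polynomial pins down $p_*F\cong T_{\mathbb{P}^4}(-1)\oplus\mathcal{O}(1)$, and $\mathrm{Ext}^1(T_{\mathbb{P}^4}(-1)\oplus\mathcal{O}(1),\mathcal{O}(2))=0$ splits the extension, yielding the second splitting after untwisting.

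The main obstacle is the non-existence statement for $(-2,0,2,4,4,8,8)$, which I would handle by imitating the Descente-Lemma chase of Proposition \ref{-2 case}. Computation yields $c_{HN^1}=T-2V$ and $c_{HN^2}=(T-2V)(T+U)$. Since $\mathrm{Pic}(F_4)$ is free of rank two generated by $p^*\mathcal{O}(1)$ and $\mathcal{H}_{T_{\mathbb{P}^4}(-1)}$, these classes force $HN^1\cong \mathcal{O}_{\mathbb{P}(T_{\mathbb{P}^4}(-1))}(-2)$ and, via Lemma \ref{trivial-lemma}, $HN^2/HN^1\cong p^*\mathcal{O}_{\mathbb{P}^4}(1)$. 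The vanishing of $\mathrm{Ext}^1_{F_4}(p^*\mathcal{O}(1),\mathcal{O}_{\mathbb{P}(T_{\mathbb{P}^4}(-1))}(-2))$---which by Leray reduces to $H^i(\mathbb{P}^3,\mathcal{O}(-2))=0$ for all $i$---splits this extension, so $HN^2\cong \mathcal{O}_{\mathbb{P}(T_{\mathbb{P}^4}(-1))}(-2)\oplus p^*\mathcal{O}(1)$. I would then push $0\to HN^2\to p^*E\to p^*E/HN^2\to 0$ down by $p_*$ and run the same snake diagram as in Proposition \ref{-2 case} to produce
\[0\longrightarrow \mathcal{O}_{\mathbb{P}(T_{\mathbb{P}^4}(-1))}(-2)\longrightarrow p^*W\longrightarrow p^*E/HN^2\longrightarrow 0\]
on $F_4$, with $W:=p_*(p^*E/HN^2)$. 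Since the splitting type of $E$ makes $p^*E/HN^2$ fibrewise trivial along $q$ while $\mathcal{O}_{\mathbb{P}(T_{\mathbb{P}^4}(-1))}(-2)|_{q^{-1}(l)}\cong \mathcal{O}_{\mathbb{P}^1}(2)$, we get $\mathrm{Hom}(\mathcal{O}_{\mathbb{P}(T_{\mathbb{P}^4}(-1))}(-2),p^*E/HN^2)=0$, and the Descente-Lemma (\cite{O-S-S}, Ch.~II, Lemma 2.1.2) would then demand a bundle $E'$ on $\mathbb{P}^4$ with $p^*E'\cong \mathcal{O}_{\mathbb{P}(T_{\mathbb{P}^4}(-1))}(-2)$---absurd, since the right-hand side restricts non-trivially to every $p$-fibre. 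The delicate point, where I expect the real work to lie, is to verify that all the $\mathrm{Ext}$- and $R^ip_*$-vanishings needed for the snake chase go through in the present rank-$(1,1)$ filtration data rather than the rank-$(1,2)$ data of Proposition \ref{-2 case}.
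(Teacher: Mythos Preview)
Your proposal is correct and follows the paper's approach exactly: the paper handles the three solutions by invoking Propositions \ref{(0,0,0)case}, \ref{0-1}, and \ref{-2 case} respectively, just as you do. One slip: in the case $(0,-1,1,1,1,1,1)$ the quotient $p_*F$ has splitting type $(2;1,4;1,0)$ and is therefore $T_{\mathbb{P}^4}(-1)\oplus\mathcal{O}_{\mathbb{P}^4}$, not $T_{\mathbb{P}^4}(-1)\oplus\mathcal{O}_{\mathbb{P}^4}(1)$ (the latter has type $(2;2,3;1,0)$); the required $\mathrm{Ext}^1$-vanishing still holds. Your worry about the $(-2,0,\ldots)$ case is unfounded: since here $r_1=r_2=1$, both $HN^1$ and $HN^2/HN^1$ are line bundles and are pinned down by their first Chern classes alone, so the argument is actually \emph{simpler} than in Proposition~\ref{-2 case}, where the S--J--U and Horrocks classifications were needed to identify the rank-$2$ graded piece.
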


 \vspace{1cm}

 \subsection{$k=3,r_1=r_3=1, r_2=4$}
 We suppose that
\begin{equation}\label{r1=1r3=1}
c_{p^{*}E}(T)=\overset{3}{\underset{i=1}{\prod}}S_i(T+u_{i}U,U,V).
\end{equation}
Without loss of generality, we may assume $u_3=0$. We also assume that
\[S_{1}(T,U,V)=T+a(U+V),\]
\[S_{3}(T,U,V)=T+b(U+V)\] and
\begin{multline*}
S_{2}(T,U,V)=T^4+n_1(U+V)T^3+[n_2^0(U^2+V^2)+n_2^1UV]T^2+[n_3^0(U^3+V^3)+n_3^1(U^2V+UV^2)]T\\+[n_4^0(U^4+V^4)+n_4^1(U^3V+UV^3)+n_4^2U^2V^2],
\end{multline*}
where $a, b, n_1, n_2^0, n_2^1, n_3^0, n_3^1,n_4^0, n_4^1, n_4^2$ are all constants.

Using computer calculation, we get only following three solutions of equation (\ref{r1=1r2=1}).
  \begin{center}
 	$(a,b,n_1,n_2^0,n_2^1,n_3^0,n_3^1)=(0,0,0,0,0,0,0)$,
 	
 	$(a,b,n_1,n_2^0,n_2^1,n_3^0,n_3^1)=(-1,0,1,1,1,1,1)$,
 	
 	$(a,b,n_1,n_2^0,n_2^1,n_3^0,n_3^1)=(0,1,-1,1,1,-1,-1)$.
 \end{center}

By similar arguments, we can get the following proposition.

\begin{proposition}
  The uniform vector bundle $E$ over $\mathbb{P}^4$ with splitting form of $(3;1,4,1;u_1,u_1-1,u_1-2)$ is isomorphic to either $$ \mathcal{O}_{\mathbb{P}^{4}}(u_1)\oplus\mathcal{O}^{\oplus 4}_{\mathbb{P}^{4}}(u_1-1)\oplus \mathcal{O}_{\mathbb{P}^{4}}(u_1-2),$$
   $$ T_{\mathbb{P}^4}(u_1-3)\oplus \mathcal{O}_{\mathbb{P}^4}(u_1-2)\oplus \mathcal{O}_{\mathbb{P}^4}(u_1-3)$$
  or $$ \Omega^1_{\mathbb{P}^4}(u_1-1)\oplus \mathcal{O}_{\mathbb{P}^4}(u_1-2)\oplus \mathcal{O}_{\mathbb{P}^4}(u_1-1) .$$
 \end{proposition}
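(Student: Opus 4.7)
The plan follows the three-case template already established in this section. The computer solutions of (\ref{r1=1r3=1}) have reduced the problem to the three possibilities listed, so it remains to reconstruct $E$ in each case by analyzing the Harder--Narasimhan filtration $0\subset HN^1\subset HN^2\subset p^*E$ on $F_4$ and pushing it down to $\mathbb{P}^4$ via $p_*$.

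For the first solution $(0,0,0,0,0,0,0)$ I would compute the Chern polynomials of each graded piece $HN^i/HN^{i-1}$ and verify that the restriction to every $p$-fiber has vanishing first Chern class. As in the proof of Proposition \ref{(0,0,0)case}, each such fiber restriction is a subbundle of a trivial bundle on $\mathbb{P}^3$ with $c_1=0$ and is hence trivial, so Corollary \ref{all-trivial-lem} forces $E\cong\mathcal{O}_{\mathbb{P}^4}(u_1)\oplus\mathcal{O}_{\mathbb{P}^4}^{\oplus 4}(u_1-1)\oplus\mathcal{O}_{\mathbb{P}^4}(u_1-2)$.

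For the second solution $(-1,0,1,1,1,1,1)$ the nontrivial shift sits in the top rank-one factor $S_1$, so I would imitate Proposition \ref{(-1,0,0)case}: compute the Chern classes of $HN^1$ and $HN^2$, read off their fiber restrictions from the Chern data, push the HN-sequence down by $p_*$, and apply Proposition~3.5 of \cite{E-H-S} so that the occurrence of the class $V$ in the Chern polynomial of the relevant quotient descends to a twist of $T_{\mathbb{P}^4}$ as a subbundle of the push-forward. A standard $\mathrm{Ext}^1$-vanishing between the complementary line bundles and the tangent twist then splits the resulting extension and yields the second alternative.

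For the third solution $(0,1,-1,1,1,-1,-1)$ the shift has moved to the bottom rank-one factor $S_3$. Since $a=0$ forces $HN^1$ to be trivial on every $p$-fiber, Lemma \ref{trivial-lemma} shows that it descends to $p^*\mathcal{O}_{\mathbb{P}^4}(u_1)$, which by the argument of Proposition \ref{0-1} extracts as a line-bundle summand of $E$. The rank-five quotient is then uniform of lower splitting depth, and either identifying it directly by the classification of \cite{Ell} or dualizing $E$ and reducing to the second case produces $\Omega^1_{\mathbb{P}^4}$ in the complement, giving the third alternative.

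The principal technical difficulty, as already in Proposition \ref{exterior T}, will be to show that the fiberwise isomorphism class of each $HN^i|_{p^{-1}(x)}$ is constant in $x\in\mathbb{P}^4$: mixed behaviour between the candidate fiber types (trivial, tangent-twist, or null-correlation-type) must be ruled out by the nontrivial extension trick of that proposition, and the $R^1p_*$ vanishings must be checked so that the push-forwards of the HN-sequences are honest short exact sequences on $\mathbb{P}^4$. Once uniformity of the fiber type is secured, the remaining extension-splittings reduce to routine $\mathrm{Ext}$ computations of the same kind used throughout the earlier subsections, and the three isomorphism classes claimed in the proposition follow.
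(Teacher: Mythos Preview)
Your plan is correct and matches the paper's approach; the paper itself only writes ``by similar arguments'' and relies implicitly on Propositions~\ref{(0,0,0)case}, \ref{(-1,0,0)case}, and \ref{0-1} for the three cases. Two small clarifications are worth making. First, the concern in your final paragraph is unwarranted here: since $r_1=r_3=1$, the restrictions $HN^1|_{p^{-1}(x)}$ and $(p^*E/HN^2)|_{p^{-1}(x)}$ are line bundles on $\mathbb{P}^3$ and hence determined by their first Chern class, so no Proposition~\ref{exterior T}--style argument is needed to exclude mixed fiber types. Second, in the case $(a,b)=(-1,0)$ one has $HN^1|_{p^{-1}(x)}\cong\mathcal{O}_{\mathbb{P}^3}(-1)$, so $p_*HN^1=0$ and the push-down step you describe is vacuous; rather, the Chern polynomial $c_{HN^1}=T+U-V$ shows that $HN^1$ is itself a twist of $\mathcal{H}_{T_{\mathbb{P}^4}(-1)}$, and Proposition~3.5 of \cite{E-H-S} applies directly to the inclusion $HN^1\hookrightarrow p^*E$ to exhibit the tangent-bundle subbundle of $E$.
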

%
%
%


 \vspace{1cm}

 \subsection{$k=4, r_1=r_2=r_3=1, r_4=3$}
  We suppose that
\begin{equation}\label{r4=3}
c_{p^{*}E}(T)=\overset{4}{\underset{i=1}{\prod}}S_i(T+u_{i}U,U,V).
\end{equation}

Without loss of generality, we may assume $u_4=0$.
We assume that
\[S_{1}(T,U,V)=T+a(U+V),\]
\[S_{2}(T,U,V)=T+b(U+V),\]
\[S_{3}(T,U,V)=T+c(U+V)\] and
\[
S_{4}(T,U,V)=T^3+n_1(U+V)T^2+[n_2^0(U^2+V^2)+n_2^1UV]T+[n_3^0(U^3+V^3)+n_3^1(U^2V+UV^2)],
\]
where $a, b, c, n_1, n_2^0, n_2^1, n_3^0, n_3^1$ are all constants.

Using computer calculation, we get only following three solutions of equation (\ref{r4=3}).
  \begin{center}
 	 $(a,b,c,n_1,n_2^0,n_2^1,n_3^0,n_3^1)=(0,0,0,0,0,0,0,0),$
 	
 	$(a,b,c,n_1,n_2^0,n_2^1,n_3^0,n_3^1)=(0,0,-1,1,1,1,1,1),$
 	
 	$(a,b,c,n_1,n_2^0,n_2^1,n_3^0,n_3^1)=(0,-2,0,2,4,4,8,8),$
 	
 	$(a,b,c,n_1,n_2^0,n_2^1,n_3^0,n_3^1)=(-3,0,0,3,9,9,27,27).$
 \end{center}

If $(a,b,c,n_1,n_2^0,n_2^1,n_3^0,n_3^1)=(0,0,0,0,0,0,0,0)$, then by the similar arguments in the proof of Proposition \ref{(0,0,0)case}, $E\cong \mathcal{O}_{\mathbb{P}^{4}}(3)\oplus \mathcal{O}_{\mathbb{P}^{4}}(2)\oplus\mathcal{O}_{\mathbb{P}^{4}}(1)\oplus \mathcal{O}^{\oplus 3}_{\mathbb{P}^{4}}.$

If $(a,b,c,n_1,n_2^0,n_2^1,n_3^0,n_3^1)=(0,0,-1,1,1,1,1,1),$ then by same arguments in
Proposition \ref{0-1}, it is easy to get that  $E\cong T_{\mathbb{P}^4}(-1)\oplus \mathcal{O}_{\mathbb{P}^4}(3)\oplus\mathcal{O}_{\mathbb{P}^4}(2)$.

If $(a,b,c,n_1,n_2^0,n_2^1,n_3^0,n_3^1)=(0,-2,0,2,4,4,8,8)$ or $(-3,0,0,3,9,9,27,27)$, by the same argument in Proposition \ref{-2 case}, there does not exist such a uniform bundle.

So the following proposition holds.
 \begin{proposition}
 The uniform vector bundle $E$ over $\mathbb{P}^4$ with splitting form of $(4;1,1,1,3;u_1,u_1-1,u_1-2,u_1-3)$ is isomorphic to either $$ \mathcal{O}_{\mathbb{P}^{4}}(u_1)\oplus \mathcal{O}_{\mathbb{P}^{4}}(u_1-1)\oplus\mathcal{O}_{\mathbb{P}^{4}}(u_1-2)\oplus \mathcal{O}^{\oplus 3}_{\mathbb{P}^{4}}(u_1-3)$$ or $$ E\cong T_{\mathbb{P}^4}(u_1-3)\oplus \mathcal{O}_{\mathbb{P}^4}(u_1)\oplus\mathcal{O}_{\mathbb{P}^4}(u_1-1).$$
 \end{proposition}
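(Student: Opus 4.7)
The plan is to dispose of the four numerical solutions of equation (\ref{r4=3}) one by one, reusing the machinery of the earlier subsections; after normalising $u_4 = 0$ the splitting type becomes $(4;1,1,1,3;3,2,1,0)$. The all-zero solution is handled as in Proposition \ref{(0,0,0)case}: no $S_i$ contains $V$, so each $HN^i|_{p^{-1}(x)}$ is a subbundle of a trivial bundle with $c_1 = 0$, hence trivial, and Corollary \ref{all-trivial-lem} yields $E \cong \mathcal{O}(3) \oplus \mathcal{O}(2) \oplus \mathcal{O}(1) \oplus \mathcal{O}^{\oplus 3}$.

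For $(0,0,-1,1,1,1,1,1)$ I follow Proposition \ref{0-1}. Since $S_1 = S_2 = T$, Lemma \ref{trivial-lemma} gives $HN^1 \cong p^*\mathcal{O}(3)$ and $HN^2/HN^1 \cong p^*\mathcal{O}(2)$; Bott vanishing splits the extension, so $HN^2 \cong p^*(\mathcal{O}(3) \oplus \mathcal{O}(2))$. Pushing down the HN-sequence yields $0 \to \mathcal{O}(3) \oplus \mathcal{O}(2) \to E \to E' \to 0$ with $E' := p_*(p^*E/HN^2)$ a uniform rank-$4$ bundle of splitting type $(2;1,3;1,0)$. By the Sato/Elencwajg--Hirschowitz--Schneider classification of uniform $n$-bundles on $\mathbb{P}^n$, $E'$ is either $\mathcal{O}(1) \oplus \mathcal{O}^{\oplus 3}$ or $T_{\mathbb{P}^4}(-1)$; the former is ruled out because $HN^3/HN^2$ has Chern polynomial $T - V$, incompatible with a sum of pullbacks from $\mathbb{P}^4$. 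Since $\operatorname{Ext}^1(T_{\mathbb{P}^4}(-1), \mathcal{O}(3) \oplus \mathcal{O}(2)) = 0$, the sequence splits and $E \cong T_{\mathbb{P}^4}(-1) \oplus \mathcal{O}(3) \oplus \mathcal{O}(2)$.

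For $(0,-2,0,2,4,4,8,8)$ and $(-3,0,0,3,9,9,27,27)$ I adapt the non-existence argument of Proposition \ref{-2 case}. In each, one of the initial HN-layers is a non-pullback line bundle: for the third solution $HN^2/HN^1 \cong \mathcal{O}_{\mathbb{P}(T_{\mathbb{P}^4}(-1))}(-2)$ with $HN^1 \cong p^*\mathcal{O}(3)$, and for the fourth $HN^1 \cong \mathcal{O}_{\mathbb{P}(T_{\mathbb{P}^4}(-1))}(-3)$ directly. In either case I push down the relevant initial portion of the filtration and form the snake-lemma diagram, producing an exact sequence $0 \to \mathcal{O}_{\mathbb{P}(T_{\mathbb{P}^4}(-1))}(-c) \to p^*W \to p^*E/HN^j \to 0$ with $(c,j) = (2,2)$ or $(3,1)$. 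A direct calculation on a $q$-fibre $q^{-1}(L) \cong \mathbb{P}^1$ gives $\mathcal{O}_{\mathbb{P}(T_{\mathbb{P}^4}(-1))}(-c)|_{q^{-1}(L)} \cong \mathcal{O}_L(c)$ and shows that $(p^*E/HN^j)|_{q^{-1}(L)}$ is a direct sum of line bundles of degree at most $c - 1$; hence $\mathcal{H}om$ vanishes on each $q$-fibre, so by the Descente-Lemma $\mathcal{O}_{\mathbb{P}(T_{\mathbb{P}^4}(-1))}(-c)$ would be of the form $p^*E''$, contradicting $cV \notin p^*\operatorname{Pic}(\mathbb{P}^4)$.

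The main obstacle is the third solution, where the non-pullback layer sits in slot two of the HN-filtration rather than slot one as in Proposition \ref{-2 case}. The snake-lemma diagram must be set up afresh: I verify $R^1 p_* HN^2 = 0$ via $H^1(\mathbb{P}^3, \mathcal{O}(-2)) = 0$ on each $p$-fibre, identify the canonical map $p^*p_*HN^2 \to HN^2$ with the inclusion $HN^1 \hookrightarrow HN^2$, and only then read off the embedding $\mathcal{O}_{\mathbb{P}(T_{\mathbb{P}^4}(-1))}(-2) \hookrightarrow p^*W$ needed to feed into the Descente-Lemma. The fourth solution is a strictly easier variant in which the pushforward step is bypassed entirely.
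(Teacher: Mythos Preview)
Your proposal is correct and follows essentially the same approach as the paper, which disposes of the four numerical solutions by invoking Propositions \ref{(0,0,0)case}, \ref{0-1}, and \ref{-2 case} respectively. Your treatment is in fact more explicit than the paper's: where the paper simply writes ``by the same argument in Proposition \ref{-2 case}'' for the last two solutions, you correctly observe that in the solution $(0,-2,0,2,4,4,8,8)$ the non-pullback layer $\mathcal{O}_{\mathbb{P}(T_{\mathbb{P}^4}(-1))}(-2)$ appears as $HN^2/HN^1$ rather than as $HN^1$, and you supply the missing verification that $p_*HN^2 \cong \mathcal{O}_{\mathbb{P}^4}(3)$ with $R^1p_*HN^2 = 0$, so that the canonical map $p^*p_*HN^2 \to HN^2$ coincides with the inclusion $HN^1 \hookrightarrow HN^2$ and the snake-lemma output is exactly $0 \to \mathcal{O}_{\mathbb{P}(T_{\mathbb{P}^4}(-1))}(-2) \to p^*W \to p^*E/HN^2 \to 0$. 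Your handling of the second solution via the rank-$n$ classification on $\mathbb{P}^n$ (rather than peeling off one line bundle at a time and using the rank-$(n{+}1)$ classification, as Proposition \ref{0-1} literally does) is a minor but legitimate variant.
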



%
 \vspace{1cm}

\subsection{$k=4,r_1=r_2=r_4=1, r_3=3$}
We suppose that
\begin{equation}\label{r3=3}
c_{p^{*}E}(T)=\overset{4}{\underset{i=1}{\prod}}S_i(T+u_{i}U,U,V).
\end{equation}

Without loss of generality, we may assume $u_4=0$.
We assume that
\[S_{1}(T,U,V)=T+a(U+V),\]
\[S_{2}(T,U,V)=T+b(U+V),\]
\[S_{4}(T,U,V)=T+c(U+V)\] and
\[
S_{3}(T,U,V)=T^3+n_1(U+V)T^2+[n_2^0(U^2+V^2)+n_2^1UV]T+[n_3^0(U^3+V^3)+n_3^1(U^2V+UV^2)],
\]
where $a, b, c, n_1, n_2^0, n_2^1, n_3^0, n_3^1$ are all constants.

Using computer calculation, we get only following three solutions of equation (\ref{r3=3}).
  \begin{center}
 	 $(a,b,c,n_1,n_2^0,n_2^1,n_3^0,n_3^1)=(0,0,0,0,0,0,0,0),$
 	
 	$(a,b,c,n_1,n_2^0,n_2^1,n_3^0,n_3^1)=(-2,0,0,2,4,4,8,8),$
 	
 	$(a,b,c,n_1,n_2^0,n_2^1,n_3^0,n_3^1)=(0,-1,0,1,1,1,1,1),$
 	
 	$(a,b,c,n_1,n_2^0,n_2^1,n_3^0,n_3^1)=(0,0,1,-1,1,1,-1,-1).$
 \end{center}

If $(a,b,c,n_1,n_2^0,n_2^1,n_3^0,n_3^1)=(0,0,0,0,0,0,0,0)$, then by the similar arguments in the proof of Proposition \ref{(0,0,0)case}, $E\cong \mathcal{O}_{\mathbb{P}^{4}}(3)\oplus \mathcal{O}_{\mathbb{P}^{4}}(2)\oplus\mathcal{O}^{\oplus 3}_{\mathbb{P}^{4}}(1)\oplus \mathcal{O}_{\mathbb{P}^{4}}.$

If $(a,b,c,n_1,n_2^0,n_2^1,n_3^0,n_3^1)=(0,0,-1,1,1,1,1,1),$ then by same arguments in
Proposition \ref{0-1}, it is easy to get that  $E\cong T_{\mathbb{P}^4}\oplus \mathcal{O}_{\mathbb{P}^4}(3)\oplus\mathcal{O}_{\mathbb{P}^4}$.

If $(a,b,c,n_1,n_2^0,n_2^1,n_3^0,n_3^1)=(0,0,1,-1,1,1,-1,-1),$ then by similar arguments in
Proposition \ref{0-1}, it is easy to get that  $E\cong \Omega^1_{\mathbb{P}^4}(2)\oplus \mathcal{O}_{\mathbb{P}^4}(3)\oplus\mathcal{O}_{\mathbb{P}^4}(2)$.

If $(a,b,c,n_1,n_2^0,n_2^1,n_3^0,n_3^1)=(-2,0,0,2,4,4,8,8)$, by the same argument in Proposition \ref{-2 case}, there does not exist such a uniform bundle.

So the following proposition holds.
 \begin{proposition}
 The uniform vector bundle $E$ over $\mathbb{P}^4$ with splitting form of $(4;1,1,3,1;u_1,u_1-1,u_1-2,u_1-3)$ is isomorphic to $$ E\cong \mathcal{O}_{\mathbb{P}^{4}}(u_1)\oplus \mathcal{O}_{\mathbb{P}^{4}}(u_1-1)\oplus\mathcal{O}^{\oplus 3}_{\mathbb{P}^{4}}(u_1-2)\oplus \mathcal{O}_{\mathbb{P}^{4}}(u_1-3),$$
 $$E\cong T_{\mathbb{P}^4}(u_1-3)\oplus \mathcal{O}_{\mathbb{P}^4}(u_1)\oplus\mathcal{O}_{\mathbb{P}^4}(u_1-3)$$
 or $$E\cong \Omega^1_{\mathbb{P}^4}(u_1-1)\oplus \mathcal{O}_{\mathbb{P}^4}(u_1)\oplus\mathcal{O}_{\mathbb{P}^4}(u_1-1).$$
 \end{proposition}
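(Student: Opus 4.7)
The approach is parallel to the earlier subsections in this section. After twisting so that $u_4=0$, hence $(u_1,u_2,u_3,u_4)=(3,2,1,0)$, each of the four numerical solutions of equation (\ref{r3=3}) pins down the Chern polynomials of the HN-graded pieces of $p^{*}E$, from which one reads off the fiberwise structure of $HN^i/HN^{i-1}$ and then either pushes down to $\mathbb{P}^4$ or derives a contradiction.

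For $(0,0,0,0,0,0,0,0)$, every $S_i$ is a pure power of $T$, so each graded piece is trivial on all $p$-fibers and Corollary~\ref{all-trivial-lem} yields the fully decomposable case. For $(0,-1,0,1,1,1,1,1)$, the factor $S_1=T$ together with Lemma~\ref{trivial-lemma} gives $HN^1\cong p^{*}\mathcal{O}_{\mathbb{P}^4}(3)$; pushing forward the exact sequence $0\to HN^1\to p^{*}E\to F\to 0$ produces a uniform rank-$5$ quotient of splitting type $(3;1,3,1;2,1,0)$, which by Ellia's classification must be $T_{\mathbb{P}^4}\oplus\mathcal{O}_{\mathbb{P}^4}$. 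The vanishing $\mathrm{Ext}^{1}(T_{\mathbb{P}^4}\oplus\mathcal{O}_{\mathbb{P}^4},\mathcal{O}_{\mathbb{P}^4}(3))=0$ then splits the extension and gives the second case. For $(0,0,1,-1,1,1,-1,-1)$, both $S_1$ and $S_2$ equal $T$, so iterating the same argument yields $HN^2\cong p^{*}(\mathcal{O}_{\mathbb{P}^4}(3)\oplus\mathcal{O}_{\mathbb{P}^4}(2))$, and the resulting uniform rank-$4$ quotient on $\mathbb{P}^4$ of splitting type $(2;3,1;1,0)$ is forced (by the classification of Sato, Elencwajg, Hirschowitz, and Schneider) to be $\Omega^{1}_{\mathbb{P}^4}(2)$. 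Bott's formula again makes the relevant $\mathrm{Ext}^{1}$ vanish, so this extension also splits and the third case follows.

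The only delicate case is $(-2,0,0,2,4,4,8,8)$, where $S_1=T-2(U+V)$ forces $c_1(HN^1|_{p^{-1}(x)})=-2$ for every $x\in\mathbb{P}^4$. Here I would reproduce the argument of Proposition~\ref{-2 case} step by step: use the classification of globally generated rank-$3$ bundles with $c_{1}=2$ from \cite{S-J-U} to restrict the fiber type of $HN^1$ to $T_{\mathbb{P}^3}(-2)$ or $N(-1)\oplus\mathcal{O}$, construct an auxiliary extension by $\mathcal{O}_{F_4}$ to rule out mixed fiber types, and invoke the Descente-Lemma to contradict the existence of any bundle on $\mathbb{P}^4$ descending $\mathcal{O}_{\mathbb{P}(T_{\mathbb{P}^4}(-1))}(-2)$. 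This non-existence step, essentially a faithful reproduction of Proposition~\ref{-2 case}, is the main obstacle; the other three cases are routine adaptations of the earlier arguments.
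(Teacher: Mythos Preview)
Your plan for the first three solutions is correct and matches the paper's approach, with only a cosmetic difference in the third case (you peel off two line bundles and invoke the rank-$4$ classification, while the paper peels off one and uses the rank-$5$ classification; both work).

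The description of the $(-2,0,0,2,4,4,8,8)$ case, however, is confused. You say you will reproduce Proposition~\ref{-2 case}, but the steps you list --- applying the \cite{S-J-U} classification to $HN^1$ to get the dichotomy $T_{\mathbb{P}^3}(-2)$ versus $N(-1)\oplus\mathcal{O}_{\mathbb{P}^3}$, and then building an extension by $\mathcal{O}_{F_4}$ to rule out mixed fiber types --- are the steps of Proposition~\ref{exterior T}, not of Proposition~\ref{-2 case}. Here $r_1=1$, so $HN^1$ has rank~$1$ and on every $p$-fiber is simply $\mathcal{O}_{\mathbb{P}^3}(-2)$; there is no dichotomy and no mixed-type argument to run. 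The actual mechanism of Proposition~\ref{-2 case} is different: one identifies $HN^1$ with (a twist of) $\mathcal{O}_{\mathbb{P}(T_{\mathbb{P}^4}(-1))}(-2)$, shows that the next graded piece is trivial on $p$-fibers (here $b=0$ gives $HN^2/HN^1\cong p^{*}\mathcal{O}_{\mathbb{P}^4}(2)$ directly), splits $HN^2$, pushes forward and applies the snake lemma, and only then invokes the Descente-Lemma on $q$-fibers. Your final clause about descending $\mathcal{O}_{\mathbb{P}(T_{\mathbb{P}^4}(-1))}(-2)$ is the right endpoint, but the route you sketched to get there would not work in this rank configuration. Rewrite that paragraph following the actual structure of Proposition~\ref{-2 case}.
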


%
%
%
%
%
%
%
%
%
%
%
%
%
%
 \vspace{1cm}

 \subsection{$r_i\leq 2$ for all $i$ and the conclusion for rank $6$ uniform bundles}
In this case,
\begin{equation}\label{ri<3}
c_{p^{*}E}(T)=\overset{k}{\underset{i=1}{\prod}}S_i(T+u_{i}U,U,V),
\end{equation}
where $k\le 6$.
Since the degree of $S_i(T,U,V)$ is less than or equal to $2$, we calculate all the cases one by one to get that there is only the zero solution of equation (\ref{ri<3}) if we set the similar assumption as before. Thus by the similar arguments in the proof of Proposition \ref{(0,0,0)case}, the bundle $E$ is the direct sum of line bundles.

So we can get the following theorem.
 \begin{theorem}\label{r6k5}
 	Any uniform $6$-bundle with $k\geq 5$ over $\mathbb{P}^4$ is a direct sum of line bundles.
 \end{theorem}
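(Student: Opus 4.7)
The plan is to reduce the theorem to a finite computer-algebra check, exactly in the style of the preceding subsections. Since $E$ has rank $6$ and the splitting type has $k\geq 5$ blocks, the multiplicities $(r_1,\dots,r_k)$ must all satisfy $r_i\leq 2$; more precisely, either $k=6$ with all $r_i=1$, or $k=5$ with exactly one block of size $2$ (giving five possible positions for that block). By Theorem 2.4 (Ellia) we may assume consecutive jumps $u_i-u_{i+1}$ equal $1$, and by twisting we may normalize $u_k=0$. So at most six splitting types have to be examined.

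For each of these splitting types I would expand the identity
\[
c_{p^{*}E}(T) = \prod_{i=1}^{k} S_i(T+u_iU, U, V) \pmod{R^4(U,V),\, U^5}
\]
in $A(F(1,2,5))$, where each $S_i(T,U,V)$ is a polynomial of degree $r_i\leq 2$, symmetric in $U$ and $V$, with undetermined coefficients (one new constant when $r_i=1$, three new constants when $r_i=2$). Identifying coefficients of monomials in $T,U,V$ in the reduced ring produces a polynomial system over $\mathbb{Z}$ in the unknowns. A computer-algebra enumeration, identical in spirit to the calculations carried out in the earlier subsections, should show that in each of the six cases the system has only the trivial solution, in which every non-leading coefficient of every $S_i$ vanishes.

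Granted this, the argument of Proposition \ref{(0,0,0)case} applies uniformly: for each $i$, the restriction $HN^i/HN^{i-1}|_{p^{-1}(x)}$ is a subquotient of a trivial bundle on $p^{-1}(x)\cong\mathbb{P}^3$ whose Chern polynomial is $T^{r_i}$, hence is itself trivial by the Corollary after Theorem 3.2.1 in \cite{O-S-S}. Corollary \ref{all-trivial-lem} then forces $E\cong\bigoplus_{i=1}^k \mathcal{O}_{\mathbb{P}^4}^{\oplus r_i}(u_i)$, establishing the theorem.

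The main obstacle is the case enumeration and the CAS verification of the polynomial systems. Care is needed to correctly place the $r_i=2$ block in all five positions when $k=5$ and to impose the $U\leftrightarrow V$ symmetry on the corresponding $S_i$; however, because every $S_i$ has degree at most $2$, none of the geometric complications of higher-rank factors (such as the $T_{\mathbb{P}^3}(-2)$ versus null-correlation alternative in Proposition \ref{exterior T}, or the non-existence argument of Proposition \ref{-2 case}) can intervene. This is what makes the final step purely formal once the coefficient comparison has been carried out.
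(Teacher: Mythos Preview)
Your proposal is correct and follows essentially the same route as the paper: reduce to $r_i\le 2$, normalize the splitting type, verify by computer algebra that the Chern-polynomial system for each of the finitely many cases has only the trivial solution, and then invoke the argument of Proposition~\ref{(0,0,0)case} together with Corollary~\ref{all-trivial-lem}. The paper's own proof is terser but identical in substance, summarizing the CAS check with ``we calculate all the cases one by one'' and citing the same propositions.
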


For other cases, we only need to take dual and tensor a suitable line bundle to boil down to the cases in the former sections.

In conclusion, we have prove the following theorem.

\begin{theorem}\label{r6}
	Any uniform $6$-bundle over $\mathbb{P}^4$ is homogeneous.
\end{theorem}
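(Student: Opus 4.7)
The plan is to observe that the theorem consolidates the case-by-case classification carried out in Subsections 3.1--3.10, so the proof amounts to enumerating all admissible splitting types $(k; r_1,\dots,r_k; u_1,\dots,u_k)$ of a uniform $6$-bundle $E$ over $\mathbb{P}^4$ and showing each one has already been (or is easily) reduced to a homogeneous bundle. Two reductions will be used repeatedly: twisting by $\mathcal{O}_{\mathbb{P}^4}(m)$ reduces to the normalization $u_k=0$ without changing homogeneity, and dualizing sends a splitting type $(k;r_1,\dots,r_k;u_1,\dots,u_k)$ to $(k;r_k,\dots,r_1;-u_k,\dots,-u_1)$. Since homogeneity is preserved under both operations, we only need to treat one representative of each $(r_1,\dots,r_k)$--sequence up to reversal.

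First I would dispense with the extreme values of $k$. For $k=1$, $E$ is a direct sum of copies of a single line bundle and is homogeneous. For $k=2$ with $r_1=1$ (partition $(1,5)$), Lemma \ref{k=2r1=1} applies directly; the reversed partition $(5,1)$ is its dual. The remaining $k=2$ partitions are $(2,4), (3,3), (4,2)$, and these are resolved in Subsections 3.1 and 3.2 (the case $(4,2)$ by dualizing the $(2,4)$ result). At the other end, any partition of $6$ into $k\geq 5$ positive parts forces $r_i\leq 2$ for all $i$, which falls under Theorem \ref{r6k5} and therefore yields a direct sum of line bundles.

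Next I would handle $k=3$ and $k=4$. For $k=3$ the partitions of $6$ are $(1,1,4),(1,2,3),(1,3,2),(1,4,1),(2,1,3),(2,2,2),(2,3,1),(3,1,2),(3,2,1),(4,1,1)$. The five with $r_1=1$ (except $(1,4,1)$, which is self-dual) together with $(2,1,3)$ and $(1,4,1)$ cover Subsections 3.3--3.7; the remaining four $(2,3,1),(3,1,2),(3,2,1),(4,1,1)$ are precisely the reversals of already-classified cases, and $(2,2,2)$ satisfies $r_i\leq 2$ so falls under Theorem \ref{r6k5}. For $k=4$, the partitions are the permutations of $(1,1,1,3)$ and of $(1,1,2,2)$; the latter have $r_i\leq 2$ and are again covered by Theorem \ref{r6k5}, while $(1,1,1,3)$ and $(1,1,3,1)$ are done in Subsections 3.8 and 3.9, and $(1,3,1,1),(3,1,1,1)$ are their duals.

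Finally one must verify that every bundle appearing in the classification is homogeneous. Each possibility listed in the propositions above is built from line bundles $\mathcal{O}_{\mathbb{P}^4}(a)$, the tangent and cotangent bundles $T_{\mathbb{P}^4}(a)$ and $\Omega^1_{\mathbb{P}^4}(a)$, and $\bigwedge^2 T_{\mathbb{P}^4}(a)$ via direct sums and twists; all of these are $\mathrm{PGL}(5)$-equivariant on $\mathbb{P}^4$ and hence homogeneous, as are direct sums of homogeneous bundles. There is no serious obstacle remaining: the deep work has already been done in the previous subsections (most notably the Proposition~\ref{exterior T} argument ruling out the mixed splitting behavior of $HN^1$ via the extension/Descente-Lemma technique). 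The final step is purely combinatorial bookkeeping of partitions together with the dualization trick.
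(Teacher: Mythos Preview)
Your proposal is correct and matches the paper's own argument, which is simply the one-line observation that all splitting types have been handled in Subsections~3.1--3.10 up to dualization and twisting. One small citation slip: Theorem~\ref{r6k5} is stated only for $k\geq 5$, so for the partition $(2,2,2)$ with $k=3$ and the permutations of $(1,1,2,2)$ with $k=4$ you should point instead to the discussion preceding that theorem in Subsection~3.10, which treats \emph{all} cases with every $r_i\leq 2$ (for any $k\leq 6$), not just the ones with $k\geq 5$.
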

 \vspace{1cm}

 \section{Uniform $7$-bundles over $\mathbb{P}^4$}

 \subsection{$k=2, r_1=2, r_2=5$}
 In this case, we may assume
 \begin{center}
 	$S_{1}(T,U,V)=T^2+t_1(U+V)T+[t_2^0(U^2+V^2)+t_2^1UV]$,
  \end{center}
where $t_1,t_2^0,t_2^1$ are all constants.

There are only two solutions for the Chern polynomial equation if we set $u_2=0$.
\[(t_1,t_2^0,t_2^1)=(0,0,0)~ \text{and}~ (-1,0,0).\]

\begin{proposition}
	The uniform vector bundle $E$ over $\mathbb{P}^4$ with splitting type $(2;2,5;u_1,u_1-1)$ is either $$\mathcal{O}^{\oplus 2}_{\mathbb{P}^{4}}(u_1)\oplus \mathcal{O}^{\oplus 5}_{\mathbb{P}^{4}}(u_1-1)$$ or $$T_{\mathbb{P}^4}(u_1-2)\oplus \mathcal{O}_{\mathbb{P}^4}(u_1)\oplus\mathcal{O}^{\oplus 2}_{\mathbb{P}^4}(u_1-1)$$.
\end{proposition}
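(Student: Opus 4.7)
The plan is to follow the template used for the $k=2$ cases earlier in the paper. After normalizing $u_{2}=0$ and $u_{1}=1$ (which merely tensors the final answer by $\mathcal{O}_{\mathbb{P}^{4}}(u_{1}-1)$), the preliminary Chern polynomial computation has already singled out only the two solutions $(t_{1},t_{2}^{0},t_{2}^{1})=(0,0,0)$ and $(-1,0,0)$. I would handle these in parallel with the corresponding $k=2$ analyses above, and then re-twist to obtain the two forms in the statement.

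For $(0,0,0)$, the factor $S_{1}(T,U,V)=T^{2}$ forces $c_{HN^{1}}(T,U,V)=(T+U)^{2}$, so on every $p$-fiber $HN^{1}|_{p^{-1}(x)}$ is a rank-two subbundle of the trivial rank-seven bundle on $\mathbb{P}^{3}$ with vanishing first Chern class, hence trivial (the standard corollary used in the proof of Proposition \ref{(0,0,0)case}). Corollary \ref{all-trivial-lem} then yields $E\cong\mathcal{O}_{\mathbb{P}^{4}}^{\oplus 2}(1)\oplus\mathcal{O}_{\mathbb{P}^{4}}^{\oplus 5}$.

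For $(-1,0,0)$, I compute $c_{HN^{1}}(T,U,V)=(T+U)(T-V)$, the same factorization that appeared in the rank-six case with $r_{1}=2,\,r_{2}=4$. The fiber $HN^{1}|_{p^{-1}(x)}$ is then a rank-two subbundle of the trivial bundle with $c_{1}=\mathcal{O}_{\mathbb{P}^{3}}(-1)$, and must therefore be isomorphic to $\mathcal{O}_{\mathbb{P}^{3}}\oplus\mathcal{O}_{\mathbb{P}^{3}}(-1)$. Lemma \ref{deco-lem} then produces an exact sequence
\[0\longrightarrow\mathcal{O}_{\mathbb{P}^{4}}(1)\longrightarrow E\longrightarrow E'\longrightarrow 0,\]
in which a short cohomology check on each line shows $E'$ is a uniform rank-six bundle of splitting type $(2;1,5;1,0)$. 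By Lemma \ref{k=2r1=1} (Ellia's $r_{1}=1$ case), $E'$ is homogeneous, and a Chern polynomial comparison identifies $E'\cong T_{\mathbb{P}^{4}}(-1)\oplus\mathcal{O}_{\mathbb{P}^{4}}^{\oplus 2}$. A Bott-formula computation gives
\[\mathrm{Ext}^{1}\bigl(T_{\mathbb{P}^{4}}(-1)\oplus\mathcal{O}_{\mathbb{P}^{4}}^{\oplus 2},\,\mathcal{O}_{\mathbb{P}^{4}}(1)\bigr)=H^{1}(\mathbb{P}^{4},\Omega^{1}(2))\oplus H^{1}(\mathbb{P}^{4},\mathcal{O}(1))^{\oplus 2}=0,\]
so the sequence splits and $E\cong T_{\mathbb{P}^{4}}(-1)\oplus\mathcal{O}_{\mathbb{P}^{4}}(1)\oplus\mathcal{O}_{\mathbb{P}^{4}}^{\oplus 2}$.

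The hard part should be the identification of $E'$: verifying that the quotient $E/\mathcal{O}_{\mathbb{P}^{4}}(1)$ is genuinely uniform with splitting type $(2;1,5;1,0)$, and then selecting the correct homogeneous representative of that splitting type by Chern-class data alone (so that Lemma \ref{k=2r1=1} together with the Chern polynomial pins down $T_{\mathbb{P}^{4}}(-1)\oplus\mathcal{O}_{\mathbb{P}^{4}}^{\oplus 2}$ uniquely). Once these routine checks are in place, tensoring the two outputs by $\mathcal{O}_{\mathbb{P}^{4}}(u_{1}-1)$ yields precisely the pair of forms asserted in the proposition.
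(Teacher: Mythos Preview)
Your proposal is correct and essentially matches the paper's approach: both cases are dispatched by invoking the earlier template arguments, with the $(0,0,0)$ case handled exactly as in Proposition~\ref{(0,0,0)case}. For the $(-1,0,0)$ case the paper simply cites Proposition~\ref{(-1,0,0)case}, whereas your route via Lemma~\ref{deco-lem} and then Lemma~\ref{k=2r1=1} more closely parallels the $r_{1}=2,\,r_{2}=4$ argument of Section~3.2; this is a minor variation on the same underlying method.
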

\begin{proof}
	If $(t_1,t_2^0,t_2^1)=(0,0,0)$, then by the similar arguments in the proof of Proposition \ref{(0,0,0)case}, $E\cong \mathcal{O}^{\oplus 2}_{\mathbb{P}^{4}}(u_1)\oplus \mathcal{O}^{\oplus 5}_{\mathbb{P}^{4}}(u_1-1).$
	
	If $(t_1,t_2^0,t_2^1)=(-1,0,0)$, then by the similar arguments in the proof of Proposition \ref{(-1,0,0)case}, $E\cong T_{\mathbb{P}^4}(u_1-2)\oplus \mathcal{O}_{\mathbb{P}^4}(u_1)\oplus\mathcal{O}^{\oplus 2}_{\mathbb{P}^4}(u_1-1)$.
\end{proof}

 \vspace{1cm}

 \subsection{$k=2, r_1=3, r_2=4$}
Suppose that
 \begin{center}
 	$S_{1}(T,U,V)=T^3+t_1(U+V)T^2+[t_2^0(U^2+V^2)+t_2^1UV]T+[t_3^0(U^3+V^3)+t_3^1(U^2V+UV^2)]$.
 \end{center}

There are four solutions for the Chern polynomial equation if we set $u_2=0$.
 \begin{center}
 	$(t_1,t_2^0,t_2^1,t_3^0,t_3^1)=(-2,2,3,0,-1),$
 	
 	$(t_1,t_2^0,t_2^1,t_3^0,t_3^1)=(-1,0,0,0,0),$
 	
 	$(t_1,t_2^0,t_2^1,t_3^0,t_3^1)=(-1,1,1,-1,-1),$
 	
 	$(t_1,t_2^0,t_2^1,t_3^0,t_3^1)=(0,0,0,0,0)$.
 \end{center}
 \begin{proposition}
 	The uniform vector bundle $E$ over $\mathbb{P}^4$ with splitting type $(2;3,4;u_1,u_1-1)$ is $$\mathcal{O}^{\oplus 3}_{\mathbb{P}^{4}}(u_1)\oplus \mathcal{O}^{\oplus 4}_{\mathbb{P}^{4}}(u_1-1),$$
  $$T_{\mathbb{P}^4}(u_1-2)\oplus \mathcal{O}^{\oplus 2}_{\mathbb{P}^4}(u_1)\oplus\mathcal{O}_{\mathbb{P}^4}(u_1-1),$$
  $$(\bigwedge ^{2}T_{\mathbb{P}^4}(-1))\otimes \mathcal{O}_{\mathbb{P}^4}(u_1-1)\oplus \mathcal{O}_{\mathbb{P}^4}(u_1-1)$$
  or $$ \Omega^1_{\mathbb{P}^4}(u_1+1)\oplus \mathcal{O}^{\oplus 3}_{\mathbb{P}^4}(u_1-1).$$
 \end{proposition}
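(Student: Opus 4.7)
The plan mirrors the case-by-case strategy of Section 3.1: for each of the four tuples $(t_1,t_2^0,t_2^1,t_3^0,t_3^1)$ produced by the Chern polynomial calculation, I would compute $c_{HN^1}$, restrict to a $p$-fiber $\cong \mathbb{P}^3$, and invoke the appropriate structure theorem from the toolbox already assembled in Section 3.

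Three of the four solutions reduce to arguments already carried out. For $(0,0,0,0,0)$ we have $S_1=T^3$, so $HN^1|_{p^{-1}(x)}$ is a rank-$3$ subbundle of a trivial bundle with $c_1=0$; by the corollary to Theorem 3.2.1 in \cite{O-S-S} it is trivial, and Corollary \ref{all-trivial-lem} yields $\mathcal{O}^{\oplus 3}_{\mathbb{P}^4}(u_1)\oplus\mathcal{O}^{\oplus 4}_{\mathbb{P}^4}(u_1-1)$. For $(-1,0,0,0,0)$ we get $S_1=T^2(T-U-V)$, so (after normalising $u_2=0$, $u_1=1$) $c_{HN^1}=(T+U)^2(T-V)$ and $HN^1|_{p^{-1}(x)}\cong \mathcal{O}_{\mathbb{P}^3}(-1)\oplus\mathcal{O}_{\mathbb{P}^3}^{\oplus 2}$. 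Running the snake-lemma push-forward of Proposition \ref{(-1,0,0)case} gives an exact sequence $0\to\mathcal{O}^{\oplus 2}_{\mathbb{P}^4}(1)\to E\to p_\ast F\to 0$ where $p_\ast F$ is uniform of splitting type $(2;1,4;1,0)$; Ellia's classification (\cite{Ell}, cf.\ Lemma \ref{k=2r1=1}) forces $p_\ast F\cong T_{\mathbb{P}^4}(-1)\oplus\mathcal{O}_{\mathbb{P}^4}$, and $\mathrm{Ext}^1$-vanishing splits the extension. The tuple $(-1,1,1,-1,-1)$ is the dual of the previous one: passing to $E^{\vee}\otimes\mathcal{O}_{\mathbb{P}^4}(2u_1-1)$ swaps $S_1\leftrightarrow S_2$ and converts this solution into the preceding case, producing $\Omega^1_{\mathbb{P}^4}(u_1+1)\oplus\mathcal{O}^{\oplus 3}_{\mathbb{P}^4}(u_1-1)$.

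The substantive case is $(-2,2,3,0,-1)$, the rank-$7$ analogue of Proposition \ref{exterior T}. Here $S_1$ coincides with the $S_1$ of that proposition, so $c_{HN^1}$ and its restriction to $p$-fibers are identical; hence the first half of the earlier argument transfers verbatim: $HN^1|_{p^{-1}(x)}$ is either $T_{\mathbb{P}^3}(-2)$ or $N(-1)\oplus\mathcal{O}_{\mathbb{P}^3}$, and the nontrivial-extension/snake-lemma argument rules out the mixed scenario, forcing $HN^1|_{p^{-1}(x)}\cong T_{\mathbb{P}^3}(-2)$ uniformly on $\mathbb{P}^4$. The new feature is that the quotient $F$ has rank $4$ rather than $3$, and one checks on $p$-fibers that $F|_{p^{-1}(x)}\cong \Omega^1_{\mathbb{P}^3}(2)\oplus\mathcal{O}_{\mathbb{P}^3}$, with the trivial summand arising because the splitting type has multiplicity $r_2=4$ rather than $3$.

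The main obstacle, and the place where real work is needed, is detaching this extra $\mathcal{O}_{\mathbb{P}^3}$ summand of $F$ in a globally coherent way. The plan is to examine $p_\ast F$: its restrictions to $p$-fibers have a canonical global-section subspace of dimension $1$ (the image of $H^0(\Omega^1_{\mathbb{P}^3}(2)\oplus\mathcal{O})$ modulo the $\Omega^1(2)$ part), which by semicontinuity and base-change yields a line subbundle $\mathcal{O}_{\mathbb{P}^4}(u_1-1)\hookrightarrow p_\ast F$; combined with the Descente-Lemma argument of Proposition \ref{-2 case}, this produces a direct-sum decomposition $E\cong E'\oplus\mathcal{O}_{\mathbb{P}^4}(u_1-1)$ where $E'$ is uniform of rank $6$ and splitting type $(2;3,3;u_1,u_1-1)$ with $HN^1|_{p^{-1}(x)}\cong T_{\mathbb{P}^3}(-2)$. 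Then \cite{B-E1982} (as cited in the proof of Proposition \ref{exterior T}) identifies $E'\cong (\bigwedge^{2} T_{\mathbb{P}^4}(-1))\otimes\mathcal{O}_{\mathbb{P}^4}(u_1-1)$, completing the classification.
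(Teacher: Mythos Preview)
Your strategy matches the paper's: the three easy cases reduce to Propositions \ref{(0,0,0)case} and \ref{(-1,0,0)case} (the latter also via duality for $(-1,1,1,-1,-1)$), and for $(-2,2,3,0,-1)$ you correctly transplant the argument of Proposition \ref{exterior T}, since $S_1$ is literally the same, to conclude $HN^1|_{p^{-1}(x)}\cong T_{\mathbb{P}^3}(-2)$ and $F|_{p^{-1}(x)}\cong\Omega^1_{\mathbb{P}^3}(2)\oplus\mathcal{O}_{\mathbb{P}^3}$. The paper's own proof is equally terse here, simply asserting the final decomposition.

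Your specific plan for detaching the trivial summand does not work as written, however. One has $h^0(\Omega^1_{\mathbb{P}^3}(2))=6$, so $p_\ast F$ has rank $7$, and the splitting $F|_{p^{-1}(x)}\cong\Omega^1_{\mathbb{P}^3}(2)\oplus\mathcal{O}$ is not canonical---there is no well-defined ``$\Omega^1(2)$-part'' of $H^0$ to quotient by, and no reason for any such quotient to globalise to a line sub- or quotient-bundle of $p_\ast F$. The clean fix is to pass to the dual: the rank-$4$ piece of the HN-filtration of $E^\vee$ restricts on each $p$-fibre to $F^\vee|_{p^{-1}(x)}\cong T_{\mathbb{P}^3}(-2)\oplus\mathcal{O}_{\mathbb{P}^3}$, and now $h^0(T_{\mathbb{P}^3}(-2))=h^1(T_{\mathbb{P}^3}(-2))=0$, so Lemma \ref{deco-lem} applies verbatim to produce a line subbundle $\mathcal{O}_{\mathbb{P}^4}(l)\hookrightarrow E^\vee$ with uniform rank-$6$ quotient. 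Dualizing back gives a surjection $E\twoheadrightarrow\mathcal{O}_{\mathbb{P}^4}(u_1-1)$ whose kernel is uniform of type $(2;3,3;u_1,u_1-1)$; the classification of Section 3.1 plus the Chern data identify this kernel as $(\bigwedge^2 T_{\mathbb{P}^4}(-1))\otimes\mathcal{O}_{\mathbb{P}^4}(u_1-1)$, and an $\mathrm{Ext}^1$ computation splits the sequence.
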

\begin{proof}
	If $(t_1,t_2^0,t_2^1,t_3^0,t_3^1)=(0,0,0,0,0)$, then by the similar arguments in the proof of Proposition \ref{(0,0,0)case}, $E\cong \mathcal{O}^{\oplus 3}_{\mathbb{P}^{4}}(u_1)\oplus \mathcal{O}^{\oplus 4}_{\mathbb{P}^{4}}(u_1-1).$
	
	If $(t_1,t_2^0,t_2^1,t_3^0,t_3^1)=(-1,0,0,0,0)$, then by the similar arguments in the proof of Proposition \ref{(-1,0,0)case}, $E\cong T_{\mathbb{P}^4}(u_1-2)\oplus \mathcal{O}^{\oplus 2}_{\mathbb{P}^4}(u_1)\oplus\mathcal{O}_{\mathbb{P}^4}(u_1-1)$.
	
	If $(t_1,t_2^0,t_2^1,t_3^0,t_3^1)=(-2,2,3,0,-1)$ ,then by the similar arguments in the proof of Proposition \ref{exterior T}, $HN^1|_{p^{-1}(x)}\cong T_{p^{-1}(x)}(-1)$ and $p^{*}E/HN^1|_{p^{-1}(x)}\cong \mathcal{O}_{p^{-1}(x)}\oplus \Omega^1_{\mathbb{P}^3}(2)$. Finally, we can get that $E\cong (\bigwedge ^{2}T_{\mathbb{P}^4}(-1))\otimes \mathcal{O}_{\mathbb{P}^4}(u_1-1)\oplus \mathcal{O}_{\mathbb{P}^4}(u_1-1).$
	
	If $(t_1,t_2^0,t_2^1,t_3^0,t_3^1)=(-1,1,1,-1,-1)$, then by the similar arguments in the proof of Proposition \ref{(-1,0,0)case} for $E^{*}$, we can get $E\cong \Omega^1_{\mathbb{P}^4}(u_1+1)\oplus \mathcal{O}^{\oplus 3}_{\mathbb{P}^4}(u_1-1)$.
\end{proof}


  \subsection{$k=3, r_1=r_2=1, r_3=5$}
 We may assume
\[S_{1}(T,U,V)=T+a(U+V)\] and
\[S_{2}(T,U,V)=T+b(U+V),\]
where $a, b$ are constants.

There are two solutions for the Chern polynomial equation if we set $u_3=0$.
 \begin{center}
 	$(a,b)=(-2,0), (0,0), (0,-1)$.
 \end{center}

\begin{proposition}
	The uniform vector bundle $E$ over $\mathbb{P}^4$ with splitting type $(3;1,1,5;u_1,u_1-1,u_1-2)$ is either $$E\cong \mathcal{O}_{\mathbb{P}^{4}}(u_1)\oplus \mathcal{O}_{\mathbb{P}^{4}}(u_1-1)\oplus \mathcal{O}^{\oplus 5}_{\mathbb{P}^{4}}(u_1-2)$$ or $$T_{\mathbb{P}^4}(u_1-3)\oplus \mathcal{O}_{\mathbb{P}^4}(u_1)\oplus\mathcal{O}^{\oplus 2}_{\mathbb{P}^4}(u_1-2).$$
\end{proposition}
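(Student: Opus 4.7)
The plan is to handle each of the three candidate solutions $(a,b) = (0,0),\ (0,-1),\ (-2,0)$ separately, reusing the templates built up in the earlier subsections (Propositions \ref{(0,0,0)case}, \ref{0-1}, and \ref{-2 case}), and showing that the last case is impossible.

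First, for $(a,b)=(0,0)$ I would argue that both $S_1$ and $S_2$ contribute nothing beyond the shift by $u_i U$, so $HN^1|_{p^{-1}(x)}$ is a subbundle of a trivial bundle with $c_1 = 0$, forcing $HN^1|_{p^{-1}(x)}$ to be trivial by the corollary to Theorem 3.2.1 in \cite{O-S-S}. The same reasoning applies to $(HN^2/HN^1)|_{p^{-1}(x)}$ and to the top graded piece (using the analogue of the argument in Proposition \ref{(0,0,0)case} for the rank-$5$ piece). Corollary \ref{all-trivial-lem} then gives the direct-sum splitting $E\cong \mathcal{O}_{\mathbb{P}^4}(u_1)\oplus \mathcal{O}_{\mathbb{P}^4}(u_1-1)\oplus \mathcal{O}^{\oplus 5}_{\mathbb{P}^4}(u_1-2)$.

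For $(a,b)=(0,-1)$, since $S_1(T,U,V)=T$ the bundle $HN^1$ is trivial on each $p$-fiber, so by Lemma \ref{trivial-lemma} $HN^1\cong p^{*}\mathcal{O}_{\mathbb{P}^4}(u_1)$. Pushing forward the HN short exact sequence I obtain
\[0\longrightarrow \mathcal{O}_{\mathbb{P}^4}(u_1)\longrightarrow E\longrightarrow p_{*}F\longrightarrow 0,\]
where $p_{*}F$ is a rank-$6$ uniform bundle of splitting type $(2;1,5;u_1-1,u_1-2)$. By Lemma \ref{k=2r1=1}, $p_{*}F$ is homogeneous, and comparing Chern polynomials with the remaining factor $S_2\cdot S_3$ forces $p_{*}F\cong T_{\mathbb{P}^4}(u_1-3)\oplus \mathcal{O}^{\oplus 2}_{\mathbb{P}^4}(u_1-2)$. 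The extension then splits because $\mathrm{Ext}^{1}(T_{\mathbb{P}^4}(u_1-3)\oplus \mathcal{O}^{\oplus 2}_{\mathbb{P}^4}(u_1-2),\mathcal{O}_{\mathbb{P}^4}(u_1))=0$, yielding the second listed bundle.

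For $(a,b)=(-2,0)$, $HN^1|_{p^{-1}(x)}\cong \mathcal{O}_{\mathbb{P}^3}(-2)$ and $(HN^2/HN^1)|_{p^{-1}(x)}$ is trivial, so after a twist $HN^1\cong \mathcal{O}_{\mathbb{P}(T_{\mathbb{P}^4}(-1))}(-2)\otimes p^{*}\mathcal{O}_{\mathbb{P}^4}(u_1)$ and by Lemma \ref{trivial-lemma} $HN^2/HN^1\cong p^{*}\mathcal{O}_{\mathbb{P}^4}(u_1-1)$. I would then mirror exactly the argument of Proposition \ref{-2 case}: use the vanishing $H^1(\mathbb{P}^4,p_*\mathcal{O}_{\mathbb{P}(T_{\mathbb{P}^4}(-1))}(-2)\otimes \mathcal{O}_{\mathbb{P}^4}(-1))=0$ to split $HN^2$, push forward the full $p^{*}E/HN^2$ sequence, and apply the Descente-Lemma (Lemma 2.1.2 in \cite{O-S-S} chapter 2) to conclude that $\mathcal{O}_{\mathbb{P}(T_{\mathbb{P}^4}(-1))}(-2)$ would descend to a line bundle on $\mathbb{P}^4$, which is absurd. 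Hence no uniform bundle exists in this case.

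The main obstacle I expect is the $(-2,0)$ case: the descent argument requires verifying that $\mathcal{H}om$ from $\mathcal{O}_{\mathbb{P}(T_{\mathbb{P}^4}(-1))}(-2)$ into the relevant cokernel vanishes on every $q$-fiber (here the $q$-fiber is $\mathbb{P}^3$, not just $\mathbb{P}^1$ as in Proposition \ref{-2 case}), together with the cohomology vanishing needed to split off the $\mathcal{O}(-2)$ piece. The other two cases are routine adaptations of the templates already established. Once all subcases are settled, the listed two homogeneous possibilities exhaust the proposition.
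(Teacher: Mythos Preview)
Your proposal is correct and follows the same three-case template as the paper (the paper cites Proposition~\ref{(-1,0,0)case} rather than Proposition~\ref{0-1} for the $(0,-1)$ case, but your route via a trivial $HN^1$ and Lemma~\ref{k=2r1=1} is equally valid and arguably more direct). One point to correct: in your discussion of the obstacle in the $(-2,0)$ case you write that the $q$-fiber is $\mathbb{P}^3$ rather than $\mathbb{P}^1$, but this is a confusion with the $p$-fiber --- the $q$-fiber $q^{-1}(l)$ over a line $l\in\mathbb{G}(1,4)$ is always $\mathbb{P}^1$, exactly as in Proposition~\ref{-2 case}, so the Descente-Lemma verification is identical and presents no new difficulty.
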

\begin{proof}
	If $(a,b)=(0,0)$, then by the similar arguments in the proof of Proposition \ref{(0,0,0)case}, $E\cong \mathcal{O}_{\mathbb{P}^{4}}(u_1)\oplus \mathcal{O}_{\mathbb{P}^{4}}(u_1-1)\oplus \mathcal{O}^{\oplus 5}_{\mathbb{P}^{4}}(u_1-2).$

If $(a,b)=(0,-1)$, then by the similar arguments in the proof of Proposition \ref{(-1,0,0)case}, we have $E\cong T_{\mathbb{P}^4}(u_1-3)\oplus \mathcal{O}_{\mathbb{P}^4}(u_1)\oplus\mathcal{O}^{\oplus 2}_{\mathbb{P}^4}(u_1-2)$.
	
	If $a=-2$, by the same argument in Proposition \ref{-2 case}, there does not exist such a uniform bundle.
\end{proof}

 \vspace{1cm}

 \subsection{$k=3,r_1=r_3=1, r_2=5$}
 We may assume
\[S_{1}(T,U,V)=T+a(U+V)\] and
\[S_{2}(T,U,V)=T+b(U+V),\] where $a, b$ are constants.

There are three solutions for the Chern polynomial equation if we set $u_3=0$.
\begin{center}
	$(a,b)=(-1,0), (0,0), (0,1).$
\end{center}

Using the similar arguments, we have following result.
\begin{proposition}
		The uniform vector bundle $E$ over $\mathbb{P}^4$ with splitting type $(3;1,5,1;u_1,u_1-1,u_1-2)$ is $$\mathcal{O}_{\mathbb{P}^{4}}(u_1)\oplus \mathcal{O}^{\oplus 5}_{\mathbb{P}^{4}}(u_1-1)\oplus \mathcal{O}_{\mathbb{P}^{4}}(u_1-2),$$ $$T_{\mathbb{P}^4}(u_1-2)\oplus \mathcal{O}^{\oplus 2}_{\mathbb{P}^4}(u_1-1)\oplus\mathcal{O}_{\mathbb{P}^4}(u_1-2)$$ or $$\Omega^1_{\mathbb{P}^4}(u_1)\oplus \mathcal{O}_{\mathbb{P}^4}(u_1)\oplus\mathcal{O}^{\oplus 2}_{\mathbb{P}^4}(u_1-1).$$
\end{proposition}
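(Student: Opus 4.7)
The plan is to handle each of the three solutions separately, reusing the templates established in the earlier subsections, so that all three possibilities listed in the proposition are produced.

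First, for the trivial solution $(a,b)=(0,0)$, both $S_1$ and $S_2$ are of the form $T$, so the Chern polynomial of $HN^1$ (and of $HN^2/HN^1$ after the standard twist) forces the restriction $HN^1|_{p^{-1}(x)}$ to be a subbundle of a trivial bundle with vanishing $c_1$, hence trivial on every $p$-fiber. The same computation applies to the successive quotient $HN^2/HN^1$. Invoking Lemma~\ref{trivial-lemma} and Corollary~\ref{all-trivial-lem} then forces $E\cong \mathcal{O}_{\mathbb{P}^{4}}(u_1)\oplus \mathcal{O}^{\oplus 5}_{\mathbb{P}^{4}}(u_1-1)\oplus \mathcal{O}_{\mathbb{P}^{4}}(u_1-2)$, exactly as in the proof of Proposition~\ref{(0,0,0)case}.

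Second, for $(a,b)=(-1,0)$ one computes the Chern polynomial of $HN^1$ to see that on each $p$-fiber the restriction is a subbundle of the trivial bundle with $c_1=-1$, so necessarily $\mathcal{O}_{\mathbb{P}^3}(-1)\oplus\mathcal{O}_{\mathbb{P}^3}^{\oplus\cdot}$. Pushing the $HN$-sequence forward by $p_*$ and pulling back by $p^*$ exactly as in Proposition~\ref{(-1,0,0)case} produces a commutative diagram whose snake-lemma conclusion identifies a subbundle of the form $\mathcal{H}_{T_{\mathbb{P}^n}(-1)}$ inside $p^*p_*F$. By Proposition~3.5 of \cite{E-H-S}, the pertinent direct-image bundle contains $T_{\mathbb{P}^4}(-1)$ (suitably twisted), and since the remaining Ext group vanishes the extension splits, yielding $E\cong T_{\mathbb{P}^4}(u_1-2)\oplus \mathcal{O}^{\oplus 2}_{\mathbb{P}^4}(u_1-1)\oplus\mathcal{O}_{\mathbb{P}^4}(u_1-2)$.

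Third, the solution $(a,b)=(0,1)$ is the dual of the previous case: comparing the Chern polynomial shows that $E^{*}\otimes \mathcal{O}_{\mathbb{P}^4}(u_1)$ has splitting type of the form handled by $(-1,0)$ in the previous paragraph, so from $E^{*}(c)\cong T_{\mathbb{P}^4}(a)\oplus\mathcal{O}_{\mathbb{P}^4}^{\oplus 2}(b)\oplus\mathcal{O}_{\mathbb{P}^4}(c')$ one dualizes and twists back to obtain $E\cong \Omega^1_{\mathbb{P}^4}(u_1)\oplus \mathcal{O}_{\mathbb{P}^4}(u_1)\oplus\mathcal{O}^{\oplus 2}_{\mathbb{P}^4}(u_1-1)$.

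The main obstacle, as in the analogous earlier subsections, is not any single isomorphism step but verifying that the computer-produced list $(-1,0),(0,0),(0,1)$ is indeed exhaustive and that in the non-trivial cases the fiberwise splitting type is constant; in particular one must rule out, in the $(-1,0)$ and $(0,1)$ cases, any alternative fiber decomposition of $HN^1|_{p^{-1}(x)}$ by the same $c_1$/globally-generated argument used before. Once fiber-constancy is confirmed, the three parts combine to give precisely the three bundles listed, completing the proposition.
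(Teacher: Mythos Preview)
Your proposal is correct and follows essentially the same approach as the paper, which simply says ``Using the similar arguments, we have the following result'' and refers back to Propositions~\ref{(0,0,0)case} and~\ref{(-1,0,0)case}. One small slip: in the $(a,b)=(-1,0)$ case you write that $HN^1|_{p^{-1}(x)}\cong \mathcal{O}_{\mathbb{P}^3}(-1)\oplus\mathcal{O}_{\mathbb{P}^3}^{\oplus\cdot}$, but here $r_1=1$, so $HN^1$ is already a line bundle and the restriction is simply $\mathcal{O}_{\mathbb{P}^3}(-1)$; this does not affect the rest of your argument.
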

%
%

\vspace{1cm}

\subsection{$k=3, r_1=1,r_2=2,r_3=4$}
 We may assume
\[S_{1}(T,U,V)=T+a(U+V)\] and
\[S_{2}(T,U,V)=T^2+b_1(U+V)T+b_2^0(U^2+V^2)+b_2^1UV,\] where $a, b_1, b_2^0, b_2^1$ are constants.

There are two solutions for the Chern polynomial equation if we set $u_3=0$.
\begin{center}
	$(a,b_1,b_2^0,b_2^1)=(-2,0,0,0), (0,0,0,0), (0,-1,0,0)$.
\end{center}

Using the similar arguments, we have following result.
\begin{proposition}
		The uniform vector bundle $E$ over $\mathbb{P}^4$ with splitting type $(3;1,2,4;u_1,u_1-1,u_1-2)$ is either $$\mathcal{O}_{\mathbb{P}^{4}}(u_1)\oplus \mathcal{O}^{\oplus 2}_{\mathbb{P}^{4}}(u_1-1)\oplus \mathcal{O}^{\oplus 4}_{\mathbb{P}^{4}}(u_1-2)$$ or $$T_{\mathbb{P}^4}(u_1-3)\oplus \mathcal{O}_{\mathbb{P}^4}(u_1)\oplus\mathcal{O}_{\mathbb{P}^4}(u_1-1))\oplus\mathcal{O}_{\mathbb{P}^4}(u_1-2).$$
\end{proposition}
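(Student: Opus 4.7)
The plan is to mirror exactly the treatment of the rank-$6$ case $k=3,\ r_1=1,\ r_2=2,\ r_3=3$ from Section 3.3, adjusting ranks and twists as necessary. First I would normalize by twisting so that $u_3=0$ (the general case follows by tensoring with $\mathcal{O}_{\mathbb{P}^4}(u_1-2)$), set up the Chern polynomial identity $c_{p^*E}(T) = \prod_{i=1}^{3} S_i(T+u_iU, U, V)$ in $A(F(1,2,5))$ with the ansatz listed just before the proposition, and confirm via the symbolic computation that the admissible tuples are $(a,b_1,b_2^0,b_2^1) \in \{(0,0,0,0),\ (0,-1,0,0),\ (-2,0,0,0)\}$.

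For the easy case $(0,0,0,0)$, every $HN^i|_{p^{-1}(x)}$ is a subbundle of a trivial bundle on $\mathbb{P}^3$ with vanishing first Chern class, hence trivial by the corollary to Theorem 3.2.1 of \cite{O-S-S}; Corollary \ref{all-trivial-lem} then yields the split form $\mathcal{O}_{\mathbb{P}^4}(u_1)\oplus \mathcal{O}^{\oplus 2}_{\mathbb{P}^4}(u_1-1)\oplus \mathcal{O}^{\oplus 4}_{\mathbb{P}^4}(u_1-2)$. For $(0,-1,0,0)$, one has $S_1(T,U,V)=T$, so $HN^1$ is $p$-fiberwise trivial, and Lemma \ref{trivial-lemma} gives $HN^1\cong p^*\mathcal{O}_{\mathbb{P}^4}(u_1)$. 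Pushing the defining sequence $0\to HN^1\to p^*E\to F\to 0$ down by $p_*$ produces
\[
0\longrightarrow \mathcal{O}_{\mathbb{P}^4}(u_1)\longrightarrow E\longrightarrow p_*F\longrightarrow 0,
\]
where $p_*F$ is a uniform $6$-bundle of splitting type $(2;2,4;u_1-1,u_1-2)$. The classification in Section 3.2 leaves two possibilities; Chern-class comparison singles out $p_*F\cong T_{\mathbb{P}^4}(u_1-3)\oplus \mathcal{O}_{\mathbb{P}^4}(u_1-1)\oplus \mathcal{O}_{\mathbb{P}^4}(u_1-2)$, and the vanishing $\mathrm{Ext}^1(p_*F,\mathcal{O}_{\mathbb{P}^4}(u_1))=0$ splits the sequence, yielding the second claimed form.

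For $(-2,0,0,0)$, I would run the argument of Proposition \ref{-2 case} step by step. Here $HN^1|_{p^{-1}(x)}\cong \mathcal{O}_{\mathbb{P}^3}(-2)$ and the Chern data for $HN^2/HN^1$ match those of a trivial rank-$2$ bundle on each fiber; Horrocks' theorem together with Lemma \ref{trivial-lemma} yields $HN^2/HN^1\cong p^*\mathcal{O}^{\oplus 2}_{\mathbb{P}^4}(u_1-1)$. After checking the vanishing $H^1\bigl(F_4,\ \mathcal{O}_{\mathbb{P}(T_{\mathbb{P}^4}(-1))}(-2)\otimes p^*\mathcal{O}^{\oplus 2}_{\mathbb{P}^4}(-1)\bigr)=0$ (which reduces via $p_*$ to a vanishing on $\mathbb{P}^4$ because $p_*\mathcal{O}_{\mathbb{P}(T_{\mathbb{P}^4}(-1))}(-2)=0$), the extension $HN^2$ splits as $\mathcal{O}_{\mathbb{P}(T_{\mathbb{P}^4}(-1))}(-2)\oplus p^*\mathcal{O}^{\oplus 2}_{\mathbb{P}^4}(u_1-1)$. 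The Descente-Lemma argument of Proposition \ref{-2 case} then forces the summand $\mathcal{O}_{\mathbb{P}(T_{\mathbb{P}^4}(-1))}(-2)$ to descend to a bundle on $\mathbb{P}^4$, contradicting $\mathcal{H}om(\mathcal{O}_{q^{-1}(\ell)}(2),\mathcal{O}_{q^{-1}(\ell)})=0$; no uniform bundle exists in this case.

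The main obstacle is verifying that the delicate cohomology-and-descent chain of Proposition \ref{-2 case} transfers with the present ranks, since the rank of the trivial summand in $HN^2/HN^1$ is $2$ rather than the value appearing in the rank-$6$ analog. The decisive observation is that the obstruction to descent depends only on the $\mathcal{O}_{\mathbb{P}(T_{\mathbb{P}^4}(-1))}(-2)$ summand and the Hom-vanishing on $q$-fibers, both of which are insensitive to the rank of the trivial complement; once this is granted, the remaining steps are direct appeals to Corollary \ref{all-trivial-lem}, Lemma \ref{trivial-lemma}, Proposition \ref{0-1}, and the classification in Section 3.2.
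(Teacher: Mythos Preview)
Your proposal is correct and follows essentially the same approach as the paper, which simply says ``Using the similar arguments'' and refers back to Propositions \ref{(0,0,0)case}, \ref{0-1}, and \ref{-2 case}. One small remark: your closing concern about the rank of $HN^2/HN^1$ differing from the rank-$6$ analog is unfounded, since in Section~3.3 one also has $r_1=1$, $r_2=2$ (only $r_3$ changes from $3$ to $4$), so $HN^1$ and $HN^2$ are literally identical in both cases and the Descente-Lemma contradiction goes through verbatim.
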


%

 \vspace{1cm}

 \subsection{$k=3, r_1=2,r_2=1,r_3=4$}
  We may assume
\[S_{2}(T,U,V)=T+a(U+V)\] and
\[S_{1}(T,U,V)=T^2+b_1(U+V)T+b_2^0(U^2+V^2)+b_2^1UV,\] where $a, b_1, b_2^0, b_2^1$ are constants.

There are two solutions for the Chern polynomial equation if we set $u_3=0$.
  \begin{center}
 	$(a,b_1,b_2^0,b_2^1)=(0,0,0,0), (-1,0,0,0)$.
 \end{center}

Using the similar arguments, we have following result.
\begin{proposition}
		The uniform vector bundle $E$ over $\mathbb{P}^4$ with splitting type $(3;2,1,4;u_1,u_1-1,u_1-2)$ is either $$\mathcal{O}^{\oplus 2}_{\mathbb{P}^{4}}(u_1)\oplus \mathcal{O}_{\mathbb{P}^{4}}(u_1-1)\oplus \mathcal{O}^{\oplus 4}_{\mathbb{P}^{4}}(u_1-2)$$ or $$T_{\mathbb{P}^4}(u_1-3)\oplus \mathcal{O}^{\oplus 2}_{\mathbb{P}^4}(u_1)\oplus\mathcal{O}_{\mathbb{P}^4}(u_1-2).$$
\end{proposition}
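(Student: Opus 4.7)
The plan is to mirror the argument of the preceding subsection (the $r_1=1,r_2=2$ case) with the roles of $S_1$ and $S_2$ interchanged. First I would write the Chern polynomial identity
\[\prod_{i=1}^{3}S_i(T+u_iU,U,V)\equiv c_{p^*E}(T)\pmod{R^{4}(U,V),\,R^{5}(U,V)}\]
in the Chow ring of $F(1,2,5)$ with the stated ansatz (together with a general symmetric degree-$4$ polynomial for $S_3$) and confirm via computer algebra that the only admissible parameter vectors are $(a,b_1,b_2^0,b_2^1)\in\{(0,0,0,0),\,(-1,0,0,0)\}$. In both cases $S_1(T+u_1U,U,V)|_{U=0}=T^2$, so for every $x\in\mathbb{P}^4$ the restriction $HN^1|_{p^{-1}(x)}$ is a rank-$2$ subbundle of the trivial rank-$7$ bundle on $\mathbb{P}^3$ with $c_1=c_2=0$, hence trivial by the OSS corollary invoked in the proof of Proposition \ref{(0,0,0)case}.

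For $(a,b_1,b_2^0,b_2^1)=(0,0,0,0)$, the same fibrewise vanishing applies to the other graded pieces $HN^i/HN^{i-1}$, so Corollary \ref{all-trivial-lem} yields directly $E\cong \mathcal{O}^{\oplus 2}_{\mathbb{P}^4}(u_1)\oplus\mathcal{O}_{\mathbb{P}^4}(u_1-1)\oplus\mathcal{O}^{\oplus 4}_{\mathbb{P}^4}(u_1-2)$. For $(a,b_1,b_2^0,b_2^1)=(-1,0,0,0)$, Lemma \ref{trivial-lemma} upgrades the fibrewise triviality of $HN^1$ to the global isomorphism $HN^1\cong p^*\mathcal{O}^{\oplus 2}_{\mathbb{P}^4}(u_1)$. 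Pushing the defining sequence $0\to HN^1\to p^*E\to F\to 0$ forward via $p_*$ yields
\[0\longrightarrow\mathcal{O}^{\oplus 2}_{\mathbb{P}^4}(u_1)\longrightarrow E\longrightarrow p_*F\longrightarrow 0,\]
where $p_*F$ is a uniform $5$-bundle on $\mathbb{P}^4$ of splitting type $(2;1,4;u_1-1,u_1-2)$. By Ellia's classification \cite{Ell} of uniform $(n{+}1)$-bundles on $\mathbb{P}^n$, $p_*F$ must be either $\mathcal{O}_{\mathbb{P}^4}(u_1-1)\oplus\mathcal{O}^{\oplus 4}_{\mathbb{P}^4}(u_1-2)$ or $T_{\mathbb{P}^4}(u_1-3)\oplus\mathcal{O}_{\mathbb{P}^4}(u_1-2)$.

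The crucial step is to exclude the first option: choosing it would force $E$ to agree with the bundle produced by the $(0,0,0,0)$ branch, contradicting the fact that the two parameter vectors give distinct solutions of the Chern polynomial equation. Once $p_*F\cong T_{\mathbb{P}^4}(u_1-3)\oplus\mathcal{O}_{\mathbb{P}^4}(u_1-2)$ is established, Bott vanishing on $\mathbb{P}^4$ gives $\operatorname{Ext}^1(p_*F,\mathcal{O}^{\oplus 2}_{\mathbb{P}^4}(u_1))=0$, so the extension splits and we obtain $E\cong T_{\mathbb{P}^4}(u_1-3)\oplus\mathcal{O}^{\oplus 2}_{\mathbb{P}^4}(u_1)\oplus\mathcal{O}_{\mathbb{P}^4}(u_1-2)$. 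The main obstacle is exactly this separation of the two Ellia candidates; the cleanest shortcut is the distinctness-of-Chern-polynomial argument above, but if a more intrinsic route is desired, one can mimic the snake-lemma reasoning of Proposition \ref{(-1,0,0)case} and exhibit $\mathcal{H}_{T_{\mathbb{P}^4}(-1)}$ inside $p^*p_*F$ via Proposition 3.5 of \cite{E-H-S}, thereby recognising $T_{\mathbb{P}^4}(-1)$ as a subbundle of $p_*F$ directly.
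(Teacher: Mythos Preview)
Your computer-algebra step is incomplete: the Chern-polynomial equation for this splitting type has \emph{three} solutions, not two. In the paper's notation (with $u_3=0$, $S_2(T,U,V)=T+a(U+V)$ and $S_1(T,U,V)=T^2+b_1(U+V)T+b_2^0(U^2+V^2)+b_2^1UV$) the admissible tuples are
\[
(a,b_1,b_2^0,b_2^1)\in\{(0,0,0,0),\ (-1,0,0,0),\ (0,-2,0,0)\},
\]
the last coming with $S_3$-coefficients $(n_1,n_2^0,n_2^1,n_3^0,n_3^1)=(2,4,4,8,8)$. For this third branch $S_1(T,U,V)=T^2-2(U+V)T$, so $c_{HN^1}=(T+2U)(T-2V)$ and the restriction to any $p$-fibre has $c_1=-2$, not $0$. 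Your sentence ``In both cases $S_1(T+u_1U,U,V)|_{U=0}=T^2$'' therefore fails here, and neither the triviality argument of Proposition~\ref{(0,0,0)case} nor the push-forward argument you outline applies. This branch has to be excluded separately, by the method of Proposition~\ref{-2 case}: one identifies $HN^2/HN^1$ as trivial on fibres, shows $HN^2\cong\mathcal{O}_{\mathbb{P}(T_{\mathbb{P}^4}(-1))}(-2)\oplus p^*\mathcal{O}_{\mathbb{P}^4}^{\oplus 2}(1)$ up to the obvious twist, and then uses the Descente-Lemma on $q$-fibres to reach a contradiction.

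For the two branches you did treat, your argument is essentially the paper's (Propositions~\ref{(0,0,0)case} and~\ref{0-1}). Your way of separating the two Ellia candidates by comparing Chern polynomials is fine---the $(0,0,0,0)$ and $(-1,0,0,0)$ solutions yield distinct $c_{p^*E}$, hence distinct $c_{p_*F}$---but this only works once you know you are genuinely in the $(-1,0,0,0)$ branch, which again presupposes that the solution list is complete.
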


\vspace{1cm}

 \subsection{$k=3, r_1=2, r_2=4, r_3=1$}
  We may assume
\[S_{3}(T,U,V)=T+a(U+V)\] and
\[S_{1}(T,U,V)=T^2+b_1(U+V)T+b_2^0(U^2+V^2)+b_2^1UV,\] where $a, b_1, b_2^0, b_2^1$ are constants.

There are three solutions for the Chern polynomial equation if we set $u_3=0$.
  \begin{center}
 	$(a,b_1,b_2^0,b_2^1)=(0,-1,0,0), (0,0,0,0), (1,0,0,0)$.
 \end{center}

Using the similar arguments, we have following result.
\begin{proposition}
		The uniform vector bundle $E$ over $\mathbb{P}^4$ with splitting type $(3;2,4,1;u_1,u_1-1,u_1-2)$ is $$\mathcal{O}^{\oplus 2}_{\mathbb{P}^{4}}(u_1)\oplus \mathcal{O}^{\oplus 4}_{\mathbb{P}^{4}}(u_1-1)\oplus \mathcal{O}_{\mathbb{P}^{4}}(u_1-2),$$ $$T_{\mathbb{P}^4}(u_1-2)\oplus \mathcal{O}_{\mathbb{P}^4}(u_1)\oplus\mathcal{O}_{\mathbb{P}^4}(u_1-1))\oplus\mathcal{O}_{\mathbb{P}^4}(u_1-2)$$ or $$\Omega^1_{\mathbb{P}^4}(u_1)\oplus \mathcal{O}^{\oplus 2}_{\mathbb{P}^4}(u_1)\oplus\mathcal{O}_{\mathbb{P}^4}(u_1-1).$$
\end{proposition}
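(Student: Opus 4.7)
The plan is to treat each of the three Chern-polynomial solutions separately, in each case reducing to a technique already established earlier in the paper.

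For $(a,b_1,b_2^0,b_2^1)=(0,0,0,0)$, both $S_1=T^2$ and $S_3=T$ are trivial, and compatibility with the equation in $A(F(1,2,5))$ forces $S_2=T^4$. Each restriction $HN^i/HN^{i-1}|_{p^{-1}(x)}$ then sits inside a trivial bundle with vanishing first Chern class, and so is itself trivial (cf.\ \cite{O-S-S}, Corollary after Theorem 3.2.1). Corollary \ref{all-trivial-lem} then delivers the fully split bundle $\mathcal{O}^{\oplus 2}_{\mathbb{P}^4}(u_1)\oplus\mathcal{O}^{\oplus 4}_{\mathbb{P}^4}(u_1-1)\oplus\mathcal{O}_{\mathbb{P}^4}(u_1-2)$, exactly as in Proposition \ref{(0,0,0)case}.

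For $(0,-1,0,0)$, the factorization $S_1(T,U,V)=T(T-(U+V))$ shows that $c_{HN^1}(T)$ contains a trivial linear factor. Following the pattern of Proposition \ref{(-1,0,0)case}, I would compute that $HN^1|_{p^{-1}(x)}\cong \mathcal{O}_{\mathbb{P}^3}\oplus\mathcal{O}_{\mathbb{P}^3}(-1)$ on every fiber, apply Lemma \ref{deco-lem} to extract a line subbundle $\mathcal{O}_{\mathbb{P}^4}(u_1)\hookrightarrow E$, identify the rank-$6$ quotient via its Chern polynomial and the already-established classifications of lower-rank uniform bundles, and finally invoke the vanishing of the relevant $\mathrm{Ext}^1$ to split the resulting short exact sequence. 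This will yield $T_{\mathbb{P}^4}(u_1-2)\oplus\mathcal{O}_{\mathbb{P}^4}(u_1)\oplus\mathcal{O}_{\mathbb{P}^4}(u_1-1)\oplus\mathcal{O}_{\mathbb{P}^4}(u_1-2)$.

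For $(1,0,0,0)$, the polynomial $S_3=T+(U+V)$ carries a nontrivial linear term at the bottom of the filtration, which is precisely the dual-twist of the previous case. The plan here is to pass to $E^\vee$, twist by a suitable line bundle so that the Chern-polynomial data becomes of the $(0,-1,0,0)$-type just analyzed, run the argument above on the dual, and then dualize back to obtain $\Omega^1_{\mathbb{P}^4}(u_1)\oplus\mathcal{O}^{\oplus 2}_{\mathbb{P}^4}(u_1)\oplus\mathcal{O}_{\mathbb{P}^4}(u_1-1)$.

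I expect the hard part to be the second case, specifically the verification that the rank-$6$ quotient really is uniform of the claimed splitting type (requiring a careful push-forward and Chern-polynomial comparison) and the subsequent $H^1$-vanishing that splits the extension. The first case is immediate once fiberwise triviality is established, and the third case reduces to the second by duality.
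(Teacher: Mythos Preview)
Your proposal is essentially correct and matches the paper's strategy (the paper itself only says ``using the similar arguments'' here). The first two cases are handled exactly as you describe, following Proposition~\ref{(0,0,0)case} and the $(k=2,r_1=2,r_2=4)$ proof in Section~3.2 respectively.

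There is one small slip in the third case. Dualizing a bundle of splitting type $(3;2,4,1;u_1,u_1-1,u_1-2)$ produces one of splitting type $(3;1,4,2;\,2-u_1,1-u_1,-u_1)$: twisting cannot change the multiplicities $r_i$, so $E^\vee$ does \emph{not} land back in the $(3;2,4,1)$ pattern and hence not in ``the $(0,-1,0,0)$-type just analyzed''. The duality route still works, but you must run the argument in the dual splitting type rather than in the present one. A cleaner direct alternative, closer to Proposition~\ref{0-1}, is available: since $S_1=T^2$ in the $(1,0,0,0)$ case, $HN^1$ is fiberwise trivial, so Lemma~\ref{trivial-lemma} gives $\mathcal{O}^{\oplus 2}_{\mathbb{P}^4}(u_1)\hookrightarrow E$; the rank-$5$ quotient is uniform of type $(2;4,1;u_1-1,u_1-2)$, hence $\Omega^1_{\mathbb{P}^4}(u_1)\oplus\mathcal{O}_{\mathbb{P}^4}(u_1-1)$ by the Ellia--Ballico classification, and the extension splits by the obvious $\mathrm{Ext}^1$-vanishing.
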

%
%
%

 \vspace{1cm}

 \subsection{$k=3,r_1=r_2=2,r_3=3$}
  We may assume
\[S_1(T,U,V)=T^2+a_1(U+V)T+a_2^0(U^2+V^2)+a_2^1UV\] and
\[S_2(T,U,V)=T^2+e_1(U+V)T+e_2^0(U^2+V^2)+e_2^1UV,\] where $a_1, a_2^0,a_2^1,e_1,e_2^0,e_2^1$ are constants.

There are three solutions for the Chern polynomial equation if we set $u_3=0$.
   \begin{center}
 	$(a_1,a_2^0,a_2^1,e_1,e_2^0,e_2^1)=(0,0,0,0,0,0),(0,0,0,-1,0,0),(-2,0,0,0,0,0)$.
 \end{center}

Using the similar arguments, we have following result.
\begin{proposition}
		The uniform vector bundle $E$ over $\mathbb{P}^4$ with splitting type $(3;2,2,3;u_1,u_1-1,u_1-2)$ is $$\mathcal{O}^{\oplus 2}_{\mathbb{P}^{4}}(u_1)\oplus \mathcal{O}^{\oplus 2}_{\mathbb{P}^{4}}(u_1-1)\oplus \mathcal{O}^{\oplus 3}_{\mathbb{P}^{4}}(u_1-2)$$ or $$T_{\mathbb{P}^4}(u_1-3)\oplus \mathcal{O}^{\oplus 2}_{\mathbb{P}^4}(u_1)\oplus\mathcal{O}_{\mathbb{P}^4}(u_1-1).$$
\end{proposition}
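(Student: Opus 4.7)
The proof plan follows the pattern of the previous subsections, treating the three parameter solutions with the machinery of Propositions \ref{(0,0,0)case}, \ref{(-1,0,0)case}, and \ref{-2 case}. The fully trivial solution $(0,0,0,0,0,0)$ is immediate: each $S_i$ is a pure power of $T$, so every graded piece of the HN-filtration has vanishing first Chern class on the $p$-fibers and is therefore a trivial subbundle of a trivial bundle; Corollary \ref{all-trivial-lem} then gives $E\cong \mathcal{O}^{\oplus 2}_{\mathbb{P}^4}(u_1)\oplus \mathcal{O}^{\oplus 2}_{\mathbb{P}^4}(u_1-1)\oplus \mathcal{O}^{\oplus 3}_{\mathbb{P}^4}(u_1-2)$.

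For $(0,0,0,-1,0,0)$, $HN^1$ is still trivial on fibers, so Lemma \ref{trivial-lemma} yields $HN^1\cong p^*\mathcal{O}^{\oplus 2}_{\mathbb{P}^4}(u_1)$; pushing forward the inclusion into $p^*E$ and using $R^1p_*HN^1=0$, I obtain a short exact sequence $0\to \mathcal{O}^{\oplus 2}_{\mathbb{P}^4}(u_1)\to E\to E'\to 0$ on $\mathbb{P}^4$, where $E':=p_*(p^*E/HN^1)$ is a uniform $5$-bundle of splitting type $(2;2,3;u_1-1,u_1-2)$. Ellia's classification of uniform $(n+1)$-bundles over $\mathbb{P}^n$ leaves exactly two candidates for $E'$, namely $\mathcal{O}^{\oplus 2}(u_1-1)\oplus \mathcal{O}^{\oplus 3}(u_1-2)$ and $T_{\mathbb{P}^4}(u_1-3)\oplus \mathcal{O}(u_1-1)$; the first would force $S_2=T^2$ and hence $e_1=0$, contradicting the hypothesis, so $E'\cong T_{\mathbb{P}^4}(u_1-3)\oplus \mathcal{O}(u_1-1)$. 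The extension splits because $\mathrm{Ext}^1(T_{\mathbb{P}^4}(u_1-3),\mathcal{O}_{\mathbb{P}^4}(u_1))\cong H^1(\mathbb{P}^4,\Omega^1_{\mathbb{P}^4}(3))=0$ (by the twisted Euler sequence) and $\mathrm{Ext}^1(\mathcal{O}(u_1-1),\mathcal{O}(u_1))=H^1(\mathbb{P}^4,\mathcal{O}(1))=0$, yielding $E\cong T_{\mathbb{P}^4}(u_1-3)\oplus \mathcal{O}^{\oplus 2}_{\mathbb{P}^4}(u_1)\oplus \mathcal{O}_{\mathbb{P}^4}(u_1-1)$.

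For $(-2,0,0,0,0,0)$, the factorization $c_{HN^1}(T)=(T+2U)(T-2V)$ shows that $HN^1|_{p^{-1}(x)}$ is a rank-$2$ subbundle of the trivial bundle with $c_1=-2$ and $c_2=0$; by the main theorem of \cite{S-J-U} it must be $\mathcal{O}_{\mathbb{P}^3}(-2)\oplus\mathcal{O}_{\mathbb{P}^3}$. The trivial summand yields a line subbundle $\mathcal{O}_{\mathbb{P}^4}(u_1)\hookrightarrow E$ through Lemma \ref{deco-lem}, so $E/\mathcal{O}_{\mathbb{P}^4}(u_1)$ is a uniform $6$-bundle of splitting type $(3;1,2,3;u_1,u_1-1,u_1-2)$, which by the classification established in Section 3.3 must equal either $\mathcal{O}(u_1)\oplus\mathcal{O}^{\oplus 2}(u_1-1)\oplus\mathcal{O}^{\oplus 3}(u_1-2)$ or $\mathcal{O}(u_1)\oplus\mathcal{O}(u_1-1)\oplus T_{\mathbb{P}^4}(u_1-3)$. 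In either subcase the extension splits by the same $\mathrm{Ext}^1$-vanishings, and the resulting $E$ has $HN^1\cong p^*\mathcal{O}^{\oplus 2}_{\mathbb{P}^4}(u_1)$, so that $S_1=T^2$; this contradicts $a_1=-2$, and therefore no uniform bundle realises this Chern polynomial. The principal obstacle is the middle case, where correctly identifying $E'$ as a uniform $5$-bundle, excluding its decomposable form via the nontriviality of $e_1$, and checking the $\mathrm{Ext}^1$-vanishings that split the final extension each demand a little care; the third case is a reformulation of the non-existence argument of Proposition \ref{-2 case} through the Section 3.3 classification, and the first case is immediate.
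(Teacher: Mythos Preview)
Your argument is correct and follows essentially the same case-by-case strategy the paper indicates by ``similar arguments''. The treatment of the all-zero solution via Corollary~\ref{all-trivial-lem} and of $(0,0,0,-1,0,0)$ via Lemma~\ref{trivial-lemma} together with Ellia's $(n+1)$-bundle classification matches the pattern of Propositions~\ref{(0,0,0)case} and~\ref{0-1}; your elimination of the split candidate for $E'$ by observing that it would force $S_2=T^2$ is clean, since the HN-filtration of $p^*E'$ is precisely the quotient of that of $p^*E$ by $HN^1$.

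The one genuine methodological difference is in the $(-2,0,0,0,0,0)$ case. The paper's template (Proposition~\ref{-2 case}) proceeds by identifying $HN^2$ explicitly and invoking the Descente-Lemma to derive a contradiction. You instead use Lemma~\ref{deco-lem} on $HN^1$ to split off $\mathcal{O}_{\mathbb{P}^4}(u_1)$, reduce to the already-classified rank-$6$ case of Section~3.3, and then observe that every resulting $E$ (after checking the relevant $\mathrm{Ext}^1$-vanishings) would have $HN^1\cong p^*\mathcal{O}^{\oplus 2}(u_1)$, forcing $a_1=0$. This is a legitimate and arguably more economical route: it trades the somewhat delicate Descente-Lemma computation for an appeal to work already done, at the cost of relying on the earlier classification as a black box. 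Both approaches reach the same non-existence conclusion.
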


 %
%
%
%
%
%

 \vspace{1cm}

 \subsection{$k=3, r_1=r_3=2, r_2=3$}
  We may assume
\[S_1(T,U,V)=T^2+a_1(U+V)T+a_2^0(U^2+V^2)+a_2^1UV\] and
\[S_2(T,U,V)=T^2+e_1(U+V)T+e_2^0(U^2+V^2)+e_2^1UV,\] where $a_1, a_2^0,a_2^1,e_1,e_2^0,e_2^1$ are constants.

There are three solutions for the Chern polynomial equation if we set $u_3=0$.
   \begin{center}
 	$(a_1,a_2^0,a_2^1,e_1,e_2^0,e_2^1)=(0,0,0,0,0,0),(0,0,0,1,0,0),(-1,0,0,0,0,0)$.
 \end{center}

Using the similar arguments, we have following result.
\begin{proposition}
		The uniform vector bundle $E$ over $\mathbb{P}^4$ with splitting type $(3;2,3,2;u_1,u_1-1,u_1-2)$ is $$\mathcal{O}^{\oplus 2}_{\mathbb{P}^{4}}(u_1)\oplus \mathcal{O}^{\oplus 3}_{\mathbb{P}^{4}}(u_1-1)\oplus \mathcal{O}^{\oplus 2}_{\mathbb{P}^{4}}(u_1-2),$$ $$T_{\mathbb{P}^4}(u_1-2)\oplus \mathcal{O}_{\mathbb{P}^4}(u_1)\oplus \mathcal{O}^{\oplus 2}_{\mathbb{P}^4}(u_1-2)$$ or $$\Omega^1_{\mathbb{P}^4}(u_1)\oplus \mathcal{O}^{\oplus 2}_{\mathbb{P}^4}(u_1)\oplus\mathcal{O}_{\mathbb{P}^4}(u_1-2).$$
\end{proposition}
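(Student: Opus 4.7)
The plan is to follow the same pattern as Subsections 3.2--3.8: verify that the three listed tuples are the only solutions of the Chern polynomial identity in $A(F(1,2,5))$, and then for each tuple analyze the restriction of the HN filtration of $p^{*}E$ to $p$-fibers and push it down to $\mathbb{P}^{4}$. Throughout, normalize $u_{1}=2$, $u_{2}=1$, $u_{3}=0$; the general statement then follows by tensoring with $\mathcal{O}_{\mathbb{P}^{4}}(u_{1}-2)$.

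For the zero tuple $(0,0,0,0,0,0)$, every successive quotient $HN^{i}/HN^{i-1}$ restricts to each $p$-fiber as a subbundle of a trivial bundle with vanishing first Chern class, hence is trivial on fibers. Corollary \ref{all-trivial-lem} then yields $E\cong \mathcal{O}_{\mathbb{P}^{4}}(2)^{\oplus 2}\oplus \mathcal{O}_{\mathbb{P}^{4}}(1)^{\oplus 3}\oplus \mathcal{O}_{\mathbb{P}^{4}}^{\oplus 2}$, the first listed summand.

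For the tuple $(-1,0,0,0,0,0)$, one computes $c_{HN^{1}}(T,U,V)=(T+2U)(T+U-V)$, so $HN^{1}|_{p^{-1}(x)}$ is a rank-$2$ subbundle of a trivial bundle with $c_{1}=\mathcal{O}_{\mathbb{P}^{3}}(-1)$ and therefore isomorphic to $\mathcal{O}_{\mathbb{P}^{3}}(-1)\oplus \mathcal{O}_{\mathbb{P}^{3}}$. Imitating the push-forward/snake-lemma procedure of Proposition \ref{(-1,0,0)case}, one exhibits a line-bundle inclusion $\mathcal{O}_{\mathbb{P}^{4}}(2)\hookrightarrow E$ via Lemma \ref{deco-lem} and shows, by comparing Chern polynomials, that the rank-$6$ quotient $E/\mathcal{O}_{\mathbb{P}^{4}}(2)$ is uniform with splitting type $(3;1,3,2;2,1,0)$. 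The classification in Subsection 3.5 lists three candidates for such a bundle, and the Chern polynomial (coming from $c_{E}/c_{\mathcal{O}(2)}$) forces $E/\mathcal{O}_{\mathbb{P}^{4}}(2)\cong T_{\mathbb{P}^{4}}\oplus \mathcal{O}_{\mathbb{P}^{4}}^{\oplus 2}$. The vanishing $\mathrm{Ext}^{1}(T_{\mathbb{P}^{4}}\oplus \mathcal{O}_{\mathbb{P}^{4}}^{\oplus 2},\mathcal{O}_{\mathbb{P}^{4}}(2))=0$ then splits the extension and yields $E\cong T_{\mathbb{P}^{4}}\oplus \mathcal{O}_{\mathbb{P}^{4}}(2)\oplus \mathcal{O}_{\mathbb{P}^{4}}^{\oplus 2}$.

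For the tuple $(0,0,0,1,0,0)$, the fastest route is duality. The bundle $E^{*}(2)$ again has splitting type $(3;2,3,2;2,1,0)$, and a direct check on Chern classes shows its coefficient tuple is $(-1,0,0,0,0,0)$ (dualizing interchanges the roles of $S_{1}$ and $S_{3}$ with the expected sign flips, leaving $S_{2}$ symmetric). Applying the previous case gives $E^{*}(2)\cong T_{\mathbb{P}^{4}}\oplus \mathcal{O}_{\mathbb{P}^{4}}(2)\oplus \mathcal{O}_{\mathbb{P}^{4}}^{\oplus 2}$, hence $E\cong \Omega^{1}_{\mathbb{P}^{4}}(2)\oplus \mathcal{O}_{\mathbb{P}^{4}}(2)^{\oplus 2}\oplus \mathcal{O}_{\mathbb{P}^{4}}$, the third listed summand. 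The hardest step in the whole argument is the identification $E/\mathcal{O}_{\mathbb{P}^{4}}(2)\cong T_{\mathbb{P}^{4}}\oplus \mathcal{O}_{\mathbb{P}^{4}}^{\oplus 2}$ in the middle case: one must confirm that the snake-lemma construction produces a genuinely uniform quotient over $\mathbb{P}^{4}$ (not merely a bundle with the correct HN restriction on fibers), and must use the numerical Chern data rather than the fiberwise splitting alone to rule out the two other candidates from Subsection 3.5.
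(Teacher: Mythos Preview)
Your proposal is correct and follows essentially the same approach that the paper intends by ``using the similar arguments'': compute the three admissible coefficient tuples, handle the zero tuple via Corollary~\ref{all-trivial-lem}, handle the $(-1,0,0,0,0,0)$ tuple by recognizing $HN^{1}|_{p^{-1}(x)}\cong\mathcal{O}\oplus\mathcal{O}(-1)$ and applying Lemma~\ref{deco-lem} to split off $\mathcal{O}_{\mathbb{P}^4}(2)$ (this is the direct analogue of Proposition~3.6 rather than Proposition~\ref{(-1,0,0)case}, since here $HN^{1}$ has rank~$2$), reduce to the already-classified rank-$6$ case of Subsection~3.5, and finish the $(0,0,0,1,0,0)$ tuple by duality. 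The only cosmetic difference is your normalization $u_{1}=2$ versus the paper's $u_{3}=0$, which is immaterial.
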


%
%
%
%
%
%

 \vspace{1cm}

\subsection{$k=3, r_1=1,r_2=r_3=3$}
We may assume
\[S_1(T,U,V)=T+a(U+V),\]
\[S_2(T,U,V)=T^3+b_1(U+V)T^2+(b_2^0U^2+b_2^1UV+b_2^0V^2)T+b_3^0(U^3+V^3)+b_3^1(U^2V+UV^2)\]
and \[S_3(T,U,V)=T^3+c_1(U+V)T^2+(c_2^0U^2+c_2^1UV+c_2^0V^2)T+c_3^0(U^3+V^3)+c_3^1(U^2V+UV^2),\]
 where $a, b_1, b_2^0, b_2^1, b_3^0, b_3^1, c_1, c_2^0, c_2^1, c_3^0, c_3^1$ are all constants.

There are five solutions for the Chern polynomial equation if we set $u_3=0$.
\begin{center}
 	$(a,b_1,b_2^0,b_2^1,b_3^0,b_3^1,c_1^0,c_2^0,c_2^1,c_3^0,c_3^1)=(-2,0,0,0,0,0,2,4,4,8,8)$,
 	
 	$(a,b_1,b_2^0,b_2^1,b_3^0,b_3^1,c_1^0,c_2^0,c_2^1,c_3^0,c_3^1)=(-1,1,1,1,1,0,0,0,0,0)$,
 	
 	$(a,b_1,b_2^0,b_2^1,b_3^0,b_3^1,c_1^0,c_2^0,c_2^1,c_3^0,c_3^1)=(0,-1,1,1,-1,-1,1,0,0,0,0)$,
 	
 	$(a,b_1,b_2^0,b_2^1,b_3^0,b_3^1,c_1^0,c_2^0,c_2^1,c_3^0,c_3^1)=(0,-1,0,0,0,0,1,1,1,1,1)$,
 	
 	$(a,b_1,b_2^0,b_2^1,b_3^0,b_3^1,c_1^0,c_2^0,c_2^1,c_3^0,c_3^1)=(0,-2,2,3,0,-1,2,2,3,0,1)$.
 \end{center}

Using the similar arguments, we have following result.
\begin{proposition}
		The uniform vector bundle $E$ over $\mathbb{P}^4$ with splitting type $(3;1,3,3;u_1,u_1-1,u_1-2)$ is $$\mathcal{O}_{\mathbb{P}^{4}}(u_1)\oplus \mathcal{O}^{\oplus 3}_{\mathbb{P}^{4}}(u_1-1)\oplus \mathcal{O}^{\oplus 3}_{\mathbb{P}^{4}}(u_1-2),$$ $$T_{\mathbb{P}^4}(u_1-3)\oplus \mathcal{O}_{\mathbb{P}^4}(u_1)\oplus \mathcal{O}^{\oplus 2}_{\mathbb{P}^4}(u_1-1)$$ or $$\Omega^1_{\mathbb{P}^4}(u_1)\oplus \mathcal{O}_{\mathbb{P}^4}(u_1)\oplus\mathcal{O}^{\oplus 2}_{\mathbb{P}^4}(u_1-2).$$
\end{proposition}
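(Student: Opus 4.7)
The plan is to treat each of the five Chern polynomial solutions separately, paralleling almost verbatim the arguments of the preceding eleven subsections; what differs is only the bookkeeping of which restriction type arises on a $p$-fiber $\mathbb{P}^3$ and which line bundle (if any) descends to $\mathbb{P}^4$. In every case I would begin by writing $c_{HN^1}(T,U,V)$ and $c_{HN^2}(T,U,V)$ explicitly from the relevant $S_i$'s, restrict to $p^{-1}(x)\cong\mathbb{P}^3$, and read off the first Chern class on the fiber. Then the standard toolkit applies: Corollary \ref{all-trivial-lem} for fibers where the restriction is forced trivial, Lemma \ref{deco-lem} to descend a line subbundle, Lemma \ref{trivial-lemma} plus Ellia's classification \cite{Ell} for the uniform rank-5 quotient, and the Descente-Lemma obstruction of Proposition \ref{-2 case} to rule out degree-$(-2)$ fiber summands.

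For the trivial solution $(0,\ldots,0)$, $HN^1|_{p^{-1}(x)}$ is a line subbundle of the trivial bundle with $c_1=0$, hence trivial; Corollary \ref{all-trivial-lem} then produces $\mathcal{O}_{\mathbb{P}^4}(u_1)\oplus\mathcal{O}^{\oplus3}_{\mathbb{P}^4}(u_1-1)\oplus\mathcal{O}^{\oplus3}_{\mathbb{P}^4}(u_1-2)$. For $(0,-1,0,0,0,0,1,1,1,1,1)$ the Chern polynomial of $HN^1$ forces $HN^1\cong p^*\mathcal{O}_{\mathbb{P}^4}(u_1)$ by Lemma \ref{trivial-lemma}; pushing forward the $HN$-sequence yields an exact sequence $0\to\mathcal{O}_{\mathbb{P}^4}(u_1)\to E\to E'\to 0$ where $E'$ is a uniform rank-6 bundle with splitting type $(2;3,3;u_1-1,u_1-2)$, reducing the problem to the previously handled case and giving (after the evident $\mathrm{Ext}^1$-vanishing) $T_{\mathbb{P}^4}(u_1-3)\oplus\mathcal{O}_{\mathbb{P}^4}(u_1)\oplus\mathcal{O}^{\oplus2}_{\mathbb{P}^4}(u_1-1)$. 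The mirror solution $(-1,1,1,1,1,0,0,0,0,0)$ is handled by dualising and tensoring, i.e.\ applying the same argument to $E^\vee(u_1+u_1-2)$, producing $\Omega^1_{\mathbb{P}^4}(u_1)\oplus\mathcal{O}_{\mathbb{P}^4}(u_1)\oplus\mathcal{O}^{\oplus2}_{\mathbb{P}^4}(u_1-2)$.

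The main obstacle is the pair of solutions $(-2,0,0,0,0,0,2,4,4,8,8)$ and $(0,-2,2,3,0,-1,2,2,3,0,1)$, which I expect to rule out rather than realise. For the first, $c_1(HN^1|_{p^{-1}(x)})=-2$, and combining the argument of Proposition \ref{-2 case} (classification of globally generated $c_1=2$ bundles on $\mathbb{P}^3$ from \cite{S-J-U}, followed by the Descente-Lemma of \cite{O-S-S} Chapter 2 Lemma 2.1.2) shows the would-be $\mathcal{H}_{T_{\mathbb{P}^4}(-1)}$ subbundle cannot descend to $\mathbb{P}^4$. For the second solution, the splitting of $c_{HN^1}$ gives $c_1(HN^1|_{p^{-1}(x)})=-2$ on a rank-3 fiber, so the Elencwajg-style dichotomy of Proposition \ref{exterior T} applies and forces $HN^1|_{p^{-1}(x)}\cong T_{\mathbb{P}^3}(-2)$ or $N(-1)\oplus\mathcal{O}$; the nontrivial-extension-over-$HN^1$ trick of that proposition then either descends $HN^1$ to a bundle whose Chern polynomial contradicts the absence of a line subbundle of the required form, or produces a subbundle of rank too large for the splitting type $(3;1,3,3)$, in either case yielding a contradiction.

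The remaining solution $(0,-1,1,1,-1,-1,1,0,0,0,0)$ is the dual of $(0,-1,0,0,0,0,1,1,1,1,1)$ up to the evident symmetry swapping the roles of $HN^2/HN^1$ and its quotient, and is handled by the same reduction to an Ellia-classified uniform rank-5 bundle after dualising; it collapses into the $\Omega^1_{\mathbb{P}^4}$ case already listed. Putting the surviving three realisations together yields the stated trichotomy.
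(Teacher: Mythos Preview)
Your outline follows the paper's template for the three ``easy'' solutions, but it breaks down on the pair you single out as obstacles.

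For $(a,b_1,b_2^0,b_2^1,b_3^0,b_3^1,c_1,c_2^0,c_2^1,c_3^0,c_3^1)=(0,-2,2,3,0,-1,2,2,3,0,1)$ you write that ``$c_1(HN^1|_{p^{-1}(x)})=-2$ on a rank-$3$ fiber'' and then invoke the dichotomy of Proposition~\ref{exterior T}. This misreads the filtration: here $r_1=1$, so $HN^1$ is a \emph{line} bundle, and since $a=0$ one has $S_1(T,U,V)=T$, $c_{HN^1}=T+2U$, hence $HN^1|_{p^{-1}(x)}\cong\mathcal{O}_{\mathbb{P}^3}$. The correct move is the one you already used for $(0,-1,0,\ldots)$: apply Lemma~\ref{trivial-lemma} to get $HN^1\cong p^*\mathcal{O}_{\mathbb{P}^4}(u_1)$, split off $\mathcal{O}_{\mathbb{P}^4}(u_1)$ as in Proposition~\ref{0-1}, and examine the uniform rank-$6$ quotient. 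That quotient has splitting type $(2;3,3;u_1-1,u_1-2)$ with Chern data exactly $(-2,2,3,0,-1,2,2,3,0,1)$, i.e.\ the case of Proposition~\ref{exterior T}, so it is $\bigwedge^2 T_{\mathbb{P}^4}(-1)$ up to twist. Since $\mathrm{Ext}^1(\bigwedge^2 T_{\mathbb{P}^4}(-1),\mathcal{O}(2))=H^1(\mathbb{P}^4,\Omega^2(3))=0$, one obtains $E\cong\mathcal{O}_{\mathbb{P}^4}(u_1)\oplus(\bigwedge^2 T_{\mathbb{P}^4}(-1))\otimes\mathcal{O}(u_1-2)$, a perfectly good homogeneous bundle with the required splitting type. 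Your contradiction argument never gets off the ground.

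The treatment of $(-1,1,1,1,1,\ldots,0,\ldots,0)$ is likewise wrong. Dualising does not collapse it into the $\Omega^1$ case: that bundle already comes from the solution $(0,-1,1,1,-1,-1,1,0,0,0,0)$ via the same split-off-$\mathcal{O}(u_1)$ reduction. For $a=-1$ one has $c_{HN^1}=T+U-V$, so $HN^1\cong\mathcal{H}_{T_{\mathbb{P}^4}(-1)}\otimes p^*\mathcal{O}(1)$, whence $\mathcal{H}\hookrightarrow p^*(E(-1))$; by Proposition~3.5 of \cite{E-H-S} (as in Proposition~\ref{(-1,0,0)case}) $E$ has $T_{\mathbb{P}^4}$ as a subbundle, and the rank-$3$ quotient is uniform of trivial splitting type, giving $E\cong T_{\mathbb{P}^4}(u_1-2)\oplus\mathcal{O}^{\oplus 3}_{\mathbb{P}^4}(u_1-2)$. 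This is again homogeneous and again missing from the list. In short, two of the five solutions you attempt to rule out in fact produce bundles; the stated trichotomy appears to omit them, though the main theorem is unaffected since both are homogeneous.
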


 \vspace{1cm}

 \subsection{$k=3,r_1=r_3=3, r_2=1$}
We may assume
\[S_2(T,U,V)=T+a(U+V),\]
\[S_1(T,U,V)=T^3+b_1(U+V)T^2+(b_2^0U^2+b_2^1UV+b_2^0V^2)T+b_3^0(U^3+V^3)+b_3^1(U^2V+UV^2)\]
and \[S_3(T,U,V)=T^3+c_1(U+V)T^2+(c_2^0U^2+c_2^1UV+c_2^0V^2)T+c_3^0(U^3+V^3)+c_3^1(U^2V+UV^2),\]
 where $a, b_1, b_2^0, b_2^1, b_3^0, b_3^1, c_1, c_2^0, c_2^1, c_3^0, c_3^1$ are all constants.

There are six solutions for the Chern polynomial equation if we set $u_3=0$.
%
%
%
%
 \begin{center}
 	$(b_1,b_2^0,b_2^1,b_3^0,b_3^1,a,c_1,c_2^0,c_2^1,c_3^0,c_3^1)=(0,0,0,0,0,0,0,0,0,0,0)$,
 	
 	$(b_1,b_2^0,b_2^1,b_3^0,b_3^1,a,c_1,c_2^0,c_2^1,c_3^0,c_3^1)=(-1,1,1,-1,-1,1,0,0,0,0,0)$,
 	
 	$(b_1,b_2^0,b_2^1,b_3^0,b_3^1,a,c_1,c_2^0,c_2^1,c_3^0,c_3^1)=(0,0,0,0,0,-1,1,1,1,1,1)$,
 	
 	$(b_1,b_2^0,b_2^1,b_3^0,b_3^1,a,c_1,c_2^0,c_2^1,c_3^0,c_3^1)=(-2,4,4,-8,-8,0,2,0,0,0,0)$,
 	
 	$(b_1,b_2^0,b_2^1,b_3^0,b_3^1,a,c_1,c_2^0,c_2^1,c_3^0,c_3^1)=(-2,0,0,0,0,0,2,4,4,8,8)$,
 	
 	$(b_1,b_2^0,b_2^1,b_3^0,b_3^1,a,c_1,c_2^0,c_2^1,c_3^0,c_3^1)=(-4,8,12,0,-8,0,4,8,12,0,8)$.
 \end{center}

 Using the similar arguments, we have following results.
\begin{proposition}
	If $(c_1,c_2^0,c_2^1,c_3^0,c_3^1)=(0,0,0,0,0)$ or $(b_1,b_2^0,b_2^1,b_3^0,b_3^1)=(0,0,0,0,0)$, $E$ is $$\mathcal{O}^{\oplus 3}_{\mathbb{P}^{4}}(u_1)\oplus \mathcal{O}_{\mathbb{P}^{4}}(u_1-1)\oplus \mathcal{O}^{\oplus 3}_{\mathbb{P}^{4}}(u_1-2),$$ $$T_{\mathbb{P}^4}(u_1-3)\oplus \mathcal{O}^{\oplus 3}_{\mathbb{P}^4}(u_1)$$ or $$\Omega^1_{\mathbb{P}^4}(u_1+1)\oplus \mathcal{O}^{\oplus 3}_{\mathbb{P}^4}(u_1-2).$$
\end{proposition}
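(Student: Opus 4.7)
The plan is to handle each of the three cases in the hypothesis by reducing to Lemma \ref{trivial-lemma} and Ellia's Lemma \ref{k=2r1=1}, exploiting the fact that in each case at least one of $S_1,S_3$ equals $T^{r_i}$. Throughout, with $u_3=0$ the splitting forces $u_1=2$, $u_2=1$.

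When all the constants $a,b_j,c_j$ vanish, every $S_i$ reduces to $T^{r_i}$. Then each graded piece $HN^i/HN^{i-1}$ restricts on a $p$-fiber to a subbundle of a trivial bundle with vanishing first Chern class, hence is itself trivial (by the Corollary after Theorem 3.2.1 in \cite{O-S-S}, as already used in Proposition \ref{(0,0,0)case}). Corollary \ref{all-trivial-lem} then yields $E\cong\mathcal{O}^{\oplus 3}_{\mathbb{P}^4}(u_1)\oplus\mathcal{O}_{\mathbb{P}^4}(u_1-1)\oplus\mathcal{O}^{\oplus 3}_{\mathbb{P}^4}(u_1-2)$.

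When the $b_j$'s all vanish, so $S_1=T^3$, the top piece $HN^1$ is trivial on each $p$-fiber, so Lemma \ref{trivial-lemma} gives $HN^1\cong p^{*}\mathcal{O}^{\oplus 3}_{\mathbb{P}^4}(u_1)$. Pushing the sequence $0\to HN^1\to p^{*}E\to p^{*}E/HN^1\to 0$ down by $p_{*}$ produces
\[
0\longrightarrow\mathcal{O}^{\oplus 3}_{\mathbb{P}^4}(u_1)\longrightarrow E\longrightarrow Q\longrightarrow 0,
\]
where $Q:=p_{*}(p^{*}E/HN^1)$ is a uniform $4$-bundle with splitting type $(2;1,3;u_1-1,u_1-2)$. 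By Lemma \ref{k=2r1=1}, $Q$ is homogeneous, so either $\mathcal{O}(u_1-1)\oplus\mathcal{O}^{\oplus 3}(u_1-2)$ or $T_{\mathbb{P}^4}(u_1-3)$; the nontriviality of $S_3$ rules out the split option by a Chern polynomial comparison of exactly the same type performed in earlier propositions. Since $\operatorname{Ext}^1(T_{\mathbb{P}^4}(u_1-3),\mathcal{O}^{\oplus 3}_{\mathbb{P}^4}(u_1))=H^1(\Omega^1_{\mathbb{P}^4}(3))^{\oplus 3}=0$ by Bott's formula, the extension splits, giving $E\cong T_{\mathbb{P}^4}(u_1-3)\oplus\mathcal{O}^{\oplus 3}_{\mathbb{P}^4}(u_1)$.

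The remaining case, where the $c_j$'s vanish so $S_3=T^3$, is dual. Now $p^{*}E/HN^2$ is trivial on each $p$-fiber, so $p^{*}E/HN^2\cong p^{*}\mathcal{O}^{\oplus 3}_{\mathbb{P}^4}(u_1-2)$ by Lemma \ref{trivial-lemma}, and $p_{*}$ produces $0\to p_{*}HN^2\to E\to\mathcal{O}^{\oplus 3}_{\mathbb{P}^4}(u_1-2)\to 0$ with $p_{*}HN^2$ uniform of splitting type $(2;3,1;u_1,u_1-1)$. Applying Lemma \ref{k=2r1=1} to $(p_{*}HN^2)^{\vee}\otimes\mathcal{O}(u_1)$, the subbundle is either $\mathcal{O}^{\oplus 3}(u_1)\oplus\mathcal{O}(u_1-1)$ or $\Omega^1_{\mathbb{P}^4}(u_1+1)$; the nontrivial $S_1$ forces the latter, and the same Bott vanishing $H^1(\Omega^1_{\mathbb{P}^4}(3))=0$ makes the extension split, yielding $E\cong\Omega^1_{\mathbb{P}^4}(u_1+1)\oplus\mathcal{O}^{\oplus 3}_{\mathbb{P}^4}(u_1-2)$. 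The main technical step throughout is the Chern polynomial comparison that distinguishes the split outcome from the non-split one in Lemma \ref{k=2r1=1}; everything else is a direct transplant of the arguments used in Propositions \ref{(0,0,0)case}, \ref{(-1,0,0)case}, and \ref{0-1}.
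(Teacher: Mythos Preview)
Your proof is correct and follows essentially the same approach the paper intends (the paper itself only writes ``Using the similar arguments'' here, pointing back to Propositions~\ref{(0,0,0)case}, \ref{(-1,0,0)case}, and~\ref{0-1}). One small remark: Lemma~\ref{k=2r1=1} only tells you the rank-$4$ bundle $Q$ (respectively $p_*HN^2$) is homogeneous; to get the explicit dichotomy $\{\text{split},\,T_{\mathbb{P}^4}(u_1-3)\}$ (respectively $\{\text{split},\,\Omega^1_{\mathbb{P}^4}(u_1+1)\}$) you are really invoking the classification of uniform $n$-bundles on $\mathbb{P}^n$ from \cite{E-H-S}, exactly as the paper does when it cites ``the result in \cite{Ell}'' in Proposition~\ref{0-1}. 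Once that is made explicit, the Chern-class comparison and the Bott vanishing $H^1(\Omega^1_{\mathbb{P}^4}(3))=0$ finish each case as you describe.
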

\begin{proposition}
	If $(b_1,b_2^0,b_2^1,b_3^0,b_3^1,a,c_1,c_2^0,c_2^1,c_3^0,c_3^1)=(-2,0,0,0,0,0,2,4,4,8,8)$ or
	
	$(b_1,b_2^0,b_2^1,b_3^0,b_3^1,a,c_1,c_2^0,c_2^1,c_3^0,c_3^1)=(-2,4,4,-8,-8,0,2,0,0,0,0)$,	
	there does not exist such a uniform vector bundle.
\end{proposition}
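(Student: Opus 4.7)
The plan is to mimic, for each of the two cases, the Descente-based argument of Proposition~\ref{-2 case}: to exhibit $\mathcal{H}_{T_{\mathbb{P}^4}(-1)}^{\otimes 2}=\mathcal{O}_{\mathbb{P}(T_{\mathbb{P}^4}(-1))}(-2)$ as a subbundle of a bundle pulled back from $\mathbb{P}^4$, which is impossible since this line bundle restricts to $\mathcal{O}_{\mathbb{P}^3}(-2)$ on each $p$-fibre and so is not a pullback.

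For the first case I would first compute
\[
c_{HN^1}(T)=S_1(T+2U,U,V)=(T+2U)^2(T-2V),
\]
so that $c_{HN^1|_{p^{-1}(x)}}(T)=T^2(T-2V)$ and hence $c_1=-2h$, $c_2=c_3=0$ on each $p$-fibre. Since $HN^1|_{p^{-1}(x)}$ is a rank-$3$ subbundle of the trivial bundle $E_x\otimes\mathcal{O}_{\mathbb{P}^3}$, the classification in \cite{S-J-U} forces $HN^1|_{p^{-1}(x)}\cong\mathcal{O}_{\mathbb{P}^3}(-2)\oplus\mathcal{O}_{\mathbb{P}^3}^{\oplus 2}$. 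Fibre cohomology is constant ($h^0=2$, $h^1=0$), so $M:=p_*HN^1$ is a rank-$2$ bundle on $\mathbb{P}^4$ and the evaluation map $p^*M\hookrightarrow HN^1$ has cokernel a line bundle $Q$ restricting to $\mathcal{O}_{\mathbb{P}^3}(-2)$ on each $p$-fibre; a Chern-class comparison in $A(F_4)$ identifies $Q$ with $\mathcal{H}_{T_{\mathbb{P}^4}(-1)}^{\otimes 2}$. Pushing $0\to HN^1\to p^*E\to F\to 0$ down by $p_*$ and applying the snake-lemma diagram of Proposition~\ref{-2 case} then yields
\[
0\longrightarrow\mathcal{H}_{T_{\mathbb{P}^4}(-1)}^{\otimes 2}\longrightarrow p^*(E/M)\longrightarrow F\longrightarrow 0.
\]
On any $q$-fibre $q^{-1}(l)\cong\mathbb{P}^1$ the left term becomes $\mathcal{O}_{\mathbb{P}^1}(2)$ while $F|_{q^{-1}(l)}\cong\mathcal{O}_{\mathbb{P}^1}(1)\oplus\mathcal{O}_{\mathbb{P}^1}^{\oplus 3}$, so $\mathcal{H}om(\mathcal{H}_{T_{\mathbb{P}^4}(-1)}^{\otimes 2},F)=0$, and the Descente Lemma (Lemma~2.1.2, Chap.~2 of \cite{O-S-S}) produces $E'$ on $\mathbb{P}^4$ with $p^*E'\cong\mathcal{H}_{T_{\mathbb{P}^4}(-1)}^{\otimes 2}$, contradicting the non-triviality of the latter on $p$-fibres.

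The second case reduces to the first by duality. Here the third graded piece $p^*E/HN^2$ has Chern polynomial $S_3(T,U,V)=T^3+2(U+V)T^2$, restricting on each $p$-fibre to $T^2(T+2V)$ with $c_1=2h$, $c_2=c_3=0$; by \cite{S-J-U} this rank-$3$ quotient of the trivial bundle is $\mathcal{O}_{\mathbb{P}^3}(2)\oplus\mathcal{O}_{\mathbb{P}^3}^{\oplus 2}$ on every $p$-fibre. Dualizing $0\to HN^2\to p^*E\to p^*E/HN^2\to 0$ gives a rank-$3$ subbundle $(p^*E/HN^2)^\vee\hookrightarrow p^*(E^\vee)$ whose restriction to each $p$-fibre is $\mathcal{O}_{\mathbb{P}^3}(-2)\oplus\mathcal{O}_{\mathbb{P}^3}^{\oplus 2}$; this is precisely the Case~$1$ situation, and running the identical Descente argument on this subbundle of $p^*(E^\vee)$ produces again a forbidden copy of $\mathcal{H}_{T_{\mathbb{P}^4}(-1)}^{\otimes 2}$, yielding the same contradiction.

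The hard step in this plan is the rigorous identification of $Q$ (and its Case-$2$ analogue) with the specific line bundle $\mathcal{H}_{T_{\mathbb{P}^4}(-1)}^{\otimes 2}$ on $F_4$, rather than merely a line bundle on $F_4$ sharing the correct first Chern class, together with the verification that $\mathcal{H}om(\mathcal{H}_{T_{\mathbb{P}^4}(-1)}^{\otimes 2},F)$ truly vanishes globally on $F_4$ and not only on each $q$-fibre, so that the Descente Lemma genuinely applies.
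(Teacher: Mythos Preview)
Your proposal is correct and follows exactly the Descente-Lemma strategy of Proposition~\ref{-2 case}, which is what the paper invokes (without spelling out the details for this particular case); the second tuple is handled by duality, as you note, since it is precisely the first case applied to $E^{\vee}(2)$. Your two worries at the end are not genuine obstacles: $\mathrm{Pic}(F_4)\cong\mathbb{Z}U\oplus\mathbb{Z}V$, so the first Chern class determines $Q$ on the nose, and the Descente Lemma (\cite{O-S-S}, Ch.~2, Lemma~2.1.2) requires only the vanishing of $\mathrm{Hom}$ on each $q$-fibre, not of the global sheaf $\mathcal{H}om$.
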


\begin{proposition}
	If $(b_1,b_2^0,b_2^1,b_3^0,b_3^1,a,c_1,c_2^0,c_2^1,c_3^0,c_3^1)=(-4,8,12,0,-8,0,4,8,12,0,8)$, there does not exist such a uniform vector bundle.
\end{proposition}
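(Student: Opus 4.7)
The plan is to combine a fiber analysis of $HN^1$ (in the spirit of Proposition \ref{exterior T}) with a direct Chern-polynomial obstruction in $A(F_4)$ (in the spirit of Lemma \ref{deco-lem} and Proposition \ref{-2 case}).

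First, I translate the numerical data into an explicit Chern polynomial. With $u_1 = 2$ one obtains
\begin{equation*}
c_{HN^1}(T,U,V) = S_1(T+2U,U,V) = T^3 + (2U-4V)T^2 + (4U^2-4UV+8V^2)T + 8U(U^2+V^2),
\end{equation*}
and restricting to a $p$-fiber $p^{-1}(x)\cong \mathbb{P}^3$ (setting $U = 0$) gives $T(T^2 - 4VT + 8V^2)$, corresponding, in the hyperplane class $h$ on the fiber, to Chern classes $(c_1,c_2,c_3) = (-4h,8h^2,0)$. Moreover $c_{HN^2/HN^1}(T,U,V) = S_2(T+U,U,V) = T+U$, so Lemma \ref{trivial-lemma} gives $HN^2/HN^1 \cong p^*\mathcal{O}_{\mathbb{P}^4}(1)$.

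Next I analyze the fiber structure of $HN^1$. Since $HN^1|_{p^{-1}(x)}$ is a rank $3$ subbundle of $\mathcal{O}_{\mathbb{P}^3}^{\oplus 7}$, its dual is globally generated with Chern classes $(4h,8h^2,0)$; the vanishing $c_3 = 0$ forces a general section of the dual to be nowhere zero, yielding $\mathcal{O}_{\mathbb{P}^3} \hookrightarrow (HN^1|_{p^{-1}(x)})^\vee$ and dually an exact sequence $0 \to G \to HN^1|_{p^{-1}(x)} \to \mathcal{O}_{\mathbb{P}^3} \to 0$ with $G$ rank $2$ and $(c_1(G),c_2(G)) = (-4h,8h^2)$. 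Combining the classification of small-$c_1$ globally generated bundles on $\mathbb{P}^3$ from \cite{S-J-U} with Riemann--Roch (which gives $\chi(G) = 0$) and Serre duality, I plan to show that this sequence splits and that $H^0(\mathbb{P}^3, G) = H^1(\mathbb{P}^3, G) = 0$, so $HN^1|_{p^{-1}(x)} \cong \mathcal{O}_{\mathbb{P}^3} \oplus G$ satisfies the hypotheses of Lemma \ref{deco-lem}.

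Applying Lemma \ref{deco-lem} then forces $\mathcal{O}_{\mathbb{P}^4}(\ell) \hookrightarrow E$ as a subbundle, with $\ell \in \mathbb{Z}$ determined by the requirement that $p^*\mathcal{O}_{\mathbb{P}^4}(\ell)$, whose Chern polynomial is $T + \ell U$, divide $c_{HN^1}(T,U,V)$ in $A(F_4)$. A direct computation gives
\begin{equation*}
c_{HN^1}(-\ell U,U,V) = (-\ell^3 + 2\ell^2 - 4\ell + 8)\,U^3 + (-4\ell^2 + 4\ell)\,U^2 V + (-8\ell + 8)\,UV^2,
\end{equation*}
and since $U^3, U^2V, UV^2$ are linearly independent in the degree-$3$ part of $A(F_4) = \mathbb{Z}[U,V]/\langle R^4, U^5\rangle$ (the relations start in degree $4$), divisibility forces all three coefficients to vanish. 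The coefficient of $UV^2$ forces $\ell = 1$, at which the coefficient of $U^3$ equals $5 \neq 0$; this contradiction rules out the existence of such a uniform bundle.

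The main obstacle is the second step: pinning down the splitting $HN^1|_{p^{-1}(x)} \cong \mathcal{O}_{\mathbb{P}^3} \oplus G$ uniformly over $x \in \mathbb{P}^4$ and verifying the cohomology vanishings $H^i(\mathbb{P}^3, G) = 0$ for $i = 0, 1$. This will likely need the jumping-splitting technique of Proposition \ref{exterior T} (treating separately any locus where the fiber splitting type degenerates by lifting non-trivial extensions) together with careful input from \cite{S-J-U}. Should cohomological vanishings fail on some exceptional locus, the fallback is a Descente-Lemma argument applied to $p^*E/HN^2 = q^*E_3$: the triviality of $q^*E_3$ on $q$-fibers combined with a Hom vanishing on $q$-fibers (cf.\ Lemma 2.1.2 in \cite{O-S-S} chapter 2) would produce a bundle on $\mathbb{P}^4$ whose pullback cannot exist, mirroring the closing step of Proposition \ref{-2 case}.
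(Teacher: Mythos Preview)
Your overall strategy---reduce to a line-bundle subbundle of $HN^1$ via Lemma \ref{deco-lem}, then get a Chern-polynomial contradiction---is exactly what the paper does in \emph{two of the three} subcases, and your divisibility computation at the end is correct. The genuine gap is in the middle step, and it is not a minor one.

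First, \cite{S-J-U} will not help you: that classification covers $c_1\le 2$, while here $G^\vee$ is globally generated with $c_1=4$, $c_2=8$. The paper instead invokes a rank-$2$ classification (their \cite{ggrk2}) and finds that $G^\vee$ sits in $0\to\mathcal{O}\to G^\vee\to\mathcal{I}_C(4)\to 0$ with $C$ either one smooth elliptic curve or a disjoint union of two. This yields $h^0(G)=0$ always, but $h^1(G)\in\{0,1\}$, so your desired vanishing $H^1(G)=0$ \emph{fails} on part of the base. Consequently the extension $0\to G\to HN^1|_{p^{-1}(x)}\to\mathcal{O}\to 0$ need not split, and one must separate three cases according to $(h^0,h^1)(\mathcal{H}^1(x))\in\{(1,0),(0,0),(1,1)\}$.

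The paper then (a) shows, via extension/semicontinuity arguments in the spirit of Proposition \ref{exterior T} but substantially more involved (three pairs of cases to exclude, each requiring its own extension construction), that these three types cannot coexist on different fibers; and (b) disposes of each pure case separately. For cases (i) and (iii) (where $h^0=1$) one gets $p_*HN^1\cong\mathcal{O}_{\mathbb{P}^4}(i)$ and a line-bundle subbundle of $HN^1$, yielding your Chern-polynomial contradiction. For case (ii) (where $h^0=h^1=0$, so $HN^1|_{p^{-1}(x)}$ is \emph{not} $\mathcal{O}\oplus G$) Lemma \ref{deco-lem} is unavailable; instead one uses $HN^2/HN^1\cong p^*\mathcal{O}_{\mathbb{P}^4}(1)$ to get $p_*HN^2\cong\mathcal{O}_{\mathbb{P}^4}(1)$, and then a dichotomy on whether $p^*\mathcal{O}(1)\hookrightarrow HN^2$ splits the HN sequence---one branch feeds into a Descente-Lemma contradiction on $p^*p_*M\to M$, the other forces $p^*\mathcal{O}(1)\subset HN^1$, again impossible by Chern polynomials. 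Your fallback paragraph gestures at the right ingredients, but as written your plan would stall precisely at case (ii), and the no-mixing step is where most of the work lies.
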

\begin{proof}
In this case, the quotient bundle $HN^2/HN^1$ is of rank $1$, the Chern polynomial of $HN^2/HN^1$ is $T-U$.
\[S_1(T,U,V)=T^3-4(U+V)T^2+(8(U^2+V^2)+12UV)T-8(U^2V+UV^2),\]
and \[S_2(T,U,V)=T^3+4(U+V)T^2+(8(U^2+V^2)+12UV)T+8(U^2V+UV^2).\]

First we can see that $HN^2/HN^1\cong p^{*}\mathcal{O}_{\mathbb{P}^4}(1)$.
	
We have the following two exact sequences
\begin{equation}\label{HN12}
0\longrightarrow HN^1 \longrightarrow HN^2{\overset{\rho}\longrightarrow} p^{*}\mathcal{O}_{\mathbb{P}^4}(1)\longrightarrow 0
\end{equation} and
\[0\longrightarrow HN^2\longrightarrow p^{*}E \longrightarrow M\longrightarrow 0,\]
	where $M$ is the quotient bundle.
	
	Denote $\mathcal{H}^{i}(x):=HN^i|_{p^{-1}(x)}$ and $\mathcal{M}(x):=M|_{p^{-1}(x)}$ for a point $x\in \mathbb{P}^4$ and $i=1,2$. The restriction of the above two exact sequences to $p^{-1}(x)$ are
	\[0\longrightarrow \mathcal{H}^{1}(x)\longrightarrow \mathcal{H}^{2}(x)\longrightarrow \mathcal{O}_{p^{-1}(x)}\longrightarrow 0\]
	and
	\[0\longrightarrow \mathcal{H}^{2}(x)\longrightarrow \mathcal{O}^{\oplus 7}_{p^{-1}(x)}\longrightarrow \mathcal{M}(x)\longrightarrow 0.\]
	
	Since $\mathcal{H}^1(x)^\vee$ is globally generated and $c_3(\mathcal{H}^1(x)^\vee)=0$, by Serre's result (cf. Lemma 4.3.2 in \cite{O-S-S} Chapter 1), we have the exact sequence of vector bundles
	\[0\longrightarrow \mathcal{O}_{p^{-1}(x)}\longrightarrow\mathcal{H}^1(x)^\vee\longrightarrow {\mathcal{H}'}^\vee\longrightarrow 0,\]
	i.e
\begin{equation}\label{HH1}
0 \longrightarrow \mathcal{H}'(x)\longrightarrow \mathcal{H}^{1}(x)\longrightarrow \mathcal{O}_{p^{-1}(x)}\longrightarrow 0.
\end{equation}
	If $ \mathcal{H}'(x)$ splits as direct sum of vector bundles, so does $\mathcal{H}^{1}(x)$, which contradicts to the Chern polynomial of  $HN^1|_{p^{-1}(x)}$.

	Thank to Theorem 0.1 in \cite{ggrk2}, we have the exact sequence 	
	\begin{align}\label{313-1}
		0\longrightarrow \mathcal{O}_{p^{-1}(x)}\longrightarrow \mathcal{H}'(x)^\vee\longrightarrow \mathcal{I}_{C}(4)\longrightarrow 0
	\end{align}
	on $\mathbb{P}^3$, where $C$ is an smooth elliptic curve or disjoint union of two smooth elliptic curves.
	For $\mathcal{H}'(x)^\vee$ is globally generated, by Lemma 1.2 in  \cite{A-C-M}, there exists a bundle $G$ such that ${\mathcal{H}'(x)}^\vee=\mathcal{O}^{\oplus t}\oplus G$ with $H^{0}(\mathcal{H}'(x))=t$. Now  $c_{2}({\mathcal{H}'}^\vee)=8H^2$, where $H$ is the hyperplane divisor in $\mathbb{P}^3$, and rank of $\mathcal{H}'(x)$ is $2$,  so $H^{0}(\mathcal{H}'(x))=0$.
	
	
From the exact sequence (\ref{313-1}),
we have the long exact sequence \[0\longrightarrow H^{2}({\mathcal{H}'}^\vee(x)(-4))\longrightarrow H^2(\mathcal{I}_{C})\longrightarrow H^3(\mathcal{O}_{\mathbb{P}^3}(-4))\longrightarrow H^3({\mathcal{H}'}^\vee(x)(-4))=H^0(\mathcal{H}'(x))=0.\]
So 	\[h^2(\mathcal{I}_{C})=1+h^2(\mathcal{H}'(x)^\vee(-4)).\]	
Moreover, by the exact sequence
	\[0\longrightarrow \mathcal{I}_{C}\longrightarrow \mathcal{O}\longrightarrow \mathcal{O}_{C}\longrightarrow 0,\]
we have \[h^2(\mathcal{I}_{C})=h^1(C,\mathcal{O}_{C})=1~\text{or}~2.\]
	
So $h^1(\mathcal{H}^{'}(x))=h^2(\mathcal{H}'(x)^\vee(-4))=0~\text{or}~1$.
	
From the exact sequence (\ref{HH1}), there are following three case.
\begin{itemize}
\item[(i).] $h^1(\mathcal{H}'(x))=0$ and $h^0(\mathcal{H}^{1}(x))=1, h^1(\mathcal{H}^{1}(x))=0$;
\item[(ii).] $h^1(\mathcal{H}'(x))=1$ and $h^0(\mathcal{H}^{1}(x))=h^1(\mathcal{H}^{1}(x))=0$;
\item[(iii).] $h^1(\mathcal{H}'(x))=1$ and $h^0(\mathcal{H}^{1}(x))=h^1(\mathcal{H}^{1}(x))=1$.
\end{itemize}

Notice that for the case (ii), the sequence (\ref{HH1}) is nontrivial extension and for the case (iii), the sequence (\ref{HH1}) is trivial extension.

	We have the exact sequence\[0\longrightarrow HN^1\otimes \mathcal{J}_{x}\longrightarrow HN^1\longrightarrow \mathcal{H}^{1}(x)\longrightarrow 0,\] where $\mathcal{J}_{x}$ is the ideal sheaf of $p$-fiber $p^{-1}(x)$.

We are going to show that cases (i), (ii) and (iii) cannot happen simultaneously.

For different points $x,y\in \mathbb{P}^4$, if case (i) holds for $x$ and case (ii) holds for $y$,  since $\mathbb{P}^4$ is rationally connected, we have $\mathcal{J}_{x}\simeq \mathcal{J}_{y}$.  So $H^0(HN^1\otimes \mathcal{J}_{x})=H^0(HN^1)=H^0(HN^1\otimes \mathcal{J}_{y})$, $H^1(HN^1\otimes \mathcal{J}_{x})=H^1(HN^1\otimes \mathcal{J}_{y})=H^1(HN^1)$ contradicts to $h^0(\mathcal{H}^{1}(x))=1$. So cases (i) and (ii) cannot happen simultaneously.

For different points $x,y\in \mathbb{P}^4$, if case (i) holds for $x$ and case (iii) holds for $y$, since $h^0(\mathcal{H}^{1}(z))=1$ for all $z\in \mathbb{P}^4$, we know that $p_*HN^1$ is a line bundle over $\mathbb{P}^4$ and $p^*p_*HN^1$ is a subbundle of $HN^1$. Let $p_*HN^1=\mathcal{O}_{\mathbb{P}^4}(j)$ for some $j\in \mathbb{Z}$. Since the Chern polynomial cannot factor out a line factor $T-jU$, we get a contradiction. So cases (i) and (iii) cannot happen simultaneously.

Finally, for different points $x,y\in \mathbb{P}^4$, if case (ii) holds for $x$ and case (iii) holds for $y$, we will also get a contradiction.
	
First, we have the exact sequence \[0\longrightarrow H^0(\mathcal{H}^1(y))\longrightarrow H^1(HN^1\otimes \mathcal{J}_{y})\longrightarrow H^1(HN^1)\longrightarrow H^1(\mathcal{H}^1(y)){\overset{\varphi}\longrightarrow}H^2(HN^1\otimes \mathcal{J}_{y}).\]
	
Since $H^1(HN^1\otimes J_{y})=H^1(HN^1\otimes J_{x})=H^1(HN^1)$ and $ H^0(\mathcal{H}^1(y))= H^1(\mathcal{H}^1(y))$, we know that  $\varphi$ is a zero morphism.
	
	Thus, the nontrivial extension
\begin{equation}\label{nontri}
0 \longrightarrow \mathcal{H}^{1}(y)\longrightarrow \mathcal{F}(y)\longrightarrow \mathcal{O}_{p^{-1}(y)}\longrightarrow 0
\end{equation}
	is induced by the nontrivial extension \[0\longrightarrow HN^1\longrightarrow \mathcal{F}^{1}\longrightarrow \mathcal{O}_{F_4}\longrightarrow 0.\]
	
%
	Since $h^1(\mathcal{H}^{1}(x))=0$, $\mathcal{F}^{1}|_{p^{-1}(x)}\cong \mathcal{H}^{1}(x)\oplus \mathcal{O}_{p^{-1}(x)}$. However, by the construction of $\mathcal{F}^{1}$, $\mathcal{F}^{1}|_{p^{-1}(y)}\cong \mathcal{H}^{1}(x)\oplus \mathcal{O}_{p^{-1}(y)}$, since (\ref{nontri}) is the nontrivial extension. So $\mathcal{F}^{1}|_{p^{-1}(z)}\cong \mathcal{H}^{1}(x)\oplus \mathcal{O}_{p^{-1}(z)}$ for any $z\in \mathbb{P}^4$. Thus, $H^1(p^{-1}(z), \mathcal{F}^{1}|_{p^{-1}(z)})=0$ and $R^1p_*\mathcal{F}^{1}=0$.
	
	It is clear that the nontrivial extension
\begin{equation}\label{sF1}
0\longrightarrow HN^1\longrightarrow \mathcal{F}^{1}\longrightarrow \mathcal{O}_{F_4}\longrightarrow0
\end{equation} induce the extensions
	\[0\longrightarrow HN^2\longrightarrow \mathcal{F}^2\longrightarrow \mathcal{O}_{F_4}\longrightarrow 0\]
	and
	\[0\longrightarrow p^{*}E\longrightarrow K\longrightarrow \mathcal{O}_{F_4}\longrightarrow 0.\]
Using projection formula, it is easy to see that $K=p^*p_*K$.

So we have the following communicative diagrams
	\begin{center}
		$\xymatrix{
			0\ar[r]& 	HN^1 \ar[r] \ar[d]& \mathcal{F}^1\ar[r]\ar@{^{(}->}[d]
& \mathcal{O}_{F}\ar[r]\ar[d]&0\\
			0\ar[r] & HN^2 \ar[r] & \mathcal{F}^2\ar[r] & \mathcal{O}_{F}\ar[r]&0\\
		}$
	\end{center}
and
	\begin{center}
		$\xymatrix{
			0\ar[r]& 	HN^2 \ar[r] \ar[d]& \mathcal{F}^2\ar[r]\ar@{^{(}->}[d]
& \mathcal{O}_{F}\ar[r]\ar[d]&0\\
			0\ar[r] & p^{*}E \ar[r] & p^{*}p_*K\ar[r] & \mathcal{O}_{F}\ar[r]&0.\\
		}$
	\end{center}
	
By the snake lemma, we have the exact sequences
	\[0\longrightarrow \mathcal{F}^1\longrightarrow \mathcal{F}^2\longrightarrow p^{*}\mathcal{O}_{P}(1)\longrightarrow 0\]
and	
	\[0\longrightarrow \mathcal{F}^2\longrightarrow p^{*}p_*K\longrightarrow M\longrightarrow 0.\]
	
So we have following diagrams
\begin{center}
		$\xymatrix{
			0\ar[r]& 	p^*p_*\mathcal{F}^1 \ar[r] \ar[d]& p^{*}p_*\mathcal{F}^2 \ar[r]\ar[d]& p^{*}\mathcal{O}_{\mathbb{P}^4}(1)\ar[r]\ar[d]^{id}&0\\
			0\ar[r] & \mathcal{F}^1 \ar[r] &\mathcal{F}^2\ar[r] &p^{*}\mathcal{O}_{\mathbb{P}^4}(1)\ar[r]&0\\
		}$
	\end{center}
and
	\begin{center}
		$\xymatrix{
			0\ar[r]& 	p^*p_*\mathcal{F}^2 \ar[r] \ar[d]& p^{*}p_*K \ar[r]\ar[d]^{id}& p^{*}p_*M\ar[r]\ar[d]&0\\
			0\ar[r] & \mathcal{F}^2 \ar[r] & p^{*}p_*K\ar[r] & M\ar[r]&0.\\
		}$
	\end{center}
So $p^*p_*\mathcal{F}^2\longrightarrow \mathcal{F}^2$ is injective from which we see that $p^*p_*\mathcal{F}^1\longrightarrow \mathcal{F}^1$ is also injective. On the other hand, for any point $x\in \mathbb{P}^4$, $h^0(\mathcal{F}^1|_{p^{-1}(x)})=1$ by the exact sequence (\ref{sF1}). So  $p_{*}\mathcal{F}^1\cong \mathcal{O}_{\mathbb{P}^4}(i)$ for some $i \in \mathbb{Z}$. From (\ref{sF1}), we have Chern polynomial equation $c_{\mathcal{F}^1}=c_{HN^1}\cdot T$. Since  $p^*p_{*}\mathcal{F}^1$ is a subbundle of $\mathcal{F}^1$, we have $i=0$ by comparing the Chern polynomial of $HN^1$.

Consider the following diagram
	\begin{center}
		$\xymatrix{
			&  & 0\ar[d] &  & \\
			&  & \mathcal{O}_{\mathbb{F}^4}\ar[d]^{f} \ar@{-->}_{s}[ld]&  &\\
			0\ar[r]& HN^1 \ar[r]^{g} &\mathcal{F}^1 \ar[r]^{h} & \mathcal{O}_{F_4} \ar[r] & 0.\\
		}$
	\end{center}
Since $\mathcal{F}^1$ is the nontrivial extension of $HN^1$ by $\mathcal{O}_{F_4}$, we have $h\circ f=0$. Thus $f=g\circ s$ and $HN^1$ contains a trivial line bundle. So $c_3(HN^1)=0$ which is a contradiction. So cases (ii) and (iii) cannot happen simultaneously.

	
For case (ii), $H^0(\mathcal{H}^{1}(x))=H^1(\mathcal{H}^{1}(x))=0$. From the exact sequence (\ref{HN12}), it is easy to get that $p_*HN^2=\mathcal{O}_{\mathbb{P}^4}(1)$. So the following diagram holds:
	\begin{center}
		$\xymatrix{
			0\ar[r]& 	p^{*}\mathcal{O}_{\mathbb{P}^4}(1) \ar[r] \ar[d]^{\phi}& p^{*}E \ar[r]\ar[d]^{id}& p^{*}p_{*}M\ar[r]\ar[d]&0\\
			0\ar[r] & HN^2 \ar[r] & p^{*}E\ar[r] & M\ar[r]&0.\\
		}$
	\end{center}
If the morphism $\rho\circ\phi: p^*\mathcal{O}_{\mathbb{P}^4}(1)\rightarrow p^{*}\mathcal{O}_{\mathbb{P}^4}(1)$ is nonzero, then the exact sequence (\ref{HN12}) splits.
So by the snake lemma, we have the exact sequence
\[0\longrightarrow HN^1\longrightarrow p^{*}p_{*}M\longrightarrow M\longrightarrow 0.\]
By the same method as Proposition \ref{-2 case}, considering the restriction of $HN^1$ and $p^*E/HN^2$ to $q$-fiber and using the Descente-Lemma, we can get $HN^1\cong p^*N$ for some bundle $N$ over $\mathbb{P}^4$, which is a contradiction. Otherwise, by the same method as above, $HN^1$ contains $p^{*}\mathcal{O}_{\mathbb{P}^4}(1)$ as its subbundle which is also impossible if we push forward these two bundles to $\mathbb{P}^4$ by morphism $p$.
	For cases (i) and (iii), since $H^0(\mathcal{H}^{1}(x))=1$, we get that $p_{*}HN^1\cong \mathcal{O}_{\mathbb{P}}(i)$ for some $i\in \mathbb{Z}$ and $p^*p_*HN^1$ is a subbundle of $HN^1$. Then by analysis the Chern polynomial of $HN^1$, we see that  cases (i) and (iii) cannot happen.
	Therefore, there does not exist such a uniform vector bundle.
\end{proof}

So the following proposition holds.
\begin{proposition}
	The uniform vector bundle $E$ over $\mathbb{P}^4$ with splitting type $(3;3,1,3;u_1,u_1-1,u_1-2)$ is  $$\mathcal{O}^{\oplus 3}_{\mathbb{P}^{4}}(u_1)\oplus \mathcal{O}_{\mathbb{P}^{4}}(u_1-1)\oplus \mathcal{O}^{\oplus 3}_{\mathbb{P}^{4}}(u_1-2),$$ $$T_{\mathbb{P}^4}(u_1-3)\oplus \mathcal{O}^{\oplus 3}_{\mathbb{P}^4}(u_1)$$ or $$\Omega^1_{\mathbb{P}^4}(u_1+1)\oplus \mathcal{O}^{\oplus 3}_{\mathbb{P}^4}(u_1-2).$$
\end{proposition}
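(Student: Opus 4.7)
The plan is to combine the three preceding propositions in this subsection, which together cover all six Chern-polynomial solutions produced by the computer calculation. My strategy is to assign each solution to either a concrete isomorphism class or to a nonexistence result, and then collect everything into the single statement.

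First, I would handle the three solutions in which one of the cubic factors $S_1$ or $S_3$ equals $T^3$ (including the all-zero tuple). These are exactly the three tuples covered by the first supporting proposition above. The all-zero solution falls to Corollary \ref{all-trivial-lem} via the argument of Proposition \ref{(0,0,0)case}, giving the fully split bundle $\mathcal{O}^{\oplus 3}_{\mathbb{P}^4}(u_1)\oplus \mathcal{O}_{\mathbb{P}^4}(u_1-1)\oplus \mathcal{O}^{\oplus 3}_{\mathbb{P}^4}(u_1-2)$. The solution with $(b_1,\ldots)=(0,\ldots,0)$ and nontrivial $(c_1,\ldots)$ reduces, by the argument of Proposition \ref{(-1,0,0)case} applied to the appropriate HN-subquotient together with a vanishing $\mathrm{Ext}^1$ computation, to $T_{\mathbb{P}^4}(u_1-3)\oplus \mathcal{O}^{\oplus 3}_{\mathbb{P}^4}(u_1)$. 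Dually, the solution with $(c_1,\ldots)=(0,\ldots,0)$ and nontrivial $(b_1,\ldots)$ yields $\Omega^1_{\mathbb{P}^4}(u_1+1)\oplus \mathcal{O}^{\oplus 3}_{\mathbb{P}^4}(u_1-2)$ after applying the same argument to $E^{*}$.

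Next, I would rule out the three remaining tuples. The two tuples containing a leading $-2$ in one of the cubic factors (solutions four and five) fall under the second supporting proposition; the argument of Proposition \ref{-2 case}, based on the Descente-Lemma and a nontrivial-extension construction, applies verbatim to the relevant HN-subquotient $HN^{i}|_{p^{-1}(x)}$ and produces a contradiction by exhibiting a bundle on $\mathbb{P}^4$ whose pullback would equal $\mathcal{O}_{\mathbb{P}(T_{\mathbb{P}^4}(-1))}(-2)$. The final tuple, with coefficients involving $-4$ and $8$, is excluded by the preceding third supporting proposition, whose nonexistence proof uses Serre's construction, a classification result for globally generated rank-$2$ bundles on $\mathbb{P}^3$ via disjoint smooth elliptic curves, and a three-way split on $h^0(\mathcal{H}^1(x))$ combined with the rational connectedness of $\mathbb{P}^4$ (which forces the ideal sheaves $\mathcal{J}_x$ and $\mathcal{J}_y$ to be numerically equivalent and hence to have the same cohomology with $HN^1$).

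The substantive difficulty has therefore already been absorbed into the third preceding proposition; the final proposition itself is a bookkeeping step, and I would conclude by simply invoking the three preceding results to exhaust all solutions of the Chern-polynomial equation. The one point I would verify carefully is that the six tuples enumerated by the computer indeed comprise every solution of the relevant system in $A(F(1,2,5))$, so that no further isomorphism class is accidentally missed. Given this, the three listed bundles exhaust all possibilities, and each is manifestly homogeneous, which is what feeds into the main theorem for rank-$7$ bundles.
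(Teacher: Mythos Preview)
Your proposal is correct and matches the paper's approach exactly: the paper itself gives no separate proof for this summary proposition, simply stating ``So the following proposition holds'' after the three preceding propositions have disposed of all six computer-generated solutions. One small remark: in your description of the third supporting proposition you say rational connectedness makes $\mathcal{J}_x$ and $\mathcal{J}_y$ ``numerically equivalent and hence have the same cohomology with $HN^1$''---numerical equivalence alone would not suffice for this; the paper actually claims $\mathcal{J}_x\simeq\mathcal{J}_y$ as sheaves, which is what is needed.
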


\vspace{1cm}

So discussed all the cases for $k=3$.
When $k=4$, we divided this case into two parts.
\begin{enumerate}
\item $r_i=2,r_j=3$ for some $i, j$ such that $i<j$ without loss of generity.
\item $r_i=4$ for some $i$.
 \end{enumerate}

\subsection{$k=4, r_i=2,r_j=3$ for some $i, j$ and $i<j$}
In this case, we may assume $r_l=r_k=1$, $u_l> u_k$,
\begin{center}
	$S_{j}(T,U,V)=T^3+m_1(U+V)T^2+(m_2^0U^2+m_2^1UV+m_2^0V^2)T+m_3^0(U^3+V^3)+m_3^1(U^2V+UV^2)$,
	$S_{i}(T,U,V)=T^2+n_1(U+V)T+(n_2^0U^2+n_2^1UV+n_2^0V^2)$
\end{center}
and
\begin{center}
$S_{l}=T+a(U+V), ~S_{k}=T+b(U+V)$
\end{center}
where $a,b,m_1,m_2^0,m_2^1,m_3^0,m_3^1,n_1,n_2^0,n_2^1$ are all constants.

\subsubsection{$i=3,j=4$}
There are four solutions for the Chern polynomial equation if we set $u_4=0$.
\begin{center}
	$(a,b,n_1,n_2^0,n_2^1,m_1,m_2^0,m_2^1,m_3^0,m_3^1)=(-3,0,0,0,0,3,9,9,27,27)$,
	
	$(a,b,n_1,n_2^0,n_2^1,m_1,m_2^0,m_2^1,m_3^0,m_3^1)=(0,-2,0,0,0,2,4,4,8,8)$,
	
	$(a,b,n_1,n_2^0,n_2^1,m_1,m_2^0,m_2^1,m_3^0,m_3^1)=(0,0,-1,0,0,1,1,1,1,1)$,
	
	$(a,b,n_1,n_2^0,n_2^1,m_1,m_2^0,m_2^1,m_3^0,m_3^1)=(0,0,0,0,0,0,0,0,0,0)$.
\end{center}
\begin{proposition}
	The uniform vector bundle $E$ over $\mathbb{P}^4$ with splitting type $(4;1,1,2,3;u_1,u_1-1,u_1-2,u_1-3)$ is either $$\mathcal{O}_{\mathbb{P}^{4}}(u_1)\oplus \mathcal{O}_{\mathbb{P}^{4}}(u_1-1)\oplus \mathcal{O}^{\oplus 2}_{\mathbb{P}^{4}}(u_1-2)\oplus \mathcal{O}^{\oplus 3}_{\mathbb{P}^{4}}(u_1-3)$$ or $$T_{\mathbb{P}^4}(u_1-4)\oplus \mathcal{O}_{\mathbb{P}^4}(u_1)\oplus\mathcal{O}_{\mathbb{P}^4}(u_1-1)\oplus\mathcal{O}_{\mathbb{P}^4}(u_1-2).$$
\end{proposition}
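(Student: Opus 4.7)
The plan is to analyze each of the four solutions of the Chern polynomial equation in turn, following argument templates established earlier in the paper.

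For the all-zero solution $(a,b,n_1,n_2^0,n_2^1,m_1,m_2^0,m_2^1,m_3^0,m_3^1)=(0,\ldots,0)$, every $S_i(T,U,V)$ is a pure power of $T$, so each graded piece $HN^i/HN^{i-1}$ restricts trivially to every $p$-fiber. Corollary~\ref{all-trivial-lem} then yields the fully split decomposition
$$E\cong \mathcal{O}_{\mathbb{P}^4}(u_1)\oplus \mathcal{O}_{\mathbb{P}^4}(u_1-1)\oplus \mathcal{O}^{\oplus 2}_{\mathbb{P}^4}(u_1-2)\oplus \mathcal{O}^{\oplus 3}_{\mathbb{P}^4}(u_1-3).$$

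For the solution $(0,0,-1,0,0,1,1,1,1,1)$, the two lowest steps of the HN-filtration remain $p$-fiber trivial, contributing $\mathcal{O}_{\mathbb{P}^4}(u_1)$ and $\mathcal{O}_{\mathbb{P}^4}(u_1-1)$ to $E$ via pushforward. The third step $HN^3/HN^2$ has, after the $u_3$-shift, Chern polynomial factoring as $(T+U)(T-V)$, so its restriction to every $p$-fiber is $\mathcal{O}_{\mathbb{P}^3}\oplus \mathcal{O}_{\mathbb{P}^3}(-1)$. The snake-lemma mechanism of Proposition~\ref{(-1,0,0)case}, combined with Proposition~3.5 of \cite{E-H-S}, then extracts a twisted tangent-bundle summand, and $\mathrm{Ext}$-vanishing for the remaining line bundle pieces yields $E\cong T_{\mathbb{P}^4}(u_1-4)\oplus \mathcal{O}_{\mathbb{P}^4}(u_1)\oplus \mathcal{O}_{\mathbb{P}^4}(u_1-1)\oplus \mathcal{O}_{\mathbb{P}^4}(u_1-2)$.

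For the remaining two solutions, the goal is to rule out existence by adapting Proposition~\ref{-2 case}. In the $(-3,\ldots)$ case the identification $c_{HN^1}(T)=T-3V$ gives $HN^1\cong \mathcal{O}_{\mathbb{P}(T_{\mathbb{P}^4}(-1))}(-3)$, a pure Hopf power restricting to $\mathcal{O}_{\mathbb{P}^1}(3)$ on every $q$-fiber; since the complementary quotient $p^*E/HN^1$ restricts to a direct sum of trivial bundles on $q$-fibers, $\mathcal{H}om$ vanishes there and the Descente-Lemma (Lemma~2.1.2 of \cite{O-S-S}) would force $HN^1$ to descend to a bundle on $\mathbb{P}^4$, contradicting the non-trivial Hopf twist. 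In the $(0,-2,\ldots)$ case, $HN^1\cong p^*\mathcal{O}_{\mathbb{P}^4}(u_1)$ is $p$-fiber trivial, while $HN^2/HN^1$ has $c_1=2V$ and restricts to $\mathcal{O}_{\mathbb{P}^3}(-2)$ on each $p$-fiber; using the classification of globally generated bundles of low $c_1$ on $\mathbb{P}^3$ from \cite{S-J-U} and Horrocks' theorem, one shows $HN^2|_{p\text{-fiber}}\cong \mathcal{O}_{\mathbb{P}^3}\oplus \mathcal{O}_{\mathbb{P}^3}(-2)$, and an $\mathrm{Ext}^1$-vanishing computation on $\mathbb{P}(T_{\mathbb{P}^4}(-1))$ splits $HN^2\cong p^*\mathcal{O}_{\mathbb{P}^4}(u_1)\oplus \mathcal{O}_{\mathbb{P}(T_{\mathbb{P}^4}(-1))}(-2)$, isolating again a Hopf factor to which the Descente-Lemma applies, producing the same contradiction.

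The main obstacle lies in the $(0,-2,\ldots)$ non-existence argument. Unlike Proposition~\ref{-2 case}, where the Hopf twist sits directly inside $HN^1$, here it first appears in the extension $0\to HN^1\to HN^2\to HN^2/HN^1\to 0$, so one must verify the splitting of this extension by a cohomology computation on the projective bundle $\mathbb{P}(T_{\mathbb{P}^4}(-1))$ and must then track the $q$-fiberwise behavior of $p^*E/HN^2$ carefully before the Descente-Lemma can be invoked. Once these technicalities are in place, the contradiction follows exactly as in Proposition~\ref{-2 case}.
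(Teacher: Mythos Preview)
Your proposal follows exactly the paper's template: handle the four solutions case-by-case, invoking Proposition~\ref{(0,0,0)case}/Corollary~\ref{all-trivial-lem} for the zero solution, Proposition~\ref{(-1,0,0)case} for the tangent-bundle solution, and Proposition~\ref{-2 case} for the two non-existence solutions. The paper's own proof is nothing more than three lines pointing to those same references, so structurally you are aligned.

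There are, however, two inaccuracies in your details. In the $(-3,\ldots)$ case you assert that $p^*E/HN^1$ is trivial on every $q$-fiber; it is not. On $q^{-1}(l)$ one has $(p^*E/HN^1)|_{q^{-1}(l)}\cong \mathcal{O}_l(2)\oplus\mathcal{O}_l(1)^{\oplus 2}\oplus\mathcal{O}_l^{\oplus 3}$, since the lower HN-pieces carry the twists $u_2,u_3,u_4=2,1,0$. The Hom-vanishing you need still holds because $3>2,1,0$, so the Descente contradiction survives, but the stated reason is wrong. In the $(0,-2,\ldots)$ case you invoke the globally-generated classification of \cite{S-J-U} and Horrocks' theorem for $HN^2/HN^1$; this is unnecessary, since here $r_2=1$ and $HN^2/HN^1$ is already a line bundle identified by its Chern class as $\mathcal{O}_{\mathbb{P}(T_{\mathbb{P}^4}(-1))}(-2)$. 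A cleaner route, closer to what ``same argument as Proposition~\ref{-2 case}'' actually means, is to note that $HN^1\cong p^*\mathcal{O}_{\mathbb{P}^4}(3)$ gives a line subbundle $\mathcal{O}_{\mathbb{P}^4}(3)\hookrightarrow E$ whose uniform rank-$6$ quotient has splitting type $(3;1,2,3;2,1,0)$ with exactly the Chern data of Proposition~\ref{-2 case}, which is already excluded.
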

\begin{proof}
If $(a,b,n_1,n_2^0,n_2^1,m_1,m_2^0,m_2^1,m_3^0,m_3^1)=(0,0,0,0,0,0,0,0,0,0)$,  then $E\cong\mathcal{O}_{\mathbb{P}^{4}}(u_1)\oplus \mathcal{O}_{\mathbb{P}^{4}}(u_1-1)\oplus \mathcal{O}^{\oplus 2}_{\mathbb{P}^{4}}(u_1-2)\oplus \mathcal{O}^{\oplus 3}_{\mathbb{P}^{4}}(u_1-2)$.

If $$(a,b,n_1,n_2^0,n_2^1,m_1,m_2^0,m_2^1,m_3^0,m_3^1)=(-3,0,0,0,0,3,9,9,27,27)$$
	or	
	$$(a,b,n_1,n_2^0,n_2^1,m_1,m_2^0,m_2^1,m_3^0,m_3^1)=(0,-2,0,0,0,2,4,4,8,8),$$ by the same argument in Proposition \ref{-2 case}, there does not exist such uniform vector bundles.

If  $(a,b,n_1,n_2^0,n_2^1,m_1,m_2^0,m_2^1,m_3^0,m_3^1)=(0,0,-1,0,0,1,1,1,1,1)$, then by the same argument in Proposition \ref{(-1,0,0)case},  $E\cong T_{\mathbb{P}^4}(u_1-4)\oplus \mathcal{O}_{\mathbb{P}^4}(u_1)\oplus\mathcal{O}_{\mathbb{P}^4}(u_1-1)\oplus\mathcal{O}_{\mathbb{P}^4}(u_1-2)$ .

\end{proof}

\subsubsection{$i=2,j=4$}

There are four solutions for the Chern polynomial equation if we set $u_4=0$.
\begin{center}
	$(a,n_1,n_2^0,n_2^1,b,m_1,m_2^0,m_2^1,m_3^0,m_3^1)=(-3,0,0,0,0,3,9,9,27,27)$,
	
	$(a,n_1,n_2^0,n_2^1,b,m_1,m_2^0,m_2^1,m_3^0,m_3^1)=(0,0,0,0,-1,1,1,1,1)$,
	
	$(a,n_1,n_2^0,n_2^1,b,m_1,m_2^0,m_2^1,m_3^0,m_3^1)=(0,-2,0,0,0,2,4,4,8,8)$,
	
	$(a,n_1,n_2^0,n_2^1,b,m_1,m_2^0,m_2^1,m_3^0,m_3^1)=(0,0,0,0,0,0,0,0,0,0)$.
\end{center}

Using the similar arguments, we have following result.
\begin{proposition}
The uniform vector bundle $E$ over $\mathbb{P}^4$ with splitting type $(4;1,2,1,3;u_1,u_1-1,u_1-2,u_1-3)$ is either $$\mathcal{O}_{\mathbb{P}^{4}}(u_1)\oplus \mathcal{O}^{\oplus 2}_{\mathbb{P}^{4}}(u_1-1)\oplus \mathcal{O}_{\mathbb{P}^{4}}(u_1-2)\oplus \mathcal{O}^{\oplus 3}_{\mathbb{P}^{4}}(u_1-3)$$ or $$T_{\mathbb{P}^4}(u_1-4)\oplus \mathcal{O}_{\mathbb{P}^4}(u_1)\oplus\mathcal{O}^{\oplus 2}_{\mathbb{P}^4}(u_1-1).$$
	\end{proposition}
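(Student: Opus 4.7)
The plan is to go through the four computer-algebra solutions displayed above one at a time, reducing each to a case already handled in the earlier subsections. Write the HN-filtration as $0\subset HN^1\subset HN^2\subset HN^3\subset HN^4=p^*E$ with ranks $1,2,1,3$ and splitting values $u_1,u_1-1,u_1-2,u_1-3$, and read the fiberwise restriction of each graded piece off the corresponding factor $S_i$ of the Chern polynomial.

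For the all-zero solution, every graded piece $HN^i/HN^{i-1}$ has vanishing Chern classes on each $p$-fiber and sits as a subbundle of a trivial bundle on $\mathbb{P}^3$, hence is trivial by the corollary after Theorem 3.2.1 of \cite{O-S-S}; Corollary \ref{all-trivial-lem} then gives $E\cong\mathcal{O}_{\mathbb{P}^4}(u_1)\oplus\mathcal{O}^{\oplus 2}_{\mathbb{P}^4}(u_1-1)\oplus\mathcal{O}_{\mathbb{P}^4}(u_1-2)\oplus\mathcal{O}^{\oplus 3}_{\mathbb{P}^4}(u_1-3)$. For the solution with $b=-1$, the rank-one graded piece attached to the splitting value $u_1-2$ restricts on a $p$-fiber to $\mathcal{O}_{\mathbb{P}^3}(-1)$, while $HN^1\cong p^*\mathcal{O}_{\mathbb{P}^4}(u_1)$ and $HN^2/HN^1\cong p^*\mathcal{O}^{\oplus 2}_{\mathbb{P}^4}(u_1-1)$ can be peeled off by Lemma \ref{trivial-lemma}; imitating Proposition \ref{(-1,0,0)case} verbatim, push the resulting short exact sequence down by $p$, run the snake lemma to produce a Hopf-relative summand $\mathcal{H}_{T_{\mathbb{P}^4}(-1)}$ inside $p^*p_*F$, descend it to $T_{\mathbb{P}^4}(-1)$ via Proposition 3.5 of \cite{E-H-S}, and split the extension by $\mathrm{Ext}^1$-vanishing to obtain $E\cong T_{\mathbb{P}^4}(u_1-4)\oplus\mathcal{O}_{\mathbb{P}^4}(u_1)\oplus\mathcal{O}^{\oplus 2}_{\mathbb{P}^4}(u_1-1)$.

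For the two remaining solutions, $a=-3$ and $n_1=-2$, I would rule them out by the non-existence argument of Proposition \ref{-2 case}. In each case the Chern data force one of the graded pieces $HN^i|_{p^{-1}(x)}$ to have the shape $\mathcal{O}_{\mathbb{P}^3}(-d)\oplus \mathcal{O}_{\mathbb{P}^3}^{\oplus s}$ with $d\in\{2,3\}$, after using the classification of globally generated bundles with small $c_1$ on $\mathbb{P}^3$ from \cite{S-J-U} to eliminate the alternative fiber structures; pushing forward and applying the snake lemma then produces a short exact sequence of the form $0\to\mathcal{O}_{\mathbb{P}(T_{\mathbb{P}^4}(-1))}(-d)\to p^*W\to p^*E/HN^i\to 0$. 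Then $\mathcal{H}om$-vanishing on the $q$-fibers together with the Descente-Lemma (Lemma 2.1.2 of Chapter 2 in \cite{O-S-S}) forces $\mathcal{O}_{\mathbb{P}(T_{\mathbb{P}^4}(-1))}(-d)$ to descend to a bundle $E'$ on $\mathbb{P}^4$, which is impossible because its restriction to any $p$-fiber is the non-trivial $\mathcal{O}_{\mathbb{P}^3}(-d)$.

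The step I expect to be the main obstacle is the $a=-3$ case: the gap $u_1-u_4=3$ is strictly larger than anything treated in the explicit $(-2,\dots)$ template, and one must invoke the extension theorem of Ellia (Proposition 1-4.1 of \cite{Ell}) to break the filtration into shorter uniform pieces before applying the Descente-Lemma argument. The bookkeeping needed to rule out an unwanted line subbundle of $HN^i$, by verifying that the relevant Chern polynomial in $A(F_4)$ has no linear factor of the form $T-jU$ for any $j\in\mathbb{Z}$, is the only genuinely case-specific computation; everything else is a mechanical adaptation of the earlier propositions.
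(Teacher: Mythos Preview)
Your proposal is correct and matches the paper's approach, which simply cites Propositions \ref{(0,0,0)case}, \ref{(-1,0,0)case} (together with the peeling-off step of Proposition \ref{0-1}), and \ref{-2 case} for the four solutions respectively. Your flagged concern about the $a=-3$ case is unfounded: the Descente-Lemma argument of Proposition \ref{-2 case} works verbatim for any $d\ge 2$, since the only role the specific value plays is that $\mathcal{O}_{\mathbb{P}(T_{\mathbb{P}^4}(-1))}(-d)|_{q^{-1}(l)}\cong\mathcal{O}_{q^{-1}(l)}(d)$ admits no nonzero morphism to a trivial bundle, and the relevant $\mathrm{Ext}^1$-vanishing $H^1(F_4,\mathcal{H}^{\otimes d}\otimes p^*\mathcal{O}(-u_i))=0$ holds for all $d\ge 1$; the paper already invokes exactly this argument for $a=-3$ in Section 3.8 without any appeal to Ellia's extension theorem, so no additional filtration-breaking step is required.
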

%
%

\subsubsection{$i=1,j=4$}
There are four solutions for the Chern polynomial equation if we set $u_4=0$.
\begin{center}
	$(n_1,n_2^0,n_2^1,a,b,m_1,m_2^0,m_2^1,m_3^0,m_3^1)=(0,0,0,-2,0,2,4,4,8,8)$,
	
	$(n_1,n_2^0,n_2^1,a,b,m_1,m_2^0,m_2^1,m_3^0,m_3^1)=(0,0,0,0,-1,1,1,1,1,1)$,
	
	$(n_1,n_2^0,n_2^1,a,b,m_1,m_2^0,m_2^1,m_3^0,m_3^1)=(-3,0,0,0,0,3,9,9,27,27)$,
	
	$(n_1,n_2^0,n_2^1,a,b,m_1,m_2^0,m_2^1,m_3^0,m_3^1)=(0,0,0,0,0,0,0,0,0,0)$.
\end{center}
Using the similar arguments, we have following result.
\begin{proposition}
The uniform vector bundle $E$ over $\mathbb{P}^4$ with splitting type $(4;2,1,1,3;u_1,u_1-1,u_1-2,u_1-3)$ is either $$\mathcal{O}^{\oplus 2}_{\mathbb{P}^{4}}(u_1)\oplus \mathcal{O}_{\mathbb{P}^{4}}(u_1-1)\oplus \mathcal{O}_{\mathbb{P}^{4}}(u_1-2)\oplus \mathcal{O}^{\oplus 3}_{\mathbb{P}^{4}}(u_1-3)$$ or $$T_{\mathbb{P}^4}(u_1-4)\oplus \mathcal{O}^{\oplus 2}_{\mathbb{P}^4}(u_1)\oplus\mathcal{O}_{\mathbb{P}^4}(u_1-1).$$
	\end{proposition}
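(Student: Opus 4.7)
The strategy is to analyse the four solutions of the Chern polynomial equation individually, each by transplanting an argument already established earlier in the paper. Write the splitting type as $(4;2,1,1,3;u_1,u_1-1,u_1-2,u_1-3)$ and, after twisting by $\mathcal{O}_{\mathbb{P}^4}(-u_1+3)$, normalise so that $u_4=0$; under this normalisation the Chern class data reduces precisely to the ten-tuples $(n_1,n_2^0,n_2^1,a,b,m_1,m_2^0,m_2^1,m_3^0,m_3^1)$ listed above. Each of the four solutions then controls the splitting of $HN^1|_{p^{-1}(x)}$ (a subbundle of the trivial bundle on $\mathbb{P}^3$), so the argument proceeds by reading off $c_1(HN^1|_{p^{-1}(x)})$ and invoking the relevant structure theorem for globally generated bundles on $\mathbb{P}^3$.

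For the all-zero solution, one has $c_1(HN^1|_{p^{-1}(x)})=0$, so the fibrewise restriction is the trivial bundle (cf.\ \cite{O-S-S} Corollary after Theorem 3.2.1). By Corollary \ref{all-trivial-lem}, $E$ is a direct sum of line bundles, yielding
$$E\cong \mathcal{O}^{\oplus 2}_{\mathbb{P}^{4}}(u_1)\oplus \mathcal{O}_{\mathbb{P}^{4}}(u_1-1)\oplus \mathcal{O}_{\mathbb{P}^{4}}(u_1-2)\oplus \mathcal{O}^{\oplus 3}_{\mathbb{P}^{4}}(u_1-3).$$
For the solution $(0,0,0,0,-1,1,1,1,1,1)$, the Chern polynomial factorisation gives $c_1(HN^1|_{p^{-1}(x)})=-1$ on each $p$-fiber, forcing $HN^1|_{p^{-1}(x)}\cong \mathcal{O}_{\mathbb{P}^3}\oplus\mathcal{O}_{\mathbb{P}^3}(-1)$ as in Proposition \ref{(-1,0,0)case}. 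Pushing down the $HN$-sequence by $p_*$, applying the snake lemma and identifying the relevant quotient with $T_{\mathbb{P}^4}(-1)$ via Proposition 3.5 of \cite{E-H-S} as before, together with the vanishing of $\mathrm{Ext}^1$ between the involved direct summands, yields
$$E\cong T_{\mathbb{P}^4}(u_1-4)\oplus \mathcal{O}^{\oplus 2}_{\mathbb{P}^4}(u_1)\oplus\mathcal{O}_{\mathbb{P}^4}(u_1-1).$$

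For the two remaining solutions $(0,0,0,-2,0,2,4,4,8,8)$ and $(-3,0,0,0,0,3,9,9,27,27)$, the decreasing step in $u_i$ is concentrated so that $HN^1$ or $HN^2$ acquires a first Chern class of $-2$ or $-3$ on each $p$-fiber while the remaining Chern invariants collapse. The plan is to reproduce the obstruction in Proposition \ref{-2 case}: classify $HN^{j}|_{p^{-1}(x)}$ via the theorem of \cite{S-J-U}, push down to obtain a candidate quotient bundle whose pullback would split off a factor of $\mathcal{O}_{\mathbb{P}(T_{\mathbb{P}^4}(-1))}(-2)$ (resp.\ $(-3)$), and then invoke the Descente-Lemma (Lemma 2.1.2 in Chapter 2 of \cite{O-S-S}) after checking via restriction to a $q$-fiber that $\mathcal{H}om$ between the relevant Hopf-twist and the quotient vanishes. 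This produces a bundle $E'$ over $\mathbb{P}^4$ with $p^*E'\cong \mathcal{O}_{\mathbb{P}(T_{\mathbb{P}^4}(-1))}(-2)$ or $(-3)$, contradicting the Picard-rank computation, and so ruling out these two solutions.

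The main obstacle is the bookkeeping in the third paragraph: one must verify carefully, for both of the excluded solutions, that the relevant fibrewise cohomology vanishes so that the Descente-Lemma applies, and that the associated graded of the HN-filtration really factors the obstruction class through a subsheaf isomorphic to the appropriate Hopf twist. Since the Chern polynomial factorisations are completely parallel to those in Proposition \ref{-2 case}, this reduction is routine but tedious; once it is performed, the conclusion follows. Combining the four cases establishes the proposition.
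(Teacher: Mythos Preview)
Your overall strategy matches the paper's: treat the four Chern-polynomial solutions separately and reduce each to an earlier proposition. However, your handling of the solution $(n_1,n_2^0,n_2^1,a,b,m_1,\ldots)=(0,0,0,0,-1,1,1,1,1,1)$ contains a concrete error. For this tuple one has $n_1=n_2^0=n_2^1=0$, so $S_1(T,U,V)=T^2$ and hence $c_{HN^1}=(T+3U)^2$; on a $p$-fiber this gives $c_1(HN^1|_{p^{-1}(x)})=0$, not $-1$ as you claim. Thus $HN^1|_{p^{-1}(x)}\cong\mathcal{O}_{\mathbb{P}^3}^{\oplus 2}$, not $\mathcal{O}_{\mathbb{P}^3}\oplus\mathcal{O}_{\mathbb{P}^3}(-1)$, and the snake-lemma argument you sketch (producing a quotient with Chern polynomial $T-V$ and invoking Proposition~3.5 of \cite{E-H-S}) does not apply: the cokernel of $p^*p_*HN^1\to HN^1$ is zero here. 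The nontrivial shift in this solution sits at $S_3$ (since $b=-1$), not at $S_1$. The correct model is Proposition~\ref{0-1} rather than Proposition~\ref{(-1,0,0)case}: because $S_1=T^2$ and $S_2=T$, both $HN^1$ and $HN^2/HN^1$ are trivial on $p$-fibers, so by Lemma~\ref{trivial-lemma} one splits off $\mathcal{O}_{\mathbb{P}^4}^{\oplus 2}(u_1)\oplus\mathcal{O}_{\mathbb{P}^4}(u_1-1)$ and reduces to a lower-rank uniform bundle already classified.

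A similar bookkeeping issue affects your treatment of the excluded solution $(0,0,0,-2,0,2,4,4,8,8)$: again $S_1=T^2$, so $HN^1$ is trivial on fibers and it is $HN^2/HN^1$ (via $a=-2$, $S_2=T-2(U+V)$) that carries the $-2$ twist. The obstruction argument of Proposition~\ref{-2 case} therefore applies only after first peeling off the trivial $HN^1$ piece; you should make this reduction explicit rather than asserting that $HN^1$ itself carries the relevant Chern class. Once these corrections are made, the four cases go through as in the paper.
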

%
%
%
%

\subsubsection{$i=2,j=3$}
There are four solutions for the Chern polynomial equation if we set $u_4=0$.
\begin{center}
	$(a,n_1,n_2^0,n_2^1,m_1,m_2^0,m_2^1,m_3^0,m_3^1,b)=(-2,0,0,0,2,4,4,8,8,0)$,
	
	$(a,n_1,n_2^0,n_2^1,m_1,m_2^0,m_2^1,m_3^0,m_3^1,b)=(0,-1,0,0,1,1,1,1,1,0)$,
	
	$(a,n_1,n_2^0,n_2^1,m_1,m_2^0,m_2^1,m_3^0,m_3^1,b)=(0,0,0,0,0,0,0,0,0,0)$,
	
	$(a,n_1,n_2^0,n_2^1,m_1,m_2^0,m_2^1,m_3^0,m_3^1,b)=(0,0,0,0,-1,1,1,-1,-1,1)$.
\end{center}

\begin{proposition}
The uniform vector bundle $E$ over $\mathbb{P}^4$ with splitting type $(4;1,2,3,1;u_1,u_1-1,u_1-2,u_1-3)$ is $$\mathcal{O}_{\mathbb{P}^{4}}(u_1)\oplus \mathcal{O}^{\oplus 2}_{\mathbb{P}^{4}}(u_1-1)\oplus \mathcal{O}^{\oplus 3}_{\mathbb{P}^{4}}(u_1-2)\oplus \mathcal{O}_{\mathbb{P}^{4}}(u_1-3),$$ $$T_{\mathbb{P}^4}(u_1-3)\oplus \mathcal{O}_{\mathbb{P}^4}(u_1)\oplus\mathcal{O}_{\mathbb{P}^4}(u_1-1)\oplus\mathcal{O}_{\mathbb{P}^4}(u_1-3),$$ or$$\Omega^1_{\mathbb{P}^4}(u_1-1)\oplus \mathcal{O}_{\mathbb{P}^4}(u_1)\oplus\mathcal{O}^{\oplus 2}_{\mathbb{P}^4}(u_1-1).$$
	\end{proposition}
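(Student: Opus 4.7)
The computer enumeration produces four Chern-polynomial solutions, each of which can be handled by one of the three argument templates already developed in this section: Proposition~\ref{(0,0,0)case} (fully-trivial fibrewise HN graded pieces force a full line-bundle decomposition), Proposition~\ref{(-1,0,0)case} (extracts a $T_{\mathbb{P}^4}$-type summand via a snake-lemma chase), and Proposition~\ref{-2 case} (rules a case out via the Descente-Lemma).

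For the zero solution $(0,0,0,0,0,0,0,0,0,0)$, each $HN^i|_{p^{-1}(x)}$ is a subbundle of a trivial bundle with vanishing first Chern class, hence itself trivial; Corollary~\ref{all-trivial-lem} then delivers the fully-split form. For the solution $(0,-1,0,0,1,1,1,1,1,0)$, the factor $S_2$ (with $n_1=-1$) reproduces the Chern data of Proposition~\ref{(-1,0,0)case}: mimic that argument to exhibit $T_{\mathbb{P}^4}(u_1-3)$ as a quotient summand of $E$, and split it off using $H^1(\Omega^1_{\mathbb{P}^4}(k))=0$ for $k\neq 0$. For the solution $(0,0,0,0,-1,1,1,-1,-1,1)$, pass to $E^\vee$ twisted by an appropriate line bundle to reduce to the previous case, and dualise back to obtain the $\Omega^1_{\mathbb{P}^4}(u_1-1)$ form.

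The genuine obstacle is ruling out the fourth solution $(-2,0,0,0,2,4,4,8,8,0)$. Here $S_1=T-2(U+V)$ forces $HN^1|_{p^{-1}(x)}\cong \mathcal{O}_{\mathbb{P}^3}(-2)$ and $S_2=T^2$ makes $HN^2/HN^1$ trivial on $p$-fibres; Lemma~\ref{trivial-lemma} gives $HN^2/HN^1\cong p^*\mathcal{O}^{\oplus 2}_{\mathbb{P}^4}(u_2)$, and the cohomology-splitting calculation of Proposition~\ref{-2 case} then identifies $HN^2$ with $\mathcal{O}_{\mathbb{P}(T_{\mathbb{P}^4}(-1))}(-2)\oplus p^*\mathcal{O}^{\oplus 2}_{\mathbb{P}^4}(u_2)$. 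The remainder of the proof of Proposition~\ref{-2 case} then applies: the $p_*$-pushforward snake-lemma diagram combined with the $q$-fibre vanishing $\mathcal{H}om(\mathcal{O}_{q^{-1}(l)}(2),\mathcal{O}_{q^{-1}(l)})=0$ and the Descente-Lemma would force $\mathcal{O}_{\mathbb{P}(T_{\mathbb{P}^4}(-1))}(-2)$ to descend to $\mathbb{P}^4$, a contradiction.

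The only subtlety compared with Proposition~\ref{-2 case} is that $p^*E/HN^2$ now carries two extra summands coming from $S_3$ and $S_4$ (so its rank is $5$ rather than $3$); however, the splitting data guarantees that both extra summands restrict to trivial bundles on the $q$-fibres, so the Hom-vanishing is preserved and the Descente argument goes through unchanged. This is where I expect the main care is needed: one must verify that the restriction of $p^*E/HN^2$ to a $q$-fibre is a trivial bundle, by reading off the fibrewise splitting type from the remaining factors $S_3(T+u_3U,U,V)$ and $S_4(T+u_4U,U,V)$. Once this case is excluded, the remaining three solutions supply exactly the three asserted isomorphism classes, completing the classification.
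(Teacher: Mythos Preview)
Your proposal is correct and follows essentially the same template as the paper: reduce the four Chern-polynomial solutions to the three model arguments (Proposition~\ref{(0,0,0)case}, Proposition~\ref{(-1,0,0)case}/its dual, and Proposition~\ref{-2 case}).

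One small correction concerning the excluded case $(-2,0,0,0,2,4,4,8,8,0)$. You assert that $p^{*}E/HN^{2}$ restricts to a trivial bundle on each $q$-fibre; this is not quite right. With the normalization $u_4=0$ (so $u_1=3$, $u_2=2$, $u_3=1$), the restriction to $q^{-1}(l)\cong\mathbb{P}^1$ is
\[
(p^{*}E/HN^{2})|_{q^{-1}(l)}\;\cong\;\mathcal{O}_{q^{-1}(l)}(u_3)^{\oplus 3}\oplus\mathcal{O}_{q^{-1}(l)}(u_4)\;=\;\mathcal{O}_{q^{-1}(l)}(1)^{\oplus 3}\oplus\mathcal{O}_{q^{-1}(l)},
\]
which is not trivial. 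Likewise $HN^{1}$ is $\mathcal{O}_{\mathbb{P}(T_{\mathbb{P}^4}(-1))}(-2)\otimes p^{*}\mathcal{O}_{\mathbb{P}^4}(1)$ (a twist of the bundle appearing in Proposition~\ref{-2 case}), so $HN^{1}|_{q^{-1}(l)}\cong\mathcal{O}_{q^{-1}(l)}(3)$. None of this affects the conclusion: what the Descente-Lemma actually needs is
\[
\mathrm{Hom}\bigl(\mathcal{O}_{q^{-1}(l)}(3),\,\mathcal{O}_{q^{-1}(l)}(1)^{\oplus 3}\oplus\mathcal{O}_{q^{-1}(l)}\bigr)=H^{0}\bigl(\mathcal{O}_{q^{-1}(l)}(-2)^{\oplus 3}\oplus\mathcal{O}_{q^{-1}(l)}(-3)\bigr)=0,
\]
which holds because $u_1>u_3>u_4$. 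So the contradiction is obtained exactly as in Proposition~\ref{-2 case}; only the statement ``trivial on $q$-fibres'' should be replaced by ``all summands have degree strictly less than $u_1$ on $q$-fibres''.
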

%
%
%

\subsubsection{$i=1,j=2$}

There are four solutions for the Chern polynomial equation if we set $u_4=0$.
\begin{center}
	$(n_1,n_2^0,n_2^1,m_1,m_2^0,m_2^1,m_3^0,m_3^1,a,b)=(-1,0,0,1,1,1,1,1,0,0)$,
	
	$(n_1,n_2^0,n_2^1,m_1,m_2^0,m_2^1,m_3^0,m_3^1,a,b)=(0,0,0,0,0,0,0,0,0,0)$,
	
	$(n_1,n_2^0,n_2^1,m_1,m_2^0,m_2^1,m_3^0,m_3^1,a,b)=(0,0,0,-1,1,1,-1,-1,1,0)$,
	
	$(n_1,n_2^0,n_2^1,m_1,m_2^0,m_2^1,m_3^0,m_3^1,a,b)=(0,0,0,-2,4,4,-8,-8,0,2)$.
\end{center}
Using the similar arguments, we have following result.
\begin{proposition}
The uniform vector bundle $E$ over $\mathbb{P}^4$ with splitting type $(4;2,3,1,1;u_1,u_1-1,u_1-2,u_1-3)$ is  $$\mathcal{O}^{\oplus 2}_{\mathbb{P}^{4}}(u_1)\oplus \mathcal{O}^{\oplus 3}_{\mathbb{P}^{4}}(u_1-1)\oplus \mathcal{O}_{\mathbb{P}^{4}}(u_1-2)\oplus \mathcal{O}_{\mathbb{P}^{4}}(u_1-3),$$  $$T_{\mathbb{P}^4}(u_1-2)\oplus \mathcal{O}_{\mathbb{P}^4}(u_1)\oplus\mathcal{O}_{\mathbb{P}^4}(u_1-2)\oplus\mathcal{O}_{\mathbb{P}^4}(u_1-3)$$ or $$\Omega^1_{\mathbb{P}^4}(u_1)\oplus \mathcal{O}^{\oplus 2}_{\mathbb{P}^4}(u_1)\oplus\mathcal{O}_{\mathbb{P}^4}(u_1-3).$$
	\end{proposition}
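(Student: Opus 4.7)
The plan is to analyse each of the four solutions of the Chern polynomial equation in turn, recycling the techniques already developed earlier in the paper. Normalise $u_4=0$, so that the splitting data become $(4;2,3,1,1;3,2,1,0)$. The three bundles listed in the conclusion will correspond to the first three solutions, and the fourth solution will have to be shown to be unrealisable.

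For the solution $(n_1,n_2^0,n_2^1,m_1,m_2^0,m_2^1,m_3^0,m_3^1,a,b)=(0,0,0,0,0,0,0,0,0,0)$, every $S_i$ is a pure power of $T$, so each graded piece $HN^i/HN^{i-1}$ is trivial on the $p$-fibers. Corollary \ref{all-trivial-lem} then forces the direct sum of line bundles, exactly as in Proposition \ref{(0,0,0)case}. For the solution $(-1,0,0,1,1,1,1,1,0,0)$ the first factor $S_1=T^2-(U+V)T$ forces $HN^1|_{p^{-1}(x)}\cong \mathcal{O}_{\mathbb{P}^3}\oplus\mathcal{O}_{\mathbb{P}^3}(-1)$, so $M:=p_{*}HN^1$ is a rank-$2$ uniform bundle of splitting type $(1,0)$; running through the snake-lemma diagram of Proposition \ref{(-1,0,0)case} together with Ellia's Lemma \ref{k=2r1=1} splits off a $T_{\mathbb{P}^4}$-summand and produces the second bundle in the list. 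The solution $(0,0,0,-1,1,1,-1,-1,1,0)$ is handled dually: apply the previous argument to $E^{*}$ (suitably twisted), and use $(T_{\mathbb{P}^4})^{*}\cong \Omega^1_{\mathbb{P}^4}$ to obtain the third bundle $\Omega^1_{\mathbb{P}^4}(u_1)\oplus \mathcal{O}^{\oplus 2}_{\mathbb{P}^4}(u_1)\oplus \mathcal{O}_{\mathbb{P}^4}(u_1-3)$.

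The main obstacle is to exclude the fourth solution $(0,0,0,-2,4,4,-8,-8,0,2)$. The strategy here mirrors Proposition \ref{-2 case}: from the Chern polynomial one reads off that the fiberwise restriction of the relevant graded piece is, up to the possibilities classified by the globally-generated-bundles criterion of \cite{S-J-U}, either $\mathcal{O}_{\mathbb{P}^3}(-2)\oplus\mathcal{O}^{\oplus 2}_{\mathbb{P}^3}$ or a variant eliminated by a non-trivial-extension argument as in the proof of Proposition \ref{exterior T}. A short cohomological calculation, combined with Horrocks' theorem applied to the rank-$2$ quotient, then identifies the corresponding subbundle with a twist of $\mathcal{O}_{\mathbb{P}(T_{\mathbb{P}^4}(-1))}(-2)$ plus trivial summands. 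Restricting this subbundle to a $q$-fiber and comparing with $p^{*}E/HN^{\bullet}$ gives $\mathcal{H}om=0$ along each $q$-fiber, so the Descente-Lemma (Lemma 2.1.2 of \cite{O-S-S}, Chapter 2) forces the subbundle to descend to a vector bundle on $\mathbb{P}^4$, contradicting the fact that the Chern polynomial of $HN^1$ in $A(F_4)$ contains no matching linear factor $T-iU$. The delicate part is keeping track of the nested extensions and the precise $q$-fiber vanishings needed to legitimately invoke descent; once this bookkeeping is done, the non-existence follows exactly as in Proposition \ref{-2 case}, and the proposition is proved.
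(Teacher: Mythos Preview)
Your approach is essentially the same as the paper's, which in fact dispatches this proposition with a single line (``Using the similar arguments, we have the following result''), pointing back to Propositions \ref{(0,0,0)case}, \ref{(-1,0,0)case} and \ref{-2 case}. Your handling of the first three solutions is fine.

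There is one imprecision in your treatment of the fourth solution $(0,0,0,-2,4,4,-8,-8,0,2)$. Here $S_1=T^2$, so $HN^1$ is \emph{trivial} on the $p$-fibers; thus your final sentence ``the Chern polynomial of $HN^1$ in $A(F_4)$ contains no matching linear factor $T-iU$'' cannot be the contradiction. The obstruction sits at the other end of the filtration: it is $S_4=T+2(U+V)$ that carries the $-2$-type behaviour. The clean way to import Proposition \ref{-2 case} is to pass to $E^{*}$ (as you already did for the third solution), which has splitting type $(4;1,1,3,2;\ldots)$; splitting off the trivial line-bundle summand coming from the first graded piece then reduces exactly to the $(3;1,2,3)$ situation of Proposition \ref{-2 case}, and the Descente-Lemma contradiction goes through verbatim. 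Once you reroute the fourth case through duality, your proof matches the paper's.
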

%
%
%

\subsubsection{$i=1,j=3$}
There are four solutions for the Chern polynomial equation if we set $u_4=0$.
\begin{center}
	$(n_1,n_2^0,n_2^1,a,m_1,m_2^0,m_2^1,m_3^0,m_3^1,b)=(0,0,0,-1,1,1,1,1,1,0)$,
	
	$(n_1,n_2^0,n_2^1,a,m_1,m_2^0,m_2^1,m_3^0,m_3^1,b)=(-2,0,0,0,2,4,4,8,8,0)$,
	
	$(n_1,n_2^0,n_2^1,a,m_1,m_2^0,m_2^1,m_3^0,m_3^1,b)=(0,0,0,0,0,0,0,0,0,0)$,
	
	$(n_1,n_2^0,n_2^1,a,m_1,m_2^0,m_2^1,m_3^0,m_3^1,b)=(0,0,0,0,-1,1,1,-1,-1,0)$,
\end{center}
Using the similar arguments, we have following result.

\begin{proposition}
The uniform vector bundle $E$ over $\mathbb{P}^4$ with splitting type $(4;2,1,3,1;u_1,u_1-1,u_1-2,u_1-3)$ is  $$\mathcal{O}^{\oplus 2}_{\mathbb{P}^{4}}(u_1)\oplus \mathcal{O}_{\mathbb{P}^{4}}(u_1-1)\oplus \mathcal{O}^{\oplus 3}_{\mathbb{P}^{4}}(u_1-2)\oplus \mathcal{O}_{\mathbb{P}^{4}}(u_1-3),$$  $$T_{\mathbb{P}^4}(u_1-2)\oplus \mathcal{O}^{\oplus 2}_{\mathbb{P}^4}(u_1)\oplus\mathcal{O}_{\mathbb{P}^4}(u_1-1)$$ or $$\Omega^1_{\mathbb{P}^4}(u_1-1)\oplus \mathcal{O}^{\oplus 2}_{\mathbb{P}^4}(u_1)\oplus\mathcal{O}_{\mathbb{P}^4}(u_1-1).$$
	\end{proposition}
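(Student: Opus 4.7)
The plan is to treat the four solutions of the Chern polynomial equation one by one, reusing verbatim the analysis developed in Propositions \ref{(0,0,0)case}, \ref{(-1,0,0)case}, \ref{0-1}, and \ref{-2 case}. Since in this case $r_1=2,\ r_2=1,\ r_3=3,\ r_4=1$, I keep track of which step of the $HN$-filtration is trivial on the $p$-fibers and which carries the nontrivial information; the rest of the work is to identify the resulting subbundle and quotient.

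First, for the trivial solution $(n_1,n_2^0,n_2^1,a,m_1,m_2^0,m_2^1,m_3^0,m_3^1,b)=(0,0,0,0,0,0,0,0,0,0)$, the Chern polynomial of $HN^1$ has $c_1(HN^1|_{p^{-1}(x)})=0$, so as in Proposition \ref{(0,0,0)case} the restriction to every $p$-fiber is a trivial subbundle of a trivial bundle. Inductively each graded piece $HN^i/HN^{i-1}$ is trivial on fibers, so Corollary \ref{all-trivial-lem} yields the decomposition $E\cong \mathcal{O}^{\oplus 2}_{\mathbb{P}^4}(u_1)\oplus\mathcal{O}_{\mathbb{P}^4}(u_1-1)\oplus\mathcal{O}^{\oplus 3}_{\mathbb{P}^4}(u_1-2)\oplus\mathcal{O}_{\mathbb{P}^4}(u_1-3)$.

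Second, for the solution $(0,0,0,-1,1,1,1,1,1,0)$, I would compute $c_{HN^{2}}(T,U,V)$ from $S_1S_2$ (shifted appropriately) and show that $HN^{2}|_{p^{-1}(x)}$ is forced to be $\mathcal{O}_{\mathbb{P}^3}(-1)\oplus\mathcal{O}_{\mathbb{P}^3}\oplus\mathcal{O}_{\mathbb{P}^3}$, exactly as in the proof of Proposition \ref{(-1,0,0)case}. Pushing forward the exact sequence $0\to HN^{2}\to p^{*}E\to F\to 0$ and chasing through the snake lemma produces, after comparing Chern polynomials of the quotient, the exact sequence $0\to \mathcal{O}^{\oplus 2}_{\mathbb{P}^4}(u_1)\oplus\mathcal{O}_{\mathbb{P}^4}(u_1-1)\to E\to T_{\mathbb{P}^4}(u_1-4)\to 0$, which splits because $\mathrm{Ext}^1$ vanishes, giving $E\cong T_{\mathbb{P}^4}(u_1-4)\oplus \mathcal{O}^{\oplus 2}_{\mathbb{P}^4}(u_1)\oplus \mathcal{O}_{\mathbb{P}^4}(u_1-1)$. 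The subtle point here is identifying the correct shift of $T_{\mathbb{P}^4}$; this is fixed by matching the Chern class of the quotient on both sides.

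Third, for the solution $(0,0,0,0,-1,1,1,-1,-1,0)$, the coefficients on $S_3$ are those of an $\Omega^1$-type factor. I would apply the previous step to the twisted dual $E^{*}(u_1-1)$ (or an equivalent twist chosen so that the splitting type becomes of the same shape as in Step 2), extract a summand of the form $T_{\mathbb{P}^4}(1-u_1+\text{shift})$, and then dualize and twist back, producing $\Omega^1_{\mathbb{P}^4}(u_1-1)\oplus \mathcal{O}^{\oplus 2}_{\mathbb{P}^4}(u_1)\oplus \mathcal{O}_{\mathbb{P}^4}(u_1-1)$. Finally, for the solution $(-2,0,0,0,2,4,4,8,8,0)$, I would reproduce the contradiction of Proposition \ref{-2 case}: on each $p$-fiber, $HN^{1}$ is forced to be $T_{\mathbb{P}^3}(-2)$ or $N(-1)\oplus\mathcal{O}$, the mixed case is ruled out by the extension argument through $H^1(F_4,HN^1)$, the globally $N(-1)\oplus\mathcal{O}$ case is ruled out by Lemma \ref{deco-lem}, and the globally $T_{\mathbb{P}^3}(-2)$ case leads via the Descente-Lemma to a bundle on $\mathbb{P}^4$ pulling back to $\mathcal{O}_{\mathbb{P}(T_{\mathbb{P}^4}(-1))}(-2)$, which is impossible; hence this solution contributes nothing. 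The main obstacle is Step 3: keeping track of how the twist, shift, and dualization interact so that the appearance of $T_{\mathbb{P}^4}$ in the dual correctly translates to $\Omega^1_{\mathbb{P}^4}(u_1-1)$ in $E$, with the other two $\mathcal{O}$-summands ending up in the correct twists to match the prescribed splitting type.
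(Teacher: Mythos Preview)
Your overall strategy is exactly the paper's: treat the four Chern-polynomial solutions one at a time, invoking the template arguments of Propositions~\ref{(0,0,0)case}, \ref{(-1,0,0)case}/\ref{0-1}, and \ref{-2 case}, with one case obtained by dualizing. Step~1 and Step~3 are fine. But Steps~2 and~4 contain genuine errors that would derail the argument as written.

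In Step~2 your claimed exact sequence $0\to \mathcal{O}^{\oplus 2}_{\mathbb{P}^4}(u_1)\oplus\mathcal{O}_{\mathbb{P}^4}(u_1-1)\to E\to T_{\mathbb{P}^4}(u_1-4)\to 0$ cannot come from pushing forward $HN^{2}$: since $HN^{2}|_{p^{-1}(x)}\cong\mathcal{O}_{\mathbb{P}^3}(-1)\oplus\mathcal{O}_{\mathbb{P}^3}^{\oplus 2}$, one has $h^0=2$ on each fiber, so $p_*HN^2$ is a rank-$2$ bundle, not rank $3$. Moreover $T_{\mathbb{P}^4}(u_1-4)$ restricts to a line as $\mathcal{O}(u_1-3)\oplus\mathcal{O}(u_1-4)^{\oplus 3}$, which is incompatible with the prescribed splitting type. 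The clean route here is Proposition~\ref{0-1}: because $S_1=T^2$, the piece $HN^1$ is trivial on every $p$-fiber, hence $HN^1\cong p^*\mathcal{O}_{\mathbb{P}^4}^{\oplus 2}(u_1)$ by Lemma~\ref{trivial-lemma}; the quotient $E/\mathcal{O}^{\oplus 2}(u_1)$ is then a uniform $5$-bundle that the existing classification identifies (with the correct twist forced by the splitting type).

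In Step~4 you cite Proposition~\ref{-2 case} but then describe the argument of Proposition~\ref{exterior T}: the dichotomy ``$T_{\mathbb{P}^3}(-2)$ or $N(-1)\oplus\mathcal{O}$'' and the mixed-fiber extension trick pertain to a rank-$3$ restriction with $c_1=-2$, whereas here $r_1=2$ so $HN^1|_{p^{-1}(x)}$ has rank $2$ (and in fact equals $\mathcal{O}(-2)\oplus\mathcal{O}$). The actual Proposition~\ref{-2 case} argument applies instead: one shows the relevant $HN$ piece decomposes as $\mathcal{O}_{\mathbb{P}(T_{\mathbb{P}^4}(-1))}(-2)\oplus p^*\mathcal{O}^{\oplus \bullet}$, and the Descente-Lemma then produces a bundle on $\mathbb{P}^4$ pulling back to $\mathcal{O}_{\mathbb{P}(T_{\mathbb{P}^4}(-1))}(-2)$, a contradiction. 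No null-correlation bundle or $T_{\mathbb{P}^3}(-2)$ appears.
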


%
%
%
\vspace{1cm}

 \subsection{$k=4,r_i=4$  for some $i$}
 In this case, we may assume $u_j>u_k>u_l$,
\begin{multline*}S_{i}(T,U,V)=T^4+n_1(U+V)T^3+(n_2^0U^2+n_2^1UV+n_2^0V^2)T^2+\\(n_3^0U^3+n_3^1U^2V+n_3^1UV^2+n_3^0V^3)T+n_4^0(U^4+V^4)+n_4^1(U^3V+UV^3)+n_4^2U^2V^2,
\end{multline*}
\[S_{j}(T,U,V)=T+a(U+V),\]
\[S_{k}(T,U,V)=T+b(U+V),\]
\[S_{l}(T,U,V)=T+c(U+V),\]
 where $a,b,c,n_1,n_2^0,n_2^1,n_3^0,n_3^1,n_4^0,n_4^1,n_4^2$ are all constants.

 \subsubsection{$i=4$}
There are four solutions for the Chern polynomial equation if we set $u_4=0$.
 \begin{center}
 	$(a,b,c,n_1,n_2^0,n_2^1,n_3^0,n_3^1)=(-3,0,0,3,9,9,27,27)$,
 	
 	$(a,b,c,n_1,n_2^0,n_2^1,n_3^0,n_3^1)=(0,-2,0,2,4,4,8,8)$,
 	
 	$(a,b,c,n_1,n_2^0,n_2^1,n_3^0,n_3^1)=(0,0,-1,1,1,1,1,1)$,
 	
 	$(a,b,c,n_1,n_2^0,n_2^1,n_3^0,n_3^1)=(0,0,0,0,0,0,0,0)$.	
 	 \end{center}
 Using the similar arguments, we have following result.
 \begin{proposition}
The uniform vector bundle $E$ over $\mathbb{P}^4$ with splitting type $(4;1,1,1,4;u_1,u_1-1,u_1-2,u_1-3)$ is either $$\mathcal{O}_{\mathbb{P}^{4}}(u_1)\oplus \mathcal{O}_{\mathbb{P}^{4}}(u_1-1)\oplus \mathcal{O}_{\mathbb{P}^{4}}(u_1-2)\oplus \mathcal{O}^{\oplus 4}_{\mathbb{P}^{4}}(u_1-3)$$ or  $$T_{\mathbb{P}^4}(u_1-4)\oplus \mathcal{O}_{\mathbb{P}^4}(u_1)\oplus\mathcal{O}_{\mathbb{P}^4}(u_1-1)\oplus\mathcal{O}_{\mathbb{P}^4}(u_1-3).$$
	\end{proposition}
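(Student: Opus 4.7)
The plan is to analyze each of the four solutions of the Chern polynomial equation separately, using the methods already established in the preceding propositions. Without loss of generality we take $u_4=0$ (and hence $u_1=3$), and treat the four cases in turn.

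First, in the trivial case $(a,b,c,n_1,n_2^0,n_2^1,n_3^0,n_3^1)=(0,0,0,0,0,0,0,0)$, the Chern polynomial of $HN^1$ equals $T+3U$, so on every $p$-fiber $HN^1|_{p^{-1}(x)}$ is a rank-$1$ subbundle of the trivial bundle with $c_1=0$, hence trivial. Iterating this observation for $HN^2/HN^1$ and $HN^3/HN^2$ (whose restrictions to the $p$-fiber are again subbundles of trivial bundles with vanishing $c_1$ by the corresponding $S_i$), we find that each graded piece is trivial on the $p$-fibers. Corollary~\ref{all-trivial-lem} then gives $E\cong\mathcal{O}_{\mathbb{P}^4}(u_1)\oplus\mathcal{O}_{\mathbb{P}^4}(u_1-1)\oplus\mathcal{O}_{\mathbb{P}^4}(u_1-2)\oplus\mathcal{O}_{\mathbb{P}^4}^{\oplus 4}(u_1-3)$, exactly as in Proposition~\ref{(0,0,0)case}.

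Second, for $(a,b,c,n_1,n_2^0,n_2^1,n_3^0,n_3^1)=(0,0,-1,1,1,1,1,1)$, the Chern polynomials of $HN^1$ and $HN^2$ are $T+3U$ and $(T+3U)(T+2U)$ respectively, forcing $HN^1|_{p^{-1}(x)}$ and $HN^2/HN^1|_{p^{-1}(x)}$ to be trivial. By Lemma~\ref{trivial-lemma} and a short induction as in Corollary~\ref{all-trivial-lem}, $HN^2\cong p^*(\mathcal{O}_{\mathbb{P}^4}(3)\oplus\mathcal{O}_{\mathbb{P}^4}(2))$. Pushing forward the HN-sequence as in Proposition~\ref{0-1}, one obtains a short exact sequence
\[0\longrightarrow \mathcal{O}_{\mathbb{P}^4}(3)\oplus\mathcal{O}_{\mathbb{P}^4}(2)\longrightarrow E\longrightarrow p_*F\longrightarrow 0,\]
where $p_*F$ is a uniform $5$-bundle with splitting type $(2;1,4;0,-1)$. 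By Ellia's classification (Lemma~\ref{k=2r1=1} and \cite{Ell}), $p_*F\cong T_{\mathbb{P}^4}(-1)\oplus\mathcal{O}_{\mathbb{P}^4}$, and then a vanishing $\mathrm{Ext}^1$ computation splits the sequence to yield $E\cong T_{\mathbb{P}^4}(u_1-4)\oplus\mathcal{O}_{\mathbb{P}^4}(u_1)\oplus\mathcal{O}_{\mathbb{P}^4}(u_1-1)\oplus\mathcal{O}_{\mathbb{P}^4}(u_1-3)$.

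Third, for the two remaining solutions $(0,-2,0,2,4,4,8,8)$ and $(-3,0,0,3,9,9,27,27)$, the plan is to reproduce the obstruction argument of Proposition~\ref{-2 case}. In each case one of the line-type factors $S_i(T,U,V)=T+m(U+V)$ with $m\in\{-2,-3\}$ forces a relevant filtration step to involve $\mathcal{O}_{\mathbb{P}(T_{\mathbb{P}^4}(-1))}(m)$ with $m<0$. Restricting to a generic $q$-fiber $q^{-1}(l)\cong\mathbb{P}^2$, the Hopf bundle restricts to $\mathcal{O}_{q^{-1}(l)}(-m)$ with $-m>0$, whereas the relevant quotient of $p^*E$ is trivial on $q$-fibers; consequently $\mathcal{H}om$ between them vanishes on every $q$-fiber. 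The Descente-Lemma (Lemma 2.1.2 in \cite{O-S-S} Chapter 2) then forces $\mathcal{O}_{\mathbb{P}(T_{\mathbb{P}^4}(-1))}(m)$ to descend to a bundle on $\mathbb{P}^4$, which is impossible since $\mathcal{H}_{T_{\mathbb{P}^n}(-1)}$ is not pulled back from $\mathbb{P}^4$.

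The main obstacle is the third step: one must first establish that the graded pieces split on $p$-fibers in a controlled way (typically via the globally generated classification of \cite{S-J-U}) before the Descente-Lemma applies, and one must verify that no ``mixed'' fiberwise splitting occurs which could be remedied by a nontrivial extension, exactly as handled through the cohomology comparison in Proposition~\ref{-2 case}. Once all four cases are treated, combining them gives the stated dichotomy.
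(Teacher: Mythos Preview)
Your proposal is correct and follows essentially the same approach as the paper, which itself just writes ``using the similar arguments'' and refers back to Propositions~\ref{(0,0,0)case}, \ref{0-1} and \ref{-2 case} for the four solutions. Two small slips to fix: the $q$-fibre $q^{-1}(l)$ is $\mathbb{P}^1$, not $\mathbb{P}^2$ (this does not affect the Descente-Lemma argument, since $\mathcal{O}_{\mathbb{P}^1}(-m)$ with $-m>0$ still has no nonzero maps to a trivial bundle); and with $u_4=0$ the quotient $p_*F$ has splitting type $(2;1,4;1,0)$, not $(2;1,4;0,-1)$, which is then identified with $T_{\mathbb{P}^4}(-1)\oplus\mathcal{O}_{\mathbb{P}^4}$ via the Chern polynomial exactly as you say.
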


%
%
%

 \subsubsection{$i=3$}
There are four solutions for the Chern polynomial equation if we set $u_4=0$.
 \begin{center}
 	$(a,b,n_1,n_2^0,n_2^1,n_3^0,n_3^1,c)=(-2,0,2,4,4,8,8,0)$,
 	
 	$(a,b,n_1,n_2^0,n_2^1,n_3^0,n_3^1,c)=(0,-1,1,1,1,1,1,0)$,
 	
 	$(a,b,n_1,n_2^0,n_2^1,n_3^0,n_3^1,c)=(0,0,0,0,0,0,0,0)$,
 	
 	$(a,b,n_1,n_2^0,n_2^1,n_3^0,n_3^1,c)=(0,0,-1,1,1,-1,-1,1)$.
 \end{center}
  Using the similar arguments, we have following result.
  \begin{proposition}
The uniform vector bundle $E$ over $\mathbb{P}^4$ with splitting type $(4;1,1,4,1;u_1,u_1-1,u_1-2,u_1-3)$ is  $$\mathcal{O}_{\mathbb{P}^{4}}(u_1)\oplus \mathcal{O}_{\mathbb{P}^{4}}(u_1-1)\oplus \mathcal{O}^{\oplus 4}_{\mathbb{P}^{4}}(u_1-2)\oplus \mathcal{O}_{\mathbb{P}^{4}}(u_1-3),$$  $$T_{\mathbb{P}^4}(u_1-3)\oplus \mathcal{O}_{\mathbb{P}^4}(u_1)\oplus\mathcal{O}_{\mathbb{P}^4}(u_1-2)\oplus\mathcal{O}_{\mathbb{P}^4}(u_1-3)$$ or $$\Omega^1_{\mathbb{P}^4}(u_1-1)\oplus \mathcal{O}_{\mathbb{P}^4}(u_1)\oplus\mathcal{O}_{\mathbb{P}^4}(u_1-1)\oplus\mathcal{O}_{\mathbb{P}^4}(u_1-2).$$
	\end{proposition}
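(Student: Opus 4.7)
The plan is to mimic the case-by-case template already developed in the preceding subsections of Section~4. After twisting by $\mathcal{O}_{\mathbb{P}^4}(-u_1+3)$ I reduce to $u_1 = 3$, $u_2 = 2$, $u_3 = 1$, $u_4 = 0$, so the factorization
\[
c_{p^{*}E}(T) \;=\; S_1(T+3U,U,V)\,S_2(T+2U,U,V)\,S_3(T+U,U,V)\,S_4(T,U,V)
\]
in $A(F(1,2,5))$ must hold with $S_1,S_2,S_4$ linear and $S_3$ quartic. The four tuples listed just above are the only solutions, and I would handle them one at a time via direct adaptations of earlier propositions.

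For the all-zero tuple I would proceed as in Proposition~\ref{(0,0,0)case}: $HN^1|_{p^{-1}(x)}$ is a sub-line-bundle of the trivial bundle with $c_1 = 0$, hence trivial, so Corollary~\ref{all-trivial-lem} yields the first listed (fully-split) decomposition. For the tuples $(0,-1,1,1,1,1,1,0)$ and $(0,0,-1,1,1,-1,-1,1)$ one has $S_1 = T$, so Lemma~\ref{trivial-lemma} gives $HN^1\cong p^{*}\mathcal{O}_{\mathbb{P}^4}(3)$, and pushing the sequence $0\to HN^1\to p^{*}E\to F\to 0$ forward yields
\[
0\longrightarrow \mathcal{O}_{\mathbb{P}^4}(3)\longrightarrow E\longrightarrow p_{*}F\longrightarrow 0
\]
with $p_{*}F$ a uniform $6$-bundle of splitting type $(3;1,4,1;2,1,0)$. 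By the classification of this splitting type in Section~3.7, matching Chern polynomials forces $p_{*}F\cong T_{\mathbb{P}^4}\oplus \mathcal{O}_{\mathbb{P}^4}(1)\oplus\mathcal{O}_{\mathbb{P}^4}$ for the first tuple and $p_{*}F\cong \Omega^1_{\mathbb{P}^4}(2)\oplus \mathcal{O}_{\mathbb{P}^4}(2)\oplus\mathcal{O}_{\mathbb{P}^4}(1)$ for the second; Bott's formula then supplies the vanishing of the relevant $\mathrm{Ext}^1$ groups, so the sequence splits and un-twisting delivers the second and third listed decompositions respectively.

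For $(-2,0,2,4,4,8,8,0)$ I would follow Proposition~\ref{-2 case} verbatim: the Chern data force $HN^1|_{p^{-1}(x)}\cong \mathcal{O}_{\mathbb{P}^3}(-2)$, so $HN^1$ is a twist of the relative Hopf bundle $\mathcal{O}_{\mathbb{P}(T_{\mathbb{P}^4}(-1))}(-2)$. Using the classification of globally generated bundles of small $c_1$ on $\mathbb{P}^3$ (as in \cite{S-J-U}) together with Horrocks' theorem applied to a rank-$2$ bundle with $c_1 = c_2 = 0$, one sees that $HN^2/HN^1$ is trivial on each $p$-fiber and $HN^2$ splits as $\mathcal{O}_{\mathbb{P}(T_{\mathbb{P}^4}(-1))}(-2)\oplus p^{*}\mathcal{O}_{\mathbb{P}^4}^{\oplus 2}(1)$ up to pullback twist; then examining $p^{*}E/HN^2$ along $q$-fibers the relevant $\mathcal{H}om$-sheaf vanishes, so the Descente-Lemma would force a twist of the relative Hopf bundle to descend to $\mathbb{P}^4$, which is impossible. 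Hence this solution is vacuous, leaving exactly the three listed decompositions.

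The main obstacle will be the $(-2,\ldots)$ case, where every step of Proposition~\ref{-2 case} -- the Horrocks-type identification of $HN^2/HN^1$ on $p$-fibers, the cohomology vanishing used to split $HN^2$, and the crucial $\mathcal{H}om$-vanishing on $q$-fibers that licenses Descente -- must be re-verified in this specific splitting-type context. The remaining three cases are routine re-applications of lemmata already in the paper and should be essentially mechanical.
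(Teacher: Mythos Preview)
Your plan matches the paper's approach exactly: the paper simply says ``using the similar arguments'' and invokes the templates of Propositions~\ref{(0,0,0)case}, \ref{0-1}, and \ref{-2 case} for the four solution tuples, just as you do. One caveat on the $(-2,0,\ldots)$ case: the rank sequence here is $(r_1,r_2,r_3,r_4)=(1,1,4,1)$, not the $(1,2,3)$ of Proposition~\ref{-2 case}, so $HN^2/HN^1$ has rank~$1$ (not~$2$) and $HN^2$ has rank~$2$ (not~$3$); hence your invocation of Horrocks on a rank-$2$ bundle and your stated splitting $HN^2\cong\mathcal{O}_{\mathbb{P}(T_{\mathbb{P}^4}(-1))}(-2)\oplus p^{*}\mathcal{O}_{\mathbb{P}^4}^{\oplus 2}(1)$ are literally misstated, though the corrected versions are simpler ($HN^2/HN^1\cong p^*\mathcal{O}_{\mathbb{P}^4}(2)$ directly from Lemma~\ref{trivial-lemma}, and $HN^2$ has only two summands) and the Descente-Lemma contradiction goes through exactly as you anticipate.
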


%
%
%

So we have finished classification of uniform $7$-bundles over $\mathbb{P}^4$ with $k=4$.

\vspace{1cm}

\subsection{$k=5,r_i=3$ for some $i$}
 We may assume $u_j>u_k>u_l>u_m$.
 \begin{center}
 	$S_{i}(T,U,V)=T^3+n_1(U+V)T^2+(n_2^0U^2+n_2^1UV+n_2^0V^2)T+(n_3^0U^3+n_3^1U^2V+n_3^1UV^2+n_3^0V^3)$, 	
 	$S_{j}(T,U,V)=T+a(U+V)$, 	
 	$S_{k}(T,U,V)=T+b(U+V)$, 	
 	$S_{l}(T,U,V)=T+c(U+V)$, 	
 	$S_{m}(T,U,V)=T+d(U+V)$.
 \end{center}
 where $a,b,c,d, n_1,n_2^0,n_2^1,n_3^0,n_3^1$ are all constants.
 \subsubsection{$i=5$}
There are five solutions for the Chern polynomial equation if we set $u_4=0$.
 \begin{center}
 	$(a,b,c,d,n_1,n_2^0,n_2^1,n_3^0,n_3^1)=(0,0,0,0,0,0,0,0,0)$,
 	
 	$(a,b,c,d,n_1,n_2^0,n_2^1,n_3^0,n_3^1)=(0,0,0,-1,1,1,1,1,1)$,
 	
 	$(a,b,c,d,n_1,n_2^0,n_2^1,n_3^0,n_3^1)=(0,0,-2,0,2,4,4,8,8)$,
 	
 	$(a,b,c,d,n_1,n_2^0,n_2^1,n_3^0,n_3^1)=(0,-3,0,0,3,9,9,27,27)$,
 	
 	$(a,b,c,d,n_1,n_2^0,n_2^1,n_3^0,n_3^1)=(-4,0,0,0,4,16,16,64,64)$.
 \end{center}
   Using the similar arguments, we have following result.
   \begin{proposition}
The uniform vector bundle $E$ over $\mathbb{P}^4$ with splitting type $(5;1,1,1,1,3;u_1,u_1-1,u_1-2,u_1-3,u_1-4)$ is either $$\mathcal{O}_{\mathbb{P}^{4}}(u_1)\oplus \mathcal{O}_{\mathbb{P}^{4}}(u_1-1)\oplus \mathcal{O}_{\mathbb{P}^{4}}(u_1-2)\oplus \mathcal{O}_{\mathbb{P}^{4}}(u_1-3)\oplus \mathcal{O}^{\oplus 3}_{\mathbb{P}^{4}}(u_1-4)$$ or  $$T_{\mathbb{P}^4}(u_1-5)\oplus \mathcal{O}_{\mathbb{P}^4}(u_1)\oplus\mathcal{O}_{\mathbb{P}^4}(u_1-1)\oplus\mathcal{O}_{\mathbb{P}^4}(u_1-2).$$
	\end{proposition}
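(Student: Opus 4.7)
The plan is to handle the five solutions one by one, reusing verbatim the arguments already developed in Propositions \ref{(0,0,0)case}, \ref{0-1}, and \ref{-2 case}. The splitting type has $r_5=3$ sitting at the bottom of the HN-filtration, and for each tuple one reads off the Chern polynomial of $HN^1$ and of $HN^i/HN^{i-1}$ and decides the fiberwise structure.

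First, if $(a,b,c,d,n_1,n_2^0,n_2^1,n_3^0,n_3^1)=(0,0,0,0,0,0,0,0,0)$, then every $S_i$ is $T^{r_i}$, so $HN^i/HN^{i-1}$ is trivial on each $p$-fiber; by Corollary \ref{all-trivial-lem}, $E$ is the direct sum of line bundles $\mathcal{O}_{\mathbb{P}^4}(u_1)\oplus\cdots\oplus\mathcal{O}^{\oplus 3}_{\mathbb{P}^4}(u_1-4)$, exactly as in Proposition \ref{(0,0,0)case}. Next, if $(a,b,c,d,n_1,n_2^0,n_2^1,n_3^0,n_3^1)=(0,0,0,-1,1,1,1,1,1)$, then $S_1=S_2=S_3=T$ forces $HN^3\cong p^{*}(\mathcal{O}_{\mathbb{P}^4}(u_1)\oplus\mathcal{O}_{\mathbb{P}^4}(u_1-1)\oplus\mathcal{O}_{\mathbb{P}^4}(u_1-2))$ by Lemma \ref{trivial-lemma} (applied iteratively as in Corollary \ref{all-trivial-lem}); then pushing down the exact sequence $0\to HN^3\to p^{*}E\to F\to 0$ and running the argument of Proposition \ref{0-1} on $p_*F$, which is a uniform bundle with splitting type $(2;1,3;u_1-3,u_1-4)$ and hence must be $T_{\mathbb{P}^4}(u_1-5)$ by Ellia's classification recalled in Lemma \ref{k=2r1=1}, together with the vanishing of the relevant $\operatorname{Ext}^1$, splits off the line bundle summands and yields $E\cong T_{\mathbb{P}^4}(u_1-5)\oplus\mathcal{O}_{\mathbb{P}^4}(u_1)\oplus\mathcal{O}_{\mathbb{P}^4}(u_1-1)\oplus\mathcal{O}_{\mathbb{P}^4}(u_1-2)$.

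For the remaining three tuples $(0,0,-2,0,2,4,4,8,8)$, $(0,-3,0,0,3,9,9,27,27)$, and $(-4,0,0,0,4,16,16,64,64)$, one of the $S_i$ with $r_i=1$ has a linear factor of the form $T+m(U+V)$ with $|m|\ge 2$. Exactly as in Proposition \ref{-2 case}, I look at the corresponding sub-filtration term $HN^i$: its Chern polynomial and the structure of the preceding $HN^{i-1}$ force the quotient $HN^i/HN^{i-1}$ to be a line bundle of the form $\mathcal{O}_{\mathbb{P}(T_{\mathbb{P}^n}(-1))}(-m)$ after twisting. Pushing forward through $p$, combining the splitting $HN^2\cong \mathcal{O}_{\mathbb{P}(T_{\mathbb{P}^4}(-1))}(-m)\oplus p^{*}(\text{trivial stuff})$ that comes from the vanishing $H^1(p_*(\mathcal{O}(-m)\otimes\mathcal{O}(-1)))=0$, and then restricting to a $q$-fiber shows $\mathcal{H}om(\mathcal{O}_{\mathbb{P}(T_{\mathbb{P}^4}(-1))}(-m),p^*E/HN^i)|_{q^{-1}(l)}=0$; Descente-Lemma (Lemma 2.1.2 of \cite{O-S-S} Ch.~2) would then produce a bundle $E'$ on $\mathbb{P}^4$ with $p^{*}E'\cong\mathcal{O}_{\mathbb{P}(T_{\mathbb{P}^4}(-1))}(-m)$, contradicting the non-triviality of the Hopf line bundle on the $q$-fibers. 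Hence no uniform bundle realizes these three Chern data.

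The main technical obstacle is not any single case but correctly identifying, for each of the three ``$|m|\ge 2$'' tuples, \emph{which} term $HN^i$ of the filtration carries the offending line-bundle summand $\mathcal{O}_{\mathbb{P}(T_{\mathbb{P}^4}(-1))}(-m)$, because the filtration has five steps rather than three; one must verify in each case that the preceding quotients are trivial on fibers (so Lemma \ref{trivial-lemma} applies and gives a clean direct sum on that piece of the filtration) before invoking Descente. Once this bookkeeping is done, the contradiction in Proposition \ref{-2 case} transfers verbatim, and the proposition follows by collecting the surviving cases 1 and 2.
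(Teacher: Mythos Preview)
Your proposal is correct and follows essentially the same approach as the paper, which simply writes ``Using the similar arguments, we have the following result'' and defers entirely to Propositions \ref{(0,0,0)case}, \ref{0-1}, and \ref{-2 case}. Your case-by-case bookkeeping (locating which $HN^i$ carries the offending line bundle $\mathcal{O}_{\mathbb{P}(T_{\mathbb{P}^4}(-1))}(-m)$ in the three excluded tuples, and checking the preceding quotients are fiberwise trivial) is exactly the unpacking the paper leaves implicit; one small refinement is that for the quotient $p_*F$ in the second case you can cite the rank-$n$ classification of \cite{E-H-S} directly rather than Lemma~\ref{k=2r1=1}, since the quotient has rank $4=n$.
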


%
%
%
%

 \subsubsection{$i=4$}
There are five solutions for the Chern polynomial equation if we set $u_4=0$.
 \begin{center}
 	$(a,b,c,n_1,n_2^0,n_2^1,n_3^0,n_3^1,d)=(0,0,0,-1,1,1,-1,-1,1)$,
 	
 	$(a,b,c,n_1,n_2^0,n_2^1,n_3^0,n_3^1,d)=(0,0,0,0,0,0,0,0,0)$,
 	
 	$(a,b,c,n_1,n_2^0,n_2^1,n_3^0,n_3^1,d)=(0,0,-1,1,1,1,1,1,0)$,
 	
 	$(a,b,c,n_1,n_2^0,n_2^1,n_3^0,n_3^1,d)=(0,-2,0,2,4,4,8,8,0)$,
 	
 	$(a,b,c,n_1,n_2^0,n_2^1,n_3^0,n_3^1,d)=(-3,0,0,3,9,9,27,27,0)$.
 \end{center}
  Using the similar arguments, we have following result.
  \begin{proposition}
The uniform vector bundle $E$ over $\mathbb{P}^4$ with splitting type $(5;1,1,1,3,1;u_1,u_1-1,u_1-2,u_1-3,u_1-4)$ is  $$\mathcal{O}_{\mathbb{P}^{4}}(u_1)\oplus \mathcal{O}_{\mathbb{P}^{4}}(u_1-1)\oplus \mathcal{O}_{\mathbb{P}^{4}}(u_1-2)\oplus \mathcal{O}^{\oplus 3}_{\mathbb{P}^{4}}(u_1-3)\oplus \mathcal{O}_{\mathbb{P}^{4}}(u_1-1),$$  $$T_{\mathbb{P}^4}(u_1-4)\oplus \mathcal{O}_{\mathbb{P}^4}(u_1)\oplus\mathcal{O}_{\mathbb{P}^4}(u_1-1)\oplus\mathcal{O}_{\mathbb{P}^4}(u_1-4)$$ or $$\Omega^1_{\mathbb{P}^4}(u_1-2)\oplus \mathcal{O}_{\mathbb{P}^4}(u_1)\oplus\mathcal{O}_{\mathbb{P}^4}(u_1-1)\oplus\mathcal{O}_{\mathbb{P}^4}(u_1-2).$$
	\end{proposition}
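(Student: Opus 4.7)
My plan is to handle the five Chern-polynomial solutions one-by-one, invoking the templates established in Propositions \ref{(0,0,0)case}, \ref{(-1,0,0)case}, and \ref{-2 case}. For the zero solution $(0,0,0,0,0,0,0,0,0)$, I would observe that $c_1(HN^1|_{p^{-1}(x)})=0$; since $HN^1|_{p^{-1}(x)}$ is a subbundle of a trivial bundle on $\mathbb{P}^3$ with vanishing first Chern class, it is itself trivial, and iterating this along the HN filtration together with Corollary \ref{all-trivial-lem} yields the fully decomposable answer.

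For the solution $(0,0,-1,1,1,1,1,1,0)$, I would mirror the proof of Proposition \ref{(-1,0,0)case}. After locating the HN step whose fiberwise first Chern class is $\mathcal{O}_{\mathbb{P}^3}(-1)$, one shows $HN^i|_{p^{-1}(x)}\cong \mathcal{O}_{\mathbb{P}^3}^{\oplus r}\oplus \mathcal{O}_{\mathbb{P}^3}(-1)$. Pushing the HN sequence forward, pulling back, and applying the snake lemma produces an inclusion $\mathcal{H}_{T_{\mathbb{P}^4}(-1)}\hookrightarrow p^*p_*F$. Proposition 3.5 of \cite{E-H-S} then converts this into a $T_{\mathbb{P}^4}(-1)$-subbundle of $p_*F$ up to twist, and vanishing of the relevant $\mathrm{Ext}^1$ splits the resulting extension, delivering the $T_{\mathbb{P}^4}$-summand in the statement. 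The solution $(0,0,0,-1,1,1,-1,-1,1)$ is the Serre-dual situation: applying the same argument to $E^\vee$ after a suitable twist extracts a $T_{\mathbb{P}^4}$-summand, which on dualizing back becomes the $\Omega^1_{\mathbb{P}^4}$-summand in the third decomposition.

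The remaining two solutions $(0,-2,0,2,4,4,8,8,0)$ and $(-3,0,0,3,9,9,27,27,0)$ should admit no uniform bundle, which I would rule out via the non-existence strategy of Proposition \ref{-2 case}. Restriction of the relevant $HN^i$ to $p^{-1}(x)$ yields a globally generated bundle on $\mathbb{P}^3$ with $c_1=2$ or $3$, whose isomorphism type is constrained by the classification in \cite{S-J-U}. A push-pull and snake-lemma computation then places the Hopf bundle $\mathcal{H}_{T_{\mathbb{P}^4}(-1)}$ inside a quotient of $p^*E$; restricting to a $q$-fiber $q^{-1}(l)\cong \mathbb{P}^3$ one sees that the ambient quotient is trivial while the Hopf bundle restricts to $\mathcal{O}(k)$ with $k>0$, so $\mathcal{H}om=0$ on every $q$-fiber, and the Descente-Lemma (Lemma 2.1.2 of \cite{O-S-S} Chapter 2) would then force $\mathcal{H}_{T_{\mathbb{P}^4}(-1)}$ to descend to $\mathbb{P}^4$, which is impossible.

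I expect the main obstacle, as in the proof of Proposition \ref{exterior T}, to be the possibility that different points $x\in \mathbb{P}^4$ produce different fiberwise decompositions of $HN^i|_{p^{-1}(x)}$ (for instance, $T_{\mathbb{P}^3}(-2)$ at some points and $N(-1)\oplus\mathcal{O}_{\mathbb{P}^3}$ at others). Handling this requires a cohomological extension argument comparing $H^0$ and $H^1$ of $HN^i$ across $p$-fibers, constructing a nontrivial extension by $\mathcal{O}_{F_4}$, and showing that the induced extension on each fiber forces an unwanted trivial summand, in contradiction with the expression of the Chern polynomial. Once the fiberwise type of $HN^i$ is pinned down to be constant along $\mathbb{P}^4$, the Descente-Lemma obstruction then applies uniformly and completes the non-existence part of the classification.
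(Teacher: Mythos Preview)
Your proposal is correct and follows the same template-matching approach as the paper, which itself offers no detail beyond ``using the similar arguments'' and referring back to Propositions \ref{(0,0,0)case}, \ref{0-1}/\ref{(-1,0,0)case}, and \ref{-2 case}.

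Two small corrections are worth noting. First, the $q$-fiber $q^{-1}(l)$ is the line $l\cong\mathbb{P}^1$, not $\mathbb{P}^3$ (the $\mathbb{P}^3$'s are the $p$-fibers); the Descente-Lemma step in Proposition \ref{-2 case} works the same way on $\mathbb{P}^1$, so this slip does not affect the argument. Second, the mixed-fiber-type obstacle you anticipate from Proposition \ref{exterior T} does not arise here: in both non-existence solutions $(0,-2,0,\ldots)$ and $(-3,0,0,\ldots)$ the step carrying the $-2$ or $-3$ twist is a rank-$1$ piece of the HN filtration, so its restriction to every $p$-fiber is automatically $\mathcal{O}_{\mathbb{P}^3}(-2)$ or $\mathcal{O}_{\mathbb{P}^3}(-3)$ with no ambiguity, and neither the classification in \cite{S-J-U} for $c_1=3$ nor any cohomological comparison across fibers is needed. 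The paper's route in the parallel $k=4$ cases (\S3.8--3.9) is simply to peel off the trivial rank-$1$ summands via the mechanism of Proposition \ref{0-1} and reduce to a lower-rank uniform bundle already shown not to exist, or to feed the rank-$1$ Hopf power directly into the Descente-Lemma contradiction.
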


%
%
%
%

 \subsubsection{$i=3$}
There are five solutions for the Chern polynomial equation if we set $u_4=0$.
 \begin{center}
 	$(a,b,n_1,n_2^0,n_2^1,n_3^0,n_3^1,c,d)=(0,0,-2,4,4,-8,-8,0,2)$,
 	
 	$(a,b,n_1,n_2^0,n_2^1,n_3^0,n_3^1,c,d)=(0,0,-1,1,1,-1,-1,1,0)$,
 	
 	$(a,b,n_1,n_2^0,n_2^1,n_3^0,n_3^1,c,d)=(0,0,0,0,0,0,0,0)$,
 	
 	$(a,b,n_1,n_2^0,n_2^1,n_3^0,n_3^1,c,d)=(0,-1,1,1,1,1,1,0,0)$,
 	
 	$(a,b,n_1,n_2^0,n_2^1,n_3^0,n_3^1,c,d)=(-2,0,2,4,4,8,8,0,0)$.
 \end{center}
 Using the similar arguments, we have following result.
  \begin{proposition}
The uniform vector bundle $E$ over $\mathbb{P}^4$ with splitting type $(5;1,1,3,1,1;u_1,u_1-1,u_1-2,u_1-3,u_1-4)$ is  $$\mathcal{O}_{\mathbb{P}^{4}}(u_1)\oplus \mathcal{O}_{\mathbb{P}^{4}}(u_1-1)\oplus \mathcal{O}^{\oplus 3}_{\mathbb{P}^{4}}(u_1-2)\oplus \mathcal{O}_{\mathbb{P}^{4}}(u_1-3)\oplus \mathcal{O}_{\mathbb{P}^{4}}(u_1-1),$$  $$T_{\mathbb{P}^4}(u_1-3)\oplus \mathcal{O}_{\mathbb{P}^4}(u_1)\oplus\mathcal{O}_{\mathbb{P}^4}(u_1-3)\oplus\mathcal{O}_{\mathbb{P}^4}(u_1-4)$$ or $$\Omega^1_{\mathbb{P}^4}(u_1-1)\oplus \mathcal{O}_{\mathbb{P}^4}(u_1)\oplus\mathcal{O}_{\mathbb{P}^4}(u_1-1)\oplus\mathcal{O}_{\mathbb{P}^4}(u_1-4).$$
	\end{proposition}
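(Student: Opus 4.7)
The plan is to follow the template now established in Section~3 and the preceding subsections of Section~4: case-split on the five admissible parameter tuples produced by the Chern polynomial identity $c_{p^*E}(T)=\prod_{i=1}^{5} S_i(T+u_iU,U,V)$ for the splitting type $(5;1,1,3,1,1)$, and in each case either construct the stated decomposition by identifying a distinguished subbundle, or obstruct existence by a Chern-class contradiction. The algebraic verification that the five listed tuples exhaust all solutions in $A(F(1,2,5))=\mathbb{Z}[U,V]/\langle R^{4}(U,V),U^{5}\rangle$ is the input the computer supplies; after setting $u_5=0$ without loss of generality, the remaining work is case-by-case geometry.

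The cases divide naturally into three types. The all-zero tuple is immediate: every graded piece $HN^i/HN^{i-1}$ restricts on each $p$-fiber to a rank-$r_i$ subbundle of a trivial bundle with trivial first Chern class, hence is trivial, so Corollary~\ref{all-trivial-lem} yields the stated direct sum of line bundles. The tuple $(0,-1,1,1,1,1,1,0,0)$ is handled by the template of Proposition~\ref{0-1}: $HN^1$ is trivial on each $p$-fiber and descends to a line-bundle summand $\mathcal{O}_{\mathbb{P}^4}(u_1)\subset E$, and the resulting quotient $p_*F$ is uniform with a splitting type that, via Ellia's classification in \cite{Ell}, forces a $T_{\mathbb{P}^4}$-summand; the remaining line factors are read off from the Chern polynomial, giving the second decomposition. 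The tuple $(0,0,-1,1,1,-1,-1,1,0)$ is dual: running the previous argument on $E^\vee$ after a suitable twist produces the $\Omega^{1}_{\mathbb{P}^4}$-summand in the third decomposition.

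For the two remaining tuples $(0,0,-2,4,4,-8,-8,0,2)$ and $(-2,0,2,4,4,8,8,0,0)$, I would deploy the non-existence machinery of Proposition~\ref{-2 case}. In each, the graded piece whose fiberwise first Chern class equals $-2$ restricts on a $p$-fiber, by the globally-generated rank-$3$ classification of \cite{S-J-U}, to either $T_{\mathbb{P}^3}(-2)$ or $N(-1)\oplus \mathcal{O}_{\mathbb{P}^3}$ (with $N$ the null-correlation bundle). After using the uniformity-of-type argument of Proposition~\ref{exterior T} to force a single isomorphism type across all fibers, the resulting graded quotient descends along $p$ via the Descente-Lemma (Lemma 2.1.2 of \cite{O-S-S}, Ch.~2) to a bundle on $\mathbb{P}^4$; comparing Chern polynomials then produces a line-subbundle $\mathcal{O}_{\mathbb{P}^4}(i)\subset HN^i$ whose Chern factor does not divide $c_{HN^i}(T,U,V)$. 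This is the desired contradiction, ruling out both tuples.

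The main obstacle will be, as in Proposition~\ref{exterior T}, ensuring in the two ``$-2$'' cases that the fiberwise isomorphism type of the rank-$3$ graded piece is constant across $\mathbb{P}^4$ (either uniformly $T_{\mathbb{P}^3}(-2)$ or uniformly $N(-1)\oplus\mathcal{O}_{\mathbb{P}^3}$, rather than mixed). The non-trivial-extension construction of Proposition~\ref{exterior T} generalises, but one has to track the two extra rank-$1$ graded pieces sitting above and below the rank-$3$ block so that the eventual trivial-line-subbundle obstruction still propagates to the flag variety $F_4$; once constancy of fiberwise type is in hand, the descent and the Chern-class book-keeping are routine, and the three listed decompositions constitute the complete answer.
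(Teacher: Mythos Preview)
Your overall architecture is correct and matches the paper: five tuples, three yielding the listed bundles via Propositions~\ref{(0,0,0)case}, \ref{0-1} and duality, two excluded. But your description of the non-existence mechanism for the tuples $(-2,0,2,4,4,8,8,0,0)$ and $(0,0,-2,4,4,-8,-8,0,2)$ conflates Proposition~\ref{-2 case} with Proposition~\ref{exterior T}, and the resulting sketch does not track the actual obstruction.

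In the tuple $(-2,0,\dots)$ the piece carrying the $-2$ is not the rank-$3$ graded block but the rank-$1$ piece $HN^1$: since $S_1=T-2(U+V)$, one has $HN^1|_{p^{-1}(x)}\cong\mathcal{O}_{\mathbb{P}^3}(-2)$ directly, with no dichotomy whatsoever. The classification of \cite{S-J-U} enters only to pin down the next subbundle $HN^j|_{p^{-1}(x)}$ as $\mathcal{O}_{\mathbb{P}^3}(-2)\oplus\mathcal{O}_{\mathbb{P}^3}^{\oplus(j-1)}$ (here the higher Chern classes vanish, so the answer is unique; the alternatives $T_{\mathbb{P}^3}(-2)$ and $N(-1)\oplus\mathcal{O}_{\mathbb{P}^3}$ arise only when $c_2\neq 0$, as in Proposition~\ref{exterior T}). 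The contradiction then comes exactly as in Proposition~\ref{-2 case}: the intermediate quotients are fibrewise trivial, hence pullbacks by Lemma~\ref{trivial-lemma}; the snake lemma yields an inclusion of (a twist of) $\mathcal{O}_{\mathbb{P}(T_{\mathbb{P}^4}(-1))}(-2)$ into a bundle of the form $p^*W$; comparing restrictions to $q$-fibres and invoking the Descente-Lemma forces $\mathcal{O}_{\mathbb{P}(T_{\mathbb{P}^4}(-1))}(-2)\cong p^*E'$, which is absurd. The tuple $(0,0,-2,4,4,-8,-8,0,2)$ is handled by the same argument applied to $E^\vee$.

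Consequently your ``main obstacle'' paragraph is a phantom: there is no fibrewise-type dichotomy to resolve, no extension gymnastics \`a la Proposition~\ref{exterior T} to perform, and no need to track the surrounding rank-$1$ pieces through a constancy argument. Once you replace that paragraph with the direct line-bundle descent argument of Proposition~\ref{-2 case}, the proof is complete and coincides with the paper's.
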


%
%
%
%
So we have finished the classification of uniform $7$-bundle over $\mathbb{P}^4$ with $k=5$.

 \vspace{1cm}

 \subsection{$r_i\leq 2$ for all $i$}
 In this case, the solutions of the Chern polynomials can only be zero.
 So all $S_{i}(T,U,V)$ are $T^{r_i}$, by the same arguments, such a uniform vector bundle is the direct sum of line bundles.

 So the following theorems hold.
 \begin{theorem}\label{r7k6}
 	Any uniform $7$-bundle over $\mathbb{P}^4$ with $k\geq 6$ are direct sum of line bundles.
 \end{theorem}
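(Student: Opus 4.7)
The plan is to reduce to the case handled in the preceding subsection (``$r_i\leq 2$ for all $i$''). Since $\mathrm{rank}(E)=\sum_{i=1}^{k}r_i=7$ and $k\geq 6$, a pigeonhole count forces $r_i\leq 2$ for every $i$, and in fact at most one $r_i$ can equal $2$: either $k=6$ with exactly one $r_j=2$ and the remaining five $r_i=1$, or $k=7$ with all $r_i=1$. Enumerating the ordered splitting types amounts to recording the position of the (possible) $r_j=2$ slot among $k$ positions, giving a finite and very short list of cases.

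For each such ordered splitting type, I would set up the Chern polynomial identity (\ref{ri<3}) in $A(F(1,2,5))\cong\mathbb{Z}[U,V]/\langle R^{4}(U,V),U^{5}\rangle$. Normalizing $u_k=0$, write $S_i(T,U,V)=T+\alpha_i(U+V)$ when $r_i=1$ and $S_i(T,U,V)=T^{2}+\beta_i(U+V)T+\gamma_i(U^{2}+V^{2})+\delta_i UV$ when $r_i=2$; the unknowns are these coefficients, together with the integer gaps $u_i-u_{i+1}\geq 1$. Equating both sides modulo $\langle R^{4},U^{5}\rangle$ yields a polynomial system in the unknowns. By the same symbolic computation employed throughout the previous subsections (and whose tractability is guaranteed by the low total degree $\leq 7$ and the strong constraint $r_i\leq 2$), I would verify that the only solutions are the ``zero'' solutions: every $\alpha_i=0$, every $\beta_i=\gamma_i=\delta_i=0$, i.e.\ $S_i(T,U,V)=T^{r_i}$ for all $i$.

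Once each $S_i=T^{r_i}$ is established, the argument from Proposition~\ref{(0,0,0)case} applies uniformly. For a point $x\in\mathbb{P}^{4}$, the restriction $HN^{1}|_{p^{-1}(x)}$ is a subbundle of a trivial bundle with $c_1=0$ (and rank $\leq 2$), hence it is trivial (cf.\ \cite{O-S-S}, Corollary after Theorem~3.2.1). Inductively, each successive quotient $HN^{i}/HN^{i-1}$ restricts to a trivial bundle on every $p$-fiber. Lemma~\ref{trivial-lemma} and Corollary~\ref{all-trivial-lem} then give $E\cong\bigoplus_{i=1}^{k}\mathcal{O}_{\mathbb{P}^{4}}^{\oplus r_i}(u_i)$, completing the proof.

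The main obstacle is the symbolic verification that no nontrivial solution to (\ref{ri<3}) exists for any of the enumerated splitting types. In principle this is a routine computer-algebra exercise, but one must check each ordering of the ``$r_j=2$'' slot separately, since the position of the larger rank within the filtration affects the precise polynomial constraints (as seen in the $r_1=r_2=r_4=1$, $r_3=3$ versus $r_1=r_2=r_3=1$, $r_4=3$ comparisons earlier). Conceptually, however, these calculations are of the same type (and strictly simpler than) those already carried out in the $k=3,4,5$ subsections, where nontrivial solutions only appeared once some $r_i\geq 3$ admitted room for a $T_{\mathbb{P}^{3}}(-2)$-type or null-correlation-type subbundle on the fiber — a possibility excluded here by the rank constraint.
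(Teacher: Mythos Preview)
Your proposal is correct and follows essentially the same approach as the paper. The paper places this theorem immediately after the subsection handling $r_i\leq 2$ for all $i$, observes (implicitly via pigeonhole, as you do explicitly) that $k\geq 6$ with rank $7$ forces this constraint, asserts that the Chern-polynomial system admits only the zero solution $S_i=T^{r_i}$, and concludes via the argument of Proposition~\ref{(0,0,0)case} and Corollary~\ref{all-trivial-lem}---precisely the outline you give.
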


 \begin{theorem}\label{rk7}
 	Uniform $7$-bundles over $\mathbb{P}^4$ are homogeneous.
 \end{theorem}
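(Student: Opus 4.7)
The plan is to assemble the case analyses in the preceding subsections into a single homogeneity statement. Given a uniform $7$-bundle $E$ on $\mathbb{P}^4$ with splitting type $(k; r_1,\ldots,r_k; u_1,\ldots,u_k)$, I would first normalize by tensoring with a line bundle (which does not affect homogeneity), and then proceed by induction on $k$ combined with a case analysis on the rank partition $(r_1,\ldots,r_k)$ of $7$.

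The boundary cases dispatch quickly. If $k=1$, then $E \cong \mathcal{O}_{\mathbb{P}^4}^{\oplus 7}(u_1)$, trivially homogeneous; for $k \geq 6$, Theorem \ref{r7k6} gives that $E$ is a direct sum of line bundles. For $k=2$ with $r_1 = 1$ or $r_2 = 1$, Ellia's Lemma \ref{k=2r1=1} (applied to $E$ or $E^{\vee}$) yields homogeneity directly. Also, whenever $r_i \leq 2$ for every $i$, the computer-enumerated solution is the zero solution, each $S_i(T,U,V) = T^{r_i}$, so every $HN^i/HN^{i-1}$ is trivial on $p$-fibers and Corollary \ref{all-trivial-lem} forces $E$ to split.

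The core of the argument is to handle $2 \leq k \leq 5$ with $\sum r_i = 7$. The key reduction is to the case where $u_i - u_{i+1} = 1$ for every $i$: if some consecutive gap satisfies $u_i - u_{i+1} \geq 2$, Theorem 2.2 writes $E$ as an extension of two uniform bundles of strictly smaller rank, both of which are homogeneous by Theorem \ref{r6} on $6$-bundles (and lower rank cases), so $E$ inherits homogeneity from the extension. In the tight case $u_1 - u_k = k - 1$, the possible rank partitions for $k \in \{2,3,4,5\}$ have been individually listed in the subsections above, and for each the propositions there show that the only Chern polynomial solutions correspond to bundles of the form: direct sums of line bundles, twists of $T_{\mathbb{P}^4}$ or $\Omega^1_{\mathbb{P}^4}$ plus line bundles, or $\bigwedge^2 T_{\mathbb{P}^4}(-1)$ plus line bundles---each of which is manifestly homogeneous under the natural $\mathrm{PGL}_5$-action on $\mathbb{P}^4$.

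The main obstacle is not in the statement itself but in ensuring completeness of the enumeration; once the preceding subsections are in place, the theorem is a collecting result. The one remaining sanity check is that cases obtained by duality or tensoring (e.g.\ rearrangements of the $r_i$ that are related by $E \mapsto E^{\vee}(c)$) are indeed covered, which follows because the list of bundles appearing is closed under these operations. This completes the classification and yields the theorem.
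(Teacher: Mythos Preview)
Your outline is the paper's own approach: the theorem is a collecting statement once the subsection-by-subsection Chern-polynomial computations for each rank partition of $7$ (with $2\le k\le 5$ and consecutive $u_i$) are in place, combined with the boundary reductions you list ($k=1$, $k\ge 6$ via Theorem~\ref{r7k6}, Lemma~\ref{k=2r1=1} for $k=2$ with a rank-$1$ block, the all-$r_i\le 2$ case) and duality to cover the mirror-image partitions.

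One step needs correction. When $u_i-u_{i+1}\ge 2$ you invoke the paper's extension theorem to write $E$ as an extension of two smaller uniform bundles and conclude that ``$E$ inherits homogeneity from the extension.'' That inference is invalid as a general principle: an extension of homogeneous bundles need not be homogeneous. What actually makes the reduction go through is that Ellia's Proposition~I--4.1 yields a \emph{direct-sum decomposition} $E\cong E_1\oplus E_2$ when the gap is at least $2$; the paper's one-line paraphrase ``is an extension'' understates the conclusion. With the splitting in hand, $E_1$ and $E_2$ are uniform of rank $\le 6$, hence homogeneous by Theorem~\ref{r6} and the earlier classifications, and a direct sum of homogeneous bundles is homogeneous. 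You should rephrase the reduction accordingly; as written the sentence asserts something false.
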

%

	Finally, we have finished proving the following theorem.
\begin{theorem}
	Uniform $r$-bundles with $r<8$ on $\mathbb{P}^4$ are homogeneous.
\end{theorem}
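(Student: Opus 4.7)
The plan is to partition by rank $r \in \{1,\ldots,7\}$ and invoke a separate argument for each. The ranks $r \leq 3$ are covered by the classical theorems of Grothendieck, Van de Ven, Sato, and Elencwajg quoted in the introduction. For $r = 4 = n$, Sato's theorem (extended to even $n$ by Elencwajg-Hirschowitz-Schneider) gives the classification, and for $r = 5 = n+1$, the Ellia-Ballico classification of uniform $(n+1)$-bundles on $\mathbb{P}^n$ applies; in every case the resulting bundles are homogeneous. So the genuinely new content lies in ranks $6$ and $7$, which are the content of Theorems \ref{r6} and \ref{rk7}.

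For $r = 6, 7$, the strategy is a finite case analysis organized by the splitting datum $(k; r_1,\ldots,r_k; u_1,\ldots,u_k)$. By Ellia's theorem (Theorem 2.4), whenever a gap $u_i - u_{i+1} \geq 2$ occurs the bundle splits as an extension of two uniform subbundles of strictly smaller rank, so we may reduce inductively to the case $u_i - u_{i+1} = 1$ for all $i$. Dualizing and twisting by a line bundle further normalizes $u_k = 0$ and, where needed, interchanges the ordered sequence $(r_i)$ with its reverse. For each remaining combinatorial type I would write the Chern polynomial identity $c_{p^*E}(T) = \prod_{i=1}^{k} S_i(T + u_i U, U, V)$ in the Chow ring $A(F(1,2,5)) \cong \mathbb{Z}[U,V]/\langle R^4(U,V), U^5 \rangle$, turn it into a system of polynomial equations in the coefficients of the $S_i$, and solve by computer algebra. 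Each such system has only finitely many solutions, as tabulated in the subsections above.

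For every solution I would identify the associated bundle by reading off the HN-filtration of $p^*E$ on $F(1,2,5) = \mathbb{P}(T_{\mathbb{P}^4}(-1))$. The main tools are Lemma \ref{deco-lem} for extracting line-bundle subbundles of $E$, Lemma \ref{trivial-lemma} for recognizing trivial HN-quotients, Corollary \ref{all-trivial-lem} for assembling direct sums of line bundles, the classifications of globally generated rank-$2$ and rank-$3$ bundles on $\mathbb{P}^3$ for pinning down fiber types (e.g.\ $N(-1) \oplus \mathcal{O}$, $T_{\mathbb{P}^3}(-2)$, $\Omega^1_{\mathbb{P}^3}(2)$), and the Descente-Lemma to descend pullback-looking bundles on the flag variety back to $\mathbb{P}^4$. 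The bundles that arise are direct sums of line bundles or twists of $T_{\mathbb{P}^4}$, $\Omega^1_{\mathbb{P}^4}$, and $\bigwedge^{2} T_{\mathbb{P}^4}$, all of which are homogeneous.

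The hardest part is the non-existence arguments for Chern-polynomial solutions that do not come from any actual uniform bundle. The template is Proposition \ref{-2 case}, where one shows that a hypothetical HN-quotient structure would force a subbundle to descend to $\mathbb{P}^4$ in a manner contradicting the Descente-Lemma. The most delicate instance is the $(3;3,1,3)$ subcase with the exotic solution $(-4,8,12,0,-8,0,4,8,12,0,8)$: the possible fiber restrictions of $HN^1$ are governed by a Serre construction whose cokernel is the ideal sheaf of an elliptic curve (or a disjoint union of two) in $\mathbb{P}^3$, giving three a priori cases distinguished by $h^0$ and $h^1$ of the fiber bundle. Ruling out each pair of cases occurring simultaneously on $\mathbb{P}^4$ will require exploiting rational connectedness (so that $\mathcal{J}_x \cong \mathcal{J}_y$ for any two $p$-fibers), coupled with a careful extension construction that propagates a contradiction in the Chern polynomial of $HN^1$. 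Once this bookkeeping is complete, collating the resulting lists across all splitting types yields Theorems \ref{r6} and \ref{rk7}, and thus the stated theorem.
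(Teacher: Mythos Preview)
Your proposal is correct and follows essentially the same approach as the paper: reduce to small rank by the classical results, then for $r=6,7$ normalize the splitting type, solve the Chern polynomial equations in $A(F(1,2,5))$ by computer, and for each solution either identify the bundle via the HN-filtration lemmas or rule it out by a Descente-type contradiction, with the $(3;3,1,3)$ exotic solution handled exactly as you describe. The only omission worth noting is that the paper also invokes Lemma~\ref{k=2r1=1} (Ellia's Proposition IV-2.2) to dispose of the $k=2$, $r_1=1$ cases up front, which you have folded implicitly into the general scheme.
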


 \bibliographystyle{plain}
 \bibliography{ref-P4R67}

\begin{thebibliography}{10}

\bibitem{A-C-M}
C.~Anghel, I.~Coand\u{a}, and N.~Manolache.
\newblock Globally generated vector bundles with small {$c_1$} on projective
  spaces.
\newblock {\em Mem. Amer. Math. Soc.}, 253(1209), 2018.
\newblock v+107.

\bibitem{Bal}
E.~Ballico.
\newblock Uniform vector bundles of rank $(n+1)$ on $\mathbb{P}^{n}$.
\newblock {\em Tsukuba J. Math.}, 7(2):215--226, 1983.

\bibitem{B-E1982}
E.~Ballico and P.~Ellia.
\newblock Fibr\'es homog\`enes sur $\mathbb{P}^n$.
\newblock {\em C. R. Acad. Sci. Paris S\'er. I Math.}, 294(12):403--406, 1982.

\bibitem{B-E-P3}
E.~Ballico and Ph. Ellia.
\newblock Fibr\'es uniformes de rang {$5$}\ sur {${\bf P}\sp{3}$}.
\newblock {\em Bull. Soc. Math. France}, 111(1):59--87, 1983.

\bibitem{ggrk2}
L.~Chiodera and P.~Ellia.
\newblock Rank two globally generated vector bundles with $c_1\leq 5$.
\newblock {\em Rend. Istit. Mat. Univ. Trieste}, 44:413--422, 2012.

\bibitem{Dre}
J.~Dr{\'e}zet.
\newblock Exemples de fibr{\'e}s uniformes non homog{\`e}nes sur
  $\mathbb{P}_n$.
\newblock {\em Comptes Rendus de l'Acad{\'e}mie des Sciences}, 291(2):125--128,
  1980.

\bibitem{Ele}
G.~Elencwajg.
\newblock Les fibr\'es uniformes de rang 3 sur $\mathbb{P}^2(\mathbb{C})$ sont
  homog\'enes.
\newblock {\em Math.Ann}, 231:217--227, 1978.

\bibitem{Ele2}
G.~Elencwajg.
\newblock Des fibr{\'e}s uniformes non homog{\`e}nes.
\newblock {\em Mathematische Annalen}, 239:185--192, 1979.

\bibitem{E-H-S}
G.~Elencwajg, A.~Hirschowitz, and M.~Schneider.
\newblock Les fibr\'es uniformes de rang n sur $\mathbb{P}^n(\mathbb{C})$ sont
  ceux qu'on croit.
\newblock {\em Progr. Math.}, 7:37--63, 1980.

\bibitem{Ell}
P.~Ellia.
\newblock Sur les fibr{\'e}s uniformes de rang $(n+1)$ sur $\mathbb{P}^{n} $.
\newblock {\em M{\'e}m. Soc. Math. France (N.S.)}, 7:1--60, 1982.

\bibitem{EP-MP}
Ph. Ellia and P.~Menegatti.
\newblock Spaces of matrices of constant rank and uniform vector bundles.
\newblock {\em Linear Algebra Appl.}, 507:474--485, 2016.

\bibitem{Guyot}
M.~Guyot.
\newblock Caract\'erisation par l'uniformit\'e des fibr\'es universels sur la
  grassmanienne.
\newblock {\em Math. Ann.}, 270(1):47--62, 1985.

\bibitem{O-S-S}
C.~Okonek, M.~Schneider, and H.~Spindler.
\newblock {\em Vector bundles on complex projective spaces}.
\newblock Birkh$\ddot{a}$user/Springer Basel AG, Basel, 2011.
\newblock viii+239 pp.

\bibitem{Sat}
E.~Sato.
\newblock Uniform vector bundles on a projective space.
\newblock {\em J. Math. Soc. Japan}, 28(1):123--132, 1976.

\bibitem{S-J-U}
J.~Sierra and L.~Ugaglia.
\newblock On globally generated vector bundles on projective spaces.
\newblock {\em J. Pure Appl. Algebra}, 213(11):2141--2146, 2009.

\bibitem{Ven}
A.~{Van de Ven}.
\newblock On uniform vector bundles.
\newblock {\em Math. Ann.}, 195:245--248, 1972.

\end{thebibliography}

\end{document}